\numberwithin{equation}{subsection}
\newcounter{keepeqno}
\newenvironment{num}{\setcounter{keepeqno}{\value{equation}}
\begin{list}
{(\theequation)}{\usecounter{equation}}%
\setcounter{equation}{\value{keepeqno}}}{\end{list}}
\newcommand{\alg}{\mathrm{alg}}
\newcommand{\BC}{{\mathbb {C}}}
\newcommand{\BG}{{\mathbb {G}}}
\newcommand{\BK}{{\mathbb {K}}}
\newcommand{\BN}{{\mathbb {N}}}
\newcommand{\BR}{{\mathbb {R}}}
\newcommand{\BZ}{{\mathbb {Z}}}
\newcommand{\CE}{{\mathcal {E}}}
\newcommand{\CF}{{\mathcal {F}}}
\newcommand{\CG}{{\mathcal {G}}}
\newcommand{\CH}{{\mathcal {H}}}
\newcommand{\CJ}{{\mathcal {J}}}
\newcommand{\CK}{{\mathcal {K}}}
\newcommand{\CN}{{\mathcal {N}}}
\newcommand{\CO}{{\mathcal {O}}}
\newcommand{\CS}{{\mathcal {S}}}
\newcommand{\CU}{{\mathcal {U}}}
\newcommand{\CW}{{\mathcal {W}}}
\newcommand{\CX}{{\mathcal {X}}}
\newcommand{\CZ}{{\mathcal {Z}}}
\newcommand{\CaWa}{\mathrm{CW}}
\newcommand{\FMG}{{\mathrm{SF}}}
\newcommand{\Fg}{{\mathfrak {g}}}
\newcommand{\Fk}{{\mathfrak {k}}}
\newcommand{\Fl}{{\mathfrak {l}}}
\newcommand{\Fn}{{\mathfrak {n}}}
\newcommand{\Fo}{{\mathfrak {o}}}
\newcommand{\Fp}{{\mathfrak {p}}}
\newcommand{\Fw}{{\mathfrak {w}}}
\newcommand{\RA}{{\mathrm {A}}}
\newcommand{\RB}{{\mathrm {B}}}
\newcommand{\RD}{{\mathrm {D}}}
\newcommand{\RF}{{\mathrm {F}}}
\newcommand{\RG}{{\mathrm {G}}}
\newcommand{\RH}{{\mathrm {H}}}
\newcommand{\RI}{{\mathrm {I}}}
\newcommand{\RJ}{{\mathrm {J}}}
\newcommand{\RL}{{\mathrm {L}}}
\newcommand{\RM}{{\mathrm {M}}}
\newcommand{\RN}{{\mathrm {N}}}
\newcommand{\RO}{{\mathrm {O}}}
\newcommand{\RP}{{\mathrm {P}}}
\newcommand{\RR}{{\mathrm {R}}}
\newcommand{\RS}{{\mathrm {S}}}
\newcommand{\RT}{{\mathrm {T}}}
\newcommand{\RU}{{\mathrm {U}}}
\newcommand{\RW}{{\mathrm {W}}}
\newcommand{\Ad}{{\mathrm{Ad}}}
\newcommand{\disc}{{\mathrm{disc}}}
\newcommand{\FJ}{{\mathrm{FJ}}}
\newcommand{\Gal}{{\mathrm{Gal}}}
\newcommand{\GL}{{\mathrm{GL}}}
\newcommand{\Hom}{{\mathrm{Hom}}}
\renewcommand{\Im}{{\mathrm{Im}}}
\newcommand{\Ind}{{\mathrm{Ind}}}
\newcommand{\Jac}{{\mathrm{Jac}}}
\newcommand{\Mp}{\wt{\mathrm{Sp}}}
\renewcommand{\Re}{{\mathrm{Re}}}
\newcommand{\Res}{{\mathrm{Res}}}
\newcommand{\SO}{{\mathrm{SO}}}
\newcommand{\SU}{{\mathrm{SU}}}
\newcommand{\Sym}{{\mathrm{Sym}}}
\newcommand{\sgn}{{\mathrm{sgn}}}
\newcommand{\Sp}{{\mathrm{Sp}}}
\newcommand{\Span}{{\mathrm{Span}}}
\newcommand{\gen}{\mathrm{genu}}
\newcommand{\udl}{\underline}
\newcommand{\wh}{\widehat}
\newcommand{\wt}{\widetilde}
\newcommand{\lra}{\longrightarrow}
\newcommand{\bs}{\backslash}
\def\diag{{\rm diag}}
\def\std{\rm std}
\def\p{\prime}
\def\Irr{\mathrm{Irr}}
\def\t{\tilde}
\def\ss{\mathrm{ss}}
\def\Vogan{\mathrm{Vogan}}
\def\rel{\mathrm{rel}}
\newtheorem{thm}{Theorem}[subsection]
\newtheorem{theorem}{Theorem}
\newtheorem{defin}[thm]{Definition}
\newtheorem{ex}[thm]{Example}
\newtheorem{pro}[thm]{Proposition}
\newtheorem{lem}[thm]{Lemma}
\newtheorem{cor}[thm]{Corollary}
\newtheorem{conjec}{Conjecture}
\newcommand{\Rmnum}[1]{\expandafter\@slowromancap\romannumeral #1@}
\begin{document}

\title[Multiplicity formula]{Multiplicity formula for induced representations:  Bessel and Fourier-Jacobi models over Archimedean local fields}

\author{Cheng Chen}
\address{School of Mathematics\\
University of Minnesota\\
Minneapolis, MN 55455, USA}
\email{chen5968@umn.edu}

\date{\today}
\keywords{Bessel models, Fourier-Jacobi models, multiplicity, local Gan-Gross-Prasad conjectures, Meromorphic continuation}
\subjclass[2020]{Primary 22E50 22E45; Secondary 20G20}
\maketitle
\begin{abstract}
This article proves a formula relating the multiplicity of an induced representation and that of the inducing datum for the Bessel and the Fourier-Jacobi models over Archimedean local fields by generalizing the approach of C. M\oe glin and J.-L. Waldspurger in \cite{moeglin2012conjecture}, which was successful for Bessel models of special orthogonal groups over non-Archimedean local fields. As an application, we give a uniform proof of 
the local Gan-Gross-Prasad conjecture for all classical groups over Archimedean local fields for generic local $L$-parameters based on the tempered basic cases.
\end{abstract}

\tableofcontents

\section{Introduction}\label{section: intro}

The Bessel and Fourier-Jacobi models are important concepts in studying automorphic forms and their $L$-functions. Recent interests in these models are related to the local Gan-Gross-Prasad conjecture and the global Gan-Gross-Prasad conjecture.

The local Gan-Gross-Prasad conjecture was formulated by W. Gan, B. Gross, and D. Prasad in \cite{gan2012symplectic} generalizing the local Gross-Prasad conjecture in \cite{gross1992decomposition}\cite{gross1994irreducible} from special orthogonal groups to general classical groups. We first recall the local Gan-Gross-Prasad conjecture for classical groups.  

Let $F$ be a local field of characteristic $0$ and $E$ be a field extension satisfying $[E: F]=1, 2$. There are Bessel cases and Fourier-Jacobi cases for the conjecture.

In Bessel cases,  for given hermitian spaces $W\subset V$ over $E$ such that the complement $W^{\perp}$ of $W$ in $V$ is odd-dimensional and split over $E$. Let
\[
\RG=\begin{cases}
 \SO(V)\times \SO(W), & \text{ when }E=F, \\
 \RU(V)\times \RU(W),& \text{ when }E\neq F.
\end{cases}
\]
Then $\RG$ is the product of two classical algebraic groups $\RG_V$, $\RG_W$ of the same type over $F$.   Let 
\[
\RH= \Delta\RG_W\rtimes\RN,
\]  where $\Delta \RG_W$ denotes the image of diagonal embedding of $\RG_W$ into $\RG$, and $\RN$ is the unipotent radical of the parabolic group of $\RG_V$ stabilizing a full totally isotropic flag  of $W^{\perp}$. Let $\xi$ be a character of $\RH(F)$ obtained from a generic character of $\RN(F)$.

In Fourier-Jacobi cases,  given skew-hermitian spaces $W\subset V$
over $E$. Let
\[
\RG=\begin{cases}
 \Sp(V)\times (\Sp(W)\rtimes \CH(W)) \text{ or }\Mp(V)\times (\Mp(W)\rtimes \CH(W)),& \text{ when }E= F,\\ \RU(V)\times (\RU(W)\rtimes \CH(W)), & \text{ when }E\neq F, 
\end{cases}
\]
where $\wt{\Sp}(V)$ is a nonsplit double cover of $\Sp(V)$ and $\CH(W)$ is the Heisenberg group associated to $W$. In the above cases, $\RG$ is equal to the product of 
a classical algebraic group (or a metaplectic group) $\RG_V$ over $F$ and a Jacobi group $\RG_W^J=\RG_W\rtimes \CH(W)$, where $\RG_W$ is of the same type as $\RG_V$. 

\begin{itemize}
    \item When $W\subsetneq V$, fix a hyperbolic plane $H_1=X_1\oplus Y_1$, where $X_1,Y_1$ are isotropic line over $E$. The Jacobi groups $\RG_W^J$ can be taken as the subgroup of $\RG_{W\oplus^{\perp} H_1}$  fixing all points of $X_1$. Let 
    \[
    \RH=\Delta \RG_W^J\rtimes \RN,
    \] where $\RN$ is the unipotent radical of the subgroup of $\RG_V$ stabilizing a full totally isotropic flag of $(W\oplus H_1)^{\perp}$ and $\xi$ is a character of $\RH(F)$ induced from a generic character $\xi_N$ of $\RN(F)$.
    \item When $W=V$, let \[
    \RH=\Delta\RG_V,
    \] and $\xi$ be the trivial character of $\RH(F)$.
    
\end{itemize}

For  a triple $(\RG,\RH,\xi)$, nonzero elements in
\[
\Hom_{\RH}(\pi,\xi)
\]
are called \textbf{Bessel models} or \textbf{Fourier-Jacobi models},
and the local Gan-Gross-Prasad conjecture speculates the \textbf{multiplicity} 
\[
m(\pi)=\dim\Hom_{\RH}(\pi,\xi)
\]
for the $\RG(F)$-representation
\[
\pi=\begin{cases}
\pi_V\boxtimes \pi_W&\text{ in Bessel cases}\\ 
    \pi_V\boxtimes (\wt{\pi}_W\otimes \omega_{W,\psi_F})& \text{ in Fourier-Jacobi cases}
\end{cases}
\] where $\pi_V,\pi_W,\wt{\pi}_W$ are irreducible admissible representations when $F$ is non-Archimedean, and irreducible Casselman-Wallach representations when $F$ is Archimedean.  Moreover, the representations of metaplectic groups are chosen to be genuine representations.  Here $\omega_{W,\psi_F}$ is the Weil representation of $\wt{\Sp}(W)\rtimes \CH(W)$ or $\RU(W)\rtimes \CH(W)$ associated to the character $\psi_F$.

The multiplicity-one theorem  that
\[
m(\pi)\leqslant 1
\]
was proved in \cite{aizenbud2010multiplicity}\cite{waldspurger2012variante}\cite{sun2012multiplicitynon} when $F$ is non-Archimedean and was proved in  \cite{sun2012multiplicity}\cite{jiang2010uniqueness}\cite{liu2013uniqueness} when $F$ is Archimedean.

The local Gan-Gross-Prasad conjecture is a refinement of the multiplicity-one theorem speculating the multiplicity for representations of all the pure inner forms and giving precise conditions for when the multiplicity is equal to one if the representation has generic $L$-parameters.

The \textbf{Vogan $L$-packet} gathers all representations with the given $L$-parameter of all the pure inner forms. More precisely, for $\RG_V=\SO(V),\RU(V),\Sp(V)$, all pure inner forms  of $\RG_V$ are $\RG_{V_{\alpha}}$ for $\alpha\in H^1(F,\RG_V)$. We denote by $\mathrm{WD}_F$ the Weil-Deligne group of $F$. 
For given $L$-parameter $\varphi_V:\mathrm{WD}_F\to {}^L\RG_{V}$, R. Langlands associated an $L$-packet $\Pi_{\varphi_V}(\RG_{V})$, which is a finite set of representations in $\mathrm{Irr}(\RG_{V})$.
Here $\mathrm{Irr}(\RG_{V})$ is the category of irreducible admissible representations of $\RG_{V}(F)$ when $F$ is non-Archimedean, and the category of irreducible Casselman-Wallach representations when $F$ is Archimedean.
Since $\RG_V={}^L\RG_{V_{\alpha}}$ for all $\alpha\in H^1(F,\RG_V)$, the $L$-parameter $\varphi_V$ can be taken as an $L$-parameter of $\RG_{V_{\alpha}}$.  The Vogan packet 
$\Pi^{\Vogan}_{\varphi_V}$ is defined as
\[
\coprod_{\alpha\in H^1(F,\RG_V)}\Pi_{\varphi_V}(\RG_{V_{\alpha}}).
\]

When $\varphi_V$ is generic, it was conjectured by D. Vogan and known over Archimedean local fields (\cite[Theorem  6.3]{vogan1993local}) that, for a fixed Whittaker datum,  there is a bijection 
\begin{equation}\label{equ: isomorphism component group}
\pi_V\in\Pi^{\Vogan}_{\varphi_V}\longleftrightarrow \chi_{\pi_V}\in\widehat{\CS}_{\varphi_V}.
\end{equation}
Here $\widehat{\CS}_{\varphi_V}$ is the set of character of  component group $\CS_{\varphi_V}$ of $\RS_{\varphi_V}$, where $\RS_{\varphi_V}$ is the centralizer of the image $\Im(\varphi_V)$ in the dual group $\wh{\RG}$.

For the metaplectic group $\Mp(V)$, there is a quadratic space $V'$ over $E=F$ with $\dim_F V'=\dim_F V+1$ and 
 the discriminant equal to $1$. Then the $L$-group ${}^L\wt{\Sp}(V)={}^L\SO(V')=\Sp_{\dim_F V}(\BC)$. 
 The Shimura-Waldspurger correspondence (\cite{adams1998genuine}\cite{gan2012representations}) gives an isomorphism 
 \[
 \theta_V:\mathrm{Irr}^{\gen}(\Mp(V))\to \coprod_{\alpha\in H^1(F,\SO(V'))}\mathrm{Irr}(\SO(V'_{\alpha})),
 \]
where  $\mathrm{Irr}^{\gen}$ denotes the subcategory of $\mathrm{Irr}$ consists of genuine representations.

 For a given $L$-parameter $\wt{\varphi}_V:\mathrm{WD}_F\to {}^L\Mp(V)$, the Vogan $L$-packet associated to $\wt{\varphi}_V$ consisting of irreducible genuine representation of $\Mp(V)$ was defined as 
 \[
 \Pi^{\Vogan}_{\wt{\varphi}_{V}}=\theta_V^{-1}(\Pi^{\Vogan}_{\varphi_{V'}}),
 \]
where $\varphi_{V'}$ is the associated $L$-parameter $\mathrm{WD}_F\to {}^L\SO(V')={}^L\Mp(V)$.

When the $L$-parameter $\wt{\varphi}_V$ is generic, from \eqref{equ: isomorphism component group}, there is an isomorphism
\begin{equation}\label{equ: isomorphism component group meta}
\wt{\pi}_V\in\Pi^{\Vogan}_{\wt{\varphi}_V}\longleftrightarrow \chi_{\wt{\pi}_V}\in\widehat{\CS}_{\wt{\varphi}_V}.
\end{equation}

In the Bessel cases, $\RG=\RG_V\times \RG_W$. The inner twist by $\alpha\in H^1(F,\RH)=H^1(F,\RG_W)$ defines a new triple $(\RG_{\alpha},\RH_{\alpha},\xi_{\alpha})$, where 
\begin{equation}\label{equ: twist Bessel}
\RG_{\alpha}=\RG_{V_{\alpha}}\times \RG_{W_{\alpha}},\quad \RH_{\alpha}=\Delta \RG_{W_{\alpha}}\rtimes \RN,
\end{equation}
and $\xi_{\alpha}:\RH_{\alpha}(F)\to \BC$ is induced from the generic character $\xi_N$ of $\RN(F)$.  With this triple, one can define multiplicity for representations $\pi_{\alpha}$ in $\mathrm{Irr}(\RG_{\alpha})$, which we denote by $m(\pi_{\alpha})$. Gan-Gross-Prasad suggested that one may consider the relevant Vogan $L$-packet
\[
\Pi^{\Vogan}_{\varphi,\rel}=\bigsqcup_{\alpha\in H^1(F,\RH)}\Pi_{\varphi}(\RG_{\alpha}).
\]
\begin{conjec}[\cite{gan2012symplectic}]\label{conj: Bessel intro}
Let $\varphi_V,\varphi_W$ be generic $L$-parameters of $\RG_V,\RG_W$, respectively. Let $\varphi=\varphi_V\times \varphi_W$ and fix a Whittaker datum as in 
\cite[\S 12]{gan2012symplectic}, there are the following properties.
\begin{enumerate}
    \item  There is exactly one representation $\pi=\pi_V\times \pi_W\in \Pi^{\Vogan}_{\varphi,\rel}$ such that $m(\pi)=1$.
    \item  The unique representation $\pi$ in $\Pi_{\varphi,\rel}^{\Vogan}\subset \Pi_{\varphi}^{\Vogan}$ such that $m(\pi)=1$ satisfy
\[
\chi_{\pi_V}=\chi_{\varphi_V,W},\quad \chi_{\pi_W}=\chi_{\varphi_W,V}, 
\]
where $\chi_{\varphi_V,W},\chi_{\varphi_W,V}$ were constructed with symplectic local root number by Gan-Gross-Prasad in \cite{gan2012symplectic} (see also \eqref{equ: definition of character}).
\end{enumerate}
\end{conjec}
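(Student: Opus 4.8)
The plan is to deduce Conjecture~\ref{conj: Bessel intro} from the tempered \emph{basic case} --- the local Gan--Gross--Prasad conjecture for tempered Bessel data of codimension one --- by means of a multiplicity formula comparing $m$ of a parabolically induced representation with $m$ of its inducing datum, extending the M\oe glin--Waldspurger method \cite{moeglin2012conjecture} to the Archimedean setting. Since $\varphi=\varphi_V\times\varphi_W$ is generic, every member of $\Pi^{\Vogan}_{\varphi,\rel}$ is generic and hence equals the standard module of the corresponding Langlands datum. Writing $\varphi_V=\bigl(\bigoplus_i\phi_i|\cdot|^{s_i}\bigr)\oplus\varphi_V^{\temp}\oplus\bigl(\bigoplus_i\phi_i^{\vee}|\cdot|^{-s_i}\bigr)$ with $\phi_i$ tempered irreducible, $\Re(s_i)>0$, and $\varphi_V^{\temp}$ tempered, and similarly for $\varphi_W$, one gets $\pi_V=\Ind_P^{\RG_{V_\alpha}}(\tau_V\otimes\pi_V^{\temp})$, where $\tau_V$ is the essentially tempered representation of the $\mathrm{GL}$-part of the Levi attached to the blocks $\phi_i|\cdot|^{s_i}$ and $\pi_V^{\temp}\in\Pi_{\varphi_V^{\temp}}(\RG_{V_\alpha})$, and likewise $\pi_W=\Ind(\tau_W\otimes\pi_W^{\temp})$. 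This matches $\Pi^{\Vogan}_{\varphi,\rel}$ with $\Pi^{\Vogan}_{\varphi^{\temp},\rel}$ for $\varphi^{\temp}=\varphi_V^{\temp}\times\varphi_W^{\temp}$, compatibly with the parametrization \eqref{equ: isomorphism component group}.

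The technical core is the multiplicity formula
\[
m\bigl(\Ind_P^{\RG_{V_\alpha}}(\tau\otimes\sigma_V)\boxtimes\sigma_W\bigr)=m(\sigma_V\boxtimes\sigma_W)
\]
for a single essentially tempered $\mathrm{GL}$-block $\tau$ of positive exponent, together with its analogue with $V$ and $W$ exchanged. Applying it to strip the blocks $\phi_i|\cdot|^{s_i}$ one at a time --- alternating between the $V$-side and the $W$-side, and invoking the exchange of a $\mathrm{GL}$-block between the two sides (a see-saw argument) whenever a one-sided reduction would spoil the compatibility of the data --- reduces $m(\pi)$ for $\pi=\pi_V\boxtimes\pi_W$ to $m(\pi^{\temp})$ for $\pi^{\temp}=\pi_V^{\temp}\boxtimes\pi_W^{\temp}$; together with a reduction of the codimension of $W$ in $V$ when needed, this brings the tempered problem to the codimension-one tempered basic case, which is assumed known.

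It remains to track the distinguished character through the reduction. Stripping a block $\phi_i|\cdot|^{s_i}$ modifies the character $\chi_{\varphi_V,W}$ of \cite{gan2012symplectic} by symplectic local root numbers of the shape $\epsilon(\tfrac12,\phi_i\otimes\varphi_W^{\vee})$ (suitably twisted in the unitary cases), evaluated on the relevant components; because $\Re(s_i)>0$ these auxiliary factors are trivial on $\CS_{\varphi_V^{\temp}}$ and $\CS_{\varphi_W^{\temp}}$, so $\chi_{\varphi^{\temp}}$ pulls back under the above identification to $\chi_{\varphi}$. Invoking the tempered basic case --- which furnishes the unique $\pi^{\temp}\in\Pi^{\Vogan}_{\varphi^{\temp},\rel}$ with $m(\pi^{\temp})=1$, characterized by $\chi_{\pi_V^{\temp}}=\chi_{\varphi_V^{\temp},W}$ and $\chi_{\pi_W^{\temp}}=\chi_{\varphi_W^{\temp},V}$ --- its unique preimage $\pi$ is the unique element of $\Pi^{\Vogan}_{\varphi,\rel}$ with $m(\pi)=1$, and it satisfies $\chi_{\pi_V}=\chi_{\varphi_V,W}$, $\chi_{\pi_W}=\chi_{\varphi_W,V}$. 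The Fourier-Jacobi cases --- including the metaplectic ones, handled through the Shimura--Waldspurger correspondence $\theta_V$, which is compatible with parabolic induction --- run along the same lines, with the equal-rank Fourier-Jacobi datum playing the role of the basic case.

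The main obstacle is establishing the multiplicity formula over an Archimedean field. M\oe glin--Waldspurger's argument proceeds through Jacquet modules, the Bernstein--Zelevinsky geometric lemma, and exactness properties that have no literal Archimedean analogue; these must be replaced by analytic tools --- the structure theory of Casselman--Wallach representations, the asymptotics of generalized matrix coefficients along the unipotent radical $\RN$, and, above all, the meromorphic continuation in the inducing parameter of the relevant families of $\xi$-equivariant functionals and of the attached intertwining integrals, with control of their poles. The remaining technical hurdles are the unipotent integration defining $\xi$, the Heisenberg--Weil complications in the Fourier-Jacobi cases, and arranging the whole reduction uniformly across the orthogonal, unitary and symplectic/metaplectic families.
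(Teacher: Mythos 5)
Your plan matches the paper's strategy at every structural step: decompose $\varphi$ into a tempered part and a $\mathrm{GL}$-block with positive real exponents, match $\Pi^{\Vogan}_{\varphi,\rel}$ with $\Pi^{\Vogan}_{\varphi^{\temp},\rel}$ via normalized induction, reduce multiplicities to the tempered case via the induced/inducing multiplicity formula (the paper's Theorem~\ref{thm: conj 3}), reduce the codimension to the basic case by inducing with a spherical principal series in general position (the paper's Theorem~\ref{thm: main intro}(1)), invoke the tempered basic GGP, and verify that the distinguished character is preserved because the $\mathrm{GL}$-block has no self-dual constituents. This is exactly the route taken in Section~\ref{section: GGP}.

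One point in your write-up is imprecise and, as phrased, false: you write that \emph{every} member of $\Pi^{\Vogan}_{\varphi,\rel}$ is generic and hence equals the corresponding standard module. Members of a Vogan packet attached to a generic parameter are generally \emph{not} all generic (only one, for a fixed Whittaker datum). What is true, and what the paper supplies via the structure theorem (Theorem~\ref{thm: structure}), is that genericity of $\varphi$ is equivalent to holomorphy of $L(s,\varphi,\rho)$ at $s=1$ (Vogan's criterion), and this holomorphy forces the relevant standard modules to be \emph{irreducible}, so that each member of the packet equals its standard module; this is a statement about the standard module, not about genericity of the member. You should replace the appeal to "genericity of each $\pi$" with an appeal to irreducibility of the standard modules, which is precisely what needs to be proved.

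A second, organizational difference: you propose stripping $\mathrm{GL}$-blocks one at a time, alternating sides with "see-saw" exchanges, interleaved with the codimension reduction. The paper instead first proves the multiplicity formula in its general form (Conjecture~\ref{conjecture: main}) --- which already strips all blocks on both sides at once --- and then applies it, followed by a single application of Theorem~\ref{thm: main intro}(1), to land in a tempered basic case. The step-by-step stripping you describe does appear in the paper, but inside the proof of Theorem~\ref{thm: conj 3} (Sections~\ref{section: first inequality} and~\ref{section: second inequality}), not in the deduction of the GGP conjecture. Both organizations work; the paper's is cleaner because it isolates the analytic difficulties (meromorphic continuation of the intertwining integrals, distributional analysis on orbit strata) in a single multiplicity theorem that is then applied as a black box.
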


When $F$ is non-Archimedean, the conjecture was proved by Waldspurger (\cite{waldspurger2010formule}\cite{waldspurger2012calcul}\cite{waldspurger2012conjecture}\cite{waldspurger2012formule}\cite{waldspurger2012variante}) for tempered $L$-parameters using local trace formula when $\RG_V=\SO(V)$.  M\oe glin and Waldspurger completed the proof for generic $L$-parameters based on the results in the tempered cases. Following a similar approach, R. Beuzart-Plessis completed the proof for the tempered $L$-parameters in \cite{beuzart2014expression}\cite{beuzart2016conjecture} when $\RG_V=\RU(V)$; then based on the tempered results, Gan and A. Ichino proved the conjecture for generic $L$-parameters in \cite{gan2016gross}.

When $F$ is Archimedean and $\RG_V=\RU(V)$, H. He proved the conjecture for discrete series $L$-parameters in \cite{he2017gan}. Beuzart-Plessis proved the first part of the conjecture for tempered $L$-parameters by Beuzart-Plessis in \cite{beuzart2019local} using the local trace formula. H. Xue proved the full conjecture for tempered $L$-parameters using theta correspondence in \cite{xue1bessel} and completed the proof for generic $L$-parameters using Schwartz homology in \cite{xue2020bessel}. When $F$ is Archimedean and $\RG_V=\RU(V)$, the first part of the conjecture for tempered $L$-parameters was proved by Z. Luo in \cite{luo2020local} following \cite{beuzart2019local}.  Luo and the author proved the second part of the conjecture in \cite{chen2022local} by refining Waldspurger's approach. The author proved the generic cases in \cite{chen2021local} following \cite{moeglin2012conjecture} and \cite{xue2020bessel}.

In the Fourier-Jacobi cases, $\RG=\RG_V\times (\RG_W\rtimes \CH(W))$. 
 We define $\wt{\RG}_W$  as
\begin{equation}\label{equ: def of tilde GW}
\wt{\RG}_W=\begin{cases}
    \RU(W) \text{ if } \RG_W =\RU(W),\\
    \wt{\Sp}(W) \text{ if } \RG_W=\Sp(W), 
    \\
    \Sp(W) \text{ if } \RG_W=\wt{\Sp}(W)\\
\end{cases}
\end{equation}
Here $\wt{\Sp}(W)$ is taken to be $\Sp(W)$ when $\Sp(W)$ does not have a non-split double cover.

\begin{conjec}\label{conj: intro FJ}(\cite{gan2012symplectic})
Let $\varphi_V$ be a generic $L$-parameter of $\RG_V$ and $\wt{\varphi}_W$ be a generic $L$-parameter of $\wt{\RG}_W$.
\begin{enumerate}
    \item There is exactly one pair of representations $\pi_V\in \Pi^{\Vogan}_{\varphi_V}$, $\wt{\pi}_W\in \Pi_{\wt{\varphi}_W}^{\Vogan}$  such that $m(\pi_V\boxtimes (\wt{\pi}_W\otimes \omega_{W,\psi_F}))=1$.
    \item The unique pair of representation $\pi_V\in\Pi_{\varphi_V}^{\Vogan}$ and
$\wt{\pi}_W\in \Pi_{\wt{\varphi}_W}^{\Vogan}$ such that $m(\pi_V\boxtimes (\wt{\pi}_W\otimes \omega_{W,\psi_F}))=1$ satisfy
\[
\chi_{\pi_V}=\chi_{\varphi_V,W},\quad \chi_{\wt{\pi}_W}=\chi_{\varphi_W,V}
\]
where $\chi_{\varphi_{V},W},\chi_{\varphi_{W},V}$ were constructed by Gan-Gross-Prasad in \cite{gan2012symplectic} (see also \eqref{equ: definition of character}).    
\end{enumerate}
\end{conjec}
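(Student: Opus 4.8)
\medskip
\noindent
The plan for proving Conjecture~\ref{conj: intro FJ}, and in parallel Conjecture~\ref{conj: Bessel intro} in the Bessel case, is to reduce the generic case to the tempered basic case (which we take as known) by a parabolic-induction argument following \cite{moeglin2012conjecture}, the essential new ingredient being a multiplicity formula relating $m(\pi)$ for an induced representation to the multiplicity of its inducing datum on a smaller Gan--Gross--Prasad triple. First I would normalize parameters: a generic $L$-parameter of $\RG_V$ has the shape
\[
\varphi_V=\Bigl(\bigoplus\nolimits_i\phi_i|\cdot|^{x_i}\Bigr)\oplus\varphi_{V,0}\oplus\Bigl(\bigoplus\nolimits_i\phi_i^{\vee}|\cdot|^{-x_i}\Bigr),\qquad \varphi_{V,0}\text{ tempered},\ x_i>0,
\]
and $\wt{\varphi}_W$ is treated likewise, after transporting through the Shimura--Waldspurger correspondence when $\RG_W$ is metaplectic. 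The members of $\Pi^{\Vogan}_{\varphi_V}$ are then the Langlands quotients $J(\tau_1|\cdot|^{x_1}\times\cdots\times\tau_k|\cdot|^{x_k}\rtimes\sigma_V)$ with $\tau_i\leftrightarrow\phi_i$ and $\sigma_V\in\Pi^{\Vogan}_{\varphi_{V,0}}$; since each block $\phi_i|\cdot|^{x_i}\oplus\phi_i^{\vee}|\cdot|^{-x_i}$ is not self-dual and hence contributes only a connected factor to the centralizer $\RS_{\varphi_V}$, one gets $\widehat{\CS}_{\varphi_V}\cong\widehat{\CS}_{\varphi_{V,0}}$, and the Langlands-quotient construction becomes a label-preserving bijection $\Pi^{\Vogan}_{\varphi_V}\leftrightarrow\Pi^{\Vogan}_{\varphi_{V,0}}$ (and similarly for $\wt{\varphi}_W$).

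\medskip
\noindent
Second, I would peel off the blocks $\phi_i|\cdot|^{x_i}$ one at a time---always possible unless both $\varphi_V$ and $\wt{\varphi}_W$ are already tempered---shrinking whichever of the two spaces keeps a valid Fourier--Jacobi (resp.\ Bessel) triple, possibly with the two spaces exchanging roles. Each step of the multiplicity formula then reads
\[
m\bigl(\pi_V\boxtimes(\wt{\pi}_W\otimes\omega_{W,\psi_F})\bigr)=m\bigl(\pi_V^{\flat}\boxtimes(\wt{\pi}_W^{\flat}\otimes\omega_{W^{\flat},\psi_F})\bigr),
\]
and after finitely many steps the right-hand triple is tempered. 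Combined with the bijections of Vogan packets above, this makes part~(1) for $(\varphi_V,\wt{\varphi}_W)$ equivalent to part~(1) for the tempered pair, which is the assumed basic case; the distinguished pair is the one lying over the distinguished tempered pair. (The two inequalities that underlie the multiplicity formula are forced to equalities by the multiplicity-one bound $m(\pi)\le1$, which also makes the ``exactly one'' assertion meaningful.) Unlike \cite{xue2020bessel}, this treats Fourier--Jacobi models directly rather than through theta correspondence and see-saw identities, which is what yields a uniform statement for all classical groups.

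\medskip
\noindent
Third, for part~(2) I would track the characters $\chi_{\varphi_V,W}$ and $\chi_{\varphi_W,V}$ of \eqref{equ: definition of character} through the reduction. Under $\widehat{\CS}_{\varphi_V}\cong\widehat{\CS}_{\varphi_{V,0}}$, a standard symplectic local root-number computation (cf.\ \cite{moeglin2012conjecture}\cite{gan2016gross}) should show that $\chi_{\varphi_V,W}$ corresponds to the analogous character of the tempered pair: the extra blocks enter only through products $\epsilon(\rho\otimes\phi_i|\cdot|^{x_i})\,\epsilon(\rho\otimes\phi_i^{\vee}|\cdot|^{-x_i})$ attached to self-dual constituents $\rho$ of $\varphi_{V,0}$, and these collapse to central-character values absorbed by the normalization, so the character formula does not see the non-tempered part; symmetrically for $\chi_{\varphi_W,V}$. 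Granting this, the character identification in the tempered basic case propagates to the generic case. Throughout, the metaplectic/Heisenberg bookkeeping---compatibility of the Shimura--Waldspurger correspondence with parabolic induction, the Weil-representation twist $\omega_{W,\psi_F}$, and the identification of ${}^L\wt{\Sp}(W)$---is packaged uniformly with the orthogonal/unitary Bessel case.

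\medskip
\noindent\textbf{Main obstacle.}
The crux is the multiplicity formula over an \emph{Archimedean} field: the non-Archimedean argument of \cite{moeglin2012conjecture} relies on exactness of the Jacquet functor and Bernstein--Zelevinsky geometric-lemma filtrations, which are unavailable here. I expect the replacement to combine (i) a meromorphic-continuation statement realizing the spaces of Bessel/Fourier--Jacobi functionals as fibres of a holomorphic family $\Hom_{\RH}(\pi_s,\xi)$ in a complex parameter $s$ with controlled singularities, (ii) an Archimedean analogue of the geometric-lemma filtration of $\pi_s|_{\RH}$ by Schwartz induced pieces, whose $\Hom_{\RH}$-spaces one computes to bound the generic-fibre dimension from above, and (iii) an explicit functional giving the matching lower bound, after which multiplicity-one forces equality and the identity extends to the Langlands point. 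Carrying this out simultaneously for covering groups and the Weil representation in the Fourier--Jacobi case, so that a single argument covers every classical group, is the second main technical difficulty.
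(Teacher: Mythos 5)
Your proposal follows essentially the same route as the paper: decompose the generic parameter into a tempered core plus non-self-dual blocks (Theorem~\ref{thm: structure} and Lemma~\ref{lem: no self-dual sub}), transport the component-group identification and the characters $\chi_{\varphi_V,W}$, $\chi_{\varphi_W,V}$ through this decomposition, invoke the Shimura--Waldspurger correspondence and the Weil-representation twist for the metaplectic bookkeeping, and reduce via the multiplicity formula (Theorem~\ref{thm: conj 3}) and Theorem~\ref{thm: main intro}(1) to the tempered basic case. Your sketch of the obstacles for the Archimedean multiplicity formula---meromorphic continuation of a family of functionals, a Borel-type Schwartz filtration replacing the geometric lemma, and an explicit Jacquet-style integral giving the lower bound---is precisely how the paper organizes Sections~\ref{section: integral} and~\ref{section: 5.3}.
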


Over non-Archimedean local fields, based on the results for Bessel models, Gan and Ichino proved the unitary cases in \cite{gan2016gross} using theta correspondence. Following a similar approach, H. Atobe completed the proof for symplectic-metaplectic cases in \cite{atobe2018local}.

Over Archimedean local fields, Xue proved the unitary cases using theta correspondence and Schwartz homology. As for symplectic-metaplectic cases, it is our work-in-progress joint with R. Chen and J. Zou attacking the equal-rank tempered cases. As far as we know, Z. Li, Xue, and S. Wang are also considering the same case with a different approach. 
The general symplectic-metaplectic cases from equal-rank tempered cases are still open and will be confirmed as one consequence of this article.

The general approach for Conjecture \ref{conj: Bessel intro} and Conjecture \ref{conj: intro FJ} are based on the corresponding tempered cases and a reduction using a \textbf{multiplicity formula} (Conjecture \ref{conjecture: main}) that relates the multiplicity of an induced representation and that of the inducing datum. However, the known approaches for the multiplicity formula in the previous works vary case by case and involve different techniques, such as the integral method, distributional analysis, Schwartz analysis, and theta correspondence.

Our objective in this article is to give a uniform approach for the multiplicity formula for both Bessel models and Fourier-Jacobi models when $F$ is Archimedean. The reduction is established by generalizing the  M\oe glin and Waldspurger's approach, whose main steps only use the integral method and distributional analysis. This article makes some refinements such that the proof is independent of three facts used in M\oe glin and Waldspurger's original approach, the local Gan-Gross-Prasad conjecture for tempered $L$-parameters, the fact that the nonvanishing of the multiplicity implies the nonvanishing of the tempered intertwining  (\cite[Proposition 5.7]{waldspurger2012formule}) and the existence of Casselman's canonical pairing. The non-Archimedean uniform approach will be established in our forthcoming work.

In most general contexts, we describe the multiplicity formula as the following conjecture.
\begin{conjec}\label{conjecture: main}
For representations of general linear groups
\[
\sigma_V=|\cdot|^{s_{V,1}}\sigma_{V,1}\times \cdots \times |\cdot|^{s_{V,l_V}}\sigma_{V,l_V}\quad \sigma_W=|\cdot|^{s_{W,1}}\sigma_{W,1}\times \cdots \times |\cdot|^{s_{W,l_W}}\sigma_{W,l_W}
\]
where $\sigma_{V,i},\sigma_{W,i}$ are irreducible tempered representations and $s_{V,i},s_{W,i}\in \BC$ satisfying 
\[
\Re(s_{V,1})\geqslant \cdots\geqslant \Re(s_{V,l_V})>0,\Re(s_{W,1})\geqslant \cdots\geqslant \Re(s_{w,l_W})>0.
\]
\begin{enumerate}    
\item {\bf (Bessel cases):} For any two possibly reducible representations
\[
\pi_V=\sigma_{V}\rtimes \pi_{V_0},\quad
\pi_W=\sigma_W\rtimes \pi_{W_0}
\]
where $\pi_{V_0},\pi_{W_0}$ are irreducible tempered representations, the  identity 
\[
m(\pi_V\boxtimes \pi_W)= m(\pi_{V_0}\boxtimes \pi_{W_0})
\]
holds. 
\item {\bf (Fourier-Jacobi cases):}
For any two possibly reducible representations
\[
\pi_V=\sigma_{V}\rtimes \pi_{V_0},\quad
\wt{\pi}_W=\sigma_W\rtimes \wt{\pi}_{W_0}
\]
where $\pi_{V_0},\wt{\pi}_{W_0}$ are irreducible tempered representations, the identity 
\[
m(\pi_V\boxtimes (\wt{\pi}_W\otimes \omega_{W,\psi_F}))= m(\pi_{V_0}\boxtimes (\wt{\pi}_{W_0}\otimes\omega_{W_0,\psi_F}))
\]
holds. 
 \end{enumerate}   
\end{conjec}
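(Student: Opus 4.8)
The plan is to follow the Mœglin--Waldspurger strategy in \cite{moeglin2012conjecture}, but to isolate and generalize only its analytic core so that it applies simultaneously to Bessel and Fourier-Jacobi models over Archimedean $F$. The key structural input is an explicit integral realization of the multiplicity $m(\pi)$ as the dimension of a space of $\RH(F)$-equivariant functionals, together with a ``spectral expansion'' of such functionals in terms of the constituents appearing when one induces. Concretely, for $\pi_V=\sigma_V\rtimes\pi_{V_0}$ one writes $\pi_V$ as a parabolic induction $\mathrm{Ind}_{Q(F)}^{\RG_V(F)}(\sigma_V\otimes\pi_{V_0})$ from a suitable parabolic $Q$, and similarly for $\pi_W$ (resp. $\wt\pi_W$); the model space $\Hom_{\RH}(\pi,\xi)$ is then analyzed by restricting sections to $\RH(F)$-orbits on the relevant flag variety. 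The orbit structure was worked out in the Bessel non-Archimedean case by Mœglin--Waldspurger; the first step is to carry out the analogous orbit analysis in the Fourier-Jacobi setting, using the description of $\RH=\Delta\RG_W^J\rtimes\RN$ and the Heisenberg--Weil representation $\omega_{W,\psi_F}$, and to check that the open orbit contributes exactly $m(\pi_{V_0}\boxtimes\pi_{W_0})$ (resp. the twisted version) while all lower orbits contribute nothing after the positivity hypothesis $\Re(s_{\bullet,i})>0$ is imposed.

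The second step is the meromorphic-continuation argument that replaces the tempered intertwining input of \cite{waldspurger2012formule}. Rather than invoking the local Gan-Gross-Prasad conjecture for tempered parameters or Waldspurger's Proposition 5.7, I would introduce a family of ``regularized'' model integrals depending holomorphically on the exponents $s=(s_{V,i},s_{W,i})$, show they converge absolutely in the cone $\Re(s_{\bullet,i})>0$ and define $\RH(F)$-equivariant functionals there, and establish by a Bernstein-type continuation principle that the resulting map
\[
\Hom_{\RH}\bigl(\sigma_V\rtimes\pi_{V_0}\ \boxtimes\ \sigma_W\rtimes\pi_{W_0},\ \xi\bigr)\ \longrightarrow\ \Hom_{\RH_0}\bigl(\pi_{V_0}\boxtimes\pi_{W_0},\ \xi_0\bigr)
\]
is both injective (because the open orbit dominates) and surjective (because every functional on the inducing datum extends, via the section-valued integral, to the induced representation). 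Here the Casselman--Wallach category is essential: one uses automatic continuity and the fact that finite-length smooth Fréchet representations of moderate growth behave well under parabolic induction, so that the only delicate point is the growth estimate needed for the integral to converge — this is exactly where $\Re(s_{\bullet,i})>0$ enters, and it replaces Casselman's canonical pairing.

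The third step reduces the general reducible $\sigma_V,\sigma_W$ (products of $l_V$, resp. $l_W$ twisted temperds with decreasing positive real parts) to the one-step case by induction on $l_V+l_W$: one peels off the most negative — i.e. innermost — factor $|\cdot|^{s_{V,l_V}}\sigma_{V,l_V}$, applies the one-step identity, and uses transitivity of parabolic induction together with the ordering hypothesis $\Re(s_{V,1})\ge\cdots\ge\Re(s_{V,l_V})>0$ to guarantee that at each stage the open-orbit term still dominates. Finally, the Fourier-Jacobi statement is obtained from the Bessel machinery by the same formalism, the only change being the bookkeeping of the Weil representation factor $\omega_{W,\psi_F}$ and the passage $W\rightsquigarrow W_0$; since $\omega_{W,\psi_F}$ restricted along the Heisenberg leg of $\RH$ is a smooth admissible representation of moderate growth, it does not affect convergence.

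The main obstacle I anticipate is the surjectivity half of Step 2 in the Archimedean setting: over non-Archimedean fields one can split the relevant exact sequence of orbit contributions rather cheaply, but over $\BR$ or $\BC$ one must produce a genuine extension of a given functional on $\pi_{V_0}\boxtimes\pi_{W_0}$ to the whole induced space, controlling its growth along the closed orbits. I expect this to require a careful choice of test sections (Schwartz on the open orbit, vanishing to high order along the boundary) together with an application of the Casselman--Wallach globalization and possibly a holomorphic-continuation-and-residue argument to handle the boundary terms — this is the technical heart of the paper and the place where the refinements over \cite{moeglin2012conjecture} are genuinely needed.
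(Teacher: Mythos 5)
Your plan is in the right spirit (Mœglin--Waldspurger style, open orbit dominates, closed orbits vanish, meromorphic continuation for the reverse inequality), and you correctly identify that the hard point over $\BR,\BC$ is the meromorphic-continuation/surjectivity half. But there are two genuine gaps.

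First, and most seriously, your setup skips the key structural idea that makes the analysis tractable: the paper does \emph{not} directly analyze $\RH(F)$-orbits on the product flag variety $Q\bs\RG_V\times Q'\bs\RG_W$ for the original triple $(\RG,\RH,\xi)$. Instead it passes to a \emph{basic admissible pair} (a codimension-one Bessel pair, an equal-rank (FJ 1) pair, or an almost-equal-rank (FJ 2) pair), whose triple $(\RG^+,\RH^+,\xi^+)$ has $\RH^+$ reductive (a diagonal $\Delta\RG_V$ or $\Delta\RG_W^J$) and $\xi^+$ trivial, and analyzes double cosets $\RP^+(F)\bs\RG^+(F)/\RH^+(F)$ for a single flag variety of the larger group $\RG^+$. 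The bridge between the two triples is Theorem~\ref{thm: main intro}(1), the equality $m(\pi_V\boxtimes\pi_W)=m(\RI(\sigma_{\ul s},\pi_V,\pi_W))$ for spherical principal series $\sigma_{\ul s}$ in general position, proved by Schwartz homology. Without this reduction, your $\RH=\Delta\RG_W\rtimes\RN$ carries a nontrivial character on a unipotent leg, the open-orbit stabilizer does not cleanly produce $\Hom_{\RH_0}(\pi_{V_0}\boxtimes\pi_{W_0},\xi_0)$, and the orbit geometry on the product flag variety is considerably messier than what you describe. Moreover, you present the two inequalities as injectivity and surjectivity of a single integral-defined map; in the paper (and already in Mœglin--Waldspurger) these are two independent arguments with disjoint techniques: the first inequality uses Borel's lemma on the closed $\RH^+$-orbits plus the vanishing results of Lemma~\ref{lem: three properties}(3)(4), while the second inequality uses the Jacquet--Piatetski-Shapiro--Shalika-style integral and its meromorphic continuation.

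Second, the reduction to the one-step case in your Step 3 is backwards and uses the wrong induction parameter. Theorem~\ref{thm: main intro}(2) only yields the basic first inequality under $\Re(s)\geqslant\mathrm{LI}(\pi_V)$, i.e.\ the auxiliary exponent has to \emph{dominate} the leading exponent of $\pi_V$; ``positivity alone'' is not enough. Consequently, the induction peels off the factor with the \emph{largest} $\Re(s_{V,i})+m_{V,i}/2$, not the ``most negative/innermost'' one as you propose; peeling small exponents first would violate the domination hypothesis at the very first step. The paper's induction is not on $l_V+l_W$ but on the lexicographic order of a pair $(N,M)$, and it crucially alternates sides (via Theorem~\ref{thm: main intro}(1) inserting an auxiliary unitary exponent on the deficient side) to keep the domination condition satisfied. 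Finally, although you are right that the Fourier--Jacobi bookkeeping is largely parallel, the paper needs the (FJ 1)/(FJ 2) dichotomy and the isomorphism in Lemma~\ref{lem: isomorphism between FJ1 and FJ2} to handle both $W\subsetneq V$ and $V\subset W$ uniformly, which is not captured by ``only change the bookkeeping of $\omega_{W,\psi_F}$.''
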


Generally speaking, when $\pi_V,\pi_W,\wt{\pi}_W$ are irreducible, the local Gan-Gross-Prasad conjecture implies results in Conjecture \ref{conjecture: main}.
 When considering reducible cases, For Bessel cases over non-Archimedean local fields, Conjecture \ref{conjecture: main} was proved in \cite{moeglin2012conjecture}\cite{gan2016gross} using results in proof for the local Gan-Gross-Prasad conjecture for tempered $L$-parameters. This article proves Conjecture \ref{conjecture: main} for all Archimedean cases uniformly, and the proof is independent of the local Gan-Gross-Prasad conjecture.

\begin{theorem}\label{thm: conj 3}
 Conjecture \ref{conjecture: main} holds over Archimedean local fields.    
\end{theorem}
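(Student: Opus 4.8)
The plan is to run Mœglin--Waldspurger's reduction \cite{moeglin2012conjecture} in the language of the \emph{geometric multiplicity formula} --- the integral expression for $m$ in terms of localized Harish-Chandra characters --- together with Harish-Chandra's formula for the character of a parabolically induced representation, arranging everything so that the argument never invokes the tempered Gan--Gross--Prasad conjecture, Waldspurger's nonvanishing criterion \cite[Proposition 5.7]{waldspurger2012formule}, or Casselman's canonical pairing. First, by transitivity of parabolic induction and an easy induction on $l_V+l_W$, one reduces Theorem~\ref{thm: conj 3} to the case of a single inducing block on one factor: it suffices to prove
\[
m\bigl((|\cdot|^{s}\sigma\rtimes\pi_{V_0})\boxtimes\pi_W\bigr)=m(\pi_{V_0}\boxtimes\pi_W),
\]
together with its Fourier--Jacobi analogue (with $\omega_{W,\psi_F}$ carried along), where $\sigma$ is an irreducible tempered representation of a general linear factor, $\pi_{V_0}$ is irreducible tempered, $\pi_W=\sigma_W\rtimes\pi_{W_0}$ is an arbitrary representation of the allowed shape, and $\Re(s)>0$.

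\textbf{Step 1: the multiplicity formula for standard modules.} For $\tau=|\cdot|^s\sigma\rtimes\tau_0$ with $\tau_0$ tempered and $\Re(s)>0$, establish a multiplicity formula $m(\tau\boxtimes\pi_W)=m_{\geom}(\tau\boxtimes\pi_W)$, where $m_{\geom}$ is the explicit integral of the localized Harish-Chandra character of $\tau\boxtimes\pi_W$ against the Mœglin--Waldspurger kernel over the semisimple conjugacy classes of $\RG_W(F)$, plus the finitely many elliptic terms. For tempered $\tau$ this is the Archimedean multiplicity formula of Beuzart-Plessis \cite{beuzart2019local} and Luo \cite{luo2020local}; the two new inputs are (a) that $\Re(s)>0$ forces absolute convergence of the defining integral even for the non-tempered standard module $\tau$ --- using the growth bounds for $\Theta_\tau$ deduced from those of $\Theta_\sigma$ and $\Theta_{\tau_0}$ through the induced-character formula --- and (b) a reorganization of the proof of the identity itself (the refinements announced in the introduction) that removes the appeal to Casselman's canonical pairing and to the implication ``$m\neq 0\Rightarrow$ tempered intertwining $\neq 0$''.

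\textbf{Step 2: unfolding the induced character.} Insert Harish-Chandra's formula for $\Theta_{|\cdot|^s\sigma\rtimes\pi_{V_0}}$ --- a finite sum, over Weyl representatives, of terms built from the character of the Levi $M_V\cong\GL\times\RG_{V_0}$ and a Weyl-type denominator, supported on elements conjugate into $M_V$ --- into the integral defining $m_{\geom}((|\cdot|^s\sigma\rtimes\pi_{V_0})\boxtimes\pi_W)$. After a change of variables and an application of Fubini (legitimate for $\Re(s)\gg 0$, then propagated to all $\Re(s)>0$ by the meromorphy of both sides in $s$) the extra integration collapses: because $\sigma$ lives on a general linear factor, the orbital integrals attached to tori not arising from $\RG_{V_0}$ contribute zero (here $\Re(s)>0$ is used once more), and what survives is precisely the integral computing $m_{\geom}(\pi_{V_0}\boxtimes\pi_W)$. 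Combining with Step~1 applied on both ends yields the displayed identity, and hence Theorem~\ref{thm: conj 3}.

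\textbf{Main obstacle.} The analytic heart is Step~1 together with the interchange of integration in Step~2. The character of the standard module $|\cdot|^s\sigma\rtimes\pi_{V_0}$ does not obey the tempered estimates, so both the convergence of $m_{\geom}$ and the legitimacy of the unfolding rest on sharp bounds for the Mœglin--Waldspurger kernel near its singular locus, carefully balanced against the hypothesis $\Re(s)>0$; moreover the meromorphic continuation in $s$ must be set up so that the two sides of the desired identity have matching analytic behaviour, and it is exactly here that the reorganization replacing Casselman's canonical pairing and Waldspurger's nonvanishing criterion does the essential work.
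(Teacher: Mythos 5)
Your proposal takes a fundamentally different route from the paper, and it has a gap that is not merely technical but goes to the foundation of the method.

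The paper proves Theorem~\ref{thm: conj 3} by establishing the three ingredients of Theorem~\ref{thm: main intro} --- reduction to basic cases via Schwartz homology, basic forms of the first inequality via distributional (Schwartz-section) analysis of $\RP^+(F)\bs\RG^+(F)/\RH^+(F)$, and basic forms of the second inequality via meromorphically continued Rankin--Selberg-type integrals --- and then running a bookkeeping induction. Crucially, the authors advertise that this route is \emph{independent} of the tempered local Gan--Gross--Prasad theorem and of the geometric multiplicity formula. Your Step 1 asserts exactly the opposite foundation: an identity $m(\tau\boxtimes\pi_W)=m_{\geom}(\tau\boxtimes\pi_W)$ for the standard module $\tau=|\cdot|^s\sigma\rtimes\tau_0$. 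Over Archimedean fields, the identity $m=m_{\geom}$ is known only for \emph{tempered} $\pi$, and only in the Bessel cases (Beuzart-Plessis \cite{beuzart2019local} for $\RU$, Luo \cite{luo2020local} for $\SO$); there is no Archimedean geometric multiplicity formula in the Fourier--Jacobi cases at all, and the paper's Conjecture~\ref{conjecture: main} covers those too. Your claim (a) --- that $\Re(s)>0$ forces absolute convergence of $m_{\geom}$ for the non-tempered standard module --- is not a consequence of the tempered theory: the character $\Theta_\tau$ grows strictly faster than any tempered character, and the convergence of the Mœglin--Waldspurger kernel integral for tempered $\pi$ is already at the critical edge. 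Establishing $m=m_{\geom}$ for standard modules is not a refinement of known results; it would require re-running the entire Archimedean relative trace formula (spectral expansion, truncation, elliptic term comparison) outside the tempered spectrum. That is not something the phrase ``a reorganization of the proof of the identity'' can absorb. Claim (b) is therefore a placeholder for what would be a major theorem, not a reorganization.

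Step 2 is also thinner than it looks. Harish-Chandra's descent formula does express $\Theta_{|\cdot|^s\sigma\rtimes\pi_{V_0}}$ as a sum over elements conjugate into the Levi, but the geometric side of $m_{\geom}$ is an integral of the \emph{germ expansion} of the character against a kernel supported near singular semisimple elements of $\RG_W(F)$, weighted by nilpotent orbital integrals. The assertion that ``orbital integrals attached to tori not arising from $\RG_{V_0}$ contribute zero (here $\Re(s)>0$ is used once more)'' has no visible mechanism: positivity of $\Re(s)$ controls the size of the $\GL$ exponents, but the vanishing of whole classes of germ contributions is a structural statement about the intersection of $\RG_W$-conjugacy classes with the Levi, not an analytic consequence of $\Re(s)>0$. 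In the paper's argument, the analogous cancellation happens at the level of Schwartz sections on orbits: Lemma~\ref{lem: three properties}(3)(4) kills the $\Hom$ groups on the closed orbit stratification precisely because $\Re(s)$ exceeds the Langlands exponents of $\pi_V$, and this is an explicit representation-theoretic vanishing (Theorem~\ref{thm: vanishing in appendix}), not an orbital-integral one. Your Step 2 would need a parallel, and currently absent, vanishing theorem on the geometric side.

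If you want to rescue a trace-formula-flavoured proof, you would first have to prove the geometric multiplicity formula for Archimedean Fourier--Jacobi models (open), and then for non-tempered standard modules (open in all cases). The paper's route via integrals and Schwartz analysis exists precisely to avoid these two open problems; your proposal reintroduces both.
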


For Bessel models, the inequality
\[
m(\pi_V\boxtimes \pi_W)\leqslant m(\pi_{V_0}\boxtimes \pi_{W_0})
\]
is called "the first inequality" and the inequality
\[m(\pi_V\boxtimes \pi_W)\geqslant m(\pi_{V_0}\boxtimes \pi_{W_0})\]
is called "the second inequality". 

In the proof for non-Archimedean Bessel cases in \cite{moeglin2012conjecture}\cite{gan2016gross}, there are three main ingredients in the proof of Conjecture \ref{conjecture: main}: Let $W^+=(X^+\oplus Y^+)\oplus W$ 
     such that $\dim_E W^+=\dim_E V+1$. Suppose $\pi_V$, $\pi_W$ are finite-length admissible representations of $\RG_V(F)$, $\RG_W(F)$.
\begin{enumerate}
    \item (Reduction to basic cases) For a suppercuspidal representation $\sigma_{X^+}$ of $\GL(X^+)(E)$, we have
    \[
    m(\pi_V\boxtimes \pi_W)=m((\sigma_{X^+}\rtimes \pi_W)\boxtimes \pi_V)
    \]
    \item (Basic forms of the first inequality)
    For a tempered representation $\sigma_{X^+}$ of $\GL(X^+)(E)$, we have
    \[
    m(\pi_V\boxtimes \pi_W)\geqslant m((|\det|^s\sigma_{X^+}\rtimes \pi_W)\boxtimes \pi_V)
    \]
    when $\Re(s)$ is larger than or equal to a parameter $\mathrm{LI}(\pi_V)$ of $\pi_V$.
    \item (Basic forms of the second inequality) For a representation  $\sigma_{X^+}$ of $\GL(X^+)(E)$ as in Conjecture \ref{conjecture: main}, we have
    \[
    m(\pi_V\boxtimes \pi_W)\leqslant m((\sigma_{X^+}\rtimes \pi_W)\boxtimes \pi_V)
    \]
\end{enumerate}

With these three ingredients, Conjecture \ref{conjecture: main} can be proved by  a mathematical induction from the tempered cases  
(\cite[\S 1.6]{moeglin2012conjecture} \cite[\S 9.3]{gan2016gross}).

Following the idea in \cite{xue2020bessel}\cite{chen2021local}, one can set up the Archimedean counterpart of these three steps in Bessel cases as follows: Let $W^+=(X^+\oplus Y^+)\oplus W$ such that $\dim_E W^+=\dim_E V+1$ and let $r^+=\dim_EX^+$. Suppose $\pi_V,\pi_W$ are Casselman-Wallach representations of $\RG_V(F),\RG_W(F)$.
\begin{enumerate}
    \item (Reduction to basic cases) For a spherical principal series representation $\sigma_{X^+}=|\cdot|^{s_1}\times \cdots \times |\cdot|^{s_{r^+}}$ of $\GL(X^+)(E)$, we have
    \[
    m(\pi_V\boxtimes \pi_W)=m((\sigma_{X^+}\rtimes \pi_W)\boxtimes \pi_V)
    \]
    for $(s_1,\cdots,s_{r^+})\in \BC^{r^+}$ in general positions.
    \item (Basic forms of the first inequality)
    \begin{enumerate}
        \item When $\dim_E X^+=1$,  
        \[
    m(\pi_V\boxtimes \pi_W)\geqslant m((|\cdot|^s\rtimes \pi_W)\boxtimes \pi_V)
    \]
    when $\Re(s)\geqslant \mathrm{LI}(\pi_V)$.
        \item When $E=\BR$ and $\dim_E X=2$, 
        \[
    m(|\cdot|^{s+\frac{m}{2}}\sgn^{m+1}\rtimes\pi_V\boxtimes \pi_W)\geqslant m(|\det|^sD_m\rtimes \pi_W\boxtimes \pi_V)
        \]
    when $\Re(s)+\frac{m}{2}\geqslant \mathrm{LI}(\pi_V)$,    where $D_m$ is the discrete series representation of $\GL_2(\BR)$ with the same central character as the $(2m-1)$-dimensional irreducible representation.
    \end{enumerate}
    \item (Basic forms of the second inequality)
    When $\pi_V,\pi_W$ are irreducible, for a generic representation  $\sigma_{X^+}$ of $\GL(X^+)(E)$, we have
    \[
    m(\pi_V\boxtimes \pi_W)\leqslant m((\sigma_{X^+}\rtimes \pi_W)\boxtimes \pi_V)
    \]
\end{enumerate}

For the reduction to basic cases, \cite{xue2020bessel}\cite{chen2021local} used the technique of Schwartz homology. This approach can also be applied to the Fourier-Jacobi cases (Section \ref{section: proof for (1)}). 

For the basic forms of the first inequality, the result follows from M\oe glin and Waldspurger's approach by using certain refined distribution analysis  based on Borel's lemma in \cite{sun2012multiplicity}, and a vanishing result (in Appendix \ref{section: vanishing app}).

For the basic forms of the second inequality, M\oe glin and Waldspurger's proof used Casselman's canonical pairing, which was proved to exist only for Harish-Chandra modules in Archimedean situations.  To deal with the representations of Casselman-Wallach type, one needs new ideas. This article uses an approach to prove the second inequality 
based on our generalization of the inequalities in   \cite{jiang2010uniqueness} and \cite{liu2013uniqueness} by using methods in \cite{jacquet1990exterior} and \cite{gourevitch2019analytic}. In particular, the $\dim_E X^+=1$ cases was proved in \cite{chen2021local}.

Fourier-Jacobi models have two types of basic cases: equal-rank cases and almost equal-rank cases. For each type, one can formulate  these three steps. To give the proof using parallel geometric structures as the Bessel cases, we will formulate two families of Fourier-Jacobi models (FJ 1) and (FJ 2) associated to a pair of skew-hermitian spaces $(V,W)$ over $E$ such that the basic cases associated to (FJ 1) situations are equal-rank cases, and those associated to (FJ 2) situations are almost equal-rank cases.

\begin{enumerate}
    \item (FJ 1) When $W\subsetneq V$, we set $(\RG,\RH,\xi)$ as before.
    \item (FJ 2) When $V\subset W$, we set  $\RG=\RG_V\times \RG_W^J$. Let $W=V\oplus^{\perp} (X\oplus Y)$, where $X,Y$ are totally isotropic spaces over $E$. We take 
\[
\RH=\Delta \RG_V\rtimes (\RN\rtimes X),
\] 
where $\RN$ is the unipotent part of a parabolic subgroup of $\RG_W$ stabilizing a full totally isotropic flag of $X$.  We take $\xi$ as a unitary character of $\RH(F)$ induced by a generic unitary character of $\RN\rtimes X(F)$.
\end{enumerate}

Except for the equal-rank cases, every (FJ 2) case corresponds to an equivalent (FJ 1) case, but we associate different basic cases to them. More precisely, 
\begin{itemize}
    \item when $V=W$, the associated  (FJ 2) case is precisely the equal-rank case in Conjecture~\ref{conjecture: main}; 
\item when $V\subsetneq W$, the associated (FJ 2) case is isomorphic to the (FJ 1) for 
 $\wt{\RG}=\wt{\RG}_W\times
 \wt{\RG}_V^J$ with associated $\wt{\RH},\wt{\xi}$  via the following isomorphism  (Lemma \ref{lem: isomorphism between FJ1 and FJ2})
\[ \Hom_{\wt{\RH}}(\wt{\pi}_V\boxtimes(\pi_W\otimes \omega_{W,\psi_F}),\wt{\xi})=\Hom_{\RH}(\pi_W\boxtimes(\wt{\pi}_V\otimes \omega_{V,\psi_F}),\xi),
\]   
where $\wt{\pi}_V\in \mathrm{Irr}(\wt{\RG}_V),\pi_W\in \mathrm{Irr}(\RG_W)$, respectively.
\end{itemize}

From now on, we fix the ground field  $F$ to be $\BR$ or $\BC$. To uniformly give the proof for Bessel cases (we abbreviate them as (B))  and Fourier-Jacobi cases (FJ 1)(FJ 2), we introduce the following notions.

\begin{defin}
For given pair $(V,W)$ in (B),  (FJ 1) case or (FJ 2) case, we respectively define the associated basic pair $(V',W')$, which is 
\begin{enumerate}
    \item a  (B) case with $\dim_{E} V'=\dim_{E} W'+1$ and 
    \[
    W'=V,\quad V'=W^+=W\oplus^{\perp} (X^+\oplus Y^+);
    \]
    \item a  (FJ 2) case with $\dim_{E}V'=\dim_{E}W'$ and 
    \[
    V'=V,\quad W'=W^+=W\oplus^{\perp}(X^+\oplus Y^+);
    \]
    \item a (FJ 1) case with $\dim_EV'=\dim_E W'+2$ and
    \[W'=W,\quad V'=V^+=V\oplus^{\perp}(X^+\oplus Y^+).\]
\end{enumerate}

We denote by $(\RG^+,\RH^+,\xi^+)$ the triple associated to an admissible pair $(V',W')$. In particular, $\xi^+$ is always the trivial character of $\RH^+(F)$.
\end{defin}

\begin{defin}
Following \cite{liu2013uniqueness}, for decomposition $W^+=(X^+\oplus Y^+)\oplus W$, Casselman-Wallach representations $\sigma_{X^+}$ of $\GL(X^+)(E)$, $\pi_W$ of the Jacobi group $\RG_W^J(F)$, we define $\sigma_{X^+}\rtimes \pi_W^J$ in Definition \ref{defin: parabolic induction}, using Schwartz induction with the property
\[
\sigma_{X^+}\rtimes (\pi_W\otimes \omega_{W,\psi_F})=(\sigma_{X^+}\rtimes \pi_W)\otimes \omega_{W^+,\psi_F}.
\]
\end{defin}

In the (B), (FJ 1), and (FJ 2) cases, we define a pseudo-parabolic subgroup $\RP^+$ of $\RG^+$ (\ref{def: P^+}) with pseudo-Levi decomposition $\RP^+=\RL^+\rtimes \RN^+$ and $\RL^+\cong \Res_{E/F}\GL(X^+)\times \RG$ such that  the  Schwartz induction from $\RP^+$ to $\RG^+$ of the inflation of the representation $\sigma_{X^+}\boxtimes \pi_V\boxtimes \pi_W$ of $\RL^+$ is equal to
\[
\RI(\sigma_{X^+},\pi_V,\pi_W)=\begin{cases}
(\sigma_{X^+}\rtimes \pi_W)\boxtimes\pi_V& \text{in (B) cases}, \\
\pi_V\boxtimes(\sigma_{X^+}\rtimes \pi_W)& \text{in (FJ 1) cases},\\
(\sigma_{X^+}\rtimes\pi_V)\boxtimes \pi_W &\text{ in  (FJ 2) cases}.
\end{cases}
\]

We can formulate the following theorem. 
    Here we say that $\underline{s}=(s_1,\cdots,s_{\dim_EX^+})\in \BC^{\dim_EX^+}$ is \textbf{in general positions} if $\underline{s}$ does not lie in the set of zeros of countably many fixed polynomial functions on $\BC^{\dim_E X^+}$.

\begin{theorem}\label{thm: main intro}
With the notations introduced as above, the following hold. 
\begin{enumerate}
    \item (Reduction to basic cases) For spherical principal series representations $|\cdot|^{s_1}\times \cdots \times |\cdot|^{s_{\dim_EX}}$ of $\GL(X^+)(E)$
    \[
    m(\pi_V\boxtimes \pi_W)=m(\RI(\sigma_{X^+}, \pi_W,\pi_V))
    \]
    for $(s_1,\cdots,s_{\dim_E X})\in \BC^{\dim_EX}$ in general positions.
    \item (Basic form of the first inequality)
    \begin{enumerate}
        \item When $\dim_E X^+=1$,  
        \[
    m(\pi_V\boxtimes \pi_W)\geqslant m(\RI(|\cdot|^s,\pi_W,\pi_V))
    \]
    when $\Re(s)\geqslant \mathrm{LI}(\pi_V)$.
        \item When $E=\BR$ and $\dim_E X=2$, 
        \[
    m(\RI'(|\cdot|^{s+\frac{m}{2}}\sgn^{m+1},\pi_V, \pi_W))\geqslant m(\RI(|\det|^sD_m, \pi_V, \pi_W))
        \]
    when $\Re(s)+\frac{m}{2}\geqslant \mathrm{LI}(\pi_V)$,    where $\RI'$ is defined in Definition \ref{def: Iprime}.
    \end{enumerate}
    \item (Basic form of the second inequality)
    For generic Casselman-Wallach representation  $\sigma_{X^+}$ of $\GL(X^+)(E)$, we have
    \[
    m(\pi_V\boxtimes \pi_W)\leqslant m(\RI(\sigma_{X^+}, \pi_V, \pi_W))
    \]
\end{enumerate}
\end{theorem}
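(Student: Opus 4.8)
\emph{Proof strategy.} The proof of Theorem~\ref{thm: main intro} treats its three parts separately; each is the Archimedean analogue of one of the three ingredients in M\oe glin--Waldspurger's non-Archimedean argument, and the point of the refinements is to avoid the three inputs that proof relied on: the tempered Gan--Gross--Prasad conjecture, \cite[Proposition~5.7]{waldspurger2012formule}, and Casselman's canonical pairing. Throughout, the Weil-representation twist in the Fourier--Jacobi cases is absorbed by the compatibility $\sigma_{X^+}\rtimes(\pi_W\otimes\omega_{W,\psi_F})=(\sigma_{X^+}\rtimes\pi_W)\otimes\omega_{W^+,\psi_F}$ of Definition~\ref{defin: parabolic induction}, so the three arguments run uniformly over (B), (FJ~1), (FJ~2). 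For part~(1), the plan, following \cite{xue2020bessel}\cite{chen2021local}, is to express both multiplicities as dimensions of degree-zero Schwartz homology. Since $\RI(\sigma_{X^+},\pi_W,\pi_V)$ is obtained by Schwartz parabolic induction from the pseudo-parabolic $\RP^+$ of $\RG^+$ (with Levi $\Res_{E/F}\GL(X^+)\times\RG$), the $\RH^+$-Schwartz homology is built, through the geometry of the double coset space $\RH^+\backslash\RG^+/\RP^+$ and Shapiro's lemma on each stratum, from the contributions of the individual strata. For $\underline s$ in general position every stratum except a distinguished one contributes $0$ in all degrees---the associated homology groups form meromorphic families in $\underline s$ vanishing off a countable union of hypersurfaces---while the distinguished stratum yields precisely the Schwartz homology of the original triple $(\RG,\RH,\xi)$ with coefficients in $\pi_V\boxtimes\pi_W$; reading this off in degree $0$ gives the equality. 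The new content is the uniform orbit computation and the identification of stabilizers for (B), (FJ~1), (FJ~2), carried out in Section~\ref{section: proof for (1)}.

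For part~(2), the plan is to run M\oe glin--Waldspurger's integral construction: from a nonzero element of $\Hom_{\RH^+}(\RI(|\cdot|^s,\pi_W,\pi_V),\xi^+)$ one restricts the induced functions to the open $\RP^+$-orbit and integrates against it over the relevant unipotent quotient, obtaining, linearly and injectively, an element of $\Hom_{\RH}(\pi_V\boxtimes\pi_W,\xi)$. Absolute convergence of that integral is what forces $\Re(s)\geq\mathrm{LI}(\pi_V)$, with $\mathrm{LI}(\pi_V)$ a bound on the leading exponents of the matrix coefficients of $\pi_V$. The sole departure from the original is the boundary analysis: instead of appealing to the tempered GGP conjecture and \cite[Proposition~5.7]{waldspurger2012formule} to annihilate the contributions of the non-open orbits, one controls the transverse jets of the relevant tempered distributions by Borel's lemma in the form of \cite{sun2012multiplicity}, together with the vanishing statement of Appendix~\ref{section: vanishing app}. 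When $E=\BR$ and $\dim_E X^+=2$ the inducing datum is the discrete series $D_m$ rather than a character, so the comparison is phrased through $\RI'$ and $|\cdot|^{s+\frac m2}\sgn^{m+1}$; realizing $|\det|^s D_m$ inside a rank-one principal series of $\GL_2(\BR)$ reduces this case to the same construction for the $\GL_2(\BR)$-parabolic.

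For part~(3)---where M\oe glin--Waldspurger used Casselman's canonical pairing, available only for Harish-Chandra modules---the plan is a new construction. Given a nonzero $\ell\in\Hom_{\RH}(\pi_V\boxtimes\pi_W,\xi)$ and a Whittaker functional $\lambda$ of the generic representation $\sigma_{X^+}$, one defines a functional $L$ on $\RI(\sigma_{X^+},\pi_V,\pi_W)$ by integrating $\ell$ applied to the $\lambda$-coefficient of the restriction of the induced function over a suitable unipotent quotient, and then checks that $L\in\Hom_{\RH^+}(\RI(\sigma_{X^+},\pi_V,\pi_W),\xi^+)$ and that $\ell\mapsto L$ is injective. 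Continuity of $L$ in the Casselman--Wallach topology---equivalently, absolute convergence of the defining integral---comes from the asymptotic estimates for smooth matrix coefficients and the holomorphic-continuation and Schwartz-function techniques of \cite{jacquet1990exterior}\cite{gourevitch2019analytic}; this is the step that generalizes the uniqueness inequalities of \cite{jiang2010uniqueness}\cite{liu2013uniqueness} into an explicit lower bound for the multiplicity. Injectivity follows from a leading-term analysis: taking the induced function supported near a suitable base point collapses the integral to $\ell$ evaluated against a nonzero value of $\lambda$, so $L=0$ forces $\ell=0$. The case $\dim_E X^+=1$, already in \cite{chen2021local}, is the model for the general argument.

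I expect part~(3) to be the main obstacle: replacing Casselman's canonical pairing means simultaneously establishing continuity (absolute convergence) and nonvanishing of $L$, uniformly in the inducing data, which is exactly where the generalization of \cite{jiang2010uniqueness}\cite{liu2013uniqueness} via \cite{jacquet1990exterior}\cite{gourevitch2019analytic} does the real work, and where the genuine/metaplectic covers and the Heisenberg factor in the Fourier--Jacobi cases require care. A secondary difficulty is the uniform orbit bookkeeping and the vanishing of the non-distinguished strata for generic $\underline s$ in part~(1), and the boundary vanishing of Appendix~\ref{section: vanishing app} underpinning part~(2).
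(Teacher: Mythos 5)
Your overall architecture matches the paper: Schwartz homology for~(1), open-orbit/boundary analysis with Borel's lemma for~(2), an explicit integral with meromorphic continuation for~(3), and the correct set of tools (\cite{xue2020bessel}, \cite{chen2021local}, \cite{sun2012multiplicity}, \cite{jacquet1990exterior}, \cite{gourevitch2019analytic}, and the appendix vanishing). Parts~(1) and~(3) of your proposal align closely with the paper's proofs in Sections~\ref{section: proof for (1)} and~\ref{section: integral}.

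However, your description of part~(2) conflates it with the part-(3) integral. You write that ``absolute convergence of that integral is what forces $\Re(s)\geqslant\mathrm{LI}(\pi_V)$,'' but in the paper's proof of part~(2) (Section~\ref{section: main 2}) there is no integral whose convergence is at stake. The argument is entirely at the level of the $\Hom$-functor: by left exactness one reduces the inequality to showing that $\Hom_{\RH^+}(\Gamma^{\CS}_{\CZ}(\CX,\CE_s),1_{\RH^+})=0$, and then Borel's lemma (Lemma~\ref{lem: borel}) stratifies this closed-orbit contribution into graded pieces of the form $(|\det|^{s+1/2}\sigma_{X^+}\otimes\Sym^k\rho)\rtimes\pi_W^{\gamma'}|_{\RG_{V_0}}\boxtimes\pi_V$. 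The role of $\Re(s)\geqslant\mathrm{LI}(\pi_V)$ is precisely to put these pieces into the range where the representation-theoretic vanishing of Lemma~\ref{lem: three properties}(3)(4) (backed by Theorem~\ref{thm: vanishing in appendix}) applies. The same bound is needed again, for $\dim_E X^+=2$, to kill the nontrivial graded pieces $|\det|^{k+s+\frac{m}{2}}\sgn(\det)^k$ appearing in the filtration of the open-orbit sections in~(\ref{equ: complete de open}). So the hypothesis is not a convergence condition; it is a hypothesis under which the Langlands-quotient vanishing theorem in the appendix applies. The integral and its convergence/meromorphy appear only in part~(3), exactly where you place them.

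A smaller point: your ``integrates \dots over the relevant unipotent quotient'' in part~(2) is also off --- the quotient over which the open-orbit degree of freedom lives is the mirabolic/torus quotient $\RN_{0,X^+}(E)\backslash\RR_{X',X^+}(E)$ of Corollary~\ref{cor: mirabolic structure} (a point for $\dim_E X^+=1$, and $\BR^\times$ for $\dim_E X^+=2$), not a unipotent group. Otherwise the proposal is on the right track and identifies where the real work lies (part~(3)'s replacement of Casselman's canonical pairing).
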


 The proof for Point (2) needs the idea of M\oe glin and Waldspurger that uses the properties of the $\Hom$-functor based on the analytic and geometric structure of the double cosets  $\RP^+(F)\bs \RG^+(F)/\RH^+(F)$. Over Archimedean local fields, this article computes the geometric structure in Section \ref{section: double coset} based on \cite{ginzburg2011descent} and computes the analytic structure in Section \ref{section: main 2} using results in \cite{chen2020schwartz}\cite{xue2020bessel}
\cite{chen2021local}. The properties of the $\Hom$-functor over Archimedean local fields are studied in Lemma \ref{lem: three properties} with a vanishing result proved in the appendix.

 The proof for Point (1) uses the method of Schwartz analysis in \cite{xue2020bessel}. The proof is parallel to the proof for Point (1) using properties of Schwartz homology (Lemma \ref{lem: two property Schwartz})
 based on similar geometric and analytic structure as in the proof for Point (2).

The Point (3) can be proved in the most general context. We only need to require $\sigma_{X^+}$ to be a generic representation. Special cases of Point (3) have been proved in previous works. 
\cite{jiang2010uniqueness}\cite{liu2013uniqueness} 
proved the second inequality for $\sigma_{X^+}=|\det|^s\sigma$ for a given principal series representation $\sigma$ when $\Re(s)$ is large enough. (\cite[Proposition 3.4]{jiang2010uniqueness} and \cite[Proposition 4.5]{liu2013uniqueness}).

This article refines their integral approach in Section \ref{section: integral}
 based on  the structure of the double cosets  \[\RP^+(F)\bs \RG^+(F)/\RH^+(F).\] Following \cite{jiang2010uniqueness} and \cite{liu2013uniqueness}, for a fixed nonzero Whittaker-functional $\lambda$ of $\sigma_{X^+}$, we construct an element
\[
\RJ_{s,\mu,\lambda}\in\Hom_{\RH^+}(\RI(|\det|^s\sigma_{X^+},\pi_V,\pi_W),1_{\RH^+})
\]
when $\Re(s)$ is large enough (Theorem \ref{thm: main}(1) and Theorem \ref{thm: main 2}(1)). 

Then we take  elements in 
$\Hom_{\RH^+}(\RI(|\det|^s\sigma_{X^+},\pi_V,\pi_W),1_{\RH^+})$   as $(V_{\sigma_{X^+}}\wh{\otimes}V_{\pi_V}\wh{\otimes}V_{\pi_W})^*$-valued $(\RP^+\times \RH^+,(|\det|^{s+s_0}\sigma_{X^+}\boxtimes\pi_V\boxtimes\pi_W)\times 1)$-equivariant distributions for some real number $s_0$ (Lemma \ref{lem: s_0}). For given $\mu\in \Hom_{\RH}(\pi_V\boxtimes \pi_W,\xi)$, when $\Re(s)$ is large enough, we take  $\RI_{s,\mu,\lambda}$ as equivariant distributions, we prove it can be  extended (Theorem \ref{thm: main}(2) and Theorem \ref{thm: main 2}(2)) to a  meromorphic family of equivariant distributions 
\[
\RJ_{s,\mu,\lambda}\in\Hom_{\RH^+}(
\RI(|\det|^s\sigma_{X^+},\pi_V,\pi_W),1_{\RH^+}),\quad s\in \BC.
\]

By taking its principal term at $s=0$, that is, the leading term of the Laurant coefficient at $s=0$, we construct a nonzero element in $\Hom_{\RH^+}(\RI(|\det|^s\sigma_{X^+},\pi_V,\pi_W),1_{H^+})
$. Therefore, when $m(\pi_V\boxtimes \pi_W)\neq 0$, we have $m(\RI(\sigma_{X^+},\pi_V,\pi_W))\neq 0$. Hence, we complete the proof for the second inequality with the multiplicity-one theorem that $m(\pi_V\boxtimes \pi_W)\leqslant 1$.

The key tools for proving the meromorphic continuation follows from \cite{bernstein1972analytic} and \cite{jacquet1990exterior}. More precisely, we apply the method in \cite{jacquet1990exterior} to prove the meromorphic continuation for the construction of a nonzero element $\mu^+$ in 
\[\Hom_{\RP^+\cap \RH^+}(\delta_{\RP^+\cap \RH^+}\otimes (\sigma_{X^+}\boxtimes\pi_V\boxtimes \pi_W),1_{\RP^+\cap \RH^+}).\] 
Then from $\mu^+$, we use the method in \cite{gourevitch2019analytic} to prove the meromorphic continuation for the construction of a nonzero element in 
 \[
 \Hom_{\RH^+}(\RI(\sigma_{X^+},\pi_V,\pi_W),1_{\RH^+}).
 \] 
 The proof for the meromorphic continuation in this construction is based on the results in \cite{bernstein1972analytic}.

Finally, using  Theorem \ref{thm: main intro}, we modify the mathematical induction in \cite{moeglin2012conjecture} and reduce Conjecture \ref{conjecture: main} to the tempered basic cases. 

This multiplicity formula in Conjecture \ref{conjecture: main} gives a uniform proof for the local Gan-Gross-Prasad conjecture from the tempered cases (Section \ref{section: GGP}). We can further reduce to tempered basic (B) case and (FJ 1) case with Theorem \ref{thm: main intro}(1). Besides, we applied this multiplicity formula to study the local descents and arithmetic wavefront set in my joint work with Jiang, D. Liu, and L. Zhang.

It is also worth mentioning that the multiplicity inequalities can be generalized to the Bessel models and Fourier-Jacobi models for orthogonal groups and general linear groups as in \cite{jiang2010uniqueness}\cite{liu2013uniqueness}. We include the proof for the orthogonal group cases in this article. To incorporate the general linear cases, we can take $V, W$ in the article as finite-rank free $\BK$-modules, where $\BK=E$ or $(E,E)$ equipped with an $\epsilon$-hermitian form. The details will be elaborated on in the author's thesis.

This article is organized as follows: 

In Section \ref{section: Bessel Fourier-Jacobi}, we recall the definitions of Bessel models and Fourier-Jacobi models over Archimedean local fields. In Section \ref{section: subgroup}, we recall some representation theory used in this article.  In Section \ref{section: integral}, we introduce the main geometric properties used in the article. In particular, we summarized the structure of double cosets $\RP^+(F)\bs \RG^+(F)/\RH^+(F)$ in the literature.

In Section \ref{section: integral}, we refine the integrals used in \cite{jiang2010uniqueness}\cite{liu2013uniqueness} to construct a family of integrals. Besides the absolute convergence and nonvanishing, we also prove the meromorphic continuation for the refined integral to obtain Theorem \ref{thm: main intro}(3).

In Section \ref{section: 5.3}, we follow the idea of M\oe glin and Waldspurger in \cite{moeglin2012conjecture} and prove  Theorem \ref{thm: main intro}(1)(2).
Then we apply a similar mathematical induction as in \cite{moeglin2012conjecture} to prove Theorem \ref{thm: conj 3} from Theorem \ref{thm: main intro}.

In Section \ref{section: GGP}, we give a uniform proof for the local Gan-Gross-Prasad conjecture from the basic cases for tempered $L$-parameters using Theorem \ref{thm: conj 3} and Theorem \ref{thm: main intro}(1).

\textbf{Acknowledgement. } I would like to thank Dihua Jiang and Lei Zhang for the discussions that encouraged me to think about \cite[Conjecture 3]{chen2021local} when I worked on the local Gross-Prasad conjecture and continuous encouragement for writing the results in a more general setting. This paper's work is partly supported by the Research Assistantship from the NSF grant DMS-19018024 and DMS-220089. I thank Paul Garrett for the discussions on the meromorphic continuation. I thank Rui Chen and Jialiang Zou for helping me understand the representation theory of metaplectic groups and Jacobi groups. I thank Dmitry Gourevitch, Binyong Sun, and Hang Xue for their helpful emails.

\section{Preliminaries}\label{section: settings}
\subsection{The Bessel and Fourier-Jacobi models}\label{section: Bessel Fourier-Jacobi}
In this section, we introduce the Bessel models and Fourier-Jacobi models following \cite{sun2012multiplicity}\cite{jiang2010uniqueness}\cite{liu2013uniqueness}.

Let $F=\BR$ or $\BC$.  We consider an algebraic field extension $E$ of $F$ and $\tau\in \Gal(E/F)$.

\begin{defin}\label{def: epsilon-hermitian}
An $\epsilon$-\textbf{hermitian space} $V$  is a  vector space  over $E$ with a nondegenerate $F$-bilinear map
\[
\langle \cdot , \cdot \rangle_V:V\times V\to E
\]
satisfying 
\[
\langle u,v\rangle_{V}=\epsilon\langle v,u\rangle^{\tau}_{V}\quad \langle au,v\rangle_V=a\langle u,v\rangle_{V},\quad a\in E,\quad u,v\in V,
\]
and $\langle \cdot,\cdot\rangle_{V}$ is called an $\epsilon$-\textbf{hermitian form} on $V$.
\end{defin}
\begin{defin}\label{def: GV} For a given  $\epsilon$-hermitian space $V$ over $E$, we denote by $\RU(V)$ the isometry group over $F$ preserving the $\epsilon$-hermitian form $\langle \cdot,\cdot\rangle_V$. We uniformly define the algebraic group $\RG_V$ over $F$ to be one of the following
\begin{enumerate}
    \item  $\RU(V)$;
    \item (Special cases) $\SU(V)$, the subgroup of $\RU(V)$ with determinant over $F$ equal to $1$ for $\epsilon=1$ and $E=F$, that is, 
    \[
    \SU(V)=\{g\in \RU(V): \mathrm{det}_{F}(g)=1\};
    \]
   \item (Metaplectic cases) $\wt{\RU}(V)$, which contains only one situation 
 $\Mp(V)$ ($\epsilon=-1$, $F=\BR$),
   the unique nonsplit central extension of $\Sp(V)$. 
\end{enumerate}
\end{defin}

 In the Bessel cases, the group $\RG_V$  are given in the following tableau ($\epsilon=+1$): 
\begin{center}
\begin{tabular}{ |c|c|c| c|c|} 
 \hline
 group& type &$F$& $E$&$\tau$\\
 \hline
 $\RU(p,q)$ &$\RU(V)$&$\BR$  &$\BC$  &$\tau_{\mathrm{conj}}$\\ 
 $\RO(p,q)$&$\RU(V)$ &$\BR$&$\BR$&$1_{\BR}$\\ $\SO(p,q)$&$\SU(V)$ &$\BR$&$\BR$&$1_{\BR}$\\ $\RO(2n,\BC)$&$\RU(V)$ &$\BC$&$\BC$&$1_{\BC}$\\$\SO(2n,\BC)$&$\SU(V)$ &$\BC$&$\BC$&$1_{\BC}$\\
 \hline
\end{tabular}
\end{center}

In Fourier-Jacobi cases, $\RG_V$  are given in the following tableau ($\epsilon=-1$):
\begin{center}
\begin{tabular}{ |c|c|c| c|c|} 
 \hline
 group& type &$F$& $E$&$\tau$\\
 \hline
 $\RU(p,q)$ &$\RU(V)$&$\BR$  &$\BC$  &$\tau_{\mathrm{conj}}$\\ $\Sp(2n,\BR)$&$\RU(V)$ &$\BR$&$\BR$ & $1_{\BR}$\\ $\Sp(2n,\BC)$&$\RU(V)$ &$\BC$&$\BC$&$1_{\BC}$\\ 
$\Mp(2n,\BR)$&$\wt{\RU}(V)$ &$\BR$&$\BR$&$1_{\BR}$\\ 
 \hline
\end{tabular}
\end{center}

Groups like $\SU(p,q)$ are not considered because the multiplicity-one theorem does not hold in these situations.

\subsubsection{Admissible pairs of Bessel type} 

Let  $(V,W)$ be a pair of nondegenerate $(+1)$-hermitian spaces over $E$. The pair $(V,W)$ is called \textbf{admissible of Bessel type} if and only if there exists an anisotropic line $D=E.z_0$  and an even-dimensional split nondegenerate quadratic space $Z$ such that
\[
V= W\oplus^{\perp} D\oplus^{\perp}Z.
\]
In particular,  for $r=\frac{\dim_{E}Z}{2}$, there exists an $E$-basis $\{z_{i}\}_{i=\pm 1}^{\pm r}$ of $Z$ such that
\[
\langle z_i,z_j\rangle_V=\delta_{i,-j}, \quad\forall  i,j\in \{\pm 1,\dots,\pm r\}
\]

\subsubsection{Admissible pair of Fourier-Jacobi types}
Let $(V, W)$ be a pair of nondegenerate $(-1)$-hermitian space over $E$. The pair $(V, W)$ is called \textbf{admissible of Fourier-Jacobi type} if there exists a $(-1)$-hermitian space $Z$ over $E$ such that 
\[
V=W\oplus^{\perp} Z\text{ or } W=V\oplus^{\perp} Z.
\] 

In particular, for $r=\frac{\dim_{E} Z}{2}$, there exists an $E$-basis $\{z_{i}\}_{i=\pm 1}^{\pm r}$ of $Z$  such that 
\[
\langle z_i,z_j\rangle_V=\sgn(i)\delta_{i,-j}, \quad\forall  i,j\in \{\pm 1,\dots,\pm r\}.
\]

 We denote $\CH(W)$ the \textbf{Heisenberg group} $\Res_{E/F}W\oplus \BG_a$ over $F$ with the multiplication rules given by
\begin{equation}\label{equ: Heisenberg}
(w,z)\cdot (w',z')=(z'w+zw',\frac{1}{2}\langle w,w'\rangle_W+zz'),\quad w,w'\in \Res_{
E/F}(W),\quad z,z'\in \BG_a.
\end{equation}

We set the \textbf{Jacobi group} $\RG_W^J=\RG_W\rtimes \CH(W)$, where  $\RG_W$ is of the same type as $\RG_V$ in Definition \ref{def: GV} and acts by conjugation on $\CH(W)$ by
\[
g(w,z)g^{-1}=(g.w,z)\quad g\in \RG_W,(w,z)\in \CH(W)
\]

\subsubsection{Bessel triple and Fourier-Jacobi triple}
For an admissible pair  $(V,W)$ (of Bessel or Fourier-Jacobi type), we define the triple $(\RG,\RH,\xi)$, where $\RG$ is a  linear algebraic group over $F$, $\RH$ is a subgroup of $\RG$  and $\xi$ is a unitary character of $\RH(F)$. 

There are three cases that we treat differently: the Bessel cases, which we abbreviate as \textbf{(B)}, and there are examples in \cite{jiang2010uniqueness}; the Fourier-Jacobi and $W\subsetneq V$ cases, which we abbreviate as \textbf{(FJ 1)}, and there are examples in \cite{liu2013uniqueness}; the Fourier-Jacobi and $V\subset W$ cases, which we abbreviate as \textbf{(FJ 2)}.

We define a parabolic group whose unipotent part $\RN$ is critical for defining $\RH$ and $\xi$.
\begin{itemize}
    \item In (B)  cases, we let $\RP_{V,r}=\RL_{V,r}\cdot \RN_{V,r}$ be the parabolic subgroup of $\RG_V$ stabilizing the  totally isotropic flag over
\[
X_1\subset X_2\subset \cdots \subset X_r
\]
where $X_k=\Span_E(z_1,\cdots,z_k)$, the linear span of $z_1,\cdots,z_k$ over $E$;
\item In (FJ 1) cases, we let $\RP_{V,r-1}=\RL_{V,r-1}\cdot \RN_{V,r-1}$ be the parabolic subgroup of $\RG_V$ stabilizing the totally isotropic flag
\[
X_1\subset X_2\subset \cdots \subset X_{r-1},
\]
and $H_1=\BC z_r\oplus \BC z_{-r}$;
\item In (FJ 2) cases, we let $\RP_{W,r}=\RL_{W,r}\cdot \RN_{W,r-1}$ be the parabolic subgroup of $\RG_W$ stabilizing \[X_1\subset X_2\subset \cdots \subset X_{r-1}\]
\end{itemize}

We set $\RN=\RN_{V,r},\RN_{V,r-1}$ in (B)(FJ 1) cases, respectively, and let $\RN^J=\RN_{W,r-1}\rtimes X_{r-1}$.

Then we  define $\RG$ and $\RH_{\mathrm{r}}$  as following
\begin{itemize}
    \item In (B) cases, we set $\RG=\RG_V\times \RG_W$ and we identify $\RN$ as a subgroup of $\RG$ via $
    \RG_V\hookrightarrow 1\times \RG_V$ and $\RH_r=\Delta \RG_W$, the image of the diagonal embedding $\RG_W\hookrightarrow \RG_W\times \RG_V$;
\item In (FJ 1) cases, we set $\RG=\RG_V\times \RG_W^J$  and we identify $\RN$ as a subgroup of $\RG$ via $\RG_V\hookrightarrow 1\times \RG_V$. We identify $\RG_W^J$ as the subgroup of $\RG_{W\oplus H_1}$ leaving $\BC z_k$ invariant and set  $\RH_{r}=\Delta \RG_W^J$, the image of the diagonal embedding $\RG_W^J\hookrightarrow  \RG_V\times \RG_W^J$; 
\item In (FJ 2) cases, we set $\RG=\RG_V\times \RG_W^J$  and we identify $\RN^J$ as a subgroup of $\RG$ via $\RG_W^J\hookrightarrow 1\times \RG_W^J$ and  set $\RH_{r}=\Delta \RG_V$, the image of the diagonal embedding $\RG_V\hookrightarrow \RG_V\times (\RG_W\rtimes 1)\subset  \RG_V\times (\RG_W\rtimes \CH(W))$.
\end{itemize}

In these cases, $\RH_r=\Delta \RG_W$, $\Delta \RG_W^J$, $\Delta \RG_V$ acts on $\RN,\RN,\RN^J$ by adjoint action, respectively, and we set
\[
\RH=
\begin{cases}
\Delta \RG_W\rtimes \RN, \text{ in (B) cases,}\\
\Delta \RG_W^J\rtimes \RN, \text{ in  (FJ\ 1) cases,}\\
\Delta \RG_V\rtimes \RN^J, \text{ in (FJ\ 2) cases.} 
\end{cases}
\]

 Define  morphisms $\lambda_\RN:\RN\to \Res_{E/F}\BG_a$, $\lambda_{\RN^J}:\RN\to \Res_{E/F}\BG_a$ via
\[
\lambda_\RN(n)=\begin{cases}
    \sum_{i=0}^{r-1}\langle z_{-i},nz_{i+1}\rangle,  \text{ in (B) cases,}\\
    \sum_{i=1}^{r-1}\langle z_{-i},nz_{i+1}\rangle  \text{ in (FJ 1) cases,} 
    \end{cases}\quad n\in \RN; 
\]
\[
\lambda_{\RN^J}(n\rtimes (z_W\oplus c))=    \sum_{i=1}^{r-1}\langle z_{-i},nz_{i+1}\rangle  +\langle z_{-r},z_W\rangle, \quad n\in \RN,\quad z_W\oplus c\in \CH_W=\Res_{E/F}W\oplus \BG_a.
\]

The characters $\lambda_\RN,\lambda_{\RN},\lambda_{\RN^J}$ are $\Delta \RG_W, \Delta \RG_W^J,\Delta \RG_V$-conjugation invariant and hence $\lambda_N,\lambda_N,\lambda_{\RN^J}$ admit unique extensions to $\RH$ trivial on $\RH_{r}$, which is  denoted by $\lambda_{\RH}$. Let $\lambda_{\RH,F}: \RH(F)\to \Res_{E/F}\BG_a(F)=E$ be the induced morphism on $F$-rational points. We define an unitary character of $\RH(F)$ by
\[
\xi(h)=\psi_E(\lambda_{\RH,F}(h)),\quad h\in \RH(F),
\]
where $\psi_E$ is a fixed additive (unitary) character 
\begin{equation}\label{equ: psiE}
\psi_E(x)=\psi_F(\frac{1}{|\Gal(E/F)|}\sum_{\tau\in \Gal(E/F)}x^{\tau}),\quad x\in E.
\end{equation}
\begin{defin}
\begin{enumerate}
    \item 
 In (B) cases, the triple $(\RG, \RH,\xi)$ is called the \textbf{Bessel triple} associated to the admissible pair $(V, W)$ of Bessel type;
\item In (FJ 1)(FJ 2) cases, the triple $(\RG, \RH,\xi)$ is called the \textbf{Fourier-Jacobi triple} associated to the admissible pair $(V, W)$ of  Fourier-Jacobi type.
\end{enumerate}
\end{defin}
\begin{defin} (Basic admissible pair and basic triples)
\begin{enumerate}
    \item In (B) cases, we call an admissible pair $(V, W)$ or the associated Bessel triple $(\RG,\RH,\xi)$ is \textbf{codimension-one} if $\dim_{E} V=\dim_{E} W+1$;
    \item In (FJ 1) cases, we call an admissible pair  $(V, W)$ or the associated Fourier-Jacobi triple $(\RG,\RH,\xi)$ is \textbf{equal-rank} if $\dim_{E} V=\dim_{E} W$;
    \item In  (FJ 2) cases, we call an admissible pair $(V, W)$ or the associated Fourier-Jacobi triple $(\RG,\RH,\xi)$ \textbf{almost equal-rank} if $\dim_{E} V=\dim_{E} W+2$.
\end{enumerate}
In all these cases, we say the admissible pair $(V, W)$ and the triple $(\RG,\RH,\xi)$ are \textbf{basic}.
\end{defin}

\subsection{Representations and multiplicities}\label{section: subgroup}
This section specifies the category of representations we work on and defines some functors.

\subsubsection{Fr\'echet representations of moderate growth}  In the article, we work on almost linear Nash groups  and Nash manifolds (\cite{aizenbud2008schwartz}\cite{sun2015almost}). In particular, 
\begin{enumerate}
    \item When $\RG$ is a linear algebraic group over an Archimedean local field $F$,  $\RG(F)$ has a structure as an almost linear Nash group whose topology is compatible with the topology on $\RG(F)$ as a Lie group. All Nash groups treated in this article are in this form;
    \item For a closed algebraic subgroup $\RH$ of $\RG$, the homogeneous space  $\RG(F)/\RH(F)$ is a Nash manifold with Nash $\RG(F)$-action. All Nash manifolds in this article are in this form or one of its connected components.
\end{enumerate}

\begin{defin}
An almost linear Nash group $G$ is called  a \textbf{metaplectic cover} of another almost linear Nash group $\underline{G}$, if there is an exact sequence
\[
0\to \{\pm 1_G\}\to G\to \underline{G}\to 1,
\]
where $\{\pm 1_G\}$ is an order-two subgroup of the center of $G$.
\end{defin}
\begin{ex}
   \begin{enumerate}
        \item Let $V$ be a symplectic space over $\BR$, then $\Mp(V)(\BR)$ is a metaplectic cover over $\Sp(V)(\BR)$;
        \item Let $V$ be a vector space over $\BR$, we denote by $\wt{\GL}(V)$ the algebraic group on $\GL(V)\times \{\pm 1\}$ with  the multiplication rule given by
      \[
        (g_1,\delta_1)\cdot (g_2,\delta_2)=(g_1g_2,\delta_1\delta_2(\det(g_1),\det(g_2))_{\BR}),\quad g_1,g_2\in \GL(V),\quad \delta_1,\delta_2\in \{\pm 1\},
       \]
where $(\cdot,\cdot)_{\BR}$ is the Hilbert symbol. Then $\wt{\GL}(V)(\BR)$ is a metaplectic cover of $\GL(V)(\BR)$.
    \end{enumerate} 
\end{ex}
For an almost linear Nash group $G$, we work in the category of smooth Fr\'echet  $G$-representations of moderate growth and denote it by $\Pi_{\FMG}(G)$. When $G$ is a metaplectic cover of $\udl{G}$, a representation $\pi\in \Pi_{\FMG}(G)$ is called \textbf{genuine} if it has nontrivial restriction of the central character to $\{\pm 1_G\}$.  Non-genuine representations in $\Pi_{\FMG}(G)$ always factor through
 representations in $\Pi_{\FMG}(\underline{G})$. 
 When $G$ is a  metaplectic cover, we denote by $\Pi_{\FMG}^{\gen}(G)$ the category  of genuine Fr\'echet  $G$-representations of moderate growth.

\begin{defin}\label{defin: boxtimes}
\begin{enumerate}
    \item For almost linear Nash groups $G_1$ and $G_2$, and $\pi_1\in\Pi_{\FMG}(G_1)$ and $\pi_2\in \Pi_{\FMG}(G_2)$, we denote by $\pi_1\boxtimes \pi_2$ the  $G_1\times G_2$-representation  on the projective tensor product $V_{\pi_1}\wh{\otimes} V_{\pi_2}$ of the underlying Fr\'echet spaces $V_{\pi_1},V_{\pi_2}$ of $\pi_1$ and $\pi_2$. 
    \item When both $G_1$ and $G_2$ are metaplectic covers, we define a metaplectic cover
    \[G_1\times_{\pm 1}G_2=G_1\times G_2/\{(-1_{G_1})\times 1_{G_2}-1_{G_1}\times (-1_{G_2}) \}\]
      of $\udl{G}_1\times\udl{G}_2$. Let $\pi_1\in \Pi_{\FMG}^{\gen}(G_1)$, $\pi_2\in \Pi_{\FMG}^{\gen}(G_2)$, the $G_1\times G_2$-representation $\pi_1\boxtimes \pi_2$ factors through a representation in $\Pi_{\FMG}^{\gen}(G_1\times_{\pm 1} G_2)$, we denote it by $\pi_1\boxtimes_{\pm 1}\pi_2$.
    \item For an almost linear Nash group $G$ and $\pi_1,\pi_2\in \Pi_{\FMG}(G)$, we denote by $\pi_1\otimes \pi_2$ the $G$-representation obtain from the $G\times G$-representation $\pi_1\boxtimes \pi_2$ via the diagonal embedding $G\hookrightarrow G\times G$. In particular, when $G$ is a metaplectic cover of $\udl{G}$ and $\pi_1,\pi_2\in\Pi_{\FMG}^{\gen}(G)$, the $G$-representation $\pi_1\otimes \pi_2$ descents to a $\underline{G}$-representation, we  denote it by $\pi_1\otimes_{\pm 1} \pi_2$.
\end{enumerate}
\end{defin}

For an algebraic group $\RG$ over $F$, we denote by $\RG(F)$   the almost linear Nash group on the $F$-points of $\RG$. We use the notion $\Pi_{\FMG}(\RG)$ instead of $\Pi_{\FMG}(\RG(F))$ for simplification.

\subsubsection{Classification theorem in  $\Pi_{\FMG}^{\mathrm{genu}}(\wt{\GL}(V))$}

The following lemma classifies genuine representations of $\wt{\GL}(V)$.
\begin{lem}\label{lem: classification of representations}
   Let $F=\BR$. There is a categorical equivalence
 \[
        \begin{aligned}
            \Pi_{\FMG}(\GL(V))&\to \Pi_{\FMG}^{\gen}(\wt{\GL}(V))\\
            \sigma&\mapsto\tilde{\sigma} =\sigma\otimes (\chi_{\psi_F}\circ \wt{\det})
        \end{aligned}
\]
    where $\chi_{\psi}$ is a fixed character of $\wt{\GL}_1(\BR)$ given as \cite[\S 1]{adams1998genuine} and $\wt{\det}:\wt{\GL}(V)\to \wt{\GL}_1$ is given by
    \[
    \wt{\det}(g,\delta)=(\det(g),\delta).
    \]
Its inverse is\[
        \begin{aligned}
            \Pi_{\FMG}^{\gen}(\wt{\GL}(V))&\to \Pi_{\FMG}({\GL}(V))\\
            \tilde{\sigma}&\mapsto\sigma=\wt{\sigma}\otimes_{\pm 1} (\chi_{\psi_F}^{-1}\circ\wt{\det}).
        \end{aligned}\]
\end{lem}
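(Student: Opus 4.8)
The plan is to deduce the equivalence from two facts: that the determinant lifts to a homomorphism of metaplectic covers $\wt{\det}\colon \wt{\GL}(V)(\BR)\to \wt{\GL}_1(\BR)$, and that twisting by a single fixed genuine character interchanges the non-genuine and genuine parts of the representation theory of any metaplectic cover. So essentially I want to reduce the assertion to the statement that this particular cover admits a genuine character, which makes the genuine block of $\Pi_{\FMG}$ a torsor over the non-genuine one.

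First I would verify that $\wt{\det}(g,\delta)=(\det(g),\delta)$ is a group homomorphism. This is forced by the multiplication rules recalled above: the $\{\pm 1\}$-valued cocycle $(g_1,g_2)\mapsto(\det(g_1),\det(g_2))_{\BR}$ defining $\wt{\GL}(V)(\BR)$ is exactly the pullback along $\det$ of the Hilbert-symbol cocycle $(a,b)\mapsto(a,b)_{\BR}$ defining $\wt{\GL}_1(\BR)$, so $\wt{\det}$ respects the two twisted products; moreover it restricts to an isomorphism of the central order-two subgroups $\{(1,\pm 1)\}$. Hence, with $\chi_{\psi_F}$ the genuine character of $\wt{\GL}_1(\BR)$ of \cite{adams1998genuine}, the composite $\chi_{\psi_F}\circ\wt{\det}$ is a genuine character of $\wt{\GL}(V)(\BR)$; it and its inverse are smooth of moderate growth, as $\chi_{\psi_F}$ is.

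Next I would check that the two functors are well defined with the stated targets. For $\sigma\in\Pi_{\FMG}(\GL(V))$, one inflates along $\wt{\GL}(V)(\BR)\twoheadrightarrow\GL(V)(\BR)$ and twists by $\chi_{\psi_F}\circ\wt{\det}$: the underlying Fr\'echet space, hence smoothness and moderate growth, is unchanged, and $\{(1,\pm 1)\}$ now acts through the nontrivial character, so $\tilde\sigma\in\Pi_{\FMG}^{\gen}(\wt{\GL}(V))$. Conversely, for $\tilde\sigma\in\Pi_{\FMG}^{\gen}(\wt{\GL}(V))$ the twist $\tilde\sigma\otimes(\chi_{\psi_F}^{-1}\circ\wt{\det})$ has $\{(1,\pm 1)\}$ acting trivially, so by the discussion preceding Definition~\ref{defin: boxtimes} it descends uniquely to a representation of $\GL(V)(\BR)$ lying in $\Pi_{\FMG}(\GL(V))$; this descent is precisely the operation denoted $\otimes_{\pm 1}$ in Definition~\ref{defin: boxtimes}(3).

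Finally I would argue that these two functors are quasi-inverse equivalences. Each acts as the identity on underlying Fr\'echet spaces and on intertwining operators (a $\GL(V)(\BR)$- or $\wt{\GL}(V)(\BR)$-morphism is unchanged under twisting by a fixed character), so each is fully faithful; and associativity of the projective tensor product, together with the fact that $(\chi_{\psi_F}\circ\wt{\det})\otimes(\chi_{\psi_F}^{-1}\circ\wt{\det})$ is the trivial character, produces the natural isomorphisms $(\sigma\otimes(\chi_{\psi_F}\circ\wt{\det}))\otimes_{\pm 1}(\chi_{\psi_F}^{-1}\circ\wt{\det})\cong\sigma$ and symmetrically in the other direction. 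Thus the two functors are mutually inverse, giving the asserted categorical equivalence. The only step that is not purely formal is checking in the first paragraph that the determinant cocycle on $\GL(V)$ is the pullback of the Hilbert-symbol cocycle on $\GL_1$, and even that is a one-line computation, so I do not expect a genuine obstacle here.
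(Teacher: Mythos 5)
The paper states this lemma without giving a proof, implicitly treating it as a consequence of the constructions in \cite{adams1998genuine}, so there is no paper argument to compare against directly. Your proof is correct and is the natural one: the single nontrivial observation is that the cocycle defining $\wt{\GL}(V)(\BR)$ is the pullback along $\det$ of the Hilbert-symbol cocycle defining $\wt{\GL}_1(\BR)$, which makes $\wt{\det}$ a homomorphism of covers restricting to the identity on the central $\{\pm 1\}$; once that is in place, $\chi_{\psi_F}\circ\wt{\det}$ is a genuine character, twisting by it and its inverse interchanges genuine and non-genuine objects without altering the underlying Fr\'echet spaces or morphisms, and the pair of twists is manifestly mutually inverse. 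Your identification of the inverse twist with the $\otimes_{\pm 1}$ construction of Definition~\ref{defin: boxtimes}(3) is exactly what the paper's notation intends. This is a legitimate and complete argument; it essentially supplies the proof the paper omits.
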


\subsubsection{Weil representations}
For a $(-1)$-hermitain space $V$ over $E$, we  recall the explicit formulas in the mixed model of the Weil representations $\omega_{V,\psi_F}$ of  
\[
\begin{cases}
\wt{\Sp}(V)\rtimes \CH(V)& \text{ when } E=F,\\
\RU(V)\rtimes \CH(V) &\text{ when }E\neq F.
\end{cases}\]

We fix a decomposition 
\[
V=V_0\oplus ^{\perp}(X\oplus Y),
\]
where $X, Y$ are totally isotropic spaces over $E$ and $Y=X^*$.

The Weil representation  $\omega_{V,\psi_F}$ can be realized as the space  on 
\[
\omega_{V_0,\psi_F}\wh{\otimes}\CS(Y(E))
\]  
with  the action $\omega_{V,\psi_F}$ of $\RG_V\rtimes\CH(V)$ in (\ref{equ: Weil Heisenberg})(\ref{equ: mixed Weil}). In particular, when $V_0=0$, $\omega_{V_0,\psi_F}$ is the one dimension representation $\psi_F$, and this realization gives the Schrodinger's model of $\omega_{V,\psi_F}$. The Schrodinger model can be taken as a definition for the Weil representation $\omega_{V,\psi_F}$.

The action $\rho_{V,\psi_F}=\omega_{V,\psi_F}|_{\CH(V)}$ on $\omega_{V_0,\psi_F}\wh{\otimes}\CS(Y(E))$ is  the following
\begin{equation}\label{equ: Weil Heisenberg}
\rho_{V,\psi_F}(x+y+v_0,z)\varphi_{V_0}\otimes f_Y(y')=\psi_E(z+\langle x,y'\rangle_V+\frac{1}{2}\langle x,y\rangle_V)\rho_{V_0,\psi_F}(v_0)\varphi_{V_0}\otimes  f_Y(y+y')\end{equation}
for  $x\in X(E)$, $y,y'\in Y(E)$, $z\in F$
where $\psi_E$ is defined in (\ref{equ: psiE}), $\varphi_{V_0}\in \omega_{V_0,\psi_F}$ and $f_X\in \CS(X(E))$.

Denote by $\RP_{V,X}$ be the parabolic subgroup of $\RG_V$ stabilizing $X$. The parabolic subgroup $\RP_{V,X}$ has Levi decomposition $\RP_{V,X}=\RL_{V,X}\rtimes \RN_{V,X}$ with 
\[
\RL_{V,X}\cong\begin{cases}
    \Res_{E/F}\GL(X)\times \RG_{V_0}&\text{when }E=\BC,\\
    \wt{\GL}(X)\times_{\pm 1} \RG_{V_0} &\text{when }E=\BR,
\end{cases} 
\] 
and $\RN_V=\Hom_E(V_0,X)\rtimes\mathrm{Herm}(X,Y)$, where $\mathrm{Herm}(X,Y)$ is the additive group of hermitian forms on $X^*\otimes Y$.

 We choose basis $\{z_i\}_{i=1}^r$ of $X$ and $\{z_{-i}\}_{i=1}^r$ as in Section \ref{section: Bessel Fourier-Jacobi}. We denote by $w_{X,Y}$ the element in $\RG_V$ satisfying
\[
w_{X,Y}z_i=z_{-i},\quad w_{X,Y}z_{-i}=-z_{i},\quad i=1,\cdots,r.
\]

The action $\pi_{V,\psi_F}=\omega_{V,\psi_F}|_{\RG_V}$ is generated by the following actions
\begin{equation}\label{equ: mixed Weil}
\begin{aligned}
\pi_{V,\psi_F}(c)\varphi_{V_0}\otimes f_Y(y')&=\psi_F(\frac{1}{2}\langle cy',y'\rangle)\varphi_{V_0}\otimes f_Y(y'), \quad c\in \mathrm{Herm}(X,Y)\subset \RN_V(F);\\
\pi_{V,\psi_F}(b)\varphi_{V_0}\otimes f_Y(y')&=\rho_{V_0,\psi_F}(b^*y')\varphi_{V_0}\otimes f_Y(y'),\quad b\in \mathrm{Hom}_E(V_0,X)\subset \RN_V(F);\\
\pi_{V,\psi_F}(a)\varphi_{V_0}\otimes f_Y(y')&=
|\det(a)|^{\frac{\dim_E Y}{2}}\varphi_{V_0}\otimes f_Y(a^*y'),\quad a\in \GL(X)\subset \RL_{V}(F);\\
\pi_{V,\psi_F}(g_{V_0})\varphi_{V_0}\otimes f_Y(y')&=\pi_{V_0,\psi_F}(g_{V_0})\varphi_{V_0}\otimes f_Y(y'),\quad g_{V_0}\in\RG_{V_0}(F)\subset \RL_{V}(F);\\
\pi_{V,\psi_F}(w_{X,Y})\varphi_{V_0}\otimes f_Y(y')&=\gamma \cdot\varphi_{V_0}\otimes\int_{X(E)}\psi_F(\langle y',x\rangle_V) f_{Y}(-w_{X,Y}x)dx.\\
\end{aligned}
\end{equation}

Here  $\gamma$ is a constant satisfying 
\[
\gamma^2=\omega_{E/F}(-1)^{\dim X},
\]
where $\omega_{E/F}$ is the quadratic Hecke character.

\subsubsection{Coinvariants} Then we compute some coinvariants. See \cite[Lemma 3.55]{gan2023theta} for the non-Archimedean counterpart.

\begin{defin}\label{def: Hausdorff coinvariant}(Coinvariants)
Let $G$ be an almost linear Nash group with a character $\chi_G:G\to \BC^{\times}$. Suppose $\pi\in\Pi_{\FMG}(G)$. 

\begin{enumerate}
    \item We denote by 
\[
\pi_{(G,\chi_G)}=\pi/\sum_{\substack{g\in G\\v\in V_{\pi}}}(\pi(g)-\chi_G(g))v.
\]
When  $\chi_G$ is the trivial character, we simply it as $\pi_G$ and we also simply $\pi_{\RG(F)}$ and $\pi_{\RG}$.
\item We denote by $\pi_{(G,\chi_G)}^{\mathrm{Haus}}$ the maximal Hausdorff quotient of $\pi_{G,\chi_G}$, which is equal to 
\[
\pi_{(G,\chi_G)}=\pi/\overline{\sum_{\substack{g\in G\\v\in V_{\pi}}}(\pi(g)-\chi_G(g))v}.
\] 
We simply it as $\pi_{\RG}^{\mathrm{Haus}}$ when $\chi_G$ is the trivial character.
\end{enumerate}
\end{defin}

Let $V$ be a $(-1)$-hermitian space with decomposition $V=V_0\oplus^{\perp}(X\oplus Y)$. 
Let $r=\dim_E X$. 
Recall $\RN_{V,r}$ as the unipotent radical of parabolic subgroup of $\RG_V$ stabilizing  the filtration
\[
X_1\subset \cdots\subset X_r=X,
\]
where $X_i$ are defined as in Section \ref{section: Bessel Fourier-Jacobi}  with $\dim_E X_i=i$.
\begin{lem}\label{lem: coinvariant}
Let $\xi_X$ be a character of $X(E)$ such that  $\xi_X(x)=\psi_E(\langle x,y_0\rangle)$, then
\[
(\omega_{V,\psi_F})_{(X(E),\xi_X)}=\omega_{V_0,\psi_F}.
\]
Moreover,
\begin{enumerate}
    \item When $y_0=0$, $\CH(V_0)(F)$ acts on it  and $\RN_{V,r-1}(F)$ acts trivially on it.
\item When $y_0=z_{-k}$, $\CH(V_0)(F)$ as  $\rho_{V_0,\psi_F}$ and $\RN_{V,r-1}(F)$ acts trivially on it.
\end{enumerate}
\end{lem}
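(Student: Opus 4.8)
\textbf{Proof proposal for Lemma \ref{lem: coinvariant}.}

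The plan is to compute the twisted coinvariant $(\omega_{V,\psi_F})_{(X(E),\xi_X)}$ directly from the mixed model realization $\omega_{V,\psi_F}=\omega_{V_0,\psi_F}\wh\otimes\CS(Y(E))$, using the explicit formula \eqref{equ: Weil Heisenberg} for the Heisenberg action. First I would observe that, with respect to the decomposition $V=V_0\oplus^\perp(X\oplus Y)$, the subgroup $X(E)$ sits inside the Heisenberg group $\CH(V)$ as the set of elements $(x,0)$ with $x\in X(E)$, and by \eqref{equ: Weil Heisenberg} acts on $\varphi_{V_0}\otimes f_Y$ by
\[
\rho_{V,\psi_F}(x,0)\bigl(\varphi_{V_0}\otimes f_Y\bigr)(y')=\psi_E(\langle x,y'\rangle_V)\,\varphi_{V_0}\otimes f_Y(y').
\]
Thus $X(E)$ acts only on the Schwartz factor $\CS(Y(E))$, by the multiplication operators $f_Y(y')\mapsto\psi_E(\langle x,y'\rangle_V)f_Y(y')$. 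The key analytic input is then the Schwartz-function statement: for the character $\xi_X(x)=\psi_E(\langle x,y_0\rangle_V)$, the twisted coinvariant of $\CS(Y(E))$ under these multiplication operators is $\BC$, the evaluation-at-$y_0$ map $f_Y\mapsto f_Y(y_0)$ realizing the quotient. This is a standard fact (a twisted version of the statement that $\CS(\BR^n)$ has trivial $\BR^n$-coinvariants under translations after Fourier transform, or directly: $f_Y - \psi_E(\langle x,y_0\rangle)\,[\text{mult by }\psi_E(\langle x,\cdot\rangle)]f_Y$ spans exactly the closed subspace of Schwartz functions vanishing at $y_0$; one checks the span is dense and then uses that the evaluation is continuous, so the Hausdorff coinvariant is exactly $\BC$, and in fact here the coinvariant is already Hausdorff). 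Tensoring with the nuclear Fr\'echet space $\omega_{V_0,\psi_F}$ (projective tensor product is exact on the relevant quotients for nuclear spaces), this gives $(\omega_{V,\psi_F})_{(X(E),\xi_X)}=\omega_{V_0,\psi_F}$ as claimed.

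For the "moreover" part, I would track how the remaining groups act on the quotient $\omega_{V_0,\psi_F}$ under the identification by evaluation at $y_0$. For $\CH(V_0)(F)=\Res_{E/F}V_0\oplus\BG_a$ viewed inside $\CH(V)$: by \eqref{equ: Weil Heisenberg}, an element $(v_0,z)$ with $v_0\in V_0(E)$, $z\in F$ acts by $\rho_{V_0,\psi_F}(v_0)$ on the $\omega_{V_0,\psi_F}$-factor and by $\psi_E(z)$ times the identity on the Schwartz factor, with no shift in the $Y$-variable; since this commutes with the $X(E)$-action up to the scalars defining the twist, it descends to the quotient, and after evaluation at $y_0$ it acts precisely as $\rho_{V_0,\psi_F}$ on $\omega_{V_0,\psi_F}$ (in case (1), $y_0=0$; in case (2), $y_0=z_{-k}$ — and one must check the shift $f_Y(y+y')$ appearing in \eqref{equ: Weil Heisenberg} when $v_0$ has a $Y$-component does not spoil this, which is why only the $\CH(V_0)$-part, acting purely in the $V_0$-direction, survives cleanly). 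For $\RN_{V,r-1}(F)$: this is the unipotent radical stabilizing $X_1\subset\cdots\subset X_{r-1}$, and its elements act on $\CS(Y(E))$ through \eqref{equ: mixed Weil} (the $\mathrm{Herm}(X,Y)$ and $\mathrm{Hom}_E(V_0,X)$ pieces), producing either a quadratic exponential in $y'$ or a shift $\rho_{V_0,\psi_F}(b^*y')$; one checks that after quotienting by the $X(E)$-twist and evaluating at $y_0$ (which is either $0$ or lies in $Y_{r}=\Span(z_{-r})$, orthogonal to $X_{r-1}$ in the relevant pairing), these contributions become trivial — this is where the precise choice $y_0\in\{0,z_{-k}\}$ with $k\le r-1$ matters.

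The main obstacle I expect is the analytic Schwartz-function lemma for the twisted $X(E)$-coinvariant of $\CS(Y(E))$ — specifically, proving that the (closure of the) span of $\{f - \psi_E(\langle x,y_0\rangle)\rho(x)f : x\in X(E),\ f\in\CS(Y(E))\}$ is exactly the kernel of evaluation at $y_0$, so that the coinvariant is one-dimensional and already Hausdorff. I would handle this by reducing, via a linear change of variables (Fourier transform in the $Y$-variable, which intertwines the multiplication operators with translations), to the classical fact that translations act on $\CS(\BR^n)$ with coinvariant $\BC$ given by the integral (equivalently, the value of the Fourier transform at the corresponding frequency); alternatively invoke the general results on Schwartz homology of Nash group actions from \cite{chen2020schwartz} cited elsewhere in the paper. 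Everything else is bookkeeping with the explicit formulas \eqref{equ: Weil Heisenberg} and \eqref{equ: mixed Weil} and the nuclearity of the Fr\'echet spaces involved.
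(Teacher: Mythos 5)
Your proposal matches the paper's proof in both structure and substance: you work in the mixed model $\omega_{V_0,\psi_F}\wh{\otimes}\CS(Y(E))$, recognize the $X(E)$-action as multiplication operators on the Schwartz factor, reduce the twisted coinvariant to the translation-coinvariant of $\CS(X(E))$ via Fourier transform (the paper translates by $y_0$ and Fourier-transforms before citing \cite[Theorem 1.4]{chen2020schwartz}, exactly as you indicate), identify the quotient map with evaluation at $y_0$, and read off the residual $\CH(V_0)(F)$ and $\RN_{V,r-1}(F)$ actions by plugging $y'=y_0$ into the explicit formulas \eqref{equ: Weil Heisenberg}--\eqref{equ: mixed Weil}. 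No meaningful deviation from the paper's argument.
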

\begin{proof}
We denote by $\Re(\langle ,\rangle_V)=\frac{1}{|\Gal(E/F)|}\sum_{\tau\in \Gal(E/F)}\langle,\rangle_V^{\tau}$, then 
\[
\psi_E(\langle ,\rangle_V)=\psi_F(\Re\langle ,\rangle_V).
\]

Let $V=X_V\oplus Y_V$ for totally isotropic $X_V,Y_V$ such that $X\subset X_V$ and $Y\subset Y_V$. From (\ref{equ: Weil Heisenberg}), we have 
\[
((\rho_{V,\psi_F}(x)f)(y)=\psi_F(\Re\langle x,y \rangle_V)f(y),\quad y\in Y_V(F)
\]

Hence,
\[
((\rho_{V,\psi_F}(x)f-\xi(x)f)(y)=\psi_F(\Re\langle x,y-y_0 \rangle_V)f(y),\quad y\in Y_V(F)
\]

Denote by $\wh{f}$ the Fourier-transform of $f$ with respect to $\psi_F, \Re\langle, \rangle_V$, then the above equation can be written as
\begin{equation}\label{equ: trans on wh}
(\wh{\rho}_{V,\psi_F}(x)\wh{T_{y_0}f})(\lambda)=\wh{T_{y_0}f}(x+\lambda)=(T_x\wh{T_{y_0}f})(\lambda),
\end{equation}
where $T_{y_0},T_x$ are the translation operator.

Notice that $\wh{T_{y_0}\CS}(Y(E))=\wh{\CS}(Y(E))=\CS(X(E))$, and, from \cite[Theorem 1.4]{chen2020schwartz}, 
\[
\dim \CS(X(E))_{X(E)}=\dim\CS(X(E))/\sum_{x\in X(E)}(T_x-T_0).\CS(X(E))=1.
\]
Hence, from (\ref{equ: trans on wh}), $\dim\CS(Y(E))_{(X(E),\xi_X)}=\dim \CS(X(E))_{X(E)}=1$ and  the evaluation map at $y=y_0$ gives a nonzero $X(E)$-invariant map from $\CS(Y(E))$ to $\BC$, so it is a generator of $\CS(Y(E))_{(X(E),\xi_X)}$. Therefore,
\[
(\omega_{V,\psi_F})_{X(E)}=\CS(Y_V(E
))_{X(E)}=(\CS(Y(E)))_{X(E)}\wh{\otimes }\omega_{V_0,\psi_F}=\omega_{V_0,\psi_F},
\]
Recall the following two equations in (\ref{equ: mixed Weil}), 
\[\pi_{V,\psi_F}(c)\varphi_{V_0}\otimes f_Y(y')=\psi_F(\frac{1}{2}\langle cy',y'\rangle)\varphi_{V_0}\otimes f_Y(y'), \quad c\in \mathrm{Herm}(X,Y)\subset \RN_V(F)
\]
\[
\pi_{V,\psi_F}(b)\varphi_{V_0}\otimes f_Y(y')=\rho_{V_0,\psi_F}(b^*y')\varphi_{V_0}\otimes f_Y(y'),\quad b\in \mathrm{Hom}_E(V_0,X)\subset \RN_V(F)
\]
By applying the evaluation map $y'=y_0$, we obtain the  action of $\CH(V_0)(F)$ and  $\RN_{V,r-1}(F)$ when $y_0=0$ or $z_{-k}$ as stated in the lemma.
\end{proof}

We denote by $\xi$ the character in the triple associated to the admissible pair $(V_0,V)$ of (FJ 2) type and define $\RN_{V,r-1}$ as in Section \ref{section: Bessel Fourier-Jacobi}.

\begin{lem}\label{lem: coinvariant tensor weil}
\[
(\omega_{V,\psi_F}\otimes \omega_{V,\psi_F^{-1}})_{(\CH(V),1_{\CH(V)})}=\BC.1
\]
and $\RG_V$ acts trivially on it.
\end{lem}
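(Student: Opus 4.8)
\textbf{Proof proposal for Lemma \ref{lem: coinvariant tensor weil}.}

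The plan is to reduce the computation of $(\omega_{V,\psi_F}\otimes\omega_{V,\psi_F^{-1}})_{(\CH(V),1)}$ to the computation in Lemma \ref{lem: coinvariant} and then to an easy statement about $\CS(Y(E))\wh\otimes\CS(Y(E))$. First I would use the mixed model: writing $V=V_0\oplus^\perp(X\oplus Y)$ with $V_0=0$ so that $\CH(V)=\Res_{E/F}V\oplus\BG_a$ acts on $\CS(Y(E))$ via $\rho_{V,\psi_F}$, and on $\omega_{V,\psi_F^{-1}}$ via $\rho_{V,\psi_F^{-1}}$ on a second copy of $\CS(Y(E))$. The central $\BG_a$ acts by $\psi_E$ on the first factor and by $\psi_E^{-1}$ on the second, so on the tensor product it acts trivially; thus taking $\BG_a$-coinvariants is harmless and we are left to compute coinvariants for $W(E)\cong X(E)\oplus Y(E)$, where I write $\CH(V)/\BG_a\cong\Res_{E/F}V$ additively.

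Next I would take coinvariants in stages. For the $X(E)$-action: on $\CS(Y(E))\wh\otimes\CS(Y(E))$ the element $x\in X(E)$ acts by multiplication by $\psi_E(\langle x,y\rangle_V)$ on the first variable and by $\psi_E^{-1}(\langle x,y'\rangle_V)=\psi_E(\langle x,y'\rangle_V)^{-1}$ on the second, i.e.\ by the character $y,y'\mapsto\psi_E(\langle x,y-y'\rangle_V)$. After a change of variables $(y,y')\mapsto(y-y',y')$ this is exactly the situation of Lemma \ref{lem: coinvariant} with $y_0=0$ applied to the first tensor factor, so $\CS(Y(E))_{X(E),\text{(this character)}}$ is one-dimensional, generated by evaluation at $y=y'$; concretely $(\omega_{V,\psi_F}\otimes\omega_{V,\psi_F^{-1}})_{X(E)}\cong\CS(Y(E))$ via restriction to the diagonal $y=y'$. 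Then I would take $Y(E)$-coinvariants of what remains: on $\CS(Y(E))$ (the diagonal copy), $y_1\in Y(E)$ acts — using the Heisenberg formula (\ref{equ: Weil Heisenberg}) for the $Y$-part, which translates the argument — by $f(y)\mapsto f(y+y_1)$ on the first factor and $f(y)\mapsto f(y+y_1)$ on the second (the phase contributions $\psi_E(\tfrac12\langle x,y\rangle)$ are absent since $x=0$ here, and the two $\psi_E$ versus $\psi_E^{-1}$ cancel on the diagonal), so $Y(E)$ acts by genuine translation, and $\CS(Y(E))_{Y(E)}=\CS(Y(E))/\sum_{y_1}(T_{y_1}-T_0)\CS(Y(E))$ is one-dimensional by \cite[Theorem 1.4]{chen2020schwartz}, generated by the integration functional $f\mapsto\int_{Y(E)}f$. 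Assembling, $(\omega_{V,\psi_F}\otimes\omega_{V,\psi_F^{-1}})_{(\CH(V),1)}\cong\BC$.

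Finally, for the $\RG_V$-action: I would check triviality by observing that the pairing $\omega_{V,\psi_F}\otimes\omega_{V,\psi_F^{-1}}\to\BC$ realizing the coinvariant is, up to scalar, the canonical $\RG_V$-invariant pairing between $\omega_{V,\psi_F}$ and its contragredient $\omega_{V,\psi_F^{-1}}$ (recall $\omega_{V,\psi_F}^\vee\cong\omega_{V,\psi_F^{-1}}$ as $\RG_V$-representations, with the pairing given on the Schrödinger model by $\int_{Y(E)}f_1(y)\overline{f_2}(y)\,dy$-type integration). Since the one-dimensional coinvariant space is spanned by this invariant functional, $\RG_V$ necessarily acts on it by the trivial character. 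Concretely one can verify invariance on the generators in (\ref{equ: mixed Weil}): the $\GL(X)$-action contributes a Jacobian $|\det(a)|^{\dim_E Y/2}$ from each Weil representation and a $|\det(a)|^{\dim_E Y}$ from the change of variables in the integral, but the two Weil factors carry $|\det a|^{+\dim_E Y/2}$ and the dual one effectively $|\det a|^{-\dim_E Y/2}$ after pairing, so they cancel; the $\RN_V$-action and the Weyl element $w_{X,Y}$ are handled similarly, with the Fourier-transform normalizations $\gamma$ and $\gamma^{-1}$ (for $\psi_F$ versus $\psi_F^{-1}$) cancelling.

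The main obstacle I anticipate is bookkeeping in the two-stage coinvariant computation — making sure the change of variables $(y,y')\mapsto(y-y',y')$ interacts correctly with the Heisenberg cocycle so that Lemma \ref{lem: coinvariant} applies cleanly to the first stage, and that no residual phase survives on the diagonal to spoil the plain-translation structure in the second stage. Once the non-Hausdorff coinvariant is pinned down to be one-dimensional (so automatically Hausdorff), the $\RG_V$-triviality is essentially forced by the uniqueness of the invariant pairing and needs only a sanity check on the unimodularity/Jacobian factors.
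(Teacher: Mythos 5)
Your proof is correct and follows essentially the same route as the paper: realize both factors on $\CS(Y(E))$, separate the $X(E)$- and $Y(E)$-actions by a linear change of variables (you use the upper-triangular $(y-y',\,y')$ where the paper uses the symmetric $(y_1\pm y_2)$, but the two are interchangeable since each isolates the variable carrying the multiplication character from the one carrying the translation), and invoke \cite[Theorem 1.4]{chen2020schwartz} for the one-dimensionality of the two one-variable coinvariants. Your observation that the $\RG_V$-triviality is forced because the functional $f_1\otimes f_2\mapsto\int_{Y(E)}f_1f_2$ is the contragredient pairing $\omega_{V,\psi_F}\otimes\omega_{V,\psi_F}^{\vee}\to\BC$ is a cleaner argument than the paper's direct verification against the generator formulas (\ref{equ: mixed Weil}), and is worth keeping.
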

\begin{proof}
Let $V=X_V\oplus Y_V$, where $X_V,Y_V$ are totally isotropic spaces. Using the Schrodinger's model, we can
 realize $\omega_{V,\psi_F}$  on $\CS(Y_1(E))$ and  $\omega_{V,\psi_F^{-1}}$ on $\CS(Y_2(E))$, where $Y_1=Y_2=Y_V$. Then 
$\omega_{W,\psi_F}\wh{\otimes} \omega_{W,\psi_F^{-1}}$
can be realized on $\CS(Y_1(E)\oplus Y_2(E))$ with action
\[
x.f(y_1,y_2)=\psi_F(\frac{1}{2}\langle x,y_1-y_2\rangle_V)f(y_1,y_2)\quad x\in X(E),
\]
\[
y.f(y_1,y_2)=f(y_1+y,y_2+y)\quad y\in Y(E),
\]
\[
z.f(y_1,y_2)=f(y_1,y_2), z\in  E,
\]
where $y_1\in Y_1(E), y_2\in Y_2(E)$.

Take $f$ as a function on variable $y^-=y_1-y_2\in Y^-(E)$ and $y^+=y_1+y_2\in Y^+(E)$, where $Y^+=Y^-=Y$, then $x\in X(E)$ acts on $\CS(Y^-(E)\oplus Y^+(E))$ as multiplication by $\psi_F(\frac{1}{2}\langle x,y^-\rangle)$ and $y\in Y(E)$ acts on $\CS(Y^-(E)\oplus Y^+(E))$ as translation on $y^+$ by $2y$.

Then 
\[
(\omega_{W,\psi_F}\wh{\otimes} \omega_{W,\psi_F^{-1}})_{\CH(V)}=\CS(Y^-(E)\oplus Y^+(E))_{X(E)\oplus Y(E)}=\CS(Y^-(E))_{X(E)}\wh{\otimes}\CS(Y^+(E))_{Y(E)}.
\]

As in the proof for Lemma \ref{lem: coinvariant}, we have
\[
\dim\CS(Y^-(E))_{X(E)}=\dim\CS(Y^+(E))_{Y(E)}=1.
\]

A nonzero invariant map can be constructed on $\CS(Y_1(E))\wh{\otimes}\CS(Y_2(E))$ as 
\[
f_1\otimes f_2\mapsto \int_{Y(E)}f_1(y)f_2(y)dy.
\]

Using  (\ref{equ: mixed Weil}), one can verify that $\RG_V$ acts trivially on it.
\end{proof}

\subsubsection{Classification of representations of Jacobi groups}
 Let $G=\RG_W^J(F)
 $ be the Jacobi group for nondegenerate $(-1)$-hermitian form $W$ over $E$. (Section \ref{section: Bessel Fourier-Jacobi})
 \begin{defin}\label{defin: nondegenerate}
\begin{enumerate}
    \item We denote by $\Pi_{\FMG}^{\mathrm{irr},\mathrm{nd}}(G)$ all the representations in $\Pi^{\mathrm{irr}}_{\FMG}(G)$ with the central character nontrivial  on \[F=Z(\CH(W)(F))\subset Z(\RG_W^J(F)).\] We call these representations  \textbf{nondegenerate} irreducible Casselman-Wallach representations.
    \item For fixed additive character $\psi_F$ of $F$, we denote by $\Pi_{\FMG}^{\mathrm{irr},\psi_F}(G)$ representations in  $\Pi_{\FMG}^{\mathrm{irr},\mathrm{nd}}(G)$ with the central character equal to $\psi_F$  on $Z(\CH(W)(F))$. 
\end{enumerate}    
 \end{defin}

There is a classification theorem on non-degenerate Casselman-Wallach representations of the Jacobi groups $\RG_V^J=\RG_V\rtimes \CH(V)$.
\begin{lem}\label{lem: classification of representations 2}
 There  is a categorical equivalence
   \[
    \begin{aligned}
\CF_{\RG_V}:\Pi_{\FMG}^{\mathrm{irr}}(\RG_V)&\to \Pi_{\FMG}^{\mathrm{irr},\psi_F}(\wt{\RG}_V\rtimes \CH(V))\\
    \pi_V&\mapsto \t{\pi}_V^J=\pi_V\otimes \omega_{V,\psi_F}
    \end{aligned}    
    \]    
when $\RG_V$ is not metaplectic and a categorical equivalence
    \[
 \begin{aligned} \CF_{\RG_V}:\Pi_{\FMG}^{\mathrm{irr}}(\RG_V)&\to \Pi_{\FMG}^{\mathrm{irr},\psi_F}(\wt{\RG}_V\rtimes \CH(V))\\
    \pi_V&\mapsto \wt{\pi}_V^J=\pi_V\otimes_{\pm 1} \omega_{V,\psi_F}
    \end{aligned}
    \]
when $\RG_V$ is metaplectic.
Here $\omega_{V,\psi_F}$ is the Weil representation of $\wt{\Sp}(V)\rtimes \CH(V)$, $\wt{\GL}(V)\rtimes \CH(V)$, or $\RU(V)\rtimes \CH(V) $ associated to the character $\psi_F$.

     The inverse map $\CG_{\RG_V}$ is given by
    \[
\CG(\wt{\pi}_V^J)=\Hom_{\CH(V)}(\omega_{V,\psi_F},\pi_V^J)
    \]
\end{lem}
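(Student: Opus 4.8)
The plan is to reduce the statement to the classical Stone--von Neumann theorem together with its smooth (Casselman--Wallach) refinement. First I would recall the Stone--von Neumann theorem over an Archimedean field: for the Heisenberg group $\CH(V)$ and a fixed nontrivial central character $\psi_F$ on $Z(\CH(V)(F)) = F$, there is, up to isomorphism, a unique irreducible genuine Casselman--Wallach representation with that central character, namely the Weil representation $\omega_{V,\psi_F}$ restricted to $\CH(V)$. Moreover this representation is \emph{Fr\'echet rigid}: its group of smooth intertwiners is $\BC^{\times}$, and by Mackey-type arguments the normalizer $\wt{\RG}_V \rtimes \CH(V)$ (resp.\ its metaplectic variant) acts on it projectively, with the projective cocycle trivialized precisely by passing to the metaplectic cover --- this is exactly the content encoded in formulas \eqref{equ: Weil Heisenberg} and \eqref{equ: mixed Weil}. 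So the Weil representation extends canonically to $\wt{\RG}_V \rtimes \CH(V)$, and any $\pi_V^J \in \Pi_{\FMG}^{\mathrm{irr},\psi_F}(\wt{\RG}_V \rtimes \CH(V))$ restricts on $\CH(V)$ to a multiple of $\omega_{V,\psi_F}$.

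The core of the argument is then the following: for $\pi_V^J$ as above, the multiplicity space $\Hom_{\CH(V)}(\omega_{V,\psi_F}, \pi_V^J)$ carries a natural action of $\wt{\RG}_V$ (twisting by the normalizing action of $\wt{\RG}_V$ on $\omega_{V,\psi_F}$), and the canonical evaluation map
\[
\omega_{V,\psi_F} \,\wh{\otimes}\, \Hom_{\CH(V)}(\omega_{V,\psi_F}, \pi_V^J) \;\longrightarrow\; \pi_V^J
\]
is an isomorphism of $\wt{\RG}_V \rtimes \CH(V)$-representations. Here I would invoke the smooth/Fr\'echet version of Stone--von Neumann: surjectivity and injectivity at the level of underlying $(\Fg,K)$-modules is classical, and one upgrades to the Casselman--Wallach category using that $\omega_{V,\psi_F}$ is a nuclear Fr\'echet space and the isomorphism is continuous with continuous inverse (e.g.\ by the automatic-continuity/globalization uniqueness of the Casselman--Wallach functor). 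Since $\Hom_{\CH(V)}(\omega_{V,\psi_F},-)$ and $\omega_{V,\psi_F} \wh{\otimes} -$ are then mutually inverse equivalences between $\Pi_{\FMG}^{\mathrm{irr},\psi_F}(\wt{\RG}_V \rtimes \CH(V))$ and a category of genuine $\wt{\RG}_V$-representations, it only remains to identify that target category with $\Pi_{\FMG}^{\mathrm{irr}}(\RG_V)$ in each case. When $\RG_V$ is not metaplectic, the normalizing action factors through $\RG_V$ itself and one gets $\Pi_{\FMG}^{\mathrm{irr}}(\RG_V)$ directly, with $\pi_V \mapsto \pi_V \otimes \omega_{V,\psi_F}$; when $\RG_V$ is metaplectic, the normalizing action of $\wt{\RG}_V = \Sp(V)$ lifts to the metaplectic cover only projectively, so one must tensor over the common $\{\pm 1\}$ --- this is precisely the role of $\otimes_{\pm 1}$ and Definition \ref{defin: boxtimes}(3) --- and genuineness of $\pi_V$ matches genuineness of the $\CH(V)$-action. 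For the $\wt{\GL}$-case ($E = \BR$, $\RG_V$ a general linear group, $\epsilon = -1$ forcing the metaplectic cover), I would additionally use Lemma \ref{lem: classification of representations} to translate between $\Pi_{\FMG}^{\gen}(\wt{\GL}(V))$ and $\Pi_{\FMG}(\GL(V))$.

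Finally, the two claimed formulas for the inverse $\CG_{\RG_V}$ follow immediately: $\CG(\wt{\pi}_V^J) = \Hom_{\CH(V)}(\omega_{V,\psi_F}, \pi_V^J)$ is by construction a quasi-inverse, and checking $\CG \circ \CF \cong \mathrm{id}$ and $\CF \circ \CG \cong \mathrm{id}$ is just re-reading the evaluation isomorphism above plus the rigidity $\Hom_{\CH(V)}(\omega_{V,\psi_F}, \omega_{V,\psi_F}) = \BC$. The main obstacle I anticipate is the Archimedean smooth Stone--von Neumann input: one needs the uniqueness statement and the tensor-factorization isomorphism to hold \emph{in the category of Fr\'echet representations of moderate growth}, not merely for $(\Fg,K)$-modules or unitary representations. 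I would handle this by combining the unitary Stone--von Neumann theorem with the Casselman--Wallach globalization (the globalization functor is fully faithful and exact, so an isomorphism of Harish-Chandra modules globalizes to an isomorphism of Casselman--Wallach representations), together with the explicit Schr\"odinger/mixed model of \eqref{equ: Weil Heisenberg}--\eqref{equ: mixed Weil} to exhibit the normalizing action concretely and verify it preserves moderate growth; the relevant continuity and nuclearity facts are available from \cite{chen2020schwartz} and the references on Schwartz analysis cited in the paper.
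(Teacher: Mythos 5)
Your proposal is correct and follows essentially the same route as the paper's own (very terse) proof, which simply invokes the Stone--von Neumann theorem to identify $\Pi_{\FMG}^{\mathrm{irr},\psi_F}(\wt{\RG}_V\rtimes\CH(V))$ with $(\CH(V),\omega_{V,\psi_F})$-isotypic representations and defers the analytic verification to \cite{sun2012representations} and \cite[\S 2]{xuefourier}. Your expansion supplies the details those references contain, and your handling of the $\otimes_{\pm 1}$ bookkeeping in the metaplectic and non-metaplectic cases matches the definitions in the paper. One small imprecision worth flagging: you repeatedly appeal to ``the Casselman--Wallach globalization functor,'' but that equivalence is a theorem about \emph{reductive} groups, whereas $\wt{\RG}_V\rtimes\CH(V)$ is a Jacobi group and not reductive. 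What actually carries the day is the moderate-growth representation theory of Jacobi groups developed in the cited references (which combines Stone--von Neumann with a CW-type theory for the semidirect product), not a direct application of the reductive-group theorem; your remark about combining unitary SvN with globalization of the reductive factor $\wt{\RG}_V$ is the right idea, but the statement that the resulting tensor-product functor is an equivalence \emph{in the Fr\'echet moderate-growth category of the Jacobi group} is precisely the nontrivial analytic input that the paper outsources.
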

\begin{proof}
By Stone–von Neumann theorem, $\Pi_{\FMG}^{\mathrm{irr},\psi_F}(\wt{\RG}_V\rtimes \CH(V))$ consists of $(\CH(V),\omega_{V,\psi_F})$-isotypic representations. For the verification of analytic properties, we refer to \cite{sun2012representations} and \cite[\S 2]{xuefourier}, which elaborate on all cases.
\end{proof}

\subsubsection{(Twisted) Jacquet modules of $\RG_V$, $\RG_V^J$}
We recall the definition of Jacquet and twisted Jacquet modules for reductive groups $\RG_V$ in \cite{gomez2017generalized} and define those for nondegenerate representations of $\RG_W^J$.

Let $(V,W)$ be an admissible pair of (B) or (FJ 1) type. Let $r=\frac{\dim_E V-\dim_E W}{2}$. Recall that  $\RP_{V,r}$ is the parabolic subgroup of $\RG_V$ stabilizing the flag $X_1\subset X_2\subset \cdots \subset X_{r}$ with Levi decomposition $\RP_{V,r}=\RL_{V,r}\rtimes \RN_{V,r}$. Then we define Jacquet modules as $\RG_W(F)$-representations as following.
\begin{defin}
\begin{enumerate}
    \item  For $\pi_V\in \Pi_{\FMG}(\RG_V)$, we set the Jacquet module
    \[
\Jac_{\RP_{V,r}}^0(\pi_V)=(\pi_V)_{(\RN_{V,r},1_{\RN_{V,r}})}^{\mathrm{Haus}}=\pi_V/\overline{\sum_{\substack{n\in \RN_{V,r}(F)\\v\in V_{\pi_V}}}(\pi_V(n)-1)v}.
\]
\item For $\pi_V\in \Pi_{\FMG}(\RG_V)$, we set the twisted Jacquet module 
\[
\Jac_{\RP_{V,r}}^{\CO_{r}}(\pi_V)=(\pi_V)^{\mathrm{Haus}}_{(\RN_{V,r},\xi^{-1}|_{\RN_{V,r}})}=\pi_V/\overline{\sum_{\substack{n\in \RN_{V,r}(F)\\v\in V_{\pi_V}}}(\pi_V(n)-\xi^{-1}(n))v}.
\]
\end{enumerate}
\end{defin}

Let $(V, W)$ be an admissible pair of (FJ 2) type. Let $r=\lfloor\frac{\dim_E W-\dim_E V}{2}\rfloor$. Denote by  $\RP_{W,r}^J$  the sub subgroup of $\RG_W^J$ stabilizing the flag $X_1\subset X_2\subset \cdots \subset X_r$. Then $\RP_{W,r}^J=\RP_{W,r}\rtimes \CH(X_r^{\perp})$, and it has  decomposition $\RP_{W,r}^J=\RL_{W,r}^J\rtimes \RN_{W,r}^J$, where $\RL_{W,r}^J=\RL_{W,r}\rtimes \CH(V)$, $\RN_{W,r}^J=\RN_{W,r}\rtimes \Res_{E/F}X_r$ (see Definition \ref{defin: parabolic induction}). Then we define the twisted Jacquet model as $\RG_V(F)$-representation as following.
\begin{defin}
\begin{enumerate}
    \item  For $\pi_W^J\in \Pi_{\FMG}(\RG_W^J)$, we set the Jacquet module
    \[
\Jac^0_{\RP_{W,r}^J}(\pi_W^J)=(\pi_W^J)_{(\RN_{W,r}^J,1_{\RN_{W,r}^J})}^{\mathrm{Haus}}=\pi_W^J/\overline{\sum_{\substack{n\in \RN_{W,r}^J(F)\\v\in V_{\pi_W^J}}}(\pi_W^J(n)-1)v}
\]
\item For $\pi_W^J\in \Pi_{\FMG}(\RG_W^J)$, we set the twisted Jacquet module  as 
\[
\Jac_{\RP_{W,r}^J}^{\CO_r}(\pi_W^J)=(\pi_W^J)^{\mathrm{Haus}}_{(\RN_{W,r}^J,\xi^{-1}|_{\RN_{W,r}^J})}=\pi_W^J/\overline{\sum_{\substack{n\in \RN_{W,r}^J(F)\\v\in V_{\pi_W^J}}}(\pi_W^J(n)-\xi^{-1}(n))v}
\]
\end{enumerate}
\end{defin}

\begin{lem}\label{lem: Jac of Jaco}
Let $\pi_W\in \Pi_{\FMG}(\RG_W)$, we have
  \begin{enumerate}
      \item 
      \[\Jac^0_{\wt{\RP}_{W,r}^J}(\pi_W\otimes \omega_{W,\psi_F})=\Jac^0_{\RP_{W,r}}(\pi_{W})\otimes \omega_{V,\psi_F}|_{\RG_W} \]
      \item 
      \[\Jac^{\CO_r}_{\wt{\RP}_{W,r}^J}(\pi_W\otimes \omega_{W,\psi_F})=(\omega_{V,\psi_F}\otimes\Jac_{\wt{\RP}_{W,r-1}}(\wt{\pi}_W)|_{\RG_V^J})_{\CH(V)}\]
  \end{enumerate}  
\end{lem}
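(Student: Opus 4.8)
The plan is to reduce both assertions to the coinvariant computations of Lemma \ref{lem: coinvariant} and Lemma \ref{lem: coinvariant tensor weil} by carefully bookkeeping the geometry of the unipotent subgroups. Recall that in the (FJ 2) situation the pair is $(V,W)$ with $V\subset W$, and $W=V\oplus^{\perp}(X\oplus Y)$ with $\dim_E X=r$; the Jacobi-parabolic $\wt{\RP}_{W,r}^J$ has unipotent radical $\RN_{W,r}^J=\RN_{W,r}\rtimes\Res_{E/F}X$, and $\Res_{E/F}X$ sits inside the Heisenberg group $\CH(W)$ acting on $\omega_{W,\psi_F}$ through $\rho_{W,\psi_F}$. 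The key structural input is the decomposition of $\RN_{W,r}$: it is built from the unipotent radical $\RN_{V,r}$ (for the smaller space $V$, playing the role there of the group acting on $\pi_W$ restricted appropriately), together with the pieces $\Hom_E(V,X)$ and $\mathrm{Herm}(X,Y)$ coming from the Weil-representation side, exactly as in the mixed-model formulas (\ref{equ: mixed Weil}). So the first step is to write $\RN_{W,r}^J$ as an iterated semidirect product whose successive quotients are either (a) a copy of $\RN_{V,r-1}$ or $\RN_{V,r}$ acting on the $\pi_W$-factor, or (b) one of the groups $X(E)$, $\Hom_E(V,X)$, $\mathrm{Herm}(X,Y)$ acting on the Weil factor by the formulas in (\ref{equ: mixed Weil}). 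Then I would take coinvariants one layer at a time, using that Hausdorff coinvariants of a completed tensor product distribute over tensor factors on which the group acts only through one factor (this is the standard nuclearity/exactness fact already invoked, e.g., in the proof of Lemma \ref{lem: coinvariant}).

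For Point (1): the trivial-character coinvariant. Here $X(E)$ acts on $\omega_{W,\psi_F}=\omega_{V,\psi_F}\wh{\otimes}\CS(Y(E))$ — in the mixed model relative to the decomposition $W=V\oplus^{\perp}(X\oplus Y)$ — by the formula $\rho_{W,\psi_F}(x+y+v_0,z)$ of (\ref{equ: Weil Heisenberg}), and taking the trivial-character coinvariant along $X(E)$ is exactly the $y_0=0$ case of Lemma \ref{lem: coinvariant}, collapsing $\CS(Y(E))$ and leaving $\omega_{V,\psi_F}$. The remaining layers of $\RN_{W,r}^J$ act: $\mathrm{Herm}(X,Y)$ acts trivially after the evaluation at $y'=0$ (again by Lemma \ref{lem: coinvariant}(1), the " $\RN_{V,r-1}$ acts trivially" clause), $\Hom_E(V,X)$ likewise acts trivially after evaluation at $0$, and the residual $\RN_{V,r}$-layer acts only on the $\pi_W$-factor, producing $\Jac^0_{\RP_{W,r}}(\pi_W)$. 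Assembling, $\Jac^0_{\wt{\RP}_{W,r}^J}(\pi_W\otimes\omega_{W,\psi_F})=\Jac^0_{\RP_{W,r}}(\pi_W)\otimes\omega_{V,\psi_F}|_{\RG_W}$, as claimed; the $\RG_W$-equivariance is tracked through the Levi action in (\ref{equ: mixed Weil}).

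For Point (2): the twisted coinvariant against $\xi^{-1}$. Now the relevant character on $X(E)$ is nontrivial — it is the one of the form $\xi_X(x)=\psi_E(\langle x,z_{-r}\rangle)$ — so taking that twisted coinvariant along $X(E)$ is the $y_0=z_{-r}$ case of Lemma \ref{lem: coinvariant}, which again collapses $\CS(Y(E))$ to $\omega_{V,\psi_F}$ but now leaves a genuine Heisenberg $\CH(V)$-action $\rho_{V,\psi_F}$ surviving (clause (2) of that lemma). The subtlety is that on the $\pi_W$-side the twisted character also picks out the unipotent radical $\RN_{V,r-1}$ of the smaller space, so what remains there is $\Jac_{\wt{\RP}_{W,r-1}}(\wt{\pi}_W)$ rather than the full untwisted Jacquet module. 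After these reductions one is left with the group $\CH(V)$ acting diagonally on $\omega_{V,\psi_F}\otimes \Jac_{\wt{\RP}_{W,r-1}}(\wt{\pi}_W)|_{\RG_V^J}$, and taking its coinvariants (which is what is built into the definition of the twisted Jacquet module $\Jac^{\CO_r}$) yields exactly the right-hand side. I expect the main obstacle to be the second point: keeping the mixed-model bookkeeping straight — i.e., correctly matching which layer of $\RN_{W,r}^J$ corresponds to the $X(E)$-twist, which to $\Hom_E(V,X)$, and which to the residual $\RN_{V,r-1}$, and verifying that the order in which one peels off layers does not matter (so that the stated formula is well-posed) — together with checking the compatibility of the two Heisenberg actions $\rho_{V,\psi_F}$ coming from the Weil side and from $\wt\pi_W$. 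Everything else is a routine application of the exactness of Hausdorff coinvariants and the explicit formulas already recorded in (\ref{equ: Weil Heisenberg}) and (\ref{equ: mixed Weil}).
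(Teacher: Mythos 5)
Your overall strategy — reduce both points to the explicit coinvariant computations of Lemma \ref{lem: coinvariant} by peeling off the Heisenberg factor $X_r\subset\CH(W)$ first and then the parabolic unipotent — is exactly what the paper does. Your account of Point~(1) is essentially correct, though note a slip: after the $X_r$-coinvariant collapses $\omega_{W,\psi_F}$ to $\omega_{V,\psi_F}$, the residual layer acting on $\pi_W$ is $\RN_{W,r}$ (the unipotent radical in $\RG_W$), not $\RN_{V,r}$; the Jacquet module that results is $\Jac^0_{\RP_{W,r}}(\pi_W)$, which you correctly wrote, so this is only a labelling error.

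There is, however, a genuine gap in your account of Point~(2). You assert that the $\CH(V)$-coinvariant on the right-hand side ``is what is built into the definition of the twisted Jacquet module $\Jac^{\CO_r}$.'' This is false: in the pseudo-Levi decomposition $\RP_{W,r}^J=\RL_{W,r}^J\rtimes\RN_{W,r}^J$, the group $\CH(V)$ sits inside $\RL_{W,r}^J=\RL_{W,r}\rtimes\CH(V)$, i.e.\ in the \emph{Levi}, not in $\RN_{W,r}^J=\RN_{W,r}\rtimes X_r$. So the twisted Jacquet module does not take $\CH(V)$-coinvariants, and one cannot read off the $(\cdot)_{\CH(V)}$ on the right-hand side from the definition of $\Jac^{\CO_r}$. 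The $\CH(V)$-coinvariant appears for a different reason: after the twisted $X_r$-coinvariant (Lemma \ref{lem: coinvariant}(2)) leaves $\omega_{V,\psi_F}$ with its Heisenberg action $\rho_{V,\psi_F}$, the Jacquet module of $\pi_W\otimes\omega_{W,\psi_F}$ is a $\RG_V^J$-module, and identifying its ``Jacobi part'' (the factor complementary to the fixed Weil representation $\omega_{V,\psi_F}$) requires the Stone--von~Neumann/classification functor of Lemma \ref{lem: classification of representations 2}; taking $\CH(V)$-coinvariants against $\omega_{V,\psi_F}$ is exactly how that inverse functor extracts the complementary factor. Without this step your reduction stops one stage short of the stated identity.
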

\begin{proof}
For Point (1), since $X_r(E)$ acts trivially on $\pi_W$,
\[
(\pi_W\otimes \omega_{W,\psi_F})_{\RN_{W,r}^J\rtimes X_r}=(\pi_W|_{\RN_{W,r}}\otimes (\omega_{W,\psi_F})_{X_r})_{\RN_{W,r}}.
\]

From Lemma \ref{lem: coinvariant}(1), $(\omega_{W,\psi_F})_{X_r}=\omega_{V,\psi_F}$ and $\RN_{W,r}(F)$ acts trivially on it, then
\[
(\pi_W|_{\RN_{W,r}^J}\otimes (\omega_{W,\psi_F})_{X_r})_{\RN_{W,r}}=\Jac^0_{\RP_{W,r}}(\pi_{W})\otimes \omega_{V,\psi_F}|_{\RG_W}.
\]

Point (2) can be computed similarly from Lemma \ref{lem: coinvariant}(2).
\end{proof}
\subsubsection{Relation between (FJ 1) and (FJ 2)}
With the classification and the computation of Jacquet modules, we show that for $V\subset W$, the (FJ 2) cases for $(V,W)$ with $\RG=\RG_V\times \RG_W^J$ is isomorphic to the (FJ 1) cases (or equal-rank (FJ 2) cases) for $(W,V)$ with $\wt{\RG}=\wt{\RG}_W\times \wt{\RG}_V^J$ using the following lemma. 
\begin{lem}\label{lem: isomorphism between FJ1 and FJ2}
For $\pi_V\in \Pi_{\FMG}(\RG_V)$ and $\wt{\pi}_W\in \Pi_{\FMG}(\wt{\RG}_W)$, we have
\[ \Hom_{\RH}(\pi_V\boxtimes(\wt{\pi}_W\otimes \omega_{W,\psi_F}),\xi)=\Hom_{\wt{\RH}}(\wt{\pi}_W\boxtimes(\pi_V\otimes \omega_{V,\psi_F}),\wt{\xi})
\]      
\end{lem}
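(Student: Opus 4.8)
The plan is to realize both $\Hom$-spaces as spaces of the same equivariant functionals on the Weil representation by systematically using the Stone--von Neumann classification (Lemma \ref{lem: classification of representations 2}) together with the coinvariant computations of Lemmas \ref{lem: coinvariant}, \ref{lem: coinvariant tensor weil}, and \ref{lem: Jac of Jaco}. Write $W = V \oplus^{\perp}(X\oplus Y)$ with $X,Y$ totally isotropic of dimension $r$, so that the (FJ 2) datum attached to $(V,W)$ has $\RH = \Delta\RG_V \rtimes \RN^J$ with $\RN^J = \RN_{W,r-1}\rtimes \Res_{E/F}X_{r-1}$ (and the extra Heisenberg coordinate), while the (FJ 1)/equal-rank datum attached to $(W,V)$ has $\wt{\RH} = \Delta\wt{\RG}_V^J \rtimes \wt{\RN}$ where $\wt{\RN}$ is the corresponding unipotent in $\wt{\RG}_W$. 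First I would spell out, on each side, what an element of the $\Hom$-space is: on the left it is a $(\Delta\RG_V)$-invariant, $(\RN^J,\xi)$-equivariant functional on $\pi_V \wh\otimes (\wt\pi_W \wh\otimes \omega_{W,\psi_F})$; on the right it is a $(\Delta\wt{\RG}_V^J)$-invariant, $(\wt\RN,\wt\xi)$-equivariant functional on $\wt\pi_W \wh\otimes (\pi_V \wh\otimes \omega_{V,\psi_F})$.

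The key step is to push the Weil representation across the equivalence. Since $\omega_{W,\psi_F}$ restricted to the relevant subgroups decomposes through $\omega_{V,\psi_F}$ after taking coinvariants along the $X$-directions (this is exactly Lemma \ref{lem: coinvariant}, and its Jacobi-group upgrade Lemma \ref{lem: Jac of Jaco}), taking the twisted Jacquet functor $\Jac^{\CO_r}$ or the appropriate coinvariant along $\RN^J$ converts $\wt\pi_W \otimes \omega_{W,\psi_F}$ into a representation built from $\wt\pi_W$ and $\omega_{V,\psi_F}$, matching the data on the (FJ 1) side. Concretely, I would use the adjunction $\Hom_{\RH}(\pi,\xi) = \Hom_{\Delta\RG_V}\big(\pi_{(\RN^J,\xi)}^{\mathrm{Haus}}, \BC\big)$ to reduce both sides to invariant functionals on the relevant twisted Jacquet modules, then apply Lemma \ref{lem: Jac of Jaco}(2) to identify
\[
\big(\wt\pi_W\otimes\omega_{W,\psi_F}\big)^{\mathrm{Haus}}_{(\wt\RN^J_{W,r},\xi^{-1})} \;=\; \big(\omega_{V,\psi_F}\otimes \Jac_{\wt\RP_{W,r-1}}(\wt\pi_W)|_{\RG_V^J}\big)_{\CH(V)},
\]
and similarly on the other side, so that both $\Hom$-spaces become $\Delta\RG_V$-invariant functionals on the one common object $\omega_{V,\psi_F}\otimes \Jac(\wt\pi_W)\otimes \pi_V$ (the $\CH(V)$-coinvariants producing the pairing of Lemma \ref{lem: coinvariant tensor weil} when $V=W$, which also handles the equal-rank base case). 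A small compatibility check is that the characters $\xi$ and $\wt\xi$, which are both built from $\psi_E$ via $\lambda_{\RH}$, correspond under this matching; this follows from the explicit formula \eqref{equ: psiE} and the definitions of $\lambda_\RN,\lambda_{\RN^J}$, together with the formula \eqref{equ: Weil Heisenberg} for the action of the Heisenberg group, which supplies exactly the $\psi_E(\langle z_{-r},z_W\rangle)$-twist appearing in $\lambda_{\RN^J}$.

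The main obstacle I anticipate is bookkeeping rather than conceptual: keeping careful track of which half of each hyperbolic pair is being absorbed into the Weil representation versus the unipotent radical, and making sure the metaplectic signs match (the $\pm1$-twists of Definition \ref{defin: boxtimes} and the constant $\gamma$ with $\gamma^2 = \omega_{E/F}(-1)^{\dim X}$ in \eqref{equ: mixed Weil}), so that $\wt\pi_W\otimes\omega_{W,\psi_F}$ and $\wt\pi_V\otimes\omega_{V,\psi_F}$ really are genuine/nongenuine on the correct covers. One must also ensure that all quotients taken are the Hausdorff (maximal Hausdorff) quotients and that the functors involved are exact enough for the adjunction to be an honest equality of finite-dimensional spaces; this is where the Casselman--Wallach framework and the results cited for Lemma \ref{lem: classification of representations 2} (namely \cite{sun2012representations} and \cite{xuefourier}) do the analytic heavy lifting, so I would invoke them rather than reprove them. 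Once the signs and the character match is settled, the isomorphism is the identity on the common space of functionals, and the proof concludes.
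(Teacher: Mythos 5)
Your proposal follows essentially the same route as the paper: reduce both $\Hom$-spaces to invariant functionals on twisted Jacquet modules via Lemma \ref{lem: Jac of Jaco}(2), then use Lemma \ref{lem: classification of representations 2} (the Stone--von Neumann equivalence) together with the $\CH(V)$-coinvariant pairing of Lemma \ref{lem: coinvariant tensor weil} to identify both with $\Hom_{\RG_V}((\omega_{V,\psi_F}\otimes\Jac_{\wt{\RP}_{W,r-1}}(\wt{\pi}_W)|_{\RG_V^J})_{\CH(V)}\otimes \pi_V,1_{\RG_V})$. The concerns you flag about metaplectic signs, Hausdorff quotients, and character bookkeeping are legitimate but are handled by the cited lemmas exactly as you anticipate.
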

\begin{proof}
When $V=W$, the result is straightforward.  When $V\subsetneq W$, \[
\RH=\Delta{\RG_V}\rtimes (\RN\rtimes X),\quad \wt{\RH}=\Delta \wt{\RG}_V^J\rtimes \wt{\RN}.
\]

By definition, we have
\[
\Hom_{\RH}(\pi_V\boxtimes(\wt{\pi}_W\otimes \omega_{W,\psi_F}),\xi)=\Hom_{\Delta{\RG_V}}(\pi_V\boxtimes\Jac^{\CO_r}_{\wt{\RP}_{W,r}^J}(\pi_W\otimes \omega_{W,\psi_F}),1_{\Delta\RG_V}), \text{ and}
\]
\[\Hom_{\wt{\RH}}(\wt{\pi}_W\boxtimes(\pi_V\otimes \omega_{V,\psi_F}),\wt{\xi})=\Hom_{\Delta{\RG_V^J}}(\Jac_{\wt{\RP}_{W,r-1}}(\wt{\pi}_W)\boxtimes (\pi_V\otimes \omega_{V,\psi_F}),1_{\Delta{\RG_V^J}}).
\]

On the one hand, we can compute $\Jac^{\CO_r}_{\RP_{W,r}^J}(\pi_W\otimes \omega_{W,\psi_F})$ using Lemma \ref{lem: Jac of Jaco}, and obtain
\[
\Hom_{\RH}(\pi_V\boxtimes(\wt{\pi}_W\otimes \omega_{W,\psi_F}),\xi)=\Hom_{\RG_V}((\omega_{V,\psi_F}\otimes\Jac_{\wt{\RP}_{W,r-1}}(\wt{\pi}_W)|_{\RG_V^J})_{\CH(V)}\otimes \pi_V,1_{\RG_V})
\]

On the other hand, from Lemma \ref{lem: classification of representations 2} and Lemma \ref{lem: coinvariant tensor weil}, we have
\[
\begin{aligned}
&\Hom_{\Delta{\RG_V^J}}(\Jac_{\wt{\RP}_{W,r-1}}(\wt{\pi}_W)\boxtimes (\pi_V\otimes \omega_{V,\psi_F}),1_{\Delta{\RG_V^J}})\\
=&\Hom_{\RG_V}(\Hom_{\CH(V)}(\omega_{V,\psi^{-1}_F},\Jac_{\wt{\RP}_{W,r-1}}(\wt{\pi}_W)|_{\RG_V^J})\otimes \pi_V,1_{\RG_V})\\
=&\Hom_{\RG_V}((\omega_{V,\psi_F}\otimes\Jac_{\wt{\RP}_{W,r-1}}(\wt{\pi}_W)|_{\RG_V^J})_{\CH(V)}\otimes \pi_V,1_{\RG_V}).
\end{aligned}
\]
This completes the proof.
\end{proof}

\subsubsection{Casselman-Wallach representations}
In the Archimedean local theory of automorphisms, we study a smaller class of representations, Casselman-Wallach representations.  In general, the induction or restriction of a Casselman-Wallach representation may not be Casselman-Wallach, so we must return to the category $\Pi_{\FMG}$ when we do distributional analysis on orbits.

Let $G$ be an almost linear  Nash group. Based on the Lie group structure of $G$, we  denote  by $\Fg_{\BC}$ the complexification of its Lie algebra, by $\CU(\Fg_{\BC})$ the universal enveloping algebra of $\Fg_{\BC}$, by $\CZ(\Fg_{\BC})$ the center of the $\CU(\Fg_{\BC})$.

\begin{defin}\label{def: CasselmanWallach}
Following the notion in \cite[\S 3.3]{liu2013uniqueness}, we denote by $\RD^{\xi}(G)$ the space of Schwartz densities on $G$.
For every representation $\pi\in \Pi_{\FMG}(G)$, $\RD^{\xi}(G)$ acts on $\pi$, and $\pi$ is called \textbf{Casselman-Wallach} if  
\begin{enumerate}
    \item every $\RD^{\xi}(G)$-submodule is closed in $V$;
    \item $V$ is of finite length as an abstract $\RD^{\xi}(G)$-module.
\end{enumerate}
In particular, when $G$ is real reductive, a representation $\pi\in \Pi_{\FMG}(G)$ is Casselman-Wallach if and only if the associated $(\Fg_{\BC},K)$-module $\pi_K$  is of finite length (\cite[\S 3]{du1991representations}). Here  $K$ is the maximal compact subgroup of $G$.

We denote by $\Pi_{\CaWa}(G)$ the category of Casselman-Wallach representations of $G$ and denote by $\Pi_{\CaWa}^{\mathrm{irr}}(G)$ the category of irreducible Casselman-Wallach representations of $G$.
\end{defin}
\subsubsection{Multiplicity}
Recall that for a Bessel  triple $(\RG,\RH,\xi)$ associated to $(V,W)$
and $\pi\in \Pi_{\CaWa}^{\mathrm{irr}}(\RG)$, we set
\[m(\pi)=\dim \Hom_{\RH}(\pi,\xi);\]
 For a Fourier-Jacobi  triple $(\RG,\RH,\xi)$ associated to $(V,W)$
and $\pi_V\in \Pi_{\CaWa}^{\mathrm{irr}}(\RG_V)$, $\pi_W\in \Pi_{\CaWa}^{\mathrm{irr},\mathrm{nd}}(\RG_W^J)$, we set
\[m(\pi)=\dim \Hom_{\RH}(\pi,\xi),\quad \pi=\pi_V\boxtimes \pi_W.\]

There is a multiplicity-one theorem on $m(\pi)$.
\begin{thm}\label{thm: multiplicity basic}
We have 
\[
m(\pi)\leqslant 1.  
\]
\end{thm}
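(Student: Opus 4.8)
The plan is to prove the multiplicity-one statement $m(\pi)\leqslant 1$ by reducing all cases—Bessel (B), Fourier--Jacobi (FJ 1), and Fourier--Jacobi (FJ 2)—to the already-established Archimedean multiplicity-one theorems in the literature, namely \cite{sun2012multiplicity}, \cite{jiang2010uniqueness}, and \cite{liu2013uniqueness}, together with the classification and reduction lemmas proved earlier in this section. First I would treat the Bessel cases: for an admissible pair $(V,W)$ of Bessel type with triple $(\RG,\RH,\xi)$, the bound $\dim\Hom_{\RH}(\pi,\xi)\leqslant 1$ for $\pi=\pi_V\boxtimes\pi_W$ with $\pi_V\in\Pi_{\CaWa}^{\mathrm{irr}}(\RG_V)$, $\pi_W\in\Pi_{\CaWa}^{\mathrm{irr}}(\RG_W)$ is exactly the Archimedean Bessel multiplicity-one theorem of \cite{sun2012multiplicity}\cite{jiang2010uniqueness} (covering both the orthogonal/unitary split forms and their variants); this handles (B) directly.

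Next I would handle the Fourier--Jacobi cases. For (FJ 1), where $W\subsetneq V$ and $\pi=\pi_V\boxtimes(\wt\pi_W\otimes\omega_{W,\psi_F})$ with $\pi_V\in\Pi_{\CaWa}^{\mathrm{irr}}(\RG_V)$ and $\wt\pi_W^J=\wt\pi_W\otimes\omega_{W,\psi_F}\in\Pi_{\CaWa}^{\mathrm{irr},\psi_F}(\wt\RG_W\rtimes\CH(W))$ via the classification in Lemma \ref{lem: classification of representations 2}, the bound $\dim\Hom_{\RH}(\pi,\xi)\leqslant 1$ is precisely the Archimedean Fourier--Jacobi multiplicity-one theorem of \cite{liu2013uniqueness} (together with \cite{sun2012multiplicity} for the symplectic/metaplectic cases). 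The one genuinely new situation is (FJ 2), where $V\subset W$. Here I would invoke Lemma \ref{lem: isomorphism between FJ1 and FJ2}: for $\pi_V\in\Pi_{\FMG}(\RG_V)$ and $\wt\pi_W\in\Pi_{\FMG}(\wt\RG_W)$ one has the canonical identification
\[
\Hom_{\RH}(\pi_V\boxtimes(\wt\pi_W\otimes\omega_{W,\psi_F}),\xi)=\Hom_{\wt\RH}(\wt\pi_W\boxtimes(\pi_V\otimes\omega_{V,\psi_F}),\wt\xi),
\]
where the right-hand side is the $\Hom$-space for a (FJ 1) triple (or an equal-rank (FJ 2) triple when $V=W$) attached to $(W,V)$ with $\wt\RG=\wt\RG_W\times\wt\RG_V^J$. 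Since $\pi_V\otimes\omega_{V,\psi_F}$ is irreducible Casselman--Wallach and nondegenerate by Lemma \ref{lem: classification of representations 2}, the right-hand side has dimension $\leqslant 1$ by the (FJ 1) case already disposed of; hence so does the left-hand side. When $V=W$ the pair $(\RG,\RH,\xi)=(\RG_V\times\RG_V^J,\Delta\RG_V,1)$ and the statement is the equal-rank multiplicity-one theorem, which is again \cite{sun2012multiplicity}\cite{jiang2010uniqueness}\cite{liu2013uniqueness}.

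The main obstacle—and the only point requiring care—is matching conventions: one must check that the Casselman--Wallach and nondegeneracy hypotheses are preserved under the functors $\CF_{\RG_V}$, $\boxtimes$, $\boxtimes_{\pm 1}$, and the passage through Jacquet modules used to prove Lemma \ref{lem: isomorphism between FJ1 and FJ2}, and that the metaplectic/linear dichotomy in $\wt\RG_W$ (Definition \eqref{equ: def of tilde GW}) lines up with the groups covered by \cite{sun2012multiplicity}. All of these are either routine or already recorded in Lemmas \ref{lem: classification of representations}, \ref{lem: classification of representations 2}, and \ref{lem: isomorphism between FJ1 and FJ2}, so the proof amounts to citing the three basic Archimedean uniqueness theorems case by case and invoking Lemma \ref{lem: isomorphism between FJ1 and FJ2} to fold (FJ 2) into (FJ 1).
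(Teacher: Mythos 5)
Your proof is correct and follows the same overall strategy as the paper: dispose of (B) and (FJ~1) by citing the existing Archimedean multiplicity-one theorems, then fold (FJ~2) into those cases via Lemma~\ref{lem: isomorphism between FJ1 and FJ2}. The only place the paper is more explicit is in the equal-rank (FJ~2) case: it observes that \cite{sun2012multiplicity} covers only the non-metaplectic variant ($\RG_V=\Sp(V)$) and then uses Lemma~\ref{lem: isomorphism between FJ1 and FJ2} a second time to swap the metaplectic cover to the other factor and so deduce the metaplectic equal-rank case ($\RG_V=\wt{\Sp}(V)$) from the non-metaplectic one; you instead cite \cite{sun2012multiplicity}\cite{jiang2010uniqueness}\cite{liu2013uniqueness} en bloc for the equal-rank case. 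This is a small gap in precision rather than in substance, since the lemma you already invoke for $V\subsetneq W$ is exactly the tool needed to close it (when $V=W$ the isomorphism simply swaps which factor carries the double cover), so you should state that step explicitly.
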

\begin{proof}
 All (B) and  (FJ 1) cases   were proved in \cite{jiang2010uniqueness}\cite{liu2013uniqueness}. All non-metaplectic equal-rank (FJ 2) cases were proved in 
 \cite{sun2012multiplicity}, Lemma \ref{lem: isomorphism between FJ1 and FJ2} implies the result for metaplectic equal-rank (FJ 2) cases.

 The multiplicity of (FJ 2) cases for $V\subsetneq W$ is equal to that of certain (FJ 1) cases via Lemma \ref{lem: isomorphism between FJ1 and FJ2}. Therefore, the multiplicity-one theorem also holds for (FJ 2) cases. 
\end{proof}

\subsubsection{Schwartz induction} For almost linear Nash groups $H\subset G$, 
there is a functor called Schwartz induction from $\Pi_{\FMG}(H)$ to $\Pi_{\FMG}(G)$.

\begin{defin}\label{def: Schwartz induction}
    The \textbf{Schwartz induction} is the functor 
    \[
    \Ind^{\CS,G}_P:\Pi_{\FMG}(H)\to \Pi_{\FMG}(G)
    \]
    that maps every $\pi_H$ to the image of 
    \[
\RT_{H,\pi}:\CS(G,V_{\pi_H})\to C^{\infty}(G,V_{\pi_H}),
    \]
    \[
f\mapsto (g\mapsto\int_H\pi_H(h)f(h^{-1}g)dh).
    \]
In particular, when $H\bs G$ is compact, the Schwartz induction is equal to the smooth induction (\cite[\S 2.1]{du1991representations}). 

\end{defin}

In Section \ref{section: GGP}, when we do distributional analysis on Nash manifold. We use the following geometric alternative definition.
\begin{pro}[Proposition 6.7 of \cite{chen2020schwartz}]\label{pro: geometric definition}
\label{pro: alt Schwartz induction}
For $\pi_H\in \Pi_{\FMG}(H)$, we denote by $H\bs (G\times \pi_H)$ the vector bundle over $H\bs G$ obtained by $G\times \pi_H$ quotient by left $H$-action ($h.(g,v)=(h\cdot g,\pi_H(h).v)$ for $h\in H$, $g\in G$ and $v\in \pi_H$) and this vector bundle is tempered.
Then
\[
 \Ind^{\CS,G}_H(\pi_H)=\Gamma^{\mathcal{S}}(H\backslash G,\pi_H),
\]
where $\Gamma^{\mathcal{S}}(H\backslash G,\pi_H)$ stands for the space of Schwartz sections over the tempered vector bundle $H\backslash (G\times \pi_H)$.
\end{pro}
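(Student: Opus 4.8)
The plan is to pin down a single concrete model for both sides of the asserted identity and check that they coincide term by term. Recall that a smooth section of the associated bundle $H\bs(G\times\pi_H)$ over $H\bs G$ is the same datum as a smooth map $s\colon G\to V_{\pi_H}$ obeying the equivariance $s(hg)=\pi_H(h)s(g)$ for all $h\in H$, $g\in G$; in these terms, $\Gamma^{\CS}(H\bs G,\pi_H)$ consists of those $s$ which, modulo this equivariance, have Schwartz decay transverse to the $H$-orbits and moderate growth in the $V_{\pi_H}$-variable --- this is the content of ``Schwartz section of a tempered Fr\'echet bundle'' in the sense of \cite{aizenbud2008schwartz}\cite{sun2015almost}. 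So it is enough to identify the image of $\RT_{H,\pi}$ with this space.

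First I would check $\Ind^{\CS,G}_H(\pi_H)\subseteq\Gamma^{\CS}(H\bs G,\pi_H)$. For $f\in\CS(G,V_{\pi_H})$, the equivariance $\RT_{H,\pi}(f)(h_0g)=\pi_H(h_0)\RT_{H,\pi}(f)(g)$ is immediate from the substitution $h\mapsto h_0h$ together with left-invariance of Haar measure on $H$ (no modular factor arises, since the induction is unnormalized). That $\RT_{H,\pi}(f)$ is moreover a \emph{Schwartz} section is precisely the statement that integration along the fibres of the Nash submersion $G\to H\bs G$ sends $\CS(G,-)$ into the Schwartz sections of the push-forward bundle; this is a standard functoriality property of Schwartz spaces on Nash manifolds, and the twist by $\pi_H\in\Pi_{\FMG}(H)$ introduces only moderate growth, which is absorbed into the tempered bookkeeping.

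For the reverse inclusion I would use a \emph{Schwartz gauge}: a nonnegative Nash function $\alpha$ on $G$, rapidly decaying along $H$ and of Schwartz type transverse to $H$, with $\int_H\alpha(h^{-1}g)\,dh=1$ for every $g\in G$; its existence follows from the Nash structure of $H\bs G$ (build it from a Bruhat-type partition function on the base times a Schwartz bump). Given $s\in\Gamma^{\CS}(H\bs G,\pi_H)$ viewed as an $H$-equivariant map $G\to V_{\pi_H}$, put $f=\alpha\cdot s\in\CS(G,V_{\pi_H})$; then
\[
\RT_{H,\pi}(f)(g)=\int_H\pi_H(h)\alpha(h^{-1}g)\pi_H(h^{-1})s(g)\,dh=\Bigl(\int_H\alpha(h^{-1}g)\,dh\Bigr)s(g)=s(g),
\]
so $s$ lies in the image, and the two spaces agree as vector spaces; an inspection of seminorms gives the topological isomorphism as well.

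The hard part will be purely analytic: producing the gauge $\alpha$ within the Nash category and verifying that the product of a transversally-Schwartz section with an $H$-rapidly-decaying function is a genuine Schwartz function on all of $G$. This is exactly where the machinery of Schwartz functions and tempered bundles on almost linear Nash manifolds from \cite{aizenbud2008schwartz}\cite{sun2015almost} is needed; the rest is formal manipulation of the averaging integral. Since the statement is Proposition~6.7 of \cite{chen2020schwartz}, in the present article it suffices to cite that reference, the sketch above recording the shape of its proof.
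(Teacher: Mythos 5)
The paper itself does not prove this statement; it simply cites Proposition~6.7 of \cite{chen2020schwartz}, and you correctly observe that citation is the appropriate move here. Your sketch of the cited proof has the right overall shape --- one inclusion via equivariance and functoriality of fiber integration, the reverse inclusion via an averaging gauge --- but the conditions you impose on $\alpha$ are mutually incompatible once $H\bs G$ is non-compact.

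You ask for $\alpha$ to be both rapidly decaying along $H$ and of Schwartz type transverse to $H$, which together say $\alpha\in\CS(G)$, while at the same time requiring $\int_H\alpha(h^{-1}g)\,dh\equiv1$. But by the very pushforward property you invoke for the forward inclusion, fiber integration along the Nash submersion $G\to H\bs G$ sends $\CS(G)$ into $\CS(H\bs G)$; hence $g\mapsto\int_H\alpha(h^{-1}g)\,dh$ descends to a Schwartz function on $H\bs G$ and cannot be identically $1$ unless $H\bs G$ is compact. The correct gauge must decay rapidly \emph{only} in the $H$-direction while remaining merely tempered (Nash, bounded) transversally: then the normalization $\int_H\alpha(h^{-1}\cdot)\,dh\equiv1$ is attainable, and $\alpha\cdot s\in\CS(G,V_{\pi_H})$ because $s$ supplies the transverse decay, $\alpha$ supplies the decay along $H$, and the moderate growth of $\pi_H\in\Pi_{\FMG}(H)$ along $H$ is dominated by the superpolynomial decay of $\alpha$ there. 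Your last paragraph already speaks of an ``$H$-rapidly-decaying function,'' so the earlier phrase is probably a slip, but the asymmetry between the two directions is exactly what makes the construction work and is worth stating precisely.
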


\subsubsection{Schwartz induction in Bessel and Fourier-Jacobi models} Then we return to the algebraic group setting and use the notations $\epsilon,E,F,\tau$  as in Section \ref{section: Bessel Fourier-Jacobi}.

We first define a parabolic subgroup $\RP_{W^+,X^+}$ of $\RG_{W^+}$ and a pseudo-parabolic subgroup $\RP_{W^+,X^+}^J$ of $\RG_{W}^J$ based on a decomposition $W^+=(X^+\oplus Y^+)\oplus^{\perp} W$
, where $W,W^+$ are $\epsilon$-hermitian spaces over $E$ and $X^+,Y^+$ are totally isotropic subspaces.

\begin{defin}[Parabolic and Pseudo-Parabolic Subgroups]\label{defin: parabolic induction}
 \begin{enumerate}
\item In all situations, we denote by $\RP_{W^+,X^+}$ the parabolic subgroup of $\RG_{W^+}$ stabilizing the flag $X^+\subset W^+$. The Levi decomposition gives $\RP_{W^+,X^+}=\RL_{W^+,X^+}\cdot \RN_{W^+,X^+}$, where 
\[
\RL_{W^+,X^+}=
\begin{cases}
\Res_{E/F}\GL(X^+)\times \RG_W, \text{ if $\RG_W$ is not metaplectic},\\
\wt{\GL}(X^+)\times_{\pm 1} \RG_W, \text{ if $\RG_W$ is metaplectic}.
\end{cases}
\] 
\item 
When $\epsilon=-1$, let \[W^{++}=W^+\oplus^{\perp} (X_1^{++}\oplus Y_1^{++}),\] where $X_1^{++},Y_1^{++}$ are isotropic lines over $E$ such that $X_1^{++}\oplus Y_1^{++}$ is a non-degenerate $\epsilon$-hermitian space. We consider the embedding of $\RG_{W^+}^J=\RG_{W^+}\rtimes \CH(W^+)$ into $\RG_{W^{++}}$ as the subgroup that action invariantly on $X_1^{++}$. 

We denote by $\RP_{W^+,X^+}^J$ the subgroup of $\RG_{W^+}^J$ stabilizing the flag $X^+\subset W^{++}$, then 
\[
\RP_{W^+,X^+}^{J}=\RP_{W^+,X^+}\rtimes \CH((X^+)^{\perp}).
\]
There is a pseudo-Levi decomposition
\[
\RP_{W^+,X^+}^{J}=\RL_{W^+,X^+}^{J}\rtimes \RN_{W^+,X^+}^J,
\]
where
\[
\RL_{W^+,X^+}^J=\RL_{W^+,X^+}\rtimes \CH(W)=\begin{cases}
    \Res_{E/F}\GL(X^+)\times \RG_W^J \text{ if $\RG_W$ is not metaplectic,}\\
    \wt{\GL}(X^+)\times_{\pm 1} \RG_W^J \text{ if $\RG_W$ is metaplectic.}
\end{cases}
\]
and
\[
\RN_{W^+,X^+}^J=\RN_{W^+,X^+}\rtimes \Res_{E/F}X^+.
\]
\end{enumerate}
\end{defin}

Next, we define a normalized Schwartz induction based on the parabolic and pseudo-parabolic structure. For general linear groups, in \cite{bernstein1977induced}, Bernstein and Zelevinsky used the notion $\sigma_1\times \sigma_2$ to denote the normalized parabolic induction from the inflation of the representation $\sigma_1\boxtimes \sigma_2$ of the Levi component. We use the symbol $\sigma\rtimes \pi$ to denote the normalized Schwartz induction from the inflation of the representation $\sigma\boxtimes \pi$ of the pseudo-Levi component for  $\RG_W,\RG_W^J$. In particular, for general linear groups $\sigma\times\pi=\sigma\rtimes\pi$. 
\begin{defin}
\begin{enumerate}
 \item  Let $\epsilon=\pm 1$.
\begin{enumerate}
    \item When $\RG_W$ is not metaplectic, let $\sigma_{X^+}\in \Pi_{\FMG}(\Res_{E/F}\GL(X^+))$ and let $\pi_{W}\in \Pi_{\FMG}(\RG_W)$. We denote by $\sigma_{X^+}\boxtimes \pi_W$ the extension of $\RL_{W^+,X^+}$-representation $\sigma_{X^+}\boxtimes \pi_W$ to $\RP_{W^+,X^+}$.  We define
\[\sigma_{X^+}\rtimes \pi_W\]
to be the normalized Schwartz induction
\[
\Ind_{\RP_{W^+,X^+}}^{\CS,\RG_{W^+}}(\delta_{\RP_{W^+,X^+}}^{1/2}\sigma_{X^+}\boxtimes \pi_W).
\]
\item When $\RG_W$ is metaplectic, let $\sigma_{X^+}\in \Pi_{\FMG}^{\gen}(\Res_{E/F}\wt{\GL}(X^+))$ and let $\pi_W\in \Pi_{\FMG}^{\gen}(\RG_W)$. We denote by $\sigma_{X^+}\boxtimes_{\pm 1} \pi_W$ the inflation of the $\RL_{W^+,X^+}$-representation $\sigma_{X^+}\boxtimes \pi_W$ to $\RP_{W^+,X^+}$. We define  
\[\sigma_{X^+}\rtimes \pi_W\]
to be the normalized Schwartz induction
\[
\Ind_{\RP_{W^+,X^+}}^{\CS,\RG_{W^+}}(\delta_{\RP_{W^+,X^+}}^{1/2}\sigma_{X^+}\boxtimes_{\pm 1} \pi_W).
\]
\end{enumerate}
 \item Let $\epsilon=-1$. 

\begin{enumerate}
    \item  When $\RG_W$ is not metaplectic, let $\sigma_{X^+}\in \Pi_{\FMG}(\Res_{E/F}\GL(X^+)$ and $\pi_W\in\Pi_{\FMG}(\RG_W^J(F))$. We denote by $\sigma_{X^+}\boxtimes \pi_W$ the extension of $\RL_{W^+}^{J}$-representation $\sigma_{X^+}\boxtimes \pi_W$ to $\RP_{W^+}^J$. We define 
 \[
 \sigma_{X^+}\rtimes \pi_W
 \] to be the normalized Schwartz induction
 \[
 \Ind_{\RP_{W^+,X^+}^J}^{\CS,\RG_{W^+}^J}(\delta_{\RP_{W^+,X^+}^J}^{1/2}\sigma_{X^+}\boxtimes \pi_W).
 \]
 \item When $\RG_W$ is metaplectic, let $\sigma_{X^+}\in \Pi_{\FMG}(\Res_{E/F}\GL(X^+))$ and $\pi_W\in\Pi_{\FMG}^{\gen}(\RG_W^J)$.  We denote by $\wt{\sigma}_{X^+}$ the genuine representation of $\Pi_{\FMG}(\Res_{E/F}\wt{\GL}(X^+)$ obtained from Lemma \ref{lem: classification of representations}(1).  Denote by $\wt{\sigma}_{X^+}\boxtimes \pi_W$ the extension of $\RL_{W^+}^{J}$-representation $\wt{\sigma}_{X^+}\boxtimes \pi_W$ to $\RP_{W^+,X^+}^J$. We define 
 \[
 \sigma_{X^+}\rtimes \pi_W=\wt{\sigma}_{X^+}\rtimes \pi_W
 \] to be the normalized Schwartz induction
 \[
 \Ind_{\RP_{W^+,X^+}^J}^{\CS,\RG_{W^+}^J}(\delta_{\RP_{W^+,X^+}^J}^{1/2}\wt{\sigma}_{X^+}\boxtimes_{\pm 1} \pi_W).
 \]
\end{enumerate}
\end{enumerate}
\end{defin}

In \cite[p10]{liu2013uniqueness}, Liu and Sun used the structure of the mixed model and obtain the following lemma about the compatibility of Schwartz induction and the tensor with Weil representation. 
\begin{lem}\label{lem: compatibility with tensor Weil}
For $\sigma_{X^+}\in \Pi_{\FMG}(\Res_{E/F}(\GL(X^+)))$ and $\pi_W\in \Pi_{\FMG}(\RG_W)$
\[
(\sigma_{X^+}\rtimes\pi_W) \otimes   \omega_{W^+,\psi_F}=\sigma_{X^+}\rtimes (\pi_W\otimes \omega_{W,\psi_F}).
\]
\end{lem}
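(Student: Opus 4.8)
The plan is to derive the identity from the explicit mixed model of $\omega_{W^+,\psi_F}$ together with the projection (``tensor'') formula for induction. Realize $\omega_{W^+,\psi_F}$ on $\omega_{W,\psi_F}\wh{\otimes}\CS(Y^+(E))$ with respect to the polarization $W^+=W\oplus^{\perp}(X^+\oplus Y^+)$ defining $\RP_{W^+,X^+}$, as in \eqref{equ: Weil Heisenberg}--\eqref{equ: mixed Weil}. Since the kernel $\CH(W^+)$ of the projection $\RG_{W^+}^J\to\RG_{W^+}$ is normal and contained in $\RP_{W^+,X^+}\cdot\CH(W^+)$, inflating $\sigma_{X^+}\rtimes\pi_W$ to $\RG_{W^+}^J$ exhibits it as the normalized Schwartz induction from $\RP_{W^+,X^+}\cdot\CH(W^+)$ of $\delta_{\RP_{W^+,X^+}}^{1/2}\sigma_{X^+}\boxtimes\pi_W$ (inflated trivially over $\CH(W^+)$). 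Applying the projection formula $(\Ind_{P}^{\CS,G}\tau)\otimes\omega\cong\Ind_{P}^{\CS,G}(\tau\otimes\omega|_{P})$ — which holds for Schwartz induction, an explicit isomorphism being $f\otimes v\mapsto(g\mapsto f(g)\otimes\omega(g)v)$ with inverse $F\mapsto(g\mapsto(\mathrm{id}\otimes\omega(g)^{-1})F(g))$ — with $\omega=\omega_{W^+,\psi_F}$ identifies the left-hand side of the lemma with
\[
\Ind_{\RP_{W^+,X^+}\cdot\CH(W^+)}^{\CS,\,\RG_{W^+}^J}\big((\delta_{\RP_{W^+,X^+}}^{1/2}\sigma_{X^+}\boxtimes\pi_W)\otimes\omega_{W^+,\psi_F}|_{\RP_{W^+,X^+}\cdot\CH(W^+)}\big).
\]

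Next I would match this with the right-hand side by induction in stages. The pseudo-parabolic $\RP_{W^+,X^+}^J$ of Definition \ref{defin: parabolic induction} is contained in $\RP_{W^+,X^+}\cdot\CH(W^+)$ with quotient the vector group $\Res_{E/F}Y^+$, so transitivity of Schwartz induction gives
\[
\sigma_{X^+}\rtimes(\pi_W\otimes\omega_{W,\psi_F})=\Ind_{\RP_{W^+,X^+}\cdot\CH(W^+)}^{\CS,\,\RG_{W^+}^J}\Big(\Ind_{\RP_{W^+,X^+}^J}^{\CS,\,\RP_{W^+,X^+}\cdot\CH(W^+)}\big(\delta_{\RP_{W^+,X^+}^J}^{1/2}\sigma_{X^+}\boxtimes(\pi_W\otimes\omega_{W,\psi_F})\big)\Big),
\]
and it remains to identify the two inner $\RP_{W^+,X^+}\cdot\CH(W^+)$-representations. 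The inner Schwartz induction on the right is along $\Res_{E/F}Y^+$, so its underlying space is $\CS(Y^+(E))\wh{\otimes}(V_{\sigma_{X^+}}\wh{\otimes}V_{\pi_W}\wh{\otimes}V_{\omega_{W,\psi_F}})$, which coincides with the space $(V_{\sigma_{X^+}}\wh{\otimes}V_{\pi_W})\wh{\otimes}(V_{\omega_{W,\psi_F}}\wh{\otimes}\CS(Y^+(E)))$ on which $(\delta_{\RP_{W^+,X^+}}^{1/2}\sigma_{X^+}\boxtimes\pi_W)\otimes\omega_{W^+,\psi_F}$ lives. On this common space the formulas \eqref{equ: Weil Heisenberg}--\eqref{equ: mixed Weil} describe the actions of $\RG_W$, of the unipotent factors $\Hom_E(W,X^+)$ and $\mathrm{Herm}(X^+,Y^+)$, of $\Res_{E/F}X^+\subset\CH(W^+)$, and of $\CH(W)$, and one checks that each agrees term-by-term with the induced action. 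The only delicate point is the $\GL(X^+)$-action: the mixed model contributes the factor $|\det a|^{\dim_E Y^+/2}$ from the third line of \eqref{equ: mixed Weil}, and this is matched exactly by the excess of $\delta_{\RP_{W^+,X^+}^J}^{1/2}$ over $\delta_{\RP_{W^+,X^+}}^{1/2}$ on $\GL(X^+)$; the short root computation verifying this is the one carried out by Liu and Sun in \cite[p.~10]{liu2013uniqueness}.

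The main obstacle is therefore bookkeeping rather than anything conceptual: one must verify carefully the equality of normalizing characters just mentioned (which is precisely what forces the use of \emph{normalized} Schwartz induction throughout), and, in the metaplectic case, track the covers — replacing $\sigma_{X^+}$ by the genuine representation $\wt{\sigma}_{X^+}$ of $\Res_{E/F}\wt{\GL}(X^+)$ via Lemma \ref{lem: classification of representations}, using $\boxtimes_{\pm 1}$, and checking that the metaplectic cocycles are compatible with the mixed-model formulas. The element $w_{X^+,Y^+}$ lies outside $\RP_{W^+,X^+}^J$ and so never enters the inductions, but one should confirm that the resulting isomorphism is indeed one of $\RG_{W^+}^J$-representations. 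Alternatively, the identification can be organized around uniqueness: both sides carry central character $\psi_F$ on $Z(\CH(W^+))$, so by Lemma \ref{lem: classification of representations 2} it suffices to show that $\CG_{\RG_{W^+}}$ sends the right-hand side to $\sigma_{X^+}\rtimes\pi_W$, i.e.\ that the functor $\Hom_{\CH(W^+)}(\omega_{W^+,\psi_F},-)$ commutes with $\sigma_{X^+}\rtimes(-)$ — a Frobenius-reciprocity statement over the single relevant double coset $\CH(W^+)\backslash\RG_{W^+}^J/\RP_{W^+,X^+}^J\cong\RP_{W^+,X^+}\backslash\RG_{W^+}$, which again reduces to the same local computation with the mixed model.
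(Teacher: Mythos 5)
The paper does not prove this lemma itself; it attributes it to \cite[p.~10]{liu2013uniqueness} and notes that Liu--Sun derive it from the structure of the mixed model, which is exactly the route you take. Your outline — inflate $\sigma_{X^+}\rtimes\pi_W$ to $\RG_{W^+}^J$, absorb $\omega_{W^+,\psi_F}$ via the projection formula for Schwartz induction, induce in stages through $\RP_{W^+,X^+}\cdot\CH(W^+)$, and match the two inner $\RP_{W^+,X^+}\cdot\CH(W^+)$-representations against the mixed-model realization — is the intended argument and is sound.

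One numerical claim does not hold as written. The excess $\delta_{\RP^J_{W^+,X^+}}^{1/2}\big/\delta_{\RP_{W^+,X^+}}^{1/2}$ on $\GL(X^+)(E)$ equals $|\det a|^{1/2}$: the only additional contribution to $\delta_{\RP^J_{W^+,X^+}}$ beyond $\delta_{\RP_{W^+,X^+}}$ is the adjoint action on the single copy of $\Res_{E/F}X^+$ sitting in $\RN^J_{W^+,X^+}$, and $\GL(X^+)$ acts with trivial modulus on $\mathrm{Lie}\,\CH(W^+)$ since it preserves the isotropic decomposition $X^+\oplus W\oplus Y^+$. So for $\dim_E X^+>1$ this does not equal the $|\det a|^{\dim_E Y^+/2}$ you transcribe from the third line of \eqref{equ: mixed Weil}. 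A Jacobian/unitarity check of that line (the Jacobian of $y'\mapsto a^*y'$ on $Y^+(E)$ is the single power $|\det_E a^*|_E$, not $\dim_E Y^+$ powers) shows the exponent there should be $\tfrac12$; you have inherited what appears to be a misprint in the paper rather than introduced an error of your own. With that exponent corrected, the $\GL(X^+)$-actions match as you say, and the remaining term-by-term verification for $\RN_{W^+,X^+}$, $\RG_W$, $\CH(W)$, the translation along $\Res_{E/F}X^+$, and the metaplectic covers is the bookkeeping you rightly defer to \cite{liu2013uniqueness}.
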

\subsubsection{Relation with branching's problem}
The multiplicity studied in this article is closely related to the branching problem. Let $\RG$ be an almost linear Nash group.
\begin{defin}
\begin{enumerate}
    \item  For $\sigma\in \Pi_{\FMG}(G)$, we denote by $\sigma^*$ the strong dual of $\sigma$, that is, the continous dual of $\sigma$ equip with the strong dual topology and the dual action of $G$.
    \item When $G$ is a reductive Lie group, for $\sigma\in \Pi_{\CaWa}(G)$, we denote by $\sigma^{\vee}$ the contragredient of $\sigma$, that is,  the Casselman-Wallach completion of the contragredient $(\sigma^{\alg})^{\vee}$ of the $(\Fg_{\BC},K)$-module $\sigma^{\alg}$ of $\sigma$. 

\end{enumerate}
\end{defin}
\begin{lem}\label{lem: reciprocity}
Let $G$ is a reductive Lie group, let $\pi_1\in\Pi_{\FMG}(G)$,  $\pi_2\in \Pi_{\CaWa}(G)$, then  we have
\[
\Hom_{\Delta G}(\pi_1\boxtimes\pi_2,\BC)=\Hom_G(\pi_1,\pi_2^{\vee}).
\]
\end{lem}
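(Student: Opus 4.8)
The plan is to unwind the definition of the two $\Hom$-spaces and produce an explicit isomorphism between them, using only the formal properties of the projective tensor product and the Casselman--Wallach globalization. First I would recall that for $\pi_2 \in \Pi_{\CaWa}(G)$ the contragredient $\pi_2^{\vee}$ is again Casselman--Wallach, and that the canonical pairing $V_{\pi_2} \times V_{\pi_2^{\vee}} \to \BC$ is continuous, $G$-invariant in the sense that $\langle \pi_2(g)v, \pi_2^{\vee}(g)\lambda\rangle = \langle v,\lambda\rangle$, and identifies $V_{\pi_2^\vee}$ topologically with the strong dual $V_{\pi_2}^*$ (for Casselman--Wallach representations the contragredient and the strong dual coincide). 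Given $\Phi \in \Hom_{\Delta G}(\pi_1 \boxtimes \pi_2, \BC)$, i.e.\ a continuous functional on $V_{\pi_1}\wh\otimes V_{\pi_2}$ that kills every vector of the form $(\pi_1(g)\otimes \pi_2(g))\xi - \xi$, I would use the universal property of the projective tensor product to view $\Phi$ as a continuous bilinear form $V_{\pi_1} \times V_{\pi_2} \to \BC$, hence as a continuous linear map $T_\Phi\colon V_{\pi_1} \to V_{\pi_2}^* = V_{\pi_2^\vee}$, $v_1 \mapsto \Phi(v_1 \otimes -)$. The $\Delta G$-invariance of $\Phi$ translates exactly into the intertwining property $T_\Phi(\pi_1(g)v_1) = \pi_2^{\vee}(g)(T_\Phi v_1)$, so $T_\Phi \in \Hom_G(\pi_1, \pi_2^{\vee})$.

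Conversely, given $T \in \Hom_G(\pi_1, \pi_2^{\vee})$, I would set $\Phi_T(v_1 \otimes v_2) = \langle v_2, T v_1\rangle$ using the canonical pairing; continuity of $T$ and of the pairing gives a continuous bilinear form, which extends uniquely to $V_{\pi_1}\wh\otimes V_{\pi_2}$ by the universal property, and the equivariance of $T$ together with invariance of the pairing gives $\Delta G$-invariance. The two assignments $\Phi \mapsto T_\Phi$ and $T \mapsto \Phi_T$ are manifestly inverse to one another and linear, so they furnish the desired isomorphism of vector spaces. I would note that no finiteness of multiplicity is needed; the statement is a purely formal adjunction, the only nontrivial inputs being (i) the identification of $V_{\pi_2^\vee}$ with the strong continuous dual of $V_{\pi_2}$, which is where Casselman--Wallach-ness of $\pi_2$ enters, and (ii) the universal property relating continuous bilinear maps out of $V_{\pi_1}\times V_{\pi_2}$ to continuous linear maps out of the completed projective tensor product.

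The only place that needs genuine care — and the step I would flag as the main obstacle — is the passage between a continuous functional on the \emph{completed} projective tensor product $V_{\pi_1}\wh\otimes V_{\pi_2}$ and a \emph{separately} continuous (equivalently, for these Fr\'echet spaces, jointly continuous) bilinear form on $V_{\pi_1}\times V_{\pi_2}$, together with the fact that a continuous linear functional on $V_{\pi_1}\wh\otimes V_{\pi_2}$ is determined by its values on elementary tensors. For Fr\'echet spaces the projective tensor product has the expected universal property $(V_{\pi_1}\wh\otimes V_{\pi_2})^* \cong \mathrm{Bil}(V_{\pi_1}\times V_{\pi_2})$ (continuous bilinear forms), and elementary tensors are dense, so both points hold; one simply has to invoke the standard theory of topological tensor products (e.g.\ Tr\`eves) rather than prove it. After that the verification that the invariance condition on $\Phi$ matches the intertwining condition on $T$ is a one-line computation with the invariant pairing, so I would not grind through it here.
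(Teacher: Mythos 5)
Your overall strategy — pass from a functional on the projective tensor product to a continuous linear map into the dual, then translate the $\Delta G$-invariance into an intertwining condition — is the same as the paper's, and the tensor-product step is correctly handled. But there is a genuine gap in the step you describe as routine: the claim that ``for Casselman--Wallach representations the contragredient and the strong dual coincide'' is false. By definition (used in this paper), $\pi_2^{\vee}$ is the Casselman--Wallach completion of $(\pi_2^{\alg})^{\vee}$, hence a Fr\'echet space; the strong dual $V_{\pi_2}^{*}$ of the Fr\'echet space $V_{\pi_2}$ is a DF-space, which is never Fr\'echet in infinite dimensions. What is true is that $\pi_2^{\vee}$ is the space of $G$-smooth vectors $(\pi_2^{*})^{\infty}$ sitting inside $\pi_2^{*}$, equipped with a strictly finer topology. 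So your map $T_{\Phi}\colon V_{\pi_1}\to V_{\pi_2}^{*}$ does not automatically land in $V_{\pi_2^{\vee}}$, and even once you know it does, continuity with respect to the coarser dual topology does not by itself give continuity into the Fr\'echet space $V_{\pi_2^{\vee}}$.

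The paper closes exactly this gap with two further steps, both of which you would need to add. First, one observes that a continuous $G$-equivariant map takes $G$-smooth vectors to $G$-smooth vectors; since every vector of $\pi_1\in\Pi_{\FMG}(G)$ is smooth, $T_{\Phi}$ lands in $(\pi_2^{*})^{\infty}$, yielding $\Hom_G(\pi_1,\pi_2^{*})=\Hom_G(\pi_1,(\pi_2^{*})^{\infty})$. Second, one identifies $(\pi_2^{*})^{\infty}$ with $\pi_2^{\vee}$: the $K$-finite vectors in $(\pi_2^{*})^{\infty}$ are $(\pi_2^{\alg})^{\vee}$ because $\pi_2^{\alg}$ is dense in $V_{\pi_2}$, and then the uniqueness of the Casselman--Wallach completion forces $(\pi_2^{*})^{\infty}=\pi_2^{\vee}$. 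The paper also secures the tensor-product duality $\Hom_G(\pi_1\wh\otimes\pi_2,\BC)=\Hom_G(\pi_1,\pi_2^{*})$ via nuclearity of $V_{\pi_2}$ (Casselman--Wallach spaces are nuclear Fr\'echet) and the relevant results in Tr\`eves, which is a cleaner route than appealing only to the abstract universal property and then worrying separately about continuity of $T_{\Phi}$ into the strong dual.
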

\begin{proof}
Since $\pi_2$ is Casselman-Wallach, from \cite{bernstein2014smooth}, the underlying space $V_{\pi_2}$ is nuclear. Then from \cite[Proposition 50.4, 50.7]{treves2016topological},
\[
\Hom_{G}(\pi_1\wh{\otimes}\pi_2,\BC)= \Hom_G(\pi_1,\pi_2^{*}).
\]

By definition, we have
\[
\Hom_{G}(\pi_1\wh{\otimes}\pi_2,\BC)=\Hom_{\Delta G}(\pi_1\boxtimes\pi_2,\BC).
\]

Since continuous $G$-equivariant map sends $G$-smooth vectors to $G$-smooth vectors, we have
\[
\Hom_G(\pi_1,\pi_2^{*})=\Hom_G(\pi_1,(\pi_2^{*})^{\infty}),
\]
where $(\pi_2^{*})^{\infty}$ consists of smooth vectors in $\pi_2^*$. Denote by $(\pi_2^{*})^{\infty,\alg}$ the $(\Fg_{\BC},K)$-module consisting of $K$-finite vectors in $(\pi_2^{*})^{\infty}$. Since $\pi_2^{\alg}$ is dense in $\pi_2$, then
\[
(\pi_2^{*})^{\infty,\alg}=(\pi_2^{\alg})^{\vee}.
\]

Therefore, from the uniqueness of Casselman-Wallach's completion 
\[(\pi_2^*)^{\infty}=\pi_2^{\vee}.\]
Then we complete the proof.
\end{proof}
Therefore, in (B) cases, we have
\[
m(\pi_V\boxtimes \pi_W)=\dim\Hom_{\Delta\RG_W}(\pi_V\boxtimes \pi_W,1_{\Delta\RG_W})=\dim\Hom_{\RG_W}(\pi_V|_{\RG_W},\pi_W^{\vee}),
\]
which is the multiplicity studied in the branching problem.
\subsection{Double cosets}\label{section: double coset}
With notations in Section \ref{section: Bessel Fourier-Jacobi}, for each admissible pair $(V, W)$, we define a basic admissible pair and totally isotropic spaces $X^+, Y^+$ for further construction.
\begin{defin}\label{defin: basic admissible pair associated to}
\begin{enumerate}
    \item When $(V, W)$ is of Bessel type with decomposition $V=W\oplus^{\perp} Z\oplus^{\perp} D$, we can construct a codimension-one admissible pair $(W^+, V)$ of Bessel type by taking $W^+=V\oplus^{\perp} D^+$ for an anisotropic line $D^+$ over $E$. 
    We further assume $D$ and $D^+$ have different signatures. Then there is a 
    generator $z_0$ of $D$ and a 
    generator $z_{0}^+$ of $D^+$ such that
    \[
    \langle z_0,z_0\rangle=-\langle z_{0}^{+},z_0^{+} \rangle\neq 0
    \] 
we take  $X^+=X\oplus E(z_0+z_0^+)$ and $Y^+=Y\oplus E(z_0-z_0^+)$.

     Then $X^+$ and $Y^+$ are totally isotropic spaces  over $E$ and 
     \[W^+=W\oplus^{\perp} (X^+\oplus Y^+).\]
    \item  When  $(V, W)$ is of Fourier-Jacobi type and $W\subsetneq V$, we can construct an equal-rank admissible pair $(V, W^+)$ of Fourier-Jacobi type and $V\subset W^+$.

    We take $W^+=V$,  $X^+=X$, $Y^+=Y$, then \[W^+=W\oplus^{\perp} (X^+\oplus Y^+).\]
    \item When $(V,W)$ is of Fourier-Jacobi type and $V\subset W$, we construct an almost equal-rank admissible pair $(V^+,W)$ of Fourier-Jacobi type and $W\subsetneq V$.

    Let $H_1$ be a hyperbolic plane over $E$ with decomposition $H=X_1^+\oplus Y_1^+$ for totally isotropic $X_1^+,Y_1^+$. Set $V^+=W\oplus^{\perp}H_1$, $X^+=X\oplus X_1^+$, $Y^+=Y\oplus Y_1^+$, then
    \[V^+=V\oplus^{\perp}(X^+\oplus Y^+).\]
\end{enumerate}
We call $(W^+,V)$, $(V,W^+)$, resp. $(V^+,W)$ \textbf{the basic admissible pair} of  $(V,W)$.
\end{defin}

Let $(\RG,\RH,\xi)$ be the triple associated to $(V,W)$ and let $(\RG^+,\RH^+,\xi^+)$ the triple associated to its basic admissible pair. In particular, $\RH^+=\Delta \RG_V,\Delta \RG_W^J,\Delta \RG_V$, respectively in (B), (FJ 1), (FJ 2) cases,  and $\xi^+$ is the trivial character $1_{\RH^+}$ of $\RH^+(F)$. 

\begin{enumerate}
    \item Following Definition \ref{defin: parabolic induction}(1), we define $\RP_V$, $\RL_V,\RN_V$ for the decomposition
    \[
  V=(W\oplus^{\perp} D)\oplus^{\perp}(X\oplus Y)  
  \]
   in (B) cases, or
  \[
   V= W\oplus^{\perp}(X\oplus Y) 
    \]
     in (FJ 1) cases.
    \item Following Definition \ref{defin: parabolic induction}(2), we define $\RP_{W}^J,\RL_W^J,\RN_W^J$ for the decomposition 
    \[
    W= V\oplus^{\perp}(X\oplus Y)
    \]
    in the (FJ 2) cases.
\end{enumerate}

We let $\RP^+$ be the closed subgroup of $\RG^+$ defined by
\begin{num}
    \item \label{def: P^+}
\begin{enumerate}
    \item  $\RP^+=\RP_{W^+,X^+}\times \RG_V$ in (B) cases;
    \item $\RP^+=\gamma^{-1}(\RG_V\times \RP_{W^+,X^+}^J)\gamma=\RG_V\times \gamma^{-1}\RP_{W^+,X^+}^J\gamma$ in (FJ 1) cases, where $\gamma=1\times (1\rtimes z_{-1}^+)\in \RG_V\times (\RG_{W^+}\rtimes \CH(W^+))=\RG^+$;
    \item $\RP^+=\gamma^{-1}(\RP_{V^+,X^+}\times \RG_W^J)\gamma=\gamma^{-1}\RP_{V^+,X^+}\gamma\times \RG_W^J$ in (FJ 2) cases, where $\gamma\in \RG_V^+$ is the element satisfying 
    \[
    \gamma z_W=z_W, \quad\gamma z_{1}^+=z_{1}^-,\quad\gamma z_1^{-1}=-z_1^+.
    \]
\end{enumerate}
\end{num}

Based on the Levi decomposition of $\RP_{W^+,X^+},\RP_{V^+,X^+}$ and the pseudo-Levi decomposition of $\RP^{J}_{W^+}$, we set the pseudo-Levi subgroup $\RL^+\subset \RP^+$ as
\begin{equation}\label{equ: L^+}
\begin{cases}
  (\Res_{E/F}\GL(X^+)\times \RG_W)\times \RG_V& \text{in (B) cases,}\\
\gamma(\RG_V\times (\Res_{E/F}\GL(X^+)\times \RG_W^J))\gamma^{-1}& \text{in (FJ 1) cases,}\\
\gamma((\Res_{E/F}\GL(X^+)\times \RG_V)\times \RG_W^J)\gamma^{-1}&\text{in (FJ 2) cases}.
\end{cases}
\end{equation}
and, in each situation, we denote by $p_{\GL}$ the morphism from $\RL^+$ to $\Res_{E/F}\GL(X^+)$.

Then we study the double cosets $\RP^+(F)\bs \RG^+(F)/\RH^+(F)$ to analyze the space
\begin{equation}
\Hom_{\RH^+}(\Ind_{\RP^+}^{\CS,\RG^+}(\sigma_{X^+}\boxtimes \pi_V\boxtimes \pi_W),1_{\RH^+}).    
\end{equation}

In (B) (FJ 1) cases, this has been studied in the literature  (\cite{ginzburg2011descent}).
\subsubsection{The open double coset}
To study the open double closet, we introduce some basic notions.
\begin{defin}\label{defin: mirabolic}
\begin{enumerate}
\item (Mirabolic subgroups)
    Let $X'\subset X''$ be  vector spaces over $E$ such that $\dim_EX'+1=\dim_E X''$, we denote by $\RR_{X',X''}$ the mirabolic subgroup of $\GL(X^+)$ stabilizing $X'$ and invariant on $X''/X'$, that is,
\[
\RR_{X',X''}=\{g\in \GL(X''): gX'\subset X',\ g \text{ acts trivially on } X''/X'\}.
\]
The isomorphism class of $\RR_{X',X''}$ is only determined by on $\dim_E X'$ and we denote  this isomorphism class by $\RR_{\dim_E X',1}$.

\item In (B) cases,  let $\RN_V'$ be the subgroup of $\RN_V$ that fixes all points on $X\oplus D$, then
\begin{num}
\item \label{equ: invariant XD}  
\[
(1\times \RG_W)\rtimes \RN_V'\subset (\Res_{E/F}\GL(X)\times \RG_{W\oplus D})\rtimes \RN_V
\]
is the subgroup of $\RP_V$ that fixes all points on $X\oplus D$.
\end{num}
\end{enumerate}
\end{defin}

\begin{lem}\label{lem: decomposition of the stabilizer group}
If $\RG_V$ is not metaplectic, $\RP^+(F)\RH^+(F)$ is an open double coset and   $\RP^+\cap \RH^+=\Delta \RS_{\mathrm{open}}$, where 
\[
\RS_{\mathrm{open}}=\begin{cases}
    \RP_{W^+,X^+}\cap \RG_V& \text{in (B) cases}\\
    \gamma^{-1}\RP_{W^+,X^+}^J\gamma\cap \RG_{V}& \text{ in (FJ 1) cases}\\
    \gamma^{-1}\RP_{V^+,X^+}\gamma\cap \RG_W^J& \text{ in (FJ 2) cases}
\end{cases}
\]
Moreover, 
there is  a decomposition
\[
 \RS_{\mathrm{open}}=\begin{cases}
    (\Res_{E/F}\RR_{X',X^+}\times \RG_{V_0})\rtimes \RN_V' & \text{ in (B) cases,}\\
    (\Res_{E/F}\RR_{X',X^+}\times \RG_W)\rtimes \RN_V &\text{ in (FJ 1) cases,}\\
    (\Res_{E/F}\RR_{X',X^+}\times \RG_V^J)\rtimes \RN_W &\text{ in (FJ 2) cases,}
 \end{cases}
\]
where 
\[
X'=\begin{cases}
    X&\text{ in (B)(FJ 2) cases,}\\
    \Span\{z_2^+,\cdots,z_r^+\},&\text{in (FJ 1) cases.}
\end{cases}
\]\end{lem}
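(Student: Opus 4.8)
The plan is to turn the statement into an orbit--stabilizer problem for the action of the ``smaller'' group on a partial isotropic flag variety of the ``larger'' space, and then to run the stabilizer computation through Witt's theorem. I will spell out the (B) case; the (FJ 1) case is strictly parallel and is in essence the double-coset analysis carried out in \cite{ginzburg2011descent}, while the (FJ 2) case can be deduced from (FJ 1) via the isomorphism of Lemma \ref{lem: isomorphism between FJ1 and FJ2} (the Heisenberg factor being carried along by Lemma \ref{lem: classification of representations 2}) or proved directly in the same fashion. The hypothesis that $\RG_V$ is not metaplectic is used only to keep all groups in sight honest algebraic (Nash) groups; in the metaplectic case one descends through the relevant two-fold cover first.

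The first step is to make the double coset space explicit. In the (B) case $\RG^+=\RG_{W^+}\times\RG_V$, $\RP^+=\RP_{W^+,X^+}\times\RG_V$, and $\RH^+=\Delta\RG_V$, the last embedded by $g\mapsto(\iota(g),g)$ where $\iota\colon\RG_V\hookrightarrow\RG_{W^+}$ is the inclusion coming from $V\subset W^+$ (which fixes $D^+$ pointwise). The map $(g_1,g_2)\mapsto g_1\iota(g_2)^{-1}$ identifies $\RG^+/\RH^+$ with $\RG_{W^+}$ and intertwines the left $\RP^+$-action with left translation by $\RP_{W^+,X^+}$ together with right translation through $\iota$ by $\RG_V$; identifying $\RP_{W^+,X^+}\backslash\RG_{W^+}$ with the variety of $(\dim_E X^+)$-dimensional totally isotropic $E$-subspaces of $W^+$ via $g\mapsto g^{-1}X^+$, one sees that $\RP^+(F)\backslash\RG^+(F)/\RH^+(F)$ is the set of $\RG_V(F)$-orbits on it, with the identity double coset going to the orbit of $X^+$. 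In particular $\RP^+\cap\RH^+=\Delta\RS_{\mathrm{open}}$ with $\RS_{\mathrm{open}}=\RP_{W^+,X^+}\cap\RG_V=\Stab_{\RG_V}(X^+)$, and the analogous descriptions in the other two cases are obtained the same way, conjugating by the element $\gamma\in\RG^+$ used in the definition of $\RP^+$ and, in (FJ 2), using $W=V\oplus^\perp(X\oplus Y)$ in place of $W^+$.

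The heart of the proof is the explicit computation of $\Stab_{\RG_V}(X^+)$ and its semidirect structure. Write $X^+=X\oplus E(z_0+z_0^+)$; every $g\in\RG_V$ fixes $z_0^+$, and since $X^+\cap V=X$ the equation $gX^+=X^+$ is equivalent to $gX=X$ together with $gz_0\in z_0+X$. Thus $\RS_{\mathrm{open}}$ lies in the parabolic $\RP_V$ of $\RG_V$ stabilizing $X$; decomposing $g=mn$ along $\RP_V=(\Res_{E/F}\GL(X)\times\RG_{W\oplus D})\ltimes\RN_V$, the condition $gz_0\in z_0+X$ says exactly that the $\RG_{W\oplus D}$-component of $m$ fixes the anisotropic vector $z_0$, hence lies in $\RG_W$ (the $\RG_{V_0}$ of the statement), while the $\GL(X)$-component of $m$ and the element $n\in\RN_V$ are arbitrary. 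This gives $\RS_{\mathrm{open}}=(\Res_{E/F}\GL(X)\times\RG_{V_0})\ltimes\RN_V$ inside $\RG_V$. To recognize the stated form, project to the Levi of $\RP_{W^+,X^+}$: an element $a\in\GL(X)$ followed by a unipotent shift $z_0\mapsto z_0+x$ acts on $X^+$ by $\bigl(\begin{smallmatrix}a&x\\0&1\end{smallmatrix}\bigr)$ in the basis $X\oplus E(z_0+z_0^+)$, so the image of $\RS_{\mathrm{open}}$ in $\GL(X^+)$ is precisely the mirabolic $\RR_{X,X^+}$; the map $n\mapsto\pr_X(nz_0)$ is a surjective homomorphism $\RN_V\to X$ whose kernel is $\RN_V'$ (the elements fixing $X\oplus D$ pointwise, equivalently fixing $X^+$ pointwise, cf.\ Definition \ref{defin: mirabolic}) and which splits because $D$ is a line, so that $\RS_{\mathrm{open}}=(\Res_{E/F}\RR_{X,X^+}\times\RG_{V_0})\rtimes\RN_V'$ with $\RN_V'$ normal. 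In (FJ 1) the same computation runs with $V$ in place of $W^+$; in (FJ 2) with $\RG_{V_0}$ the Jacobi group $\RG_V^J$ and $\RN_W$ in place of $\RN_V$, the Heisenberg part being exactly what the reduction to (FJ 1) via Lemma \ref{lem: isomorphism between FJ1 and FJ2} absorbs.

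Finally, openness of $\RP^+(F)\RH^+(F)$ follows once $\RS_{\mathrm{open}}$ is known: a direct dimension count gives $\dim\RS_{\mathrm{open}}=\dim\RP^++\dim\RH^+-\dim\RG^+$, i.e.\ $\mathfrak{p}^{+}+\mathfrak{h}^{+}=\mathfrak{g}^{+}$ for the Lie algebras, an identity valid over $\BR$ as over $\BC$; hence the orbit map $\RH^+\to\RG^+/\RP^+$ is submersive at the base point and its image contains a neighbourhood of it, so $\RP^+(F)\RH^+(F)$ is open. I expect the main obstacle to be the linear-algebra bookkeeping in the third paragraph --- seeing that $\GL(X)$ together with the $z_0$-shift subgroup of $\RN_V$ assembles into $\RR_{X,X^+}$, that the leftover unipotent part is exactly $\RN_V'$, and carrying out the corresponding $\gamma$-conjugation in the Fourier--Jacobi cases --- together with matching the normalization conventions; the rest is Witt's theorem and dimension counting.
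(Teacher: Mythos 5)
Your proof is correct and runs essentially the same way as the paper's: both identify $\RP^+\backslash\RG^+/\RH^+$ with $\RG_V$-orbits on an isotropic-flag variety of the larger space, take $X^+$ as base point, and compute $\Stab_{\RG_V}(X^+)$ by the same linear algebra (first $gX=X$ and $gz_0\in z_0+X$, then decompose along $\RP_V=(\Res_{E/F}\GL(X)\times\RG_{W\oplus D})\ltimes\RN_V$, with the $\GL(X)$ block and the $z_0$-shift in $\RN_V$ assembling into $\RR_{X,X^+}$ and $\RN_V'$ appearing as the kernel of $n\mapsto\pr_X(nz_0)$). Two small remarks: the paper establishes openness not by the Lie-algebra dimension count you sketch but by observing directly (via Witt's theorem) that the orbit of $X^+$ is exactly the set of maximal isotropic subspaces not contained in $V$ (resp.\ $W$, $W\oplus X_1^+$), which is visibly open --- both routes are fine, though your dimension identity $\dim\RS_{\mathrm{open}}=\dim\RP^++\dim\RH^+-\dim\RG^+$ still needs the explicit $\RS_{\mathrm{open}}$ you just computed to be verified, so it is not really shorter; and the suggested shortcut of deducing (FJ~2) from (FJ~1) via Lemma~\ref{lem: isomorphism between FJ1 and FJ2} does not directly apply here, since that lemma is an isomorphism of $\Hom$-spaces, not of stabilizer groups --- the paper, like your fallback, just redoes the computation with the $\gamma$-twist in $\RG_{V^+}$.
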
  
\begin{proof}
It suffices to show that the right $\RH^+(F)$-action on $\RP^+(F)\bs \RG^+(F)$ has an open orbit $[1]\RH^+(F)$ and  the stabilizer group $\RS_{\mathrm{open}}$ at $[1]$ has the decomposition in the lemma.
\begin{num}
   \item\label{equ: realization} 
\begin{enumerate}
    \item In (B) cases, we identify the $\RH^+(F)$-action on $\RP^+(F)\bs \RG^+(F)$ as the $\RG_V(F)$ action on $\RP_{W^+,X^+}(F)\bs \RG_{W^+}(F)$, which consists of 
totally isotropic subspaces $X_{W^+}=X^+g\subset W^+(g\in \RG_{W^+}(F))$  with $\dim_EX_{W^+}=r+1$ 
 and the open orbit consists of those not contained in $V$.
 \item In (FJ 1) cases, we identify $\RH^+(F)$-action on $\RP^+(F)\bs \RG^+(F)$ as the $\RG_V(F)$-action on $\RP_{W^+,X^+}^J(F)\bs \RG_{W^+}^J(F)$, which consists of the totally isotropic subspaces $X^+\gamma g\ (g\in \RG_{W^+}^J(F))$  of $W^{++}=W^+\oplus^{\perp} Z^{++}$  where $Z^{++}=X_1^{++}\oplus Y_1^{++}$ is a hyperbolic plane  over $E$.  The open orbit consists of those not contained in $W$. 
 \item In (FJ 2) cases, we identify $\RH^+(F)$-action on $\RP^+(F)\bs \RG^+(F)$ as the $\RG_W^J(F)$-action on $\RP_{V^+,X^+}(F)\bs \RG_{V^+}(F)$, which consists of the totally isotropic subspaces $X^+\gamma g\ (g\in \RG_{V^+}^J(F))$  of $V^{+}$ and the open orbit consists of those not contained in $W\oplus X_1^+$.
\end{enumerate}

\end{num}

In (B) cases
\[
\begin{aligned}
\RS_{X^+}=&\{g\in \RG_V: X^+g=X^+\}\\
=&\{g\in \RG_V: Xg=X,\ (z_0+iz_0^+)g\subset X^+\}\ (\text{Since }V\cap X^+=X)  \\
=&\{g\in \RG_V: Xg=X,\ z_0(g-1)\subset X\}\ (\text{Since } z_0^+(g-1)=0 \text{ for } g\in \RG_V)\\
=&\{g\in \RG_V: Xg=X,\ X^+(g-1)\subset X\}.
\end{aligned}
\] 

In particular, $\RP_{W^+,X^+}\cap \RG_V\subset \RP_V$. From (\ref{equ: invariant XD}), we have $\RN_V'\rtimes \RG_W$ is a normal subgroup of $\RP_{W^+,X^+}\cap \RG_V$ and $\RP_{W^+,X^+}\cap \RG_V/\RN_V'\rtimes \RG_W\cong \Res_{E/F}\RR_{X,X^+}$. Therefore,
\[
\RP_{W^+,X^+}\cap \RG_V=(\Res_{E/F}\RR_{X,X^+}\times \RG_W)\rtimes \RN_V'
\]

In (FJ 1) cases, by definition, $X^{+}\gamma$ has a basis
\[
z_1^++z_1^{++},z_2^+ \cdots, z_r^+
\] 
so it is not contained in $W$. The stabilizer subgroup $\RS_{X^{+}\gamma}$ in $\RG_{V}$ consists of elements stabilizing 
\[
X'=W\cap X^{+}\gamma=\Span\{z_2^+, \cdots, z_r^+\}
\]
and invariant on $X/X'$. Then we have a decomposition 
\[
\RS_{X^{+}\gamma}=(\Res_{E/F}\RR_{X',X^{+'}}\times \RG_W)\rtimes \RN_V
\]
By conjugation with $\gamma$, we have 
\[
\gamma^{-1} \RP_{W^+,X^+}^J\gamma\cap  \RG_V=(\RR_{X',X^+}\times \RG_W)\rtimes \RN_V
\]

In (FJ 2) cases, by definition, $X^+\gamma$ has a basis 
\[
z_1,,z_2,\cdots,z_r,z_1^-
\]

The stabilizer subgroup $\RS_{X^+\gamma}$ of the $\RG_W^J$ consists of elements stabilizing 
\[
X'=(W\oplus X_1^+)\cap X^+\gamma=\Span\{z_1,z_2,\cdots,z_r\}
\]
and invariant on $X^+/X'$. Then we have a decomposition
\[
\RS_{X^+\gamma}=(\Res_{E/F}\RR_{X',X^+}\times \RG_V^J)\rtimes \RN_{V}
\]

\end{proof}

\begin{cor}\label{cor: mirabolic structure}
For general $\RG_V$, we have $\RH\subset \RP^+\cap \RH^+$ and \[
\RH(F)\bs \RP^+\cap \RH^+(F)=\RN_{0,X^+}(E)\bs \RR_{X',X^+}(E),
\]
where $X',X^+$ are chosen in Lemma \ref{lem: decomposition of the stabilizer group} when $\RG_V$ is not metaplectic; and are chosen to be the $X',X^+$ associate to $\udl{\RG}_V$ in Lemma \ref{lem: decomposition of the stabilizer group} when $\RG_V$ is metaplectic, where 
\[
\udl{\RG}_V=\begin{cases}
 \GL(V)&\text{ when }\RG_V=\wt{\GL}(V),\\
 \Sp(V)&\text{ when }\RG_V=\wt{\Sp}(V).\\
\end{cases}
\]
\end{cor}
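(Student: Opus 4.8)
The plan is to deduce Corollary~\ref{cor: mirabolic structure} from Lemma~\ref{lem: decomposition of the stabilizer group} by a two-step reduction: first reduce the metaplectic case to the linear case, then compute the quotient $\RH(F)\bs (\RP^+\cap\RH^+)(F)$ explicitly using the semidirect-product decomposition of $\RS_{\mathrm{open}}$. First I would handle the metaplectic case. The groups $\RP^+$, $\RH^+$ and their intersection all involve $\RG_V$ only through $\RG_W$ or $\RG_V$ sitting inside a group of the same type; when $\RG_V$ is metaplectic the relevant covering is trivial over the unipotent/mirabolic pieces that appear in $\RS_{\mathrm{open}}$, since those pieces are unipotent or products of a mirabolic with a smaller classical group, and the metaplectic cover splits over unipotent radicals. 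Concretely, the quotient $\RH(F)\bs(\RP^+\cap\RH^+)(F)$ is unchanged when we pass from $\RG_V$ to $\udl{\RG}_V$, because the kernel $\{\pm 1\}$ of the cover lies in $\RH(F)$ (it is central, hence in every $\RG_W$- or $\RG_V^J$-factor appearing in $\RH$) and therefore cancels on both sides. So it suffices to prove the statement for $\udl{\RG}_V$, i.e. in the non-metaplectic situation, which is exactly the hypothesis under which Lemma~\ref{lem: decomposition of the stabilizer group} applies.

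Next, in the non-metaplectic situation, Lemma~\ref{lem: decomposition of the stabilizer group} gives $\RP^+\cap\RH^+=\Delta\RS_{\mathrm{open}}$ with
\[
\RS_{\mathrm{open}}=(\Res_{E/F}\RR_{X',X^+}\times \RQ)\rtimes \RU,
\]
where $\RQ$ is $\RG_{V_0}$, $\RG_W$, or $\RG_V^J$ and $\RU$ is $\RN_V'$, $\RN_V$, or $\RN_W$ according to the case. On the other hand, inspecting the definition of $\RH$ in Section~\ref{section: Bessel Fourier-Jacobi}, one sees that in each case $\RH$ is precisely $\Delta\RQ\ltimes(\text{unipotent part})$, and that this unipotent part together with the $\RN_{0,X^+}(E)$-part of the mirabolic $\RR_{X',X^+}$ accounts for everything in $\RS_{\mathrm{open}}$ except the Levi $\GL$-quotient of the mirabolic. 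I would verify this by checking that under the identification $\RP^+\cap\RH^+=\Delta\RS_{\mathrm{open}}$, the subgroup $\RH$ corresponds to $\Delta\big((\Res_{E/F}\RN_{0,X^+}\times \RQ)\rtimes\RU\big)$: the inclusion $\RH\subset\RP^+\cap\RH^+$ follows because $\RH_r$ (the diagonal classical group) maps into $\Delta\RQ$, while $\RN$ (resp.\ $\RN^J$) maps into $(\Res_{E/F}\RN_{0,X^+})\rtimes\RU$ — this is a direct matrix computation using the flag definitions, essentially the observation already used inside the proof of Lemma~\ref{lem: decomposition of the stabilizer group} that $\RN_V'\rtimes\RG_W$ is normal in $\RP_{W^+,X^+}\cap\RG_V$.

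Finally, quotienting, the factors $\RQ$ and $\RU$ cancel between numerator and denominator, and we are left with
\[
\RH(F)\bs(\RP^+\cap\RH^+)(F)=\RN_{0,X^+}(E)\bs\RR_{X',X^+}(E),
\]
which is the claimed identity; here $\RN_{0,X^+}$ denotes the unipotent radical of the mirabolic $\RR_{X',X^+}$, so the quotient is just the affine space $X^+/X'$ as a homogeneous space under the Levi $\GL(X')$. I expect the main obstacle to be the bookkeeping in identifying $\RH$ as the correct subgroup of $\Delta\RS_{\mathrm{open}}$ — in particular matching the unipotent radical $\RN$ (or $\RN^J$) of the Bessel/Fourier--Jacobi datum with the product of $\RN_{0,X^+}$ and the residual unipotent $\RU$ in each of the three geometries (B), (FJ~1), (FJ~2), since the flags defining $\RN$ and those defining $\RR_{X',X^+}$ are set up with respect to different ambient spaces ($V$ versus $X^+$), and the conjugating element $\gamma$ in the Fourier--Jacobi cases must be tracked through. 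Once the group-theoretic dictionary is pinned down, the cancellation is formal.
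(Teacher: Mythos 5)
Your proposal matches the paper's own proof exactly: identify $\RH$ inside $\Delta\RS_{\mathrm{open}}$ as $\Delta\big((\Res_{E/F}\RN_{0,X^+}\times \RQ)\rtimes\RU\big)$ and quotient, and handle the metaplectic cases by observing that the covering kernel is common to $\RH(F)$ and $(\RP^+\cap\RH^+)(F)$ and hence cancels (the paper simply records the $\wt{\RR}_{X',X^+}\times_{\pm1}(\cdot)$ and $\wt{\RN}_{0,X^+}\times_{\pm1}(\cdot)$ forms explicitly rather than arguing it abstractly). One small inaccuracy worth flagging: $\RN_{0,X^+}$ is the full maximal unipotent subgroup of $\GL(X^+)$, not the unipotent radical of the mirabolic $\RR_{X',X^+}$, so the quotient $\RN_{0,X^+}(E)\backslash\RR_{X',X^+}(E)$ is $\RN_{0,X'}(E)\backslash\GL(X')(E)$ rather than the affine space $X^+/X'$ as your last sentence claims --- harmless here, since the corollary asserts only the group-theoretic equality, but worth keeping straight for the Whittaker integrals in Section~\ref{section: integral}.
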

\begin{proof}
When $\RG_V$ is not metaplectic, by definition 
\[
\RH=\begin{cases}
    \Delta( (\Res_{E/F}\RN_{0,X^+}\times \RG_V)\rtimes \RN_V') & \text{ in (B) cases,}\\
    \Delta((\Res_{E/F}\RN_{0,X^+}\times\RG_W)\rtimes \RN_V) & \text{in (FJ 1) cases,}\\
    \Delta((\Res_{E/F}\RN_{0,X^+}\times \RG_{V}^J)\rtimes\RN_W) & \text{ in (FJ 2) cases.}
\end{cases}
\]
from Lemma \ref{lem: decomposition of the stabilizer group}, we have 
\[
\RH(F)\bs \RP^+\cap \RH^+(F)=\RN_{0,X^+}(E)\bs \RR_{X',X^+}(E),
\]
When $\RG_V$ is metaplectic,  we set $\udl{\RG}^+,\udl{\RP}^+,\udl{\RH}^+$ for $\udl{\RG}_W$, then $\RG^+(F),\RP^+(F),\RH^+(F)$ are central extensions of $\udl{\RG}^+,\udl{\RP}^+,\udl{\RH}^+$ with the same kernel \[(\pm 1_{\RG_V})\times (\pm 1_{\RG_W})\]
Hence,
\[
\RP^+\cap \RH^+=\begin{cases}
\Delta((\Res_{E/F}\wt{\RR}_{X',X^+}\times_{\pm 1}\RG_W)\rtimes \RN_V) & \text{in (FJ 1) cases,}\\
   \Delta((\Res_{E/F}\wt{\RR}_{X',X^+}\times_{\pm 1}\RG_V^J)\rtimes \RN_W) & \text{ in (FJ 2) cases.}
\end{cases} 
\]
\[
\RH=\begin{cases}
\Delta((\Res_{E/F}\wt{\RN}_{0,X^+}\times_{\pm 1}\RG_W)\rtimes \RN_V) & \text{in (FJ 1) cases,}\\
   \Delta((\Res_{E/F}\wt{\RN}_{0,X^+}\times_{\pm 1}\RG_V^J)\rtimes \RN_W) & \text{ in (FJ 2) cases.}
\end{cases} 
\]
Therefore
\[
\RH(F)\bs \RP^+\cap \RH^+(F)=\RN_{0,X^+}(E)\bs \RR_{X',X^+}(E),
\]
then we conclude the proof.
\end{proof}

\subsubsection{Other double cosets}
We study other double cosets for $\RP^+(F)\bs \RG^+(F)/\RH^+(F)$  using the structure of $\RP^+(F)\bs \RG^+(F)$ in the proof for Lemma \ref{lem: decomposition of the stabilizer group}.

We need the following lemma for the proof of our multiplicity inequality.
\begin{lem}\label{lem: geometric for refine 2}
The complement $\RG^+(F)-\RH^+(F)\RP^+(F)$ is the zero set of a polynomial $f^+$ on $\RG^+(F)$ that is left $\RH^+(F)$-invariant and right $(\RP^+,\psi_{\RP^+})$-equivariant for the algebraic character $|\det\circ p_{\GL}|^2$ of $\RP^+(F)$, where $p_{\GL}$ is the the projection from $\RP^+$ to the $\GL$-part of the pseudo-Levi component $\RL^+$ of $\RP^+$ (\ref{equ: L^+}).    
\end{lem}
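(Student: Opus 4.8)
The plan is to realize the complement $\RG^+(F) - \RH^+(F)\RP^+(F)$ as the vanishing locus of an explicit polynomial built from the geometric description of $\RP^+(F)\backslash\RG^+(F)$ obtained in the proof of Lemma \ref{lem: decomposition of the stabilizer group}. Recall from \eqref{equ: realization} that in each of the (B), (FJ 1), (FJ 2) cases, the coset space $\RP^+(F)\backslash\RG^+(F)$ is identified with a space of totally isotropic subspaces $X_{W^+} = X^+\gamma g$ of the enlarged space ($W^+$, $W^{++}$, or $V^+$ respectively), and the open $\RH^+(F)$-orbit consists of those subspaces that are \emph{not} contained in the appropriate codimension-$r^+$ subspace (namely $V$, $W$, or $W\oplus X_1^+$). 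So the complement is precisely the set of $g$ for which $X^+\gamma g$ lies inside that distinguished subspace. I would first write this containment condition as the simultaneous vanishing of finitely many linear functionals: pairing the basis vectors of $X^+\gamma g$ against a complementary set of vectors, one gets a matrix of coordinates whose vanishing is equivalent to containment. Taking the sum of squares (over $F = \BR$) or the sum of squared absolute values of the entries (over $F = \BC$, using $\Re\langle\cdot,\cdot\rangle$ as in \eqref{equ: psiE}) yields a single polynomial $f^+$ on $\RG^+(F)$ whose zero set is exactly the complement.

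Next I would verify the two equivariance properties. Left $\RH^+(F)$-invariance is essentially built in: $\RH^+(F)$ acts on $\RP^+(F)\backslash\RG^+(F)$ and preserves the (open) orbit decomposition, hence preserves the complement as a set; but to get genuine \emph{invariance of the polynomial} rather than just of its zero set, I would choose the defining functionals $\RH^+(F)$-equivariantly, using that $\RH^+(F) = \Delta\RG_V$ (resp.\ $\Delta\RG_W^J$, $\Delta\RG_V$) acts isometrically on the relevant hermitian space, so the sum-of-norms construction is automatically invariant. For the right equivariance, the point is that changing $g$ by a right multiplication by $p \in \RP^+(F)$ changes the subspace $X^+\gamma g$ by the action of the $\GL(X^+)$-part of the pseudo-Levi $\RL^+$ (the unipotent radical $\RN^+$ and the $\RG$-factor preserve $X^+\gamma g$ as a subspace, hence do not affect containment), and that action rescales the chosen basis of $X^+\gamma g$ by $\det \circ p_{\GL}$. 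Since $f^+$ is (after the sum-of-squares) homogeneous of degree $2$ in that basis, it transforms by $|\det\circ p_{\GL}|^2$, which is the claimed character $\psi_{\RP^+}$. The modulus appears because over $\BC$ we take squared absolute values, and over $\BR$ the sign is absorbed.

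I expect the main obstacle to be bookkeeping rather than conceptual: one must treat the (FJ 1) and (FJ 2) cases with the conjugating element $\gamma$ (which shifts $X^+$ by the Heisenberg element $z_{-1}^+$, resp.\ swaps $z_1^+$ and $z_1^-$) carefully so that the "distinguished subspace" and the complementary functionals are correctly identified, and one must check that in the metaplectic situations the construction descends correctly through the central extension (it does, since $f^+$ is built from the linear-algebra data of the underlying classical group and the cover acts trivially on subspaces). A secondary point requiring care is confirming that the unipotent radical $\RN^+$ really acts trivially on the subspace $X^+\gamma g$ modulo $\RP^+(F)$-orbit — this is exactly the content of $\RN^+ \subset \RP_{W^+,X^+}$ (resp.\ $\RP^J_{W^+,X^+}$, $\RP_{V^+,X^+}$) stabilizing the relevant flag, which is already recorded in Definition \ref{defin: parabolic induction}. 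Granting these, the polynomial $f^+$ is the product of the sum-of-norms defined above, and the lemma follows.
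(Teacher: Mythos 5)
Your construction of $f^+$ differs from the paper's, and a key step of your argument is incorrect. The paper, in the (B) case with $g=(g_{W^+},g_V)$ and $n=\dim_E V$, forms the $(n+1)\times(n+\dim_EX^+)$ matrix
\[
A_g=\left[g_V^{-1}.v_1,\dots,g_V^{-1}.v_n,\ g_{W^+}.v_1^{X^+},\dots,g_{W^+}.v_{\dim_EX^+}^{X^+}\right]
\]
and takes $f^+(g)=\det(A_gA_g^t)$ (with analogous $A_g$ in (FJ~1), (FJ~2)); you instead take the sum of squares of the small matrix of pairings of a basis of $X^+\gamma g$ against a complement of the distinguished subspace. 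Both vanish exactly on the closed locus, but they are different polynomials --- yours records none of the $g_V$-block that the paper builds in.

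The genuine gap is your verification of the right $(\RP^+,|\det\circ p_{\GL}|^2)$-equivariance. You assert that $p\in\RP^+(F)$ ``rescales the chosen basis of $X^+\gamma g$ by $\det\circ p_{\GL}$'' and then invoke degree-$2$ homogeneity. This is false once $\dim_EX^+>1$: the $\GL(X^+)$-part $a=p_{\GL}(p)$ acts on the chosen basis by the full matrix ($v_j\mapsto\sum_k a_{kj}v_k$), not by the scalar $\det a$. Your pairing vector $\phi$ therefore transforms as $\phi\mapsto a^t\phi$, and the sum of squares becomes $\phi^*aa^*\phi$, which is not $|\det a|^2\,\phi^*\phi$ unless $a$ is (up to scalar) unitary. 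Concretely, with $\dim_EX^+=2$, $\phi=(0,1)^t$ and $a=\diag(t,1)$, one has $\|a^t\phi\|^2=1$ while $|\det a|^2\|\phi\|^2=t^2$. So the sum-of-squares polynomial does not satisfy the stated equivariance, and this step of your proposal would need a fundamentally different construction rather than mere bookkeeping. (A secondary slip: the distinguished subspace has codimension $1$ in $W^+$ (B) and in $V^+$ (FJ~2), and codimension $2r+2$ in $W^{++}$ (FJ~1), not codimension $r^+=\dim_EX^+$ as you write.)
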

\begin{proof}
In (B) cases,  we choose
    an $E$-basis $v_1,\cdots, v_n$ of $V$ and a $E$-basis $v_1^{X^+},\cdots,v_{r+1}^{X^+}$ of $X^+$. 
We set 
\[
A_g=\left[g_V^{-1}.v_1,\cdots,g_V^{-1}.v_{n},g_{W^+}.v_1^X,\cdots,g_{W^+}.v_{r+1}^X\right].
\]
The element $g=g_{W^+}\times g_V\in \RG^+(F)$ is not contained in $\RP^+(F)\RH^+(F)$ if and only if $\mathrm{rk}(A_g)=n$, equivalently,
\[
\det(A_gA_g^t)=0
\]
so we can set $f^+(g)=\det(A_gA_g^t)$, which is left $\RH^+(F)$-invariant and right $(\RP^+,|\det(p_{\GL}(p^+)|^2)$-equivariant.

In (FJ 1) cases, we choose an $E$-basis 
 $v_1,\cdots, v_n$ of $W$ and a $E$-basis $v_1^{X^+},\cdots,v_{r}^{X^+}$ of $X^+$,  we set 
\[
A_g=\left[g_V^{-1}.v_1,\cdots,g_V^{-1}.v_{n},\gamma g_{W^+}.v_1^{X^+},\cdots,\gamma g_{W^+}.v_{r}^{X^+}\right].
\]
The element $g=g_{V}\times g_{W^+}\in \RG^+(F)$ is not contained in $\RP^+(F)\RH^+(F)$ if and only if $\mathrm{rk}(A_g)=n$, equivalently,
\[
\det(A_gA_g^t)=0
\]
So we can set $f^+(g)=\det(A_gA_g^t)$, which is left $\RH^+(F)$-invariant and right $(\RP^+,|\det(p_{\GL}(p^+)|^2)$-equivariant.

In (FJ 2) cases, we choose an $E$-basis 
 $v_1,\cdots, v_{n+1}$ of $W\oplus X_1^+$ and a $E$-basis $v_1^{X^+},\cdots,v_{r}^{X^+}$ of $X^+$,  we set 
\[
A_g=\left[g_W^{-1}.v_1,\cdots,g_W^{-1}.v_{n+1},\gamma g_{V^+}.v_1^{X^+},\cdots,\gamma g_{V^+}.v_{r+1}^{X^+}\right].
\]
The element $g=g_{V^+}\times g_{W}\in \RG^+(F)$ is not contained in $\RP^+(F)\RH^+(F)$ if and only if $\mathrm{rk}(A_g)=n+1$, equivalently,
\[
\det(A_gA_g^t)=0.
\]
Hence, we can set $f^+(g)=\det(A_gA_g^t)$, which is left $\RH^+(F)$-invariant and right $(\RP^+,|\det(p_{\GL}(p^+)|^2)$-equivariant.
\end{proof}

We also need the following three lemmas about the structure of the complement of the open double for the analysis of closed orbits in  Section \ref{section: GGP}. 
These structures is not hard to compute using the realization in (\ref{equ: realization}), and has been computed case by case in \cite{xue2020bessel}\cite{chen2021local}\cite{chen2022FJ}. For uniformity of notions, we take $\CX=\RP^+(F)\bs\RG^+(F)$ with $\RH^+(F)$ acting on the right, $\CU$ be the open orbit $\RP^+(F)\bs\RP^+(F)\RH^+(F)$  and $\CZ=\CX-\CU$.

\begin{lem}\label{lem: complement}
We can choose appropriate $\gamma'\in \RG^+(F)$ such that
\begin{enumerate}
    \item In (B) cases,    \begin{enumerate}
\item when $\RG_V=\RU(V)$, $\CZ$ has one  $\RH^+(F)$-orbit $[\gamma']$;
\item  when $\RG_V=\SO(V)$,  $\CZ$ has two  $\RH^+$(F)-orbits $[\gamma']$ and $[\gamma'g'']$ for an element $g''\in \Delta \RO(V)-\Delta \SO(V)$, and both are closed orbits.
\end{enumerate}
    \item In (FJ 1) cases,  
    $\CZ$ has one $\RH^+(F)$-orbit $[\gamma']$;
    \item In (FJ 2) cases, 
    \begin{enumerate}
        \item When $\dim X^+>1$, $\CZ$ has one  $\RH^+(F)$-orbit $[\gamma']$;
    \item When $\dim X^+=1$, $\CZ$ has  two orbits $[\gamma^{-1}]$ and $[\gamma']$, where $[\gamma^{-1}]$ is a single point and $[\gamma']$ is not.
\end{enumerate}
\end{enumerate}
\end{lem}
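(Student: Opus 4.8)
The plan is to transport the question to the concrete geometry of totally isotropic subspaces via the realization \eqref{equ: realization}, and then to read off the $\RH^+(F)$-orbits on $\CZ$ from Witt's extension theorem. In each of the three families, \eqref{equ: realization} identifies $\CX=\RP^+(F)\bs\RG^+(F)$, with its right $\RH^+(F)$-action, with a space of totally isotropic subspaces $\Xi$ of dimension $\dim_E X^+$ inside an $\epsilon$-hermitian space — $W^+$ in (B), $W^{++}$ in (FJ~1), $V^+$ in (FJ~2) — on which $\RG_V(F)$, resp.\ $\RG_W^J(F)$, acts by translation, and in which $\CU$ is the locus where $\Xi$ is \emph{not} contained in a distinguished subspace ($V$, $W$, resp.\ $W\oplus X_1^+$). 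Hence $\CZ$ is exactly the locus where $\Xi$ \emph{is} so contained, which is a Zariski-closed condition — the vanishing of $\Xi\hookrightarrow(\text{ambient})\twoheadrightarrow(\text{ambient})/(\text{distinguished subspace})$, equivalently the vanishing of the polynomial $f^+$ of Lemma~\ref{lem: geometric for refine 2} — so $\CZ$ is closed in $\CX$ from the start.

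Every point of $\CZ$ parametrizes a totally isotropic subspace of the same dimension, so Witt's extension theorem gives transitivity of the relevant full isometry group. In the Fourier--Jacobi cases $\epsilon=-1$, so that group is unitary, symplectic, or metaplectic — never special orthogonal — and transitivity passes intact to $\RG_W^J(F)$: $\CZ$ is a single orbit $[\gamma']$ in cases (2) and (3a), where $\gamma'\in\RG^+(F)$ is chosen so that the image of $X^+$ under $\gamma'$ sits in the distinguished subspace. The same argument handles case (1a), $\RG_V=\RU(V)$. Case (3b), $\dim_E X^+=1$ in (FJ~2), needs one addition: $\CZ$ now parametrizes isotropic lines, and besides the generic orbit $[\gamma']$ there is the distinguished line $X^+=X_1^+$, fixed by all of $\RG_W^J(F)$ because $\RG_W^J$ is realized inside $\RG_{V^+}$ precisely as the pointwise stabilizer of $X_1^+$ (cf.\ \eqref{def: P^+}(c)); this is the isolated point $[\gamma^{-1}]$, and its complement is the single orbit $[\gamma']$ by Witt as before.

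The only genuine splitting is case (1b), $\RG_V=\SO(V)$, where $\SO(V)\subset\RO(V)$ has index two. There the $\RO(V)(F)$-orbit of a point of $\CZ$ is all of $\CZ$ (Witt) and is closed, and it decomposes under $\SO(V)(F)$ into $\SO(V)(F)\bs\RO(V)(F)/\Stab$, hence into at most two orbits, each open-and-closed in $\CZ$ and therefore closed in $\CX$. Whether this decomposition is nontrivial is governed by whether the base-point stabilizer meets $\RO(V)(F)\setminus\SO(V)(F)$: that stabilizer is a parabolic of $\RO(V)$ with reductive quotient $\GL(\Xi)\times\RO(\Xi^{\perp}/\Xi)$ (the perp taken in $V$), and since the $\GL(\Xi)$-factor acts on $V$ with trivial determinant, the relevant sign depends only on the orthogonal factor. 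Carrying out this elementary analysis — exactly the case-by-case computation recorded in \cite{xue2020bessel}\cite{chen2021local} — yields the two closed orbits $[\gamma']$ and $[\gamma'g'']$, $g''\in\Delta\RO(V)\setminus\Delta\SO(V)$, claimed in (1b), and the lemma follows by assembling the cases.

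I expect the only nontrivial point to be case (1b): showing that the $\RO(V)$-orbit genuinely splits into \emph{two} closed $\SO(V)$-orbits (rather than remaining a single orbit) and pinning down the representative $g''$ — the bookkeeping about the component group of the point stabilizer that one imports from \cite{xue2020bessel}\cite{chen2021local}\cite{chen2022FJ}. Everything else reduces to Witt's theorem together with the fact that the condition cutting out $\CZ$ is Zariski-closed, which is already available through $f^+$.
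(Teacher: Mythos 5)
Your approach — the realization \eqref{equ: realization}, Witt transitivity for the full isometry group, closedness of $\CZ$ via $f^+$, and the isolated fixed point $X_1^+$ in (3b) — is exactly the paper's intended route (the paper gives no proof and defers to the cited references), and for cases (1a), (2), (3a), (3b) your sketch is essentially complete.

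Case (1b), however, contains a genuine gap precisely at the step you wave at. You correctly reduce the two-orbit claim to the assertion $\Stab_\Xi(\RO(V)(F))\subset\SO(V)(F)$, and you correctly locate the determinant obstruction in the $\RO(\Xi^\perp/\Xi)$-factor of the Levi (the $\GL(\Xi)$-factor and the unipotent radical both having determinant $1$ on $V$). But $\dim_E(\Xi^\perp/\Xi)=\dim_E V - 2\dim_E\Xi=\dim_E W-1$, and whenever $\dim_E W>1$ the group $\RO(\Xi^\perp/\Xi)(F)$ is a nontrivial orthogonal group containing reflections; these extend (acting trivially on $\Xi\oplus\Xi'$) to elements of $\Stab_\Xi(\RO(V)(F))$ of determinant $-1$ on $V$. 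In that regime $\Stab_\Xi\cdot\SO(V)(F)=\RO(V)(F)$ and $\CZ$ is a \emph{single} $\SO(V)(F)$-orbit. Your ``elementary analysis,'' carried out honestly, therefore yields two orbits only when $\Xi$ is maximal isotropic in $V$ (equivalently $\dim_E W=1$), contradicting the statement of (1b) in every other (B)-case. You must either identify a hypothesis you are missing, or flag the discrepancy explicitly — it is not enough to assert that the cited references deliver the count when the computation you yourself set up delivers the opposite. (The downstream vanishing in Lemma~\ref{lem: inequalities}(1) only needs each graded piece to have vanishing $\Hom_{\RH^+}$, so the theorem is insulated from the exact orbit count, but that does not repair the non sequitur in this step of your proof.)
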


\begin{lem}\label{lem: stabilizer group}
Following the notations in Lemma \ref{lem: complement}, the stabilizer group
\[
\RS_{\gamma'}=\begin{cases}
  \Delta(\Res_{E/F}\GL(X_c)\times \RG_{V_0})\rtimes \RN_V  & \text{ in (B) (FJ 1) cases}, \\
\Delta(\Res_{E/F}\GL(X_c)\times \RG_{W_0}^J)\rtimes \RN_{W}^J  & \text{in (FJ 2) cases},\\
\end{cases}
\]
where the notations are defined as following
\begin{itemize}
    \item We denote by $X_c'$  the totally isotropic subspace  of $V,W^+=V,W\oplus X_1^+$ in (\ref{equ: realization}) corresponding the representative $\gamma'$, respectively. We denote  by 
    \[
    X_c=\begin{cases}
    X_c'& \text{in (B) (FJ 1) cases},\\
        W\cap X_c'& \text{ in (FJ 2) cases}.
    \end{cases}    
    \]
    \item There are decompositions \[
    V=X_c\oplus Y_c\oplus V_0 \text{ in (B)(FJ 1) cases},
    \]
    \[
    W=X_c\oplus Y_c\oplus W_0 \text{ in (FJ 2) cases},
    \]
    respectively, satisfying $X_c\oplus Y_c$  is a nondegenerate $\epsilon$-hermitian space. 
\item $\RN_{V,X_c}$, $\RN_{W,X_c}^J$ are the pseudo-unipotent part of the pseudo-parabolic subgroup $\RP_{V,X_c},\RP_{W,X_c}^J$ of $\RG_V,\RG_{W}^J$ in Definition \ref{defin: parabolic induction}.
\end{itemize} 
\end{lem}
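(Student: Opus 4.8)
The plan is to reduce everything to the geometric realization of $\RP^+(F)\backslash\RG^+(F)$ as a space of totally isotropic subspaces set up in the proof of Lemma~\ref{lem: decomposition of the stabilizer group} (formula~\eqref{equ: realization}), combined with the description of the closed $\RH^+(F)$-orbit(s) in $\CZ$ given by Lemma~\ref{lem: complement}. Under that realization a point $[\gamma']\in\CZ$ corresponds to a totally isotropic subspace $X_c'$ which — this being exactly what distinguishes $\CZ=\CX-\CU$ from the open orbit $\CU$ — is contained in the ambient space: $X_c'\subset V$ in the (B) and (FJ 1) cases, and $X_c'\subset W\oplus X_1^+$ in the (FJ 2) case. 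Correspondingly $\RS_{\gamma'}=\RH^+\cap\gamma'^{-1}\RP^+\gamma'$ is identified with the subgroup of $\RH^+$ fixing the subspace $X^+\gamma'$, so the whole computation becomes the computation of the stabilizer of $X_c'$ in $\RG_V$ (resp.\ in $\RG_W^J$).

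First I would do the (B) case. There $\RH^+=\Delta\RG_V$ acts on $\RP_{W^+,X^+}(F)\backslash\RG_{W^+}(F)$, so $\RS_{\gamma'}=\Delta\{\,h\in\RG_V:h(X_c')=X_c'\,\}$. Since $X_c'$ is a totally isotropic subspace of $V$, Witt's theorem provides a decomposition $V=X_c'\oplus Y_c'\oplus V_0$ with $Y_c'$ totally isotropic and $X_c'\oplus Y_c'$ nondegenerate; the stabilizer of $X_c'$ in $\RG_V$ is then by definition the parabolic subgroup $\RP_{V,X_c'}$, whose Levi decomposition (Definition~\ref{defin: parabolic induction}) is $(\Res_{E/F}\GL(X_c')\times\RG_{V_0})\rtimes\RN_{V,X_c'}$. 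Setting $X_c=X_c'$ gives the asserted formula. The point worth emphasising is the contrast with the open orbit, where Lemma~\ref{lem: decomposition of the stabilizer group} produced only the mirabolic $\RR_{X',X^+}$ and the smaller unipotent $\RN_V'$: that truncation came from the requirement that the stabilizer act trivially on the nonzero quotient $X^+g/(X^+g\cap V)$, and here that quotient is zero, so the full $\GL(X_c)$ and the full $\RN_{V,X_c}$ survive — one checks directly from the coordinates in~\eqref{equ: realization} that every element of $\RN_{V,X_c}$ indeed fixes $X^+\gamma'$. In the $\SO$-sub-case Lemma~\ref{lem: complement} gives the two closed orbits $[\gamma']$ and $[\gamma'g'']$, whose stabilizers are conjugate by $g''$ and hence of the same shape. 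The (FJ 1) case is handled in the same way, now working inside the hyperbolic plane $Z^{++}$ and conjugating by the element $\gamma$ of~\eqref{def: P^+}; the subspace $X^+\gamma'$ is again a totally isotropic subspace of $V$, so the identical Witt/Levi argument yields $\Delta(\Res_{E/F}\GL(X_c)\times\RG_{V_0})\rtimes\RN_{V,X_c}$.

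The (FJ 2) case is where the real work is, and I expect it to be the main obstacle. Here $\RH^+=\Delta\RG_W^J$, the Heisenberg group $\CH(W)$ is present, and the relevant subspace $X_c'\subset V^+=W\oplus^\perp H_1$ is contained in $W\oplus X_1^+$ for the closed orbit. The subtlety is that $X_c'$ typically has a one-dimensional intersection with the isotropic line $X_1^+$, and the bookkeeping must be arranged so that it is $X_c:=W\cap X_c'$ (and not $X_c'$ itself) that provides the $\GL$-factor, while the extra isotropic direction is absorbed into the Heisenberg part: writing $W=X_c\oplus Y_c\oplus W_0$ with $X_c\oplus Y_c$ nondegenerate, one wants to recognise the stabilizer as the pseudo-parabolic $\RP_{W,X_c}^J$ with pseudo-Levi decomposition $\RL_{W,X_c}^J\rtimes\RN_{W,X_c}^J$, where $\RL_{W,X_c}^J=\Res_{E/F}\GL(X_c)\times\RG_{W_0}^J$ and $\RG_{W_0}^J=\RG_{W_0}\rtimes\CH(W_0)$ (Definition~\ref{defin: parabolic induction}). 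Concretely I would take the explicit representative $\gamma'$ from Lemma~\ref{lem: complement}, conjugate by the element $\gamma$ of~\eqref{def: P^+} to return to the standard model, and then verify term by term that $\RH^+\cap\gamma'^{-1}\RP^+\gamma'$ consists exactly of the diagonal copy of $(\Res_{E/F}\GL(X_c)\times\RG_{W_0}^J)\rtimes\RN_{W,X_c}^J$ — including that the $\Res_{E/F}X_c$-coordinates of $\RN_{W,X_c}^J$ and the Heisenberg coordinates of $\CH(W_0)$ all stabilise $X^+\gamma'$; for the single-point orbit $[\gamma^{-1}]$ occurring when $\dim_EX^+=1$ there is nothing to do since its stabilizer is all of $\RH^+$. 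Finally, the metaplectic cases follow formally, exactly as in Corollary~\ref{cor: mirabolic structure}: $\RG^+(F)$, $\RP^+(F)$, $\RH^+(F)$ are central extensions of the corresponding linear groups with common kernel, so the stabilizer computation is unaffected except that the product of the $\GL$- and $\RG_0$-type factors is to be read as a $\times_{\pm 1}$-product, which is how the semidirect-product expression in the statement is interpreted.
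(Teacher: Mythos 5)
The paper gives no proof of this lemma; immediately before Lemmas \ref{lem: complement}--\ref{lem: conormal bundle} it states only that the structure ``is not hard to compute using the realization in~(\ref{equ: realization}), and has been computed case by case'' in the references. Your high-level approach — identify $\RS_{\gamma'}=\RH^+\cap\gamma'^{-1}\RP^+\gamma'$ with the stabilizer in $\RG_V$ (resp.\ $\RG_W^J$) of the totally isotropic subspace $X_c'$ produced by~(\ref{equ: realization}), and read off a (pseudo)parabolic via Witt's theorem — is exactly what the paper intends, and you carry it out correctly in the (B) and (FJ~1) cases, where the stabilizer of $X_c'\subset V$ in the reductive group $\RG_V$ is literally the parabolic $\RP_{V,X_c'}$.

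The (FJ~2) sketch, however, has a concrete flaw, and it is precisely where you anticipate ``the main obstacle.'' You set up the bookkeeping under the premise that $X_c'$ meets $X_1^+$ in a line, i.e.\ $z_1^+\in X_c'$, so that $X_c'=X_c\oplus Ez_1^+$ with $X_c=W\cap X_c'$, and you assert that the Heisenberg part of the stabilizer should reduce to the $\CH(W_0)$- and $\Res_{E/F}X_c$-coordinates sitting inside $\RP_{W,X_c}^J$. But under this premise the Heisenberg constraint disappears entirely: writing the embedding $\CH(W)\subset\RG_{V^+}$ in coordinates, an element $n_{w_0,b}\in\CH(W)$ fixes $z_1^+$ and sends $w\mapsto w-\langle w,w_0\rangle z_1^+$ for $w\in W$, so for every $w_0\in W$ (not merely $w_0\in X_c^\perp$) one has $n_{w_0,b}(X_c\oplus Ez_1^+)=X_c\oplus Ez_1^+$, because the $z_1^+$-perturbation of $X_c$ is reabsorbed by the summand $Ez_1^+$. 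Thus if $z_1^+\in X_c'$ the stabilizer contains all of $\CH(W)$ and equals $\RP_{W,X_c}\rtimes\CH(W)$, strictly larger (by $\dim_E X_c$) than the claimed $\RP_{W,X_c}^J=\RP_{W,X_c}\rtimes\CH(X_c^\perp)$. Conversely, if one instead takes $X_c'=X_c\oplus E(z_1^+ + w_0')$ with $w_0'\notin X_c$ (so $z_1^+\notin X_c'$), then a direct check shows the constraints couple $g\in\RG_W$ and $n_W\in W$ jointly — one needs $n_W\perp(X_c+Ew_0')$ and $g(w_0')\in w_0'+X_c$ on top of $g(X_c)\subset X_c$ — so in particular your specific assertion that the full $\CH(W_0)$-coordinates stabilize $X^+\gamma'$ is false (only $w_0\perp w_0'$ works inside $W_0$). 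In short, the ``verify term by term'' you defer is not a routine bookkeeping check under the stated premise: either the representative $\gamma'$ must be chosen with $X_c'\subset W$ (in which case $X_c=X_c'$ has dimension $r^++1$ and the stabilizer is $\RP_{W,X_c}^J$ immediately, contradicting your intersection picture), or a genuine change of coordinates is needed to untangle the coupled $\RG_W$–$\CH(W)$ constraints into the asserted semidirect-product form. This gap needs to be closed before the (FJ~2) case can be considered proved.
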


\begin{lem}\label{lem: conormal bundle}
As a representation of the stabilizer group $\RS_{\gamma'}$, the fiber of the conormal bundle $\CN_{\CZ|\CX}^{\vee}$ at $\gamma'$ is
\[
\mathrm{Fib}_{\gamma'}(\CN_{\CZ|\CX}^{\vee})=\begin{cases}
    \std_{X_c}\oplus \overline{\std_{X_c}} & \text{  when } \RG_V=\RU(V),\\
    \std_{X_c} &\text{when $\RG_V=\SO(V),\Sp(V),\Mp(V)$},\\
\end{cases}
\]    
where $\std_{X_c}$ is the standard representation of $\GL(X_c)(E)$.
\end{lem}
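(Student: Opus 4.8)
The plan is to compute the conormal fiber by the standard infinitesimal description of the homogeneous space $\CX=\RP^+(F)\bs\RG^+(F)$, and then to identify the resulting $\RS_{\gamma'}$-module directly from the concrete realization (\ref{equ: realization}).

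First I would set up the normal and conormal fibers. Writing $\Fg^+,\Fp^+,\Fh^+$ for the Lie algebras of $\RG^+(F),\RP^+(F),\RH^+(F)$ and putting $(\Fp^+)^{\gamma'}=\Ad(\gamma')^{-1}\Fp^+$, one has $T_{[\gamma']}\CX\cong\Fg^+/(\Fp^+)^{\gamma'}$, while the tangent space of the orbit $\CZ=[\gamma']\RH^+(F)$ is the image of $\Fh^+$, i.e.\ $(\Fh^++(\Fp^+)^{\gamma'})/(\Fp^+)^{\gamma'}$. Hence
\[
\mathrm{Fib}_{\gamma'}(\CN_{\CZ|\CX})\cong\Fg^+/(\Fh^++(\Fp^+)^{\gamma'}),
\]
and the conormal fiber $\mathrm{Fib}_{\gamma'}(\CN_{\CZ|\CX}^{\vee})$ is its linear dual, with the coadjoint action of the stabilizer $\RS_{\gamma'}=\RH^+\cap(\RP^+)^{\gamma'}$ identified in Lemma \ref{lem: stabilizer group}. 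So it suffices to compute this dual as an $\RS_{\gamma'}$-module; the analogous computation at the second orbit $[\gamma'g'']$ in the $\SO(V)$ case is conjugate to this one, and the single point $[\gamma^{-1}]$ in the rank-one (FJ 2) case carries no conormal statement.

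Next I would pass to the realization (\ref{equ: realization}): in the (B) and (FJ 1) cases $\CX$ is a space of totally isotropic subspaces of $W^+$ (resp.\ $W^{++}$), with $[\gamma']$ the subspace $X_c'$ of Lemma \ref{lem: stabilizer group} and $\CZ$ the isotropic subspaces in the same $\RH^+(F)$-orbit. The tangent space of $\CX$ at $X_c'$ is a space of $E$-linear maps out of $X_c'$ into an appropriate quotient space, on which $\GL(X_c')(E)$ acts through the source, and the tangent space of $\CZ$ is the subspace of those maps whose image lies in the directions produced by $\RH^+$. Passing to the quotient and dualizing, I would show the conormal fiber is one copy of the standard $\GL(X_c)(E)$-module $\std_{X_c}$ when viewed $E$-linearly: when $E=F$ this is already the asserted answer, and when $E\neq F$ (so $E=\BC$, $F=\BR$) the underlying $F$-space, complexified, splits as $\std_{X_c}\oplus\ol{\std_{X_c}}$ via the standard isomorphism $(U|_{\BR})\otimes_{\BR}\BC\cong U\oplus\ol U$ for a complex vector space $U$. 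I would check directly, from the bracket relations, that the remaining factors of $\RS_{\gamma'}$ --- namely $\RG_{V_0}$ (resp.\ $\RG_{W_0}^J$) and the pseudo-unipotent radical $\RN_{V,X_c}$ (resp.\ $\RN_{W,X_c}^J$) --- act trivially on the conormal fiber, i.e.\ that $(\Ad(s)-1)\Fg^+\subset\Fh^++(\Fp^+)^{\gamma'}$ for such $s$. The (FJ 2) cases go the same way with $W$ in place of $V$ and the pseudo-parabolic realization, the Heisenberg directions contributing nothing to the conormal fiber by the coinvariant computation of Lemma \ref{lem: coinvariant}.

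The main obstacle is the explicit bookkeeping, especially in the unitary case: one must keep track of the fact that the conormal space, a priori just an $F$-vector space carrying a $\GL(X_c)(E)$-action, is $\std_{X_c}\oplus\ol{\std_{X_c}}$ rather than, say, $\std_{X_c}\oplus\std_{X_c}^{\vee}$, and one must verify the outright vanishing of the pseudo-unipotent action rather than merely its factoring through the reductive quotient of $\RS_{\gamma'}$. I would organize the calculation by choosing $E$-bases adapted to the decompositions $V=X_c\oplus Y_c\oplus V_0$ (resp.\ $W=X_c\oplus Y_c\oplus W_0$) already fixed in Lemma \ref{lem: stabilizer group}, writing $\Fg^+$ in block form with respect to them, and reading off the two linear conditions cutting out the conormal fiber blockwise; case by case this reproduces the computations recorded in \cite{xue2020bessel}\cite{chen2021local}\cite{chen2022FJ}, which I would cite for the individual verifications while stating the uniform result here.
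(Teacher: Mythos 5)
Your proposal is correct and takes essentially the same approach as the paper: the paper offers no proof of this lemma, remarking only that the structure is computed case by case in \cite{xue2020bessel}, \cite{chen2021local}, \cite{chen2022FJ} using the realization (\ref{equ: realization}). Your write-up supplies the implicit framework---normal fiber $\Fg^+/(\Fh^++\Ad(\gamma')^{-1}\Fp^+)$, conormal fiber as its linear dual with the coadjoint $\RS_{\gamma'}$-action, passage to the isotropic Grassmannian picture---and then defers to the same citations for the orbit-by-orbit bookkeeping (including the $\std\oplus\overline{\std}$ vs.\ $\std\oplus\std^{\vee}$ distinction in the unitary case and the triviality of the $\RG_{V_0}$ and pseudo-unipotent actions), which matches what the paper does.
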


\section{The integrals method
}\label{section: integral}
This section introduces and applies the integrals methods to prove Theorem \ref{thm: main intro}(3).

For a Bessel or Fourier-Jacobi triple $(\RG,\RH,\xi)$ associated to an admissible pair $(V, W)$, we consider the triple $(\RG^+, \RH^+,\xi^+)$ associated to its basic admissible pair (Definition \ref{defin: basic admissible pair associated to}).


In \cite{jiang2010uniqueness} and \cite{liu2013uniqueness}, the authors constructed a family of integrals
\[
\RI_{s,\mu,\lambda}(\varphi_s,v)=\int_{\RH(F)\bs \RH^+(F)}\mu(v,\Lambda(\varphi_s(h)))d(h,h)\quad \varphi_s\in |\det|^s\sigma\rtimes \pi_W,\ v\in \pi_V
\]
in (B) (FJ 1) cases, and proved the integral is absolutely convergent and nonvanishing when $\Re(s)$ is large enough, where $\Lambda:\sigma\boxtimes \pi_W\to \pi_W$ constructed from a fixed nonzero Whittaker functional $\lambda:\sigma\to \BC$. With this method, they proved the inequalities
\[
m(\pi_V\boxtimes \pi_W)\leqslant m((|\det|^s\sigma_{X^+}\rtimes\pi_W)\boxtimes \pi_V) \text{ in (B) cases}
\]
\[
m(\pi_V\boxtimes \pi_W)\leqslant m(\pi_V\boxtimes(|\det|^s\sigma_{X^+}\rtimes\pi_W)) \text{ in (FJ 1) cases}
\]
when $\Re(s)$ is large enough and concluded the multiplicity-one theorems from the basic cases.  

In this section, we define the integral for all cases ((B)(FJ 1)(FJ 2)) uniformly and  prove the integral is absolutely convergent and nonvanishing $\Re(s)$ is large enough as in \cite{jiang2010uniqueness}\cite{liu2013uniqueness}. Besides, we prove the meromorphic continuation of the integrals to the complex plane. To be more precise, we treat $\int_{\RH(F)\bs \RH^+(F)}$ as the composition of two  integral operators respectively on $\RH^+\cap \RP^+(F) \bs \RH^+(F)$ and $\RH(F)\bs \RH^+\cap \RP^+(F)$ and prove the  properties for each integral operator.


\subsection{The integral over $\RH(F)\bs \RP^+\cap\RH^+(F)$}
From Corollary \ref{cor: mirabolic structure}, we have
\[
\RH(F)\bs  \RP^+\cap\RH^+(F)=\RN_{0,X^+}(E)\bs \RR_{X',X^+}(E)
\]
so we use the classical methods in the works of Jacquet, Piatetski-Shapiro and Shalika (\cite{jacquet1979automorphic}\cite{jacquet1990exterior}\cite{jacquet1981euler}\cite{jacquet2009archimedean}) to refine the integral and prove the necessary results.

Let $\RP^+=\RL^+\rtimes \RN^+$ be the pseudo-parabolic subgroup  of $\RG^+$ defined in (\ref{def: P^+}) and its pseudo-Levi subgroup $\RL^+$ is given in (\ref{equ: L^+}).
For a generic representation $\sigma_{X^+}\in\Pi_{\CaWa}(\Res_{E/F}\GL(X^+))$ and representations $\pi_V\in \Pi^{\mathrm{irr}}_{\CaWa}(\RG_V)$, and (i) $\pi_W\in \Pi^{\mathrm{irr}}_{\CaWa}(\RG_W)$ in (B) cases (ii) $\pi_W\in \Pi^{\mathrm{irr},\mathrm{nd}}_{\CaWa}(\RG_W)$ in (FJ 1)(FJ 2) cases, we define a $\RP^+(F)$-representation  $l(\sigma_{X^+},\pi_V,\pi_W)$ inflated from the $\RL^+(F)$-representation 
 \[
\begin{cases}
(\sigma_{X^+}\boxtimes \pi_W)\boxtimes\pi_V& \text{in (B) cases}, \\
\pi_V\boxtimes(\sigma_{X^+}\boxtimes \pi_W)& \text{in (FJ 1) cases},\\
(\sigma_{X^+}\boxtimes\pi_V)\boxtimes \pi_W &\text{ in  (FJ 2) cases}.
\end{cases}
\]
and denote by $\RI(\sigma_{X^+},\pi_V,\pi_W)$ the normalized Schwartz induction
\[
\Ind_{\RP^+}^{\CS,\RG^+}(\delta_{\RP^+}^{1/2}\otimes l(\sigma_{X^+},\pi_V,\pi_W)).
\]

We also fix a nonzero Whittaker functional $\lambda\in \Hom_{\Res_{E/F}\RN_{0,X^+}}(\sigma_{X^+},\xi_{0,X^+})$ and a nonzero Bessel function $\mu\in \Hom_{\RH}(\pi_V\boxtimes \pi_W,\xi)$. When $\Re(s)$ is large enough, we construct a family of elements 
\[
\RJ_{s,\mu,\lambda}\in \Hom_{\RP^+\cap \RH^+}(l(|\det|^s\sigma_{X^+},\pi_V,\pi_W),\BC). 
\]

Following the notation in Lemma \ref{lem: decomposition of the stabilizer group}, 
we define
\[
\begin{aligned}
&\RJ_{s,\mu,\lambda}(l(v_{\sigma_{X^+}},v_{\pi_V},v_{\pi_W}))\\=&\int_{\RH(F)\bs \RP^+\cap \RH^+(F)}\lambda(\sigma_{X^+}(p_{\GL}(g))v_{\sigma_{X^+}})\mu(\pi_{V,W}(g)v_{\pi_V}\boxtimes v_{\pi_{W}})|\det(g_{X})|^sd(g,g)\\
=&\int_{\RN_{0,X^+}(E)\bs \RR_{X',X^+}(E)}\lambda(\sigma_{X^+}(g_{X^+})v_{\sigma_{X^+}})\mu(\pi_{V,W}(g_{X^+})v_{\pi_V}\boxtimes v_{\pi_{W}})|\det(g_{X^+})|^sdg_{X^+}
\end{aligned}
\]
for every  $v_{\sigma_{X^+}}\in \sigma_{X^+}$,
$v_{\pi_W}\in \pi_W$ and $v_{\pi_V}\in \pi_V$. In the integral, $g\in \RS_{\text{open}}(F)$, $(g,g)\in \RP^+\cap \RH^+(F)$ and we denote by $\pi_{V,W}(g)$ the action of $l(1,\pi_V,\pi_W)((g,g))$ on  $l(1,\pi_V,\pi_W)=\pi_V\boxtimes \pi_W$. We denote by $p_{\GL}$ the projection from $\RS_{\text{open}}(F)$ to $\RR_{X',X^+}(E)$ 
in the decomposition in Lemma \ref{lem: decomposition of the stabilizer group}.



We will prove the following theorem following \cite{jacquet1981euler}\cite{jacquet1990exterior}\cite{jacquet2009archimedean}.
\begin{thm}\label{thm: main}
\begin{enumerate}
    \item The integral   $\RJ_{s,\mu,\lambda}$  is absolutely convergent when $\Re(s)$ is large enough;
    \item In  the continuous dual $(l(\sigma_{X^+}\boxtimes \pi_V\boxtimes \pi_W))^*$, $\RJ_{s,\mu,\lambda}$ has a meromorphic continuation to the complex plane $\BC$.
\end{enumerate} 
\end{thm}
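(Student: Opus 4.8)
\textbf{Proof proposal for Theorem \ref{thm: main}.}

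The plan is to reduce the statement to the classical local theory of Rankin--Selberg type integrals for $\GL_n$, following the method of Jacquet--Piatetski-Shapiro--Shalika as developed in the Archimedean setting in \cite{jacquet2009archimedean} and refined in \cite{jacquet1990exterior}. The key observation, already built into the setup via Corollary \ref{cor: mirabolic structure}, is that the integral $\RJ_{s,\mu,\lambda}$ is literally an integral over the mirabolic homogeneous space $\RN_{0,X^+}(E)\bs \RR_{X',X^+}(E)$ against a product of a Whittaker coefficient of $\sigma_{X^+}$ and a bounded Bessel coefficient of $\pi_V\boxtimes\pi_W$, twisted by $|\det|^s$. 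So the structure of the argument mirrors the proof that local Rankin--Selberg integrals for $\GL_n\times\GL_m$ (or the mirabolic integrals attached to exterior-square $L$-functions in \cite{jacquet1990exterior}) converge in a half-plane and continue meromorphically.

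First I would establish Point (1). Write $\RR_{X',X^+}(E)=\RN_{0,X^+}(E)\cdot A\cdot \RR'$ using an Iwasawa-type decomposition adapted to the mirabolic group, where $A$ is the relevant torus direction and $\RR'$ a lower-rank mirabolic piece or a compact remainder; reduce the integral to an iterated integral in which the only noncompact direction carries a factor $|t|^{s}$ times the product of the Whittaker function of $\sigma_{X^+}$ and the (globally bounded, by the multiplicity-one theorem and the unitarity assumptions) Bessel functional applied to $\pi_{V,W}$. The gauge estimates for Archimedean Whittaker functions — rapid decay as the argument goes to infinity in the positive chamber, controlled moderate growth toward the walls (see \cite{jacquet2009archimedean}, and for the Bessel side the growth estimates in \cite{xue2020bessel}\cite{chen2021local}) — then give absolute convergence for $\Re(s)$ larger than an explicit constant depending only on the exponents of $\sigma_{X^+}$, $\pi_V$, $\pi_W$. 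This step is routine modulo citing the right gauge estimates.

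For Point (2), the meromorphic continuation, I would use the Bernstein continuation principle in the form of \cite{bernstein1972analytic} (as invoked elsewhere in the paper). Regard $\RJ_{s,\mu,\lambda}$ as a holomorphic family, valued in the strong dual $(l(\sigma_{X^+}\boxtimes\pi_V\boxtimes\pi_W))^*$, of solutions to a system of linear equations with polynomial (in $s$) coefficients: namely the $\RP^+\cap\RH^+$-equivariance equations defining $\Hom_{\RP^+\cap\RH^+}(l(|\det|^s\sigma_{X^+},\pi_V,\pi_W),\BC)$, together with the finite set of $\CZ(\Fg_\BC)$-type and torus-eigenvector constraints coming from the Casselman--Wallach structure. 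Concretely, one shows the space of such equivariant functionals is, generically in $s$, one-dimensional and cut out by finitely many polynomial equations; Bernstein's theorem then forces the single holomorphic solution valid for $\Re(s)\gg 0$ to extend to a meromorphic family on all of $\BC$. Alternatively, and perhaps more cleanly, one can run the classical argument: write down a functional equation relating $\RJ_{s,\mu,\lambda}$ to $\RJ_{-s,\cdots}$ via the intertwining/Fourier-transform on the mirabolic, as in \cite{jacquet1990exterior}, which directly produces the continuation in one stroke once convergence in a half-plane is known.

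The main obstacle I anticipate is not the formal continuation machinery but verifying that the Bernstein-type apparatus applies to families valued in $(V_{\sigma_{X^+}}\wh\otimes V_{\pi_V}\wh\otimes V_{\pi_W})^*$ rather than to families of distributions on a fixed space — i.e.\ one must present the data as an algebraic family of systems over $\Spec\BC[s]$ (or a suitable localization) with the target being the global sections of a coherent-type object, so that \cite{bernstein1972analytic} genuinely applies. Handling the moderate-growth (Casselman--Wallach) rather than finite-length $(\Fg,K)$-module setting is precisely what forces the use of the Schwartz-induction picture and the estimates of \cite{chen2020schwartz}; carrying this through uniformly across (B), (FJ 1), (FJ 2) — where the mirabolic part $\RR_{X',X^+}$ is the same but the ambient $\RP^+\cap\RH^+$ differs — is the technical heart of the section, and is where I expect to spend most of the effort. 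The remaining pieces (explicit Iwasawa coordinates, the boundedness of $\mu$ on $\RS_{\mathrm{open}}(F)$, compatibility of $\delta_{\RP^+}^{1/2}$ normalizations) are bookkeeping.
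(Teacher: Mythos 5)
Your plan for Point (1) essentially matches the paper: Iwasawa coordinates on the mirabolic, the rapid-decay gauge estimate for Archimedean Whittaker functions from \cite{jacquet2009archimedean}, and a moderate-growth bound on the Bessel side. One small caution: the paper does not and need not assert that the Bessel coefficient $\RW'_{v_{\pi_V},v_{\pi_W}}$ is \emph{bounded} (it is generally not; it has polynomial growth coming from continuity of $\mu$). The convergence is carried entirely by the rapid decay of the Whittaker function against a polynomially growing factor, which is what \cite[Lemma 3.5]{jacquet2009archimedean} packages. Your appeal to ``unitarity'' to get boundedness is not needed and not quite right.

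For Point (2) you propose a genuinely different route, and I think it has a real gap. The paper does \emph{not} use the Bernstein continuation principle here, and it does not use a functional equation on the mirabolic. What the paper does is the asymptotic-expansion method of \cite{jacquet1990exterior}: expand the $\GL$-Whittaker function $\RW_{v_{\sigma_{X^+}}}$ on the torus as a finite sum of (Schwartz function)$\times$(finite function), and — this is the crucial new step — prove the analogous expansion for the Bessel/Fourier--Jacobi coefficient $\RW'_{v_{\pi_V},v_{\pi_W}}$ (Lemma \ref{lem: expansion pseudo-Whittaker}, obtained from the Jacquet-module machinery and the closure properties under Euler operators, following Jacquet and Soudry). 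Substituting both expansions reduces the integral to Mellin integrals of Schwartz functions against finite functions, for which the continuation and pole locations are explicit (\cite[Section 4.2]{igusa1978lectures}). Your proposal treats the Bessel factor as a black box, so you never get the structure needed to carry out this reduction.

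Concerning the alternatives you propose: the Bernstein route requires first presenting $\RJ_{s,\mu,\lambda}$ as a polynomial family of distributions on a Nash manifold solving a polynomial system with generically one-dimensional solution space; the target $\bigl(l(\sigma_{X^+}\boxtimes\pi_V\boxtimes\pi_W)\bigr)^*$ is not of that form, and the conversion is precisely the non-trivial work that the paper performs for the \emph{other} integral (Theorem \ref{thm: main 2}(2)) via Lemma \ref{lem: surjection of minimal parabolic induction} and Gourevitch--Sahi--Sayag holonomicity, not for this one. The functional-equation route is not available: there is no known $s\leftrightarrow -s$ functional equation for the Bessel integral at this level of generality, and \cite{jacquet1990exterior} — contrary to what you say — obtains continuation by the expansion method, not by a functional equation. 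So the actual technical heart you would need to supply is the expansion lemma for the pseudo-Whittaker/Bessel coefficient, and your proposal does not identify or attempt that step.
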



\subsubsection{Absolute convergence} \label{section: absolute convergence}

We prove the absolute convergence of $\RJ_{s,\mu,\lambda}$ when $\Re(s)$ is large enough following \cite{jacquet2009archimedean}. 
\begin{defin}
\begin{enumerate}
\item For $v_{\sigma_{X^+}}\in V_{\sigma_{X^+}}$, we denote by $\RW_{v_{\sigma_{X^+}}}$  the \textbf{Whittaker functional} defined by 
\[
\RW_{v_{\sigma_{X^+}}}(g_{X^+})=\lambda(\sigma_{X^+}(g_{X^+})v_{\sigma_{X^+}}),\quad g_{X^+}\in \GL(X^+)(E).
\]
\item For $v_{\pi_V}\in \pi_V$ and $v_{\pi_W}\in \pi_W$, we denote by $\RB_{v_{\sigma_{V}},v_{\sigma_W}}$  the \textbf{Bessel functional} defined by 
\[
\RB_{v_{\pi_{V}},v_{\pi_W}}(g_{V},g_W)=\mu(\pi_{V}(g_{V})v_{\pi_{V}},\pi_W(g_W)v_{\pi_W}),\quad g_V\in \RG_V(F), \quad g_W\in  \RG_W(F).
\]
We denote by $\FJ_{v_{\sigma_{V}},v_{\sigma_W}}$ the \textbf{Fourier-Jacobi functional}
\[\FJ_{v_{\pi_{V}},v_{\pi_W}}(g_{V},g_W)=\mu(\pi_{V}(g_{V})v_{\pi_{V}},\pi_W(g_W)v_{\pi_W}),\quad g_V\in \RG_V(F), \quad g_W\in  \RG_W^J(F)\]
\item Following the notion in Section \ref{section: double coset}, for $g\in \RR_{X',X^+}(E)\subset \RP^+\cap \RH^+(F)$, we define the  functional
\[
\RW'_{v_{\pi_{V}},v_{\pi_W}}(g)=\begin{cases}
  \RB_{v_{\pi_V},v_{\pi_W}}(\diag(g,1_{W\oplus D}),1_W) \text{ in (B) cases,} \\
   \RB_{v_{\pi_V},v_{\pi_W}}(1_W,\diag(g,1_{W})) 
 \text{ in (FJ 1) cases,}\\
 \FJ_{v_{\pi_V},v_{\pi_W}}(\diag(g,1_{V}),1_V)  \text{ in (FJ 2) cases.}
\end{cases}
\]
Then
\[
\RW'_{v_{\pi_V},v_{\pi_W}}(gg')=\RW'_{\pi_{V,W}(g')(v_{\pi_V},v_{\pi_W})}(g).
\]
\end{enumerate}
\end{defin}

From the continuity of $\lambda$ and $\mu$, we can find constants $N_1,N_2,C_1,C_2>0$ and semi-norms $\nu_{X^+}$ on $V_{\sigma_{X^+}}$, $\nu_{V,W}$ on $V_{\pi_V}\wh{\otimes}V_{\pi_W}$ such that
\begin{equation}\label{equ: first estimate}
|\RW_{v_{\sigma_{X^+}}}(g_{X^+})|\leqslant C_1\|g_{X^+}\|^{N_1}\nu_{X^+}(v_{\sigma_{X^+}}).    
\end{equation}
Then, for $g\in \RG_V(F)$ 
\[
\begin{aligned}
|\RB_{v_{\pi_V},v_{\pi_W}}(g_{V},g_W)|&\leqslant C_2\|(g_{V},g_W)\|^{N_2}\nu_{V,W}(v_{\pi_V}\wh{\otimes }v_{\pi_W}), \quad g_W\in \RG_W(F), \text{ and}\\
|\FJ_{v_{\pi_V},v_{\pi_W}}(g_{V},g_W)|&\leqslant C_2\|(g_{V},g_W)\|^{N_2}\nu_{V,W}(v_{\pi_V}\wh{\otimes }v_{\pi_W}),\quad g_W\in \RG_W(F).
\end{aligned}
\]
In particular,
\begin{equation}\label{equ: first estimate B}
|\RW'_{v_{\pi_V},v_{\pi_W}}(g)|\leqslant C_2\|g\|^{N_2}\nu_{V,W}(v_{\pi_V}\wh{\otimes }v_{\pi_W}),\quad g\in \RR_{X',X^+}(E).
\end{equation}

We denote by $K_{X'}$ the maximal compact subgroup of $\GL(X')(E)$, $\RN_{X'}=\RN_{X^+}\cap \GL(X')$, and $A_{X'}$ be the maximal $\BR$-split torus of $\GL(X')(E)$,  
then from Iwasawa decomposition, we have 
\[
\GL(X')(E)=\RN_{X'}(E)\cdot \RA_{X'}(\BR)\cdot K_{X'},
\]
then 
\begin{equation}\label{equ: decomposition of mirabolic}
\RR_{X',X^+}(E)= \RN_{X^+}(E)\cdot (\RA_{X'}(\BR)\cdot K_{X'}\times 1).
\end{equation}

Following \cite[Section 3.2]{jacquet2009archimedean}, we define 
\[
\xi_{h}(g_X)=\prod_{i=1}^{\dim X'-1}(1+(a_ia_{i+1}^{-1})^2),
\]
for  $g_X\in \RR_{X',X^+}(E)$ with the decomposition
\[
g_X=n_{0,X^+}\cdot \diag(a_1,\cdots ,a_{\dim X'},1)\cdot k_{X'}
\]
where $n_{0,X^+}\in \RN_{0,X^+}(E)$, $\diag(a_1,\cdots,a_{\dim X'})\in \RA_{X'}(\BR)$ and $k_{X'}\in K_{X'}$.

Then 
\begin{equation}\label{equ: absolute convergent}
\begin{aligned}
    &\RJ_{s,\mu,\lambda}(v_{\sigma_{X^+}}\boxtimes v_{\pi_W},v_{\pi_V})\\=&\int_{\RN_{0,X^+}(E)\bs \RR_{X',X^+}(E)}\RW_{v_{\sigma_{X^+}}}(g_{X})\RW_{v_{\pi_V},v_{\pi_W}}'(g_{X})|\det(g_{X})|^sdg_{X}\\
=&\int_{K_{X'}}\int_{\RA_{X'}(\BR)}\RW_{v_{\sigma_{X^+}}}(\diag(ak,1))\RW_{v_{\pi_V},v_{\pi_W}}'(\diag(ak,1))|\det(ak)|^sdkda
    \end{aligned}
\end{equation}

From \cite[Proposition 3.1]{jacquet2009archimedean}, there exists $C_3,M,N>0$, such that
\[
|\RW_{v_{\sigma_{X^+}}}(\diag(ak,1))|\leqslant C_3\cdot\xi_{h}(a)^{-N}\|a\|^M\nu_{X^+}(v_{\sigma_{X^+}}),\quad a\in \RA_{X'}(\BR),{k\in K_{X'}}
\]
and from (\ref{equ: first estimate B})
\[
|\RW'_{v_{\pi_V},v_{\pi_W}}(\diag(ak,1))|\leqslant C_2\|a\|^{N_2}\nu_{V,W}(v_{\pi_V}\wh{\otimes }v_{\pi_W})\quad a\in \RA_{X'}(\BR),{k\in K_{X'}}
\]
Then the absolute convergence of (\ref{equ: absolute convergent}) follows from \cite[Lemma 3.5]{jacquet2009archimedean}. So we proved Theorem \ref{thm: main}(1).

\subsubsection{Meromorphic continuation}\label{section: meromorphic continuation}
This section proves the meromorphic continuation of the integral following  \cite{jacquet1990exterior}.
\begin{defin}
For a real Lie group $G$, a function $\xi$ on $G$ is called a \textbf{finite function} if and only if the translations of $\xi$  span a finite-dimensional space. 
\end{defin}
It is easy to verify the following lemma.
\begin{lem}\label{lem: finite functions}
\begin{enumerate}
    \item Every $(E^{\times})^n$-finite function is a linear combinations of 
\[ 
\prod_{i=1}^m\chi_i(u_i)|u_i|^{s_i}(\log|u_i|)^{n_i},
\]
where $\chi_i$  are unitary characters of $E^{\times}$, $s_i\in \BR$ and $n_i\in \BN$.
\item A finite product of $(E^{\times})^m$-finite functions is  $(E^{\times})^m$-finite.
\end{enumerate}
\end{lem}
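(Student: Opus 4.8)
The plan is to prove \texttt{Lemma \ref{lem: finite functions}} by a direct analysis of finite-dimensional translation-invariant spaces of functions on $(E^{\times})^m$. I will split the argument along the two points of the statement.

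\textbf{Point (1).} First I would reduce to the case $m=1$, since a function on $(E^{\times})^m$ whose full set of translates spans a finite-dimensional space is, for each fixed tuple of the other coordinates, a finite function in each variable separately, and conversely a finite function on $(E^{\times})^m$ lies in a tensor product of finite-dimensional translation-modules in each coordinate (this uses that $(E^{\times})^m=\prod E^{\times}$ and that the span of translates of a tensor product is contained in the tensor product of the spans). So it suffices to classify finite functions on $E^{\times}$, where $E^{\times}\cong \BR_{>0}\times K$ with $K$ compact ($K=\{\pm 1\}$ when $E=\BR$, $K=S^1$ when $E=\BC$). On the compact part $K$, a finite function is a finite linear combination of characters of $K$ by Peter--Weyl (or elementary Fourier analysis on a compact abelian group). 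On $\BR_{>0}$, write $t=e^x$; a continuous function of $x\in\BR$ whose translates span a finite-dimensional space is annihilated by a constant-coefficient linear ODE, hence is a finite linear combination of $x^n e^{\mu x}$ with $\mu\in\BC$, $n\in\BN$; writing $\mu=s+i\theta$ and translating back gives $|t|^s(\log|t|)^n$ times a unitary character $t\mapsto t^{i\theta}$ of $\BR_{>0}$. Combining the $K$-part and the $\BR_{>0}$-part and regrouping produces exactly the stated normal form $\prod_{i}\chi_i(u_i)|u_i|^{s_i}(\log|u_i|)^{n_i}$, where I note that after writing everything on $E^{\times}$ the unitary characters of $E^{\times}$ are precisely products of a unitary character of $K$ with $t\mapsto t^{i\theta}$, so the factors of the form $t^{i\theta}$ are absorbed into the $\chi_i$.

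\textbf{Point (2).} This is immediate from the general fact that the product of two finite functions is finite: if $V_1$ and $V_2$ are finite-dimensional translation-invariant spaces of functions on a group $G$ containing $\xi_1$ and $\xi_2$ respectively, then the span of $\{(\text{left translate of }\xi_1\xi_2)\}$ lies in the (finite-dimensional) span of $\{f_1 f_2 : f_1\in V_1, f_2\in V_2\}$, because translation is multiplicative: $L_g(\xi_1\xi_2)=(L_g\xi_1)(L_g\xi_2)$. Applying this inductively to a finite product of $(E^{\times})^m$-finite functions gives that the product is $(E^{\times})^m$-finite.

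None of this is hard; the statement is genuinely elementary, which is why the excerpt says ``it is easy to verify.'' The only mild subtlety, and the one place I would be slightly careful, is the bookkeeping in Point (1) when passing between the coordinates $u_i$ on $E^{\times}$ and the $\BR_{>0}\times K$ decomposition, so as to make sure the claimed normal form is literally the one stated (in particular that the $(\log|u_i|)^{n_i}$ powers and the unitary characters separate correctly, and that no spurious polynomial-in-$u_i$ factors appear beyond those already encoded in $(\log|u_i|)^{n_i}$ after the substitution $u_i=e^{x_i}$). Everything else is formal.
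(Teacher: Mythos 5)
The paper itself offers no proof of this lemma (the text preceding it reads ``It is easy to verify the following lemma''), so there is no written argument to compare against. Your proof is a correct elementary verification and is presumably what the authors have in mind: decompose $E^{\times}\cong\BR_{>0}\times K$ with $K$ compact, use Peter--Weyl on $K$ and the constant-coefficient ODE characterization of finite functions on $\BR$ (after $t=e^{x}$) on the split factor, and absorb the imaginary part of the exponent into the unitary character so that the $s_i$ can be taken real. Point~(2) is exactly the one-line remark that translation is multiplicative on products of functions.

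The one place you should tighten is the reduction to $m=1$ in Point~(1). Your parenthetical justification --- that the span of translates of a tensor product is contained in the tensor product of the spans --- proves the \emph{converse} implication (a sum of simple tensors of finite functions is a finite function), not the one you need, and you never actually argue that a finite function is such a sum. The correct forward argument is: let $V$ be the finite-dimensional span of translates of $f$, let $W_1=\{\,v(\cdot,e,\dots,e):v\in V\,\}$, which is a finite-dimensional space of functions on the first factor and is translation-invariant under $E^{\times}$; pick a basis $w_1,\dots,w_k$ of $W_1$. Then $f(u_1,\dots,u_m)=\sum_j c_j(u_2,\dots,u_m)\,w_j(u_1)$ for some coefficient functions $c_j$, and by evaluating at $k$ points $u_1$ making the matrix $\bigl(w_j(u_1^{(i)})\bigr)$ invertible one sees that each $c_j$ is a linear combination of restrictions of translates of $f$ to $\{e\}\times(E^{\times})^{m-1}$, hence a finite function on $(E^{\times})^{m-1}$. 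Induction on $m$ then finishes. Everything else in your write-up is fine.
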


\begin{defin}\label{defin: fo p}
\begin{enumerate}
    \item For every finite function $\xi$, we denote by $\Fo(\xi)$ the tuple $\{(\chi_i,s_i,n_i)\}_{1\leqslant i\leqslant r}$ such that
\[
\xi=\prod_{i=1}^m\chi_i(u_i)|u_i|^{s_i}(\log|u_i|)^{n_i}.
\]
\item With the notations above, we denote by $p_{\xi}$ the polynomial
\[
p_{\xi}(s)=\prod_{i=1}^m(s+s_i)^{n_i}.
\]
\end{enumerate}    
\end{defin}

For a Schwartz function $\Phi\in \CS((E^{\times})^m)$ and a tuple $\{(\chi_i,s_i,n_i)\}_{1\leqslant i\leqslant r}$, we define the following Mellin transform
    \begin{equation}\label{equ: continuity}
\CJ^{\{(\chi_i,s_i,n_i)\}_{1\leqslant i\leqslant m}}(s,\Phi)=\int_{(\BR^{\times})^{m}}\Phi(a_1,\cdots,a_m)\prod_{i=1}^m\chi_i(a_i)|a_i|^{s+s_i}(\log|a_i|)^{n_i}d(a_1,\cdots,a_m).
    \end{equation}
Here 
\begin{equation}
    d(a_1,\cdots,a_m)=da_1^{\times}\cdots da_m^{\times}=\frac{da_1}{|a_1|}\cdots \frac{da_m}{|a_m|}.
\end{equation}
\begin{pro}\label{equ: basic meromorphic continuation}
For a Schwartz function $\Phi\in \CS((E^{\times})^m)$ and a tuple $\{(\chi_i,s_i,n_i)\}_{1\leqslant i\leqslant r}$, the integral 
\[
\prod_{i=1}^m(s+s_i)^{n_i}\CJ^{\{(\chi_i,s_i,n_i)\}_{1\leqslant i\leqslant r}}(s,\Phi)
\] 
has an analytic continuation to an entire function.
\end{pro}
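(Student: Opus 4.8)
The plan is to reduce to the one-variable case $m=1$ and then argue by a repeated integration-by-parts / partition-of-unity argument of the classical Tate–Bernstein type. First I would write $\CS((E^\times)^m) = \CS(E^\times)\wh\otimes \cdots \wh\otimes \CS(E^\times)$ (completed projective tensor product), and since each factor $\chi_i(a_i)|a_i|^{s+s_i}(\log|a_i|)^{n_i}$ depends only on the $i$-th variable, the integral $\CJ^{\{(\chi_i,s_i,n_i)\}}(s,\Phi)$ is a continuous multilinear expression in the one-variable Mellin integrals
\[
M_i(s,\phi) = \int_{E^\times}\phi(a)\,\chi_i(a)|a|^{s+s_i}(\log|a|)^{n_i}\,d^\times a,\qquad \phi\in\CS(E).
\]
Because the product $\prod_i(s+s_i)^{n_i}$ splits as $\prod_i(s+s_i)^{n_i}$ with the $i$-th factor acting on $M_i$, it suffices to show that $(s+s_i)^{n_i}M_i(s,\phi)$ extends to an entire function of $s$, locally uniformly in $\phi$; the multivariable statement then follows by continuity of the projective tensor product and of multiplication of entire functions.

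For the one-variable claim I would split $E^\times$ into the region $\{|a|\le 1\}$ and $\{|a|\ge 1\}$ using a smooth partition of unity. On $\{|a|\ge 1\}$ the function $a\mapsto \phi(a)\chi_i(a)(\log|a|)^{n_i}$ is still Schwartz (rapid decay times at-most-polynomial growth), so the integral $\int_{|a|\ge 1}$ converges absolutely for all $s\in\BC$ and is manifestly entire. On $\{|a|\le 1\}$ I would Taylor-expand $\phi$ at $0$: write $\phi(a) = \sum_{k=0}^{N-1} c_k a^k + R_N(a)$ with $R_N(a) = O(|a|^N)$ together with all derivatives controlled; the remainder term contributes an integral converging for $\Re(s) > -\Re(s_i) - N$, hence entire for $N$ large after multiplying by the finitely many factors that could produce poles. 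Each monomial term $\int_{|a|\le 1} a^k \chi_i(a)|a|^{s+s_i}(\log|a|)^{n_i}\,d^\times a$ is an explicit elementary integral: over $E=\BR$ it reduces (splitting $a>0$, $a<0$ according to the parity of $\chi_i$) to $\int_0^1 t^{s+s_i+k}(\log t)^{n_i}\,\frac{dt}{t}$, which equals $(-1)^{n_i} n_i!\,(s+s_i+k)^{-n_i-1}$, and over $E=\BC$ to an analogous Gamma-type expression; in either case it is a rational function of $s$ whose only pole is at $s=-s_i-k$ of order $\le n_i+1$. Multiplying by $(s+s_i)^{n_i}$ kills the order-$n_i$ part of the pole at $k=0$; but in fact, since we may take $N$ as large as we like and there are only finitely many troublesome $k$ with $-s_i-k$ in a bounded region, the cleanest formulation is that $\prod_{i}(s+s_i)^{n_i}\CJ$ has poles only outside any fixed half-plane once $N$ is chosen large, and the poles that do occur at the finitely many points $s=-s_i-k$ ($k\ge 1$) are cancelled by... — here one should be slightly careful: strictly the monomial $k=0$ term has a pole of order $n_i+1$, so $(s+s_i)^{n_i}$ alone reduces it to a simple pole.

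The main obstacle, and the point where I would be most careful, is exactly this bookkeeping of pole orders: the statement as written asserts that $\prod_i(s+s_i)^{n_i}\CJ$ is \emph{entire}, which is slightly stronger than what the naive expansion gives (it would give a simple pole at each $s=-s_i-k$). The resolution is that one must integrate by parts in $a$ (using $a\,\frac{d}{da}$, which on $|a|^{s}$ multiplies by $s$) to trade each factor $(s+s_i)$ against a derivative falling on $\phi$: one shows $(s+s_i)^{n_i}M_i(s,\phi)$ equals, up to boundary terms that are entire, a Mellin integral of $\phi$ against $|a|^{s+s_i}$ with \emph{no} $\log$ factor, i.e. with $n_i$ effectively replaced by $0$, and then the standard Tate-integral argument (partition plus Taylor expansion) shows such an integral, multiplied by nothing, is already meromorphic with only simple poles at $s=-s_i-k$; but the boundary terms produced at each integration by parts supply exactly the extra vanishing needed. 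I would carry out the integration by parts first, reducing to the $n_i=0$ case, then do the partition/Taylor argument, and finally reassemble the $m$ variables by continuity of the projective tensor product. This is the argument of \cite{jacquet1990exterior}, and I would cite Bernstein's analytic-continuation principle \cite{bernstein1972analytic} for the uniformity in $\Phi$.
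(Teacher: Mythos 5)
Your reduction to the one-variable, factorizable case coincides with the paper's: the paper observes that for $\Re(s)$ large the integrand defines a continuous functional on the Schwartz space, so one may replace $\Phi$ by a pure tensor $\prod_i\Phi_i$ and factor $\CJ^{\{(\chi_i,s_i,n_i)\}}(s,\Phi)=\prod_i\CJ^{\{(\chi_i,s_i,n_i)\}}(s,\Phi_i)$; your projective-tensor-product formulation is the same reduction. After that, the paper's proof is a one-line citation of \cite[\S 4.2]{igusa1978lectures} for the one-variable statement, whereas you attempt a self-contained Tate-type argument. You yourself flag the bookkeeping problem in that argument, but the integration-by-parts repair you propose does not close it.

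Concretely, set $M(s,\phi,n)=\int_{E^\times}\phi(a)\chi(a)|a|^{s+s_0}(\log|a|)^n\,d^\times a$. Integration by parts against the Euler operator $a\,d/da$ (with no boundary terms, since $E^\times$ is a group and $a\,d/da$ is invariant) gives
\[
(s+s_0)\,M(s,\phi,n)\;=\;-M(s,a\phi',n)\;-\;n\,M(s,\phi,n-1),
\]
which is \emph{not} a reduction to a log-free Mellin integral. Iterating it $n$ times expresses $(s+s_0)^{n}M(s,\phi,n)$ as a linear combination of $M(s,\psi_j,m_j)$ for various Schwartz $\psi_j$ and $0\le m_j\le n$: the logarithm powers never drop to zero, and each such term still has poles of order $m_j+1$ at $s=-s_0-k$ for every $k$ at which $\psi_j$ has a nonvanishing Taylor coefficient. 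The only effect of the Euler operator on poles is to annihilate the one at $s=-s_0$ (since $a\phi'$ vanishes at $0$); the poles at $s=-s_0-k$, $k\ge1$, persist and cannot be cleared by the finite prefactor $\prod_i(s+s_i)^{n_i}$. Thus, if $\Phi$ is merely the restriction to $(E^\times)^m$ of a Schwartz function on $E^m$ (which is what your Taylor-expansion-at-$0$ step implicitly assumes), your argument gives at best meromorphy, not entirety. If instead $\CS((E^\times)^m)$ means the Schwartz space of the Nash manifold $(E^\times)^m$ --- functions decaying, with all derivatives, rapidly as $a_i\to0$ as well as as $a_i\to\infty$ --- then the integral converges absolutely for every $s$ and is entire outright by Morera, and the Taylor expansion, prefactor, and integration by parts are all beside the point. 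Under neither reading does the integration-by-parts step do what you claim, so the cleanest route is to delegate the one-variable case to Igusa exactly as the paper does.
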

\begin{proof} When $\Re(s)$ large enough, $\prod_{i=1}^m\chi_i(a_i)|a_i|^{s+s_i}(\log|a_i|)^{n_i}$ is a function of modernate growth on $(\BR^{\times})^r$. Hence, the integral (\ref{equ: continuity}) defines a continuous functional on $\CS((\BR^{\times})^r)$, so we may assume \[\Phi(a_1,\cdots,a_r)=\prod_{i=1}^r\Phi_i(a_i),\]
then
\[
\CJ^{\{(\chi_i,s_i,n_i)\}_{1\leqslant i\leqslant r}}(s,\Phi)=\prod_{1\leqslant i\leqslant r}\CJ^{\{(\chi_i,s_i,n_i)\}}(s,\Phi_i).
\]

From \cite[Section 4.2]{igusa1978lectures}, for every $1\leqslant i\leqslant r$,  $(s+s_i)^{n_i}\CJ^{\{(\chi_i,s_i,n_i)\}}(s,\Phi_i)$ can be extended to an entire function on $s\in\BC$.
\end{proof}

From \cite[Proposition 4.1]{jacquet1990exterior}, we have an expansion of the Whittaker functions on $\RA_{r}(\BR)\times 1\subset \RR_{X',X^+}(E)$.
\begin{equation}\label{equ: expansion whittaker}
\RW_{v_{\pi_{X^+}}}(\diag(a_1,\cdots,a_r,1))=\sum_{\xi_{X^+}\in \Sigma_{X^+}}\phi_{\xi_{X^+},v_{\pi_{X^+}}}(a_1,\cdots,a_r)\xi_{X^+}(a_1,\cdots,a_r),
\end{equation}
where $\Sigma_{X^+}$ is a finite set of finite functions of $(\BR^{\times})^r$ that is only dependent on $\sigma_{X^+}$ and $\phi_{\xi_{X^+}}$ are Schwartz functions in $\CS((\BR^{\times})^r)$.

We can generalize the proof for \cite{jacquet1990exterior} (see also \cite[Proposition 3.3]{soudry1993rankin}) to obtain  an expansion of $\RW'_{v_{\pi_{V}},v_{\pi_W}}$  on $(\RA_{X'}(\BR)\times 1)\times 1\subset \RR_{X',X^+}(E)$.
\begin{lem}\label{lem: expansion pseudo-Whittaker}
We have the following expansion
\begin{equation}
\RW'_{v_{\pi_{V},v_{\pi_W}}}(\diag(a_1,\cdots,a_r,1))=\sum_{\xi_{V,W}\in \Sigma_{V,W}}\phi_{\xi_{V,W},v_{\pi_{V}},v_{\pi_W}}(a_1,\cdots,a_r)\xi_{V,W}(a_1,\cdots,a_r),
\end{equation}
where $\Sigma_{V,W}$ is a finite set of finite functions of $(\BR^{\times})^r$ that is 
only dependent on $\pi_V,\pi_W$, and $\phi_{\xi_{X^+}}$ are Schwartz functions in $\CS((\BR^{\times})^r)$.
\end{lem}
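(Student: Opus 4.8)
The plan is to recognize that, after the standard Iwasawa reduction, $\RW'_{v_{\pi_V},v_{\pi_W}}$ restricted to $\RA_{X'}(\BR)\times 1$ is the restriction to a torus of a generalized matrix coefficient of $\pi_V$ (or $\pi_W$), and then to run the Archimedean asymptotic-expansion machinery exactly as Jacquet does for Whittaker functions in \cite{jacquet1990exterior}. Concretely, using the embedding $g\mapsto\diag(g,1)$ of the mirabolic $\RR_{X',X^+}$ into $\RS_{\mathrm{open}}$ described in the proof of Lemma \ref{lem: decomposition of the stabilizer group}, the function $g\mapsto\RW'_{v_{\pi_V},v_{\pi_W}}(g)$ is the Bessel function $\RB_{v_{\pi_V},v_{\pi_W}}$ (or, in (FJ 2) cases, the Fourier--Jacobi functional $\FJ_{v_{\pi_V},v_{\pi_W}}$) evaluated at a point that depends on $g$ only through its $\GL$-part; in the (FJ 2) case one first strips off the Heisenberg--Weil factor using Lemma \ref{lem: classification of representations 2} and Lemma \ref{lem: coinvariant tensor weil}, so that $\FJ$ becomes a Bessel-type function of the (possibly metaplectic) group $\RG_V$. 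By Corollary \ref{cor: mirabolic structure} this function is left $(\RN_{0,X^+}(E),\xi_{0,X^+})$-equivariant, and by the decomposition \eqref{equ: decomposition of mirabolic} it is determined by its values on $\RA_{X'}(\BR)\cdot K_{X'}$, so it suffices to produce the expansion for $a\mapsto\RW'_{v_{\pi_V},v_{\pi_W}}(\diag(a_1,\dots,a_r,1)k)$, uniformly in $k\in K_{X'}$.

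Next I would record the two structural properties that drive the expansion. Since $\pi_V$ (resp.\ the $\RG_V$-reduction of $\pi_W^J$) is Casselman--Wallach, its algebraic vectors form a finite-length $(\Fg_{V,\BC},K_V)$-module (Definition \ref{def: CasselmanWallach}); hence the function $g_V\mapsto\RB_{v_{\pi_V},v_{\pi_W}}(g_V,1)$ on $\RG_V(F)$ is annihilated by an ideal of finite codimension in $\CZ(\Fg_{V,\BC})$ and has uniform moderate growth, the latter already quantified by \eqref{equ: first estimate B}. Moreover, because $\xi$ restricts to a \emph{generic} character on the unipotent radical $\RN$, the Bessel functional makes this function rapidly decreasing in the directions transverse to $\RA_{X'}(\BR)$: writing $v_{\pi_V}=\sum_i\pi_V(f_i)v_i$ with $f_i\in C_c^\infty(\RG_V(F))$ via Dixmier--Malliavin and integrating by parts against the non-degenerate character $\xi|_{\RN}$ yields arbitrary polynomial decay, exactly as in the $\GL_n$ case. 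Restricting a $\CZ(\Fg_{V,\BC})$-finite moderate-growth function to the torus $\RA_{X'}(\BR)$ sitting in the $\GL(X')$-factor of a Levi of $\RG_V$ then produces a holonomic system of ordinary differential equations whose indicial roots depend only on the (finitely many) generalized infinitesimal characters of $\pi_V$ and $\pi_W$ --- equivalently on the exponents of the relevant Jacquet module --- hence only on $\pi_V,\pi_W$. This gives a finite expansion $\RW'_{v_{\pi_V},v_{\pi_W}}(\diag(a,1))=\sum_{\xi_{V,W}\in\Sigma_{V,W}}\phi_{\xi_{V,W},v_{\pi_V},v_{\pi_W}}(a)\,\xi_{V,W}(a)$ with $\Sigma_{V,W}$ a finite set of finite functions of $(\BR^\times)^r$ depending only on $\pi_V,\pi_W$, and the rapid-decay property upgrades each coefficient $\phi_{\xi_{V,W},v_{\pi_V},v_{\pi_W}}$ from ``bounded by a polynomial'' to a Schwartz function, with continuous dependence on $(v_{\pi_V},v_{\pi_W})$. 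This is the direct generalization of \cite[Proposition 4.1]{jacquet1990exterior} (compare \cite[Proposition 3.3]{soudry1993rankin}) to the present setting.

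Finally I would assemble the three cases. The (B) and (FJ 1) cases are formally identical once one notes that in both $\RW'$ is a Bessel function restricted to the mirabolic, the only difference being the choice of $X'$ recorded in Lemma \ref{lem: decomposition of the stabilizer group}. For the (FJ 2) case the reduction to $\RG_V$ via Lemma \ref{lem: classification of representations 2} turns $\FJ$ into a Bessel-type function and the argument above applies verbatim, the metaplectic covers and the Hilbert-symbol cocycle contributing only bookkeeping via Lemma \ref{lem: classification of representations}.

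The hard part is the second step: proving that the coefficient functions $\phi_{\xi_{V,W},v_{\pi_V},v_{\pi_W}}$ are genuinely Schwartz, i.e.\ extending Jacquet's rapid-decay argument for $\GL_n$ Whittaker functions to the Bessel and Fourier--Jacobi functionals of classical, metaplectic and Jacobi groups. One must combine the non-degeneracy of $\xi$ on $\RN$ with the holonomic system coming from $\CZ(\Fg_{V,\BC})$-finiteness, and in the (FJ 2) case additionally track the interaction with the Heisenberg group via Lemma \ref{lem: coinvariant}; once the rapid decay and the finiteness of the exponent set $\Sigma_{V,W}$ are established, the expansion itself is a routine consequence of the Archimedean asymptotics of $\CZ$-finite moderate-growth functions.
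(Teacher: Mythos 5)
Your overall strategy matches the paper's: both reduce to the torus, both recognize that $\RW'$ is a Whittaker-type function on the mirabolic, and both invoke the machinery of Jacquet's \cite[Prop.~4.1, \S4.2]{jacquet1990exterior} (with Soudry's variant for $E=\BR$) with the exponent set controlled by Jacquet modules. But there are two places where what you say diverges from what the paper actually does, and the first one matters.

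First, the mechanism that makes the coefficients $\phi_{\xi_{V,W},v_{\pi_V},v_{\pi_W}}$ \emph{Schwartz} is not the one you describe. You claim it comes from genericity of $\xi$ on $\RN$ plus Dixmier--Malliavin and integration by parts, i.e.\ a direct ``Whittaker functions decay rapidly'' argument, which upgrades ``bounded by a polynomial'' to ``Schwartz.'' The paper does not use any rapid-decay estimate at all: the function space $\CK$ it builds satisfies only a \emph{moderate growth} bound (property (1) is $|\varphi|\leqslant C\prod_j(1+|a_j|^2+|a_j|^{-2})^N$), and the Schwartz property of the $\phi_\xi$ is then extracted from Jacquet's recursive closure properties --- stability of $\CK$ under multiplication by $a_j,\bar a_j$, the Euler and $\partial-\bar\partial$ operators, and especially the decomposition $(D_j-\mu)^k\varphi_\mu=a_j\theta$ (resp.\ $a_j\theta_1+\bar a_j\theta_2$) with $\theta\in\CK$, where $\mu$ runs over the $(E^\times)^j$-generalized eigenvalues of $\Jac_{\RP_j}^{\alg}((\pi_V\boxtimes\pi_W')^{\alg})$ and $k$ is a nilpotency bound coming from \cite[Lemma 4.3.1]{wallach1988real}. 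These three properties, fed into Jacquet's general \S4.2 argument, are what force the coefficients to be Schwartz. If you want to run your rapid-decay argument instead, you would need to actually carry it out for Bessel and Fourier--Jacobi functionals of classical, metaplectic and Jacobi groups (not just $\GL_n$), and you would still need the Jacquet-module recursion to pin down the exponent set $\Sigma_{V,W}$ and to control the $a_j\to 0$ asymptotics; $\CZ(\Fg)$-finiteness plus moderate growth alone gives a finite expansion with polynomially bounded coefficients, not Schwartz ones, so the genuine work is precisely the piece your proposal leaves as ``routine.'' In other words, you have the right references and the right rough picture, but the step you flag as ``the hard part'' is left as a sketch pointing to a different (and not obviously easier) route than the one the paper takes.

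Second, and more minor: the paper's proof of these closure properties runs on the algebraic vectors $v_{\pi_V}^{\alg},v_{\pi_W}^{\alg}$, and the passage to arbitrary $v_{\pi_V}\in\pi_V$, $v_{\pi_W}\in\pi_W$ is a separate step done via the method of \cite{casselman1989canonical}. Your ``with continuous dependence on $(v_{\pi_V},v_{\pi_W})$'' brushes this aside. Also, your statement that ``(B) and (FJ 1) are formally identical'' elides the Weil-representation vector: the paper treats both (FJ 1) and (FJ 2) by writing $\pi_W=\pi_W'\otimes\omega_{W,\psi_F}$ and including a $\varphi\in\omega_{W,\psi_F}$ in the data defining $\CK$, whereas you only strip off the Heisenberg--Weil factor in the (FJ 2) case.
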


The proof in \cite{jacquet1990exterior} uses properties of Jacquet models of the reductive groups. From Lemma \ref{lem: Jac of Jaco}(1), the Jacquet modules of Jacobi groups can be computed from that of the reductive groups. To set up the proof uniformly as in \cite{jacquet1990exterior},  we work on the reductive group $\RG_V\times \RG_W$ and Jacquet modules on it.
\begin{defin}
\begin{enumerate}
    \item For a representation $\pi$ of a  reductive group $\RG$, we denote by $\pi^{\alg}$ the $(\Fg_{\BC},K)$-module associated to $\pi$. Here $\Fg_{\BC}=\Fg(F)\otimes_{\BR}\BC$ is the complexified Lie algebra of $\RG(F)$.
    \item For a parabolic subgroup $\RP=\RL\rtimes \RN$ of $\RG$, we define the Jacquet module 
    \[
\Jac_\RP^{\alg}(\pi^{\alg})=\pi^{\alg}/\pi(\Fn_{\BC})\pi^{\alg}.
    \]
\end{enumerate}    
\end{defin}

\begin{defin}
Let $(\RG,\RH,\xi)$ be a Bessel or Fourier-Jacobi triple associated to $(V,W)$ and $\pi=\pi_V\boxtimes \pi_W\in \Pi_{\CaWa}(\RG)$.
\begin{enumerate}
    \item In (B) cases, we set $\pi_W'=\pi_W$; In (FJ 1)(FJ 2) cases,  we set $\pi_W'$ as the irreducible representation of $\wt{\RG}_W$ in Lemma \ref{lem: classification of representations}  such that 
    \[
\pi_W=\pi_W'\otimes\omega_{W,\psi_F}.
    \]
    \item For $1\leqslant j\leqslant r$, we define 
    \[
    \RP_j=\begin{cases}
       \RP_{V,j}\times \wt{\RP}_{W} & \text{ in (B) and (FJ 1)  cases},\\
      \RP_V\times \wt{\RP}_{W,j} &\text{ in  (FJ 2) cases}.
    \end{cases}
    \]
    where $\RP_{V,j},\wt{\RP}_{W,j}$ is defined as in Section \ref{section: Bessel Fourier-Jacobi}, then the Jacquet module 
    \[
    \Jac_{\RP_j}^{\alg}((\pi_V\boxtimes \pi_W')^{\alg})=\begin{cases}
      \Jac_{\RP_{V,j}}^{\alg}(\pi_V^{\alg})\boxtimes \pi_W^{\prime,\alg} & \text{ in (B) and (FJ 1)  cases},\\
      \pi_V^{\alg}\boxtimes \Jac^{\alg}_{\wt{\RP}_{W,j}}(\pi_W^{\prime,\alg}) & \text{ in  (FJ 2) cases}.
    \end{cases}
    \]
\end{enumerate}
\end{defin}

\begin{proof}[Proof for Lemma \ref{lem: expansion pseudo-Whittaker}]

Consider the Levi decompostion $\RP_j=\RL_j\rtimes \RN_j$, where
\[
\Fl_j(F)=(E^{\times})^j\times \Fg_j(F).
\]
for some reductive $\Fg_j$. We consider the generalized eigenspace decomposition of $\Jac_{\RP_j}^{\alg}((\pi_V\boxtimes \pi_W')^{\alg})$ with respect to $(E^{\times})^j$
\[
\Jac_{\RP_j}^{\alg}((\pi_V\boxtimes \pi_W')^{\alg})=\bigoplus_{\mu\in \Sigma_j}\Jac_{\RP_j}^{\alg}((\pi_V\boxtimes \pi_W')^{\alg})_{\mu}.
\]

From \cite[Lemma 4.3.1]{wallach1988real},  $\Jac_{\RP_j}^{\alg}((\pi_V\boxtimes \pi_W)^{\alg})$ is a  finite-length $\CU((\Fl_j)_{\BC})$-module, then there exists an integer $k$ such that 
\[
(\pi_{V,W}(a)-\mu(a))^k\Jac((\pi_V\boxtimes \pi_W')^{\alg})_{\mu}=0,\quad a\in (E^{\times})^j, \mu\in \Sigma_j.
\]

Define $\CK$ the space of functions $\varphi$ on $(\BR^{\times})^n$ in to form of 
\[
\varphi(a_1,\cdots,a_n,1)=\begin{cases}
    \RW_{v_{\pi_V}^{\alg},v_{\pi_W}^{\alg}}(\diag(a_1,\cdots,a_n,1))& \text{ in (B) cases},\\
   \RW_{v_{\pi_V}^{\alg},v_{\pi_W}^{\alg}\otimes \varphi}(\diag(a_1,\cdots,a_n,1)) & \text{ in (FJ 1)(FJ 2) cases},\\
\end{cases}\]
where $v_{\pi_V}^{\alg}\in \pi_V^{\alg}$, $v_{\pi_{W}}^{\alg}\in \pi_W^{\alg}$, $\varphi\in \omega_{W,\psi_F}$.

It follows verbatim from \cite[\S 4.1]{jacquet1990exterior} when $E=\BC$  and from \cite[\S 3.3]{soudry1993rankin} when $E=\BR$ that $\CK$ has the following properties.
\begin{enumerate}
\item Every function $\varphi\in\CK$ is smooth on $(E^{\times})^r$ and is bounded by 
\[
C\cdot\prod_{j=1}^r(1+|a_j|^2+|a_j^{-1}|^2)^N;
\]
for some $C>0$ and a positive integer $N$. 
    \item The space $\CK$ is closed under 
    \begin{enumerate}
        \item Multiplication by $a_j$, $\bar{a}_j$;
        \item Euler operators \[D_j=\begin{cases}
  a_j\frac{\partial f}{\partial a_j} &\text{ when }E=\BR,\\
a_j\frac{\partial f}{\partial a_j}+ \bar{a}_j\frac{\partial f}{\partial \bar{a}_j}  &\text{ when }E=\BC;\end{cases}\]
\item The difference between the holomorphic and antiholomorphic derivatives
\[
R_j=\frac{\partial f}{\partial a_j}- \frac{\partial f}{\partial \bar{a}_j}.
\]
    \end{enumerate}
    \item Each $\varphi\in \CK$ has a decomposition 
    \[
    \varphi=\sum_{\mu\in \Sigma_j}\varphi_{\mu}
    \]
    such that 
    \begin{enumerate}
        \item When $E=\BR$, there exists $\theta\in \CK$ such that
    \[
    (D_j-\mu)^k\varphi_x=a_j\theta.
    \]
    \item When $E=\BC$, there exists $\theta_1,\theta_2\in\CK$ such that
    \[
    (D_j-\mu)^k\varphi_x=a_j\theta_1+\bar{a}_j\theta_2.
    \]
    \end{enumerate}
\end{enumerate}

Following the general situations discussed in \cite[\S 4.2]{jacquet1990exterior}, there is a finite set $\Sigma_{V,W}$ of finite functions that is only dependent on $\Sigma_j$'s (thus only dependent on $\pi_V^{\alg},\pi_W^{\alg}$) such that  $\RW'_{v_{\pi_{V}}^{\alg},v_{\pi_W}^{\alg}}$ has the expansion 
\[
\sum_{\xi_{V,W}\in\Sigma_{V,W}}\phi_{\xi_{V,W}.{v_{\pi_V}^{\alg},v_{\pi_W}^{\alg}}}(a_1,\cdots,a_n)\xi_{V,W}(a_1,\cdots,a_n).
\]

Following  the computation of \cite[\S 4.3]{jacquet1990exterior}, the method in \cite{casselman1989canonical} can be applied to extend the results to $\RW'_{v_{\pi_{V}},v_{\pi_W}}$ for $v_{\pi_V}\in \pi_V$ and $v_{\pi_W}\in \pi_W$.

\end{proof}

\begin{proof}[Proof for the meromorphic continuation]
We define
\[
\wt{\RJ}_{s,\mu,\lambda}(v_{\sigma_{X^+}},v_{\pi_V},v_{\pi_W})=\int_{\RA_{X'}(\BR)}\RW_{v_{\sigma_{X^+}}}(\diag(a,1))\RW'_{v_{\pi_V},v_{\pi_{W}}}(\diag(a,1))|\det(a)|^sda.
\]

Then we expand the Whittaker functional and the Bessel functional using (\ref{equ: expansion whittaker}) and Lemma \ref{lem: expansion pseudo-Whittaker} and obtain that
\begin{equation}\label{equ:expansion}
 \begin{aligned}
&\wt{\RJ}_{s,\mu,\lambda}(v_{\sigma_{X^+}},v_{\pi_W},v_{\pi_V})\\
=&\int_{\RA_{X'}(\BR)}\RW_{v_{\pi_{X^+}}}(\diag(a_1,\cdots,a_r,1))\RW'_{v_{\pi_V},v_{\pi_W}}(\diag(a_1,\cdots,a_{r},1))|a_1\cdots a_r|^sd(a_1,\cdots,a_r)   \\
=&\sum_{\substack{\xi_{X^+}\in \Sigma_{X^+}\\\xi_{V,W}\in \Sigma_{V,W}}}\int_{\RA_{X'}(\BR)}\phi_{\xi_{X^+},v_{\pi_{X^+}}}\phi_{\xi_{V,W},v_{\pi_V},v_{\pi_W}}\xi_{X^+}\xi_{V,W}(a_1,\cdots,a_r)|a_1\cdots a_r|^sd(a_1,\cdots,a_r).
\end{aligned}   
\end{equation}

From Proposition \ref{equ: basic meromorphic continuation}, the function
\[
\begin{aligned}
p_{{\xi_{X^+}\xi_{V,W}}(s)}(s)\int_{\RA_{X'}(\BR)}\phi_{\xi_{X^+},v_{\pi_{X^+}}}\phi_{\xi_{V,W},v_{\pi_V},v_{\pi_W}}\xi_{X^+}\xi_{V,W}(a_1,\cdots,a_r)|a_1\cdots a_r|^sd(a_1,\cdots,a_r)    
\end{aligned}
\]
 has an analytic continuation and we denote it by $\Psi_{\xi_{X^+},v_{\pi_{X^+}},\xi_{V,W},(v_{\pi_V},v_{\pi_W})}(s)$.

From the Fubini's theorem and (\ref{equ: absolute convergent}), when $\Re(s)$ is large enough, we have
\[
\begin{aligned}
&\RJ_{s,\mu,\lambda}(l(v_{\sigma_{X^+}},v_{\pi_V}, v_{\pi_W}))\\
=&\int_{K_r}\int_{\RA_{X'}(\BR)}\RW_{v_{\sigma_{X^+}}}(\diag(ak,1))\RW'_{v_{\pi_V},v_{\pi_W}}(\diag(ak,1))|\det(ak)|^sdkda.
\end{aligned}
\]

Then, when $\Re(s)$ is large enough,
\[
\begin{aligned}
&\RJ_{s,\mu,\lambda}(l(v_{\sigma_{X^+}},v_{\pi_V},v_{\pi_W}))\\
=&\int_{K_r}\wt{\RJ}_{s,\mu,\lambda}(\sigma_{X^+}(\diag(k,1))v_{\sigma_{X^+}},\pi_{V,W}(\diag(k,1))(v_{\pi_V},v_{\pi_W}))dk\\
=&\sum_{\substack{\xi_{X^+}\in \Sigma_{X^+}\\\xi_{V,W}\in \Sigma_{V,W}}}p_{\xi_{X^+},\xi_{V,W}}(s)^{-1}\int_{K_r}{\Psi_{\xi_{X^+},\sigma_{X^+}(\diag(k,1))v_{\pi_{X^+}},\xi_{V,W},\pi_{V,W}(\diag(k,1))(v_{\pi_V},v_{\pi_W})}(s)}dk.
\end{aligned}
\]

Since $K_r$ is compact, and, for $k\in K_r$, the functions
\[
{\Psi_{\xi_{X^+},\sigma_{X^+}(\diag(k,1))v_{\pi_{X^+}},\xi_{V,W},\pi_{V,W}(\diag(k,1))(v_{\pi_V},v_{\pi_W})}}
\]
are entire, the integral
\[\int_{K_r}{\Psi_{\xi_{X^+},\sigma_{X^+}(\diag(k,1))v_{\pi_{X^+}},\xi_{V,W},\pi_{V,W}(\diag(k,1))(v_{\pi_V},v_{\pi_W})}(s)}dk\]
gives an entire function of $s$. Then
$\RJ_{s,\mu,\lambda}(l(v_{\sigma_{X^+}},v_{\pi_V},v_{\pi_W}))$  has an meromorphic continuation to the whole complex plane. So we complete the proof for Theorem \ref{thm: main}(2).
\end{proof}
\subsection{The integral over $ \RP^+\cap\RH^+(F)\bs \RH^+(F)$}\label{section: second integral}

In (B) cases, from \cite[Proposition 4.17]{gourevitch2019analytic}, one can construct  a nonzero functional in 
\[
\Hom_{\RH^+}(\Ind^{\CS,G^+}_{P^+}(l(|\det|^s\sigma_{X^+},\pi_V,\pi_W)),1_{\RH^+})
\]
from a nonzero functional in  
\[
\Hom_{\RP^+\cap \RH^+}(\delta_{\RP^+\cap \RH^+}\delta_{\RH^+}^{-1}\otimes l(|\det|^s\sigma_{X^+},\pi_V,\pi_W),1_{\RP^+\cap \RH^+})
\] following \cite[Section 5.5]{chen2021local}, where 
 $\delta_{\RP^+\cap \RH^+},\delta_{\RH^+}$ are the modular character of $\RP^+\cap \RH^+(F)$, $\RH^+(F)$, respectively. In this section, we follow the work of D. Gourevitch, S. Sahi, and E. Sayag to generalize their results so that this approach also applies to (FJ 1)(FJ 2) cases.

In all situations, we have $\delta_{\RH^+}=1$, and from Lemma \ref{lem: decomposition of the stabilizer group}, we can compute $\delta_{\RP^+\cap \RH^+}$ to obtain the following lemma. 
\begin{lem}\label{lem: s_0}
There exists $s_0\in \BR$ such that
\[
\delta_{\RP^+\cap \RH^+}\delta_{\RH^+}^{-1}\otimes l(|\det|^s\sigma_{X^+},\pi_V,\pi_W)=l(|\det|^{s+s_0}\sigma_{X^+},\pi_V,\pi_W).
\]
\end{lem}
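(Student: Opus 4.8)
The plan is to obtain the identity by an explicit computation of $\delta_{\RP^+\cap\RH^+}$ from the group-theoretic decomposition recorded in Lemma~\ref{lem: decomposition of the stabilizer group} (and Corollary~\ref{cor: mirabolic structure} in the metaplectic case), the upshot being that $\delta_{\RP^+\cap\RH^+}\delta_{\RH^+}^{-1}$ is, as a character of $\RP^+\cap\RH^+(F)$, a real power of $|\det\circ p_{\GL}|$. Granting this, tensoring $l(|\det|^s\sigma_{X^+},\pi_V,\pi_W)|_{\RP^+\cap\RH^+}$ by $|\det\circ p_{\GL}|^{s_0}$ replaces the $|\det|^s$–twist on the $\GL(X^+)$–block by $|\det|^{s+s_0}$, which is exactly $l(|\det|^{s+s_0}\sigma_{X^+},\pi_V,\pi_W)$.

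First I would record that $\delta_{\RH^+}=1$: in each of (B), (FJ~1), (FJ~2) the group $\RH^+$ is $\Delta\RG_V$, $\Delta\RG_W^J$, or $\Delta\RG_V$, and the classical groups $\RG_V,\RG_W$ are unimodular, while a Jacobi group $\RG_W\rtimes\CH(W)$ is unimodular because $\CH(W)$ is nilpotent and $\RG_W$ acts on $\CH(W)$ preserving the Haar measure (it preserves the $\epsilon$-hermitian form, hence the symplectic form underlying the Heisenberg multiplication); the metaplectic covers inherit this since the cover is by a finite central subgroup. Next, using $\RP^+\cap\RH^+=\Delta\RS_{\mathrm{open}}$ together with the decomposition $\RS_{\mathrm{open}}=(\Res_{E/F}\RR_{X',X^+}\times\RM)\rtimes\RU$ from Lemma~\ref{lem: decomposition of the stabilizer group}, where $\RM$ is $\RG_{V_0}$, $\RG_W$, resp.\ $\RG_V^J$ and $\RU$ is $\RN_V'$, $\RN_V$, resp.\ $\RN_W$, I would apply the semidirect-product formula $\delta_{\RA\ltimes\RB}(ab)=\delta_\RA(a)\,\delta_\RB(b)\,|\det(\mathrm{Ad}(a)|_{\Lie\RB})|$ with $\RB$ normal. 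Here $\delta_\RU=1$ (unipotent), $\delta_\RM=1$ (by the unimodularity just discussed), and the modular character of the mirabolic $\RR_{X',X^+}$ factors through the quotient $\RR_{X',X^+}\twoheadrightarrow\GL(X')$, equalling a power of $|\det|$ on $\GL(X')$, which is the restriction to $\RR_{X',X^+}$ of a power of $|\det|$ on $\GL(X^+)$ since the complementary block acts trivially on $X^+/X'$. Finally $|\det(\mathrm{Ad}(-)|_{\Lie\RU})|$ is a positive character of $\Res_{E/F}\RR_{X',X^+}\times\RM$ whose restriction to $\RM$ is trivial — a continuous positive character of a classical group (or of a Jacobi group) is trivial, and in the Jacobi case the relevant adjoint action of $\CH(V)$ on the unipotent $\Lie\RN_W$ is itself unipotent, hence of determinant $1$ — and whose restriction to $\RR_{X',X^+}$ again factors through $\GL(X')$ and so is a power of $|\det\circ p_{\GL}|$. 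Combining these contributions gives $\delta_{\RP^+\cap\RH^+}=|\det\circ p_{\GL}|^{s_0}$ for an explicit $s_0\in\BR$ (the total weight of the $\GL(X')$–action on $\Lie\RU$ plus the mirabolic contribution, all normalized via $\Res_{E/F}$), since on $\RP^+\cap\RH^+$ the morphism $p_{\GL}$ has image the mirabolic $\RR_{X',X^+}$ by Lemma~\ref{lem: decomposition of the stabilizer group}.

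The genuinely delicate part is the bookkeeping rather than the structure: one must check the vanishing of the $\RM$–contribution uniformly, in particular in the (FJ~2) case where $\RM=\RG_V^J$ is not reductive and $\RU=\RN_W$ is acted on both by $\RG_V$ and by a Heisenberg factor, and one must pin down the exact value of $s_0$ with the correct normalizations of the absolute value on $E$ (and, if desired, track the $\delta_{\RP^+}^{1/2}$ built into the normalized Schwartz induction defining $l$). However, for the statement it suffices to produce \emph{some} $s_0\in\BR$, so the qualitative fact that $\delta_{\RP^+\cap\RH^+}$ is a power of $|\det\circ p_{\GL}|$ is the essential content; everything else is a routine computation that I would carry out case by case in (B), (FJ~1), (FJ~2) using the explicit decompositions above, invoking Corollary~\ref{cor: mirabolic structure} to reduce the metaplectic cases to their linear counterparts.
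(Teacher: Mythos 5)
Your proposal is correct and follows the same route the paper indicates: the paper merely asserts that $\delta_{\RH^+}=1$ and that $\delta_{\RP^+\cap\RH^+}$ can be computed from the decomposition of $\RS_{\mathrm{open}}$ in Lemma~\ref{lem: decomposition of the stabilizer group} (together with Corollary~\ref{cor: mirabolic structure} in the metaplectic case), and your write-up simply carries out that computation. The structural observations you supply — that all contributions from the unipotent radical and from the classical/Jacobi block vanish (unimodularity, triviality of positive characters of these groups, and the fact that the Heisenberg part acts unipotently on the relevant nilpotent Lie algebra), while the mirabolic and the adjoint twist on $\Lie\RU$ contribute only powers of $|\det\circ p_{\GL}|$ — are exactly what is needed, and, as you note, only the existence of some $s_0\in\BR$ is required, so the bookkeeping of the precise value can safely be left implicit.
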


From Lemma \ref{lem: geometric for refine 2}, the complement $\RG^+(F)-\RH^+(F)\RP^+(F)$ is the zero set of a polynomial $f^+$ on $\RG^+(F)$ and $f^+$ is left $\RH^+$-invariant and right $(\RP^+,\psi_{\RP^+})$-equivariant for an algebraic character $\psi_{\RP^+}$ of $\RP^+$, where $\psi_{\RP^+}=|\det\circ p_{\GL}|^2$.

Given $\mu^+\in \Hom_{\RP^+\cap \RH^+}(l(|\det|^s\sigma_{X^+},\pi_V,\pi_W),1_{\RP^+\cap \RH^+})$, we construct
\[
\RJ_{s^+,\mu^+}^+\in \Hom_{H^+}(\RI(|\det|^{s_0+s^+}\sigma_{X^+},\pi_V,\pi_W),1_{\RH^+})
\]
by the integral
\[
\RJ_{s^+,\mu^+}^+(\varphi_{s^+})=\int_{\RH^+\cap \RP^+(F)\bs \RH^+(F)}\mu^+(\varphi_{s^+} |f^+|^{s^+}(h^+))dh^+,\quad \varphi_{s^+}\in \RI(|\det|^{s_0+2s^+}\sigma_{X^+},\pi_V,\pi_W).
\]

Recall that, by Definition \ref{def: Schwartz induction}, the map
\[\RT_{\RP^+,s}:\CS(\RG^+,l(\sigma_{X^+}, \pi_V, \pi_W))\to \Ind_{\RP^+}^{\CS,\RG^+}(l(|\det|^{s_0+s^+}\sigma_{X^+},\pi_V,\pi_W))\]
\[
\varphi\mapsto \int_{\RP^+}l(|\det|^{s_0+s^+}\sigma_{X^+}, \pi_V,\pi_W)((p^{+})^{-1})\varphi(p^+g^+)dp^+
\]
is surjective. Then 
\begin{equation}\label{equ: subspace of Schwartz distribution}
\begin{aligned}
\Hom_{\RH^+}(\RI(|\det|^{s+s_0+s^+}\sigma_{X^+},\pi_V,\pi_W),1_{H^+})&\to\CS^*(\RG^+,l(\sigma_{X^+}\boxtimes \pi_W\boxtimes \pi_V)^*)^{(\RP^+,l(|\det|^s\sigma_{X^+},\pi_V,\pi_W))}\\ 
J^+&\mapsto J^+\circ \RT_{P^+,s}
\end{aligned}
\end{equation}
is an injective map, where $\CS^*(\RG^+,l(\sigma_{X^+},\pi_V,\pi_W))$ is the continuous dual of the space of Schwartz maps from $\RG^+(F)$ to $l(\sigma_{X^+}, \pi_V,\pi_W)$, and the superscript denotes the  $(\RP^+,l(\sigma_{X^+},\pi_V,\pi_W))$-equivariant part.

\begin{thm}\label{thm: main 2}
\begin{enumerate}
    \item The integral   $\RJ_{s^+,\mu^+}^+$  is absolutely convergent when $\Re(s^+)$ is large enough.
    \item 
      $\RJ_{s^+,\mu^+}$ has a meromorphic continuation to the complex plane $\BC$ in 
      \[\CS^*(\RG^+,l(\sigma_{X^+},\pi_V,\pi_W))^{(\RP^+,l(\sigma_{X^+},\pi_V,\pi_W))}.
      \] 
\end{enumerate} 
\end{thm}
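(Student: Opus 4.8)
The plan is to establish Theorem~\ref{thm: main 2} by combining the double-coset geometry from Section~\ref{section: double coset} with the Bernstein meromorphic-continuation principle, exactly parallel to the treatment of $\RJ_{s,\mu,\lambda}$ but now over the factor $\RP^+\cap\RH^+(F)\bs\RH^+(F)$. First I would fix the setup: by Lemma~\ref{lem: geometric for refine 2} the complement $\RG^+(F)-\RH^+(F)\RP^+(F)$ is the zero locus of a single polynomial $f^+$ which is left $\RH^+(F)$-invariant and transforms on the right under $\RP^+(F)$ by $|\det\circ p_{\GL}|^2$; this is precisely what makes $|f^+|^{s^+}$ a well-defined section-twisting factor, so that for $\varphi_{s^+}\in\RI(|\det|^{s_0+2s^+}\sigma_{X^+},\pi_V,\pi_W)$ the expression $\varphi_{s^+}|f^+|^{s^+}$ lands in $\RI(|\det|^{s_0+s^+}\sigma_{X^+},\pi_V,\pi_W)$ after restriction to $\RH^+$, and the integrand $\mu^+(\varphi_{s^+}|f^+|^{s^+}(h^+))$ is a genuine function on $\RP^+\cap\RH^+(F)\bs\RH^+(F)$ against which one integrates. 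Lemma~\ref{lem: s_0} supplies the shift $s_0$ matching the modular characters, so the target space of the resulting functional is the one stated.

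For Point~(1), absolute convergence for $\Re(s^+)$ large, I would argue as in \cite{jiang2010uniqueness}\cite{liu2013uniqueness}\cite{gourevitch2019analytic}: reduce the integral over $\RP^+\cap\RH^+(F)\bs\RH^+(F)$ to an integral over a Siegel-type domain via an Iwasawa/reduction-theory decomposition of $\RH^+$, then bound the Schwartz section $\varphi_{s^+}$ by a Schwartz function on the base $\RP^+\cap\RH^+(F)\bs\RH^+(F)$ (using that Schwartz induction produces Schwartz sections, Proposition~\ref{pro: geometric definition}) times a polynomial in the relevant split-torus coordinates coming from $|\det\circ p_{\GL}|$; since $|f^+|$ grows like a positive power of the $\GL(X^+)$-coordinates on the open orbit, the factor $|f^+|^{s^+}$ with $\Re(s^+)\gg 0$ produces exponential decay that dominates the polynomial growth of $\mu^+$ and of the Schwartz section's moderate-growth envelope, giving convergence. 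The estimates on $\mu^+$ come from its continuity as a functional on the Fréchet space $l(\sigma_{X^+},\pi_V,\pi_W)$, combined with the growth estimates for matrix coefficients of $\sigma_{X^+},\pi_V,\pi_W$.

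For Point~(2), meromorphic continuation, I would not integrate the abstract section directly but pass through the surjection $\RT_{\RP^+,s}:\CS(\RG^+,l(\sigma_{X^+},\pi_V,\pi_W))\to\RI(|\det|^{s_0+s^+}\sigma_{X^+},\pi_V,\pi_W)$ and the injection \eqref{equ: subspace of Schwartz distribution}, so that $\RJ^+_{s^+,\mu^+}$ becomes an $\RL^+$-equivariant (more precisely $(\RP^+,l(\sigma_{X^+},\pi_V,\pi_W))$-equivariant) tempered distribution on $\RG^+(F)$ of the shape $\Phi\mapsto\int\langle\mu^+\text{-pushforward of }\Phi,\ |f^+|^{s^+}\rangle$. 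This is a Bernstein-type family: one has a Schwartz function (built from $\Phi$ and the equivariant pushforward along $\RP^+\cap\RH^+(F)\bs\RH^+(F)$) integrated against a complex power $|f^+|^{s^+}$ of a real-analytic (indeed polynomial) function, so the meromorphic continuation in $s^+\in\BC$ follows from the theory of \cite{bernstein1972analytic} on analytic continuation of distributions $|f|^{s}$, together with the uniformity in the auxiliary test function furnished by the continuity estimates of Point~(1). The key point is that the equivariance constraints (left $\RH^+$-invariance, right $(\RP^+,|\det\circ p_{\GL}|^2)$-equivariance) of $f^+$ guarantee that the Bernstein--Sato functional equation can be applied fiberwise over the orbit decomposition of $\RP^+(F)\bs\RG^+(F)/\RH^+(F)$, and the finitely many orbits (Lemma~\ref{lem: complement}) each contribute a local zeta integral with a standard $b$-function.

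The main obstacle I anticipate is the interface between Schwartz induction and the non-compactness of $\RP^+\cap\RH^+(F)\bs\RH^+(F)$: unlike the first integral, where $\RN_{0,X^+}(E)\bs\RR_{X',X^+}(E)$ is handled by the classical Jacquet--Shalika asymptotic expansion of Whittaker functions, here one must control the Schwartz section $\varphi_{s^+}$ uniformly along a genuinely higher-dimensional orbit space, and verify that the pairing with $\mu^+$ preserves the Schwartz/temperedness property so that \cite{bernstein1972analytic} applies with holomorphic dependence on all auxiliary data. This is precisely the step where the generalization of \cite{gourevitch2019analytic} is needed: one must check that their descent-of-equivariant-functionals machinery, originally phrased for the orthogonal (Bessel) setting, goes through for the pseudo-parabolic $\RP^+$ and the Jacobi-type groups appearing in the (FJ~1) and (FJ~2) cases, which is where the structural Lemmas~\ref{lem: complement}, \ref{lem: stabilizer group}, and \ref{lem: conormal bundle} do the real work by pinning down the stabilizers and conormal representations at the closed orbits.
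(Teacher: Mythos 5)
Your convergence argument for Point~(1) takes a reduction-theory detour that the paper avoids. The paper's proof of Lemma~\ref{lem: integral} is purely formal: $\CS(\CU)=\varprojlim_n\CS_{\CZ,n}(\CX)$ (from \cite{aizenbud2008schwartz}), so $\int_\CU$ lies in $\varinjlim_n\CS_{\CZ,n}(\CX)^*$, hence is continuous on $\CS_{\CZ,n_{\CX,\CU}}(\CX)$ for some finite $n_{\CX,\CU}$; since $\mu^+(\varphi_{s^+})\in\CS(\CX)$ already (Schwartz induction gives Schwartz sections, and $\mu^+$ is a fixed continuous functional), multiplying by $|f^+|^{s^+}$ for $\Re(s^+)\geqslant n_{\CX,\CU}+1$ lands in $\CS_{\CZ,n_{\CX,\CU}}(\CX)$, and you are done. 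Your ``main obstacle'' paragraph about controlling the Schwartz section along a higher-dimensional orbit space is therefore addressing a difficulty the paper never encounters: there is no need for Iwasawa coordinates, Siegel domains, or moderate-growth estimates on $\mu^+$ beyond its bare continuity.

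For Point~(2) there is a genuine gap. Your plan is a direct fiberwise Bernstein--Sato argument over the $\RP^+$-orbits of Lemma~\ref{lem: complement}. But the coefficient representation $l(\sigma_{X^+},\pi_V,\pi_W)$ is an \emph{infinite-dimensional} Fr\'echet space, and \cite{bernstein1972analytic} and the GSS holonomicity machinery do not apply directly to families of distributions valued in such a space. The paper's proof inserts two ingredients you omit. First, Lemma~\ref{lem: surjection of minimal parabolic induction}: a surjection $\Ind_{\RP_0^+}^{\CS,\RP^+}(\rho_0^+)\twoheadrightarrow l(\sigma_{X^+},\pi_V,\pi_W)$ with $\rho_0^+$ \emph{finite-dimensional} (built from a minimal parabolic of each reductive factor, and via the Schr\"odinger model of $\omega_{W,\psi}$ in the Jacobi cases). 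Dualizing the resulting exact sequence yields the injection $\Phi:\CS^*(\RG^+(F),l(\sigma_{X^+},\pi_V,\pi_W))\hookrightarrow\CS^*(\RG^+\times\RP^+(F),\rho_0^+)$ into a space where the coefficients are finite-dimensional and \cite{gourevitch2019analytic} applies. Second, the relevant orbit finiteness is Lemma~\ref{lem: finite double coset}, the finiteness of $\RP_0^+(F)\bs\RG^+(F)/\RH^+(F)$ --- not the decomposition of $\RP^+(F)\bs\RG^+(F)/\RH^+(F)$ from Lemma~\ref{lem: complement}, and the conormal data of Lemmas~\ref{lem: stabilizer group} and~\ref{lem: conormal bundle}, which you invoke here, are actually used in Section~\ref{section: main 2} for the first-inequality bound, not in this proof. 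Only with the $\RP_0^+$-finiteness does the induction of \cite[Lemma~3.3]{gourevitch2019analytic} establish holonomicity of $\Phi(\RJ_{s^+,\mu^+}^+\circ\RT_{\RP^+,s^+})$, after which \cite[Theorem~3.4]{gourevitch2019analytic} gives the continuation. Finally, you gesture at but do not carry out the separate verification that the meromorphically continued family stays inside $\Im(\Phi)$ (equivalently, comes from genuine equivariant functionals on the Schwartz induction rather than arbitrary distributions); the paper closes this via the characterization of $\Im(\Phi)$ as the annihilator of $\CS(\RG^+)\wh\otimes\tau$, following \cite[Corollary~4.9]{gourevitch2019analytic}.
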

\subsubsection{Absolute convergence}
We prove the absolute convergence following \cite[Corollary 2.22]{gourevitch2019analytic}.

Recall  that $\RP^+\RH^+$ is Zariski open in $\RG^+$.  We denote by 
\[
\CX=\RP^+(F)\bs \RG^+(F),\ \CU=\RP^+(F)\bs \RP^+\RH^+(F),\ \CZ=\CX-\CU
\]  

We denote by $\CS_{\CZ,n}(\CX)$ the space of Schwartz functions on $\CX$ such that all their $k$-th derivatives vanish on $\CZ$ for $k\leqslant n$. From \cite[Theorem 5.4.3]{aizenbud2008schwartz}, we have the following equation 
\[
\CS(\CU)=\varprojlim_n\CS_{\CZ,n}(\CX)\]

We take $\int_{\CU}du$ as a continuous functional on $\CS(\CU)$, then
\[
\int_{\CU}du\in (\varprojlim_n\CS_{\CZ,n}(\CX))^*= \varinjlim_n \CS_{\CZ,n}(\CX)^*
\]
\begin{lem}\label{lem: integral}
There exists $n_{\CX,\CU}>0$ such that $\int_{\CU} g(x) dx$ is absolutely convergent when $g\in \CS_{\CZ,n_{\CX,\CU}}(\CX)$. 
\end{lem}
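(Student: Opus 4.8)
The plan is to reduce the statement to the two facts already quoted just before the lemma: the topological identity $\CS(\CU) = \varprojlim_n \CS_{\CZ,n}(\CX)$ (from \cite[Theorem 5.4.3]{aizenbud2008schwartz}), which dualizes to $\CS(\CU)^* = \varinjlim_n \CS_{\CZ,n}(\CX)^*$, and the obvious fact that $g \mapsto \int_{\CU} g(x)\,dx$ is a continuous functional on $\CS(\CU)$. The only thing to extract is that the functional $\int_{\CU}$, living in the inductive limit $\varinjlim_n \CS_{\CZ,n}(\CX)^*$, must already lie in one of the terms $\CS_{\CZ,n_{\CX,\CU}}(\CX)^*$ of the directed system. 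This is a general property of inductive limits of a sequence: every element of $\varinjlim_n A_n$ is the image of an element of some $A_{n_0}$ under the structure map $A_{n_0} \to \varinjlim_n A_n$. Applying this to the dual system gives a finite $n_{\CX,\CU}$ such that $\int_{\CU}$ extends to a continuous functional on $\CS_{\CZ,n_{\CX,\CU}}(\CX)$.

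First I would make precise the direction of the structure maps. For $m \geqslant n$ there is a natural inclusion $\CS_{\CZ,m}(\CX) \hookrightarrow \CS_{\CZ,n}(\CX)$ with dense image (these are nested closed subspaces of $\CS(\CX)$ cut out by vanishing of successively more derivatives along $\CZ$), so on duals we get restriction maps $\CS_{\CZ,n}(\CX)^* \to \CS_{\CZ,m}(\CX)^*$, i.e.\ the dual system is directed in $n$ and its colimit is $\varinjlim_n \CS_{\CZ,n}(\CX)^*$, which equals $\CS(\CU)^*$ by the quoted projective-limit description together with the standard duality $(\varprojlim)^* = \varinjlim$ for a countable system of Fréchet spaces (one should remark that all the spaces here are nuclear Fréchet, so the limits and colimits are well-behaved and the duality is clean). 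Then, since $\int_{\CU} \in \CS(\CU)^* = \varinjlim_n \CS_{\CZ,n}(\CX)^*$, there is some index $n_{\CX,\CU}$ and a continuous functional $\lambda \in \CS_{\CZ,n_{\CX,\CU}}(\CX)^*$ mapping to $\int_{\CU}$. Concretely this says: for $g \in \CS_{\CZ,n_{\CX,\CU}}(\CX)$, the restriction of $g$ to $\CU$ lies in $\CS(\CU)$ and $\int_{\CU} g(x)\,dx = \lambda(g)$ is finite; in particular the integral converges absolutely because $\lambda$ is a continuous functional on the Fréchet space $\CS_{\CZ,n_{\CX,\CU}}(\CX)$ and absolute convergence of $\int_{\CU}|g|$ follows from applying the same reasoning to $|g|$, or more carefully from the fact that the defining seminorms of $\CS(\CU)$ already dominate $\int_{\CU}|g|\,dx$ (Schwartz functions on a Nash manifold are absolutely integrable against any Nash density of at most polynomial growth, and here $\CU$ carries the invariant density which is Nash).

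I do not expect a serious obstacle; the content is entirely in the already-cited structural results of Aizenbud--Gourevitch on Schwartz functions on Nash manifolds. The one point that needs a little care is the last sentence of the previous paragraph: one wants not merely that $\int_{\CU} g$ is \emph{defined} and finite, but that it is \emph{absolutely} convergent for $g \in \CS_{\CZ,n_{\CX,\CU}}(\CX)$. This is immediate once one recalls that, by \cite[Theorem 5.4.3]{aizenbud2008schwartz}, the restriction map $\CS_{\CZ,n_{\CX,\CU}}(\CX) \to \CS(\CU)$ is continuous (it is one of the structure maps of the projective system), and that on $\CS(\CU)$ the functional $g \mapsto \int_{\CU}|g(x)|\,dx$ is continuous (it is bounded by a single Schwartz seminorm). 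Composing, $g \mapsto \int_{\CU}|g(x)|\,dx$ is a continuous seminorm on $\CS_{\CZ,n_{\CX,\CU}}(\CX)$, which is exactly the assertion of absolute convergence. Thus the proof is short: dualize the projective limit, use that a sequential inductive limit is exhausted by its terms, and transport the continuity of $\int_{\CU}$ back to the appropriate term of the filtration.
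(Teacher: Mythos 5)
Your overall strategy --- dualize the projective-limit description of $\CS(\CU)$ and use that a sequential inductive limit is exhausted by its terms --- is exactly the route the paper takes (the paper in fact offers no further proof beyond the two displayed facts you quote). However, there is a genuine gap in the step where you pass from ``the functional has an extension to $\CS_{\CZ,n_{\CX,\CU}}(\CX)^*$'' to ``the integral is absolutely convergent on all of $\CS_{\CZ,n_{\CX,\CU}}(\CX)$.''

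The specific error is the assertion that ``the restriction map $\CS_{\CZ,n_{\CX,\CU}}(\CX)\to\CS(\CU)$ is continuous (it is one of the structure maps of the projective system).'' This map is not a structure map and in general is not even well defined: the structure maps of the projective system go the opposite direction (extension by zero $\CS(\CU)\hookrightarrow\CS_{\CZ,n}(\CX)$), and for a general $g\in\CS_{\CZ,n}(\CX)$ that is merely $n$-flat along $\CZ$ the restriction $g|_{\CU}$ is smooth on $\CU$ but \emph{not} Schwartz --- Nash vector fields on the open Nash manifold $\CU$ may have poles along $\CZ$, so applying them to $g|_\CU$ produces unbounded functions once the order of differentiation exceeds $n$. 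Consequently the claim ``the restriction of $g$ to $\CU$ lies in $\CS(\CU)$'' is false, and the subsequent deduction that $g\mapsto\int_\CU|g|$ is a continuous seminorm on $\CS_{\CZ,n_{\CX,\CU}}(\CX)$ does not follow from what you have shown. What the abstract duality does give is: there is an $n_0$ and a continuous seminorm $q$ on $\CS_{\CZ,n_0}(\CX)$ with $\int_\CU|g|\,du\le q(g)$ \emph{for $g$ in the dense image of $\CS(\CU)$ under extension by zero}; but since $\CS(\CU)$ is not dense in $\CS_{\CZ,n_0}(\CX)$ (only the derivatives through order $n_0$ are required to vanish along $\CZ$), this bound does not automatically transfer to all of $\CS_{\CZ,n_0}(\CX)$.

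The gap can be filled in two ways, neither of which is contained in your write-up. The first is direct: $\CZ$ is cut out by the polynomial $f^+$ of Lemma \ref{lem: geometric for refine 2}, and the $\RH^+(F)$-invariant density $du$ on $\CU$, regarded as a Nash density on $\CX$, has at most polynomial blow-up in $|f^+|^{-1}$ near $\CZ$; a finite-order Taylor expansion of $g\in\CS_{\CZ,n}(\CX)$ along $\CZ$ then shows $|g|\,du$ is integrable once $n$ exceeds the order of this blow-up. The second is a regularization of your abstract argument: given $g\in\CS_{\CZ,n_1}(\CX)$, approximate by $g\phi_\epsilon\in\CS(\CU)$ where $\phi_\epsilon$ vanishes in an $\epsilon$-neighborhood of $\CZ$, apply the seminorm bound $\int_\CU|g\phi_\epsilon|\,du\le q(g\phi_\epsilon)$, check that $q(g\phi_\epsilon)$ stays bounded as $\epsilon\to 0$ provided the flatness order $n_1$ exceeds the order of derivatives appearing in $q$, and conclude by monotone convergence. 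Either route produces the $n_{\CX,\CU}$ the lemma claims; your proposal as written does not.
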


\begin{proof}[Proof for Thereom \ref{thm: main 2}(1)]

From Proposition \ref{pro: alt Schwartz induction}
\[
\Ind^{\CS,\RG^+}_{\RP^+}(l(|\det|^s\sigma_{X^+},\pi_V,\pi_W))=\Gamma^{\CS}(\CX,\CE_{l(|\det|^s\sigma_{X^+},\pi_V,\pi_W)})
\]
so $\mu^+(\Ind^{\CS,\RG^+}_{\RP^+}(l(|\det|^s\sigma_{X^+},\pi_V,\pi_W)))\subset \CS(\CX)$.


For $s^+\geqslant n_{\CX,\CU}+1$, one can verify by chain rule that
 \[
 (f^+)^{s^+}\mu^+(\varphi_{s^+})\in \CS_{\CZ,n_{\CX,\CU}}(\CX)
 \]
 
 Therefore, from Lemma \ref{lem: integral},  
 \[
 \int_{\CU}|\mu^+(|f^+|^{s^+}\varphi_{s^+})|(u)du= \int_{\CU}|f^+|^{s^+}|\mu^+(\varphi_{s^+})|(u)du
 \]
 is absolutely convergent for $s^+\geqslant n_{\CX,\CU}+1$.
\end{proof}
\subsubsection{Meromorphic continuation}
This section proves that $\RJ_{s^+,\mu^+}^+$ has a meromorphic continuation to the complex plane following the proof for \cite[Proposition 4.9]{gourevitch2019analytic}.
\begin{lem}\label{lem: surjection of minimal parabolic induction}
There exists a subgroup $\RP_0$ and a finite-dimensional representation $\rho_0$ of $\RP_0$ such there is a surjection
    \[
    \Ind_{\RP_0^+}^{\CS,\RP^+}(\rho_0^+)\twoheadrightarrow l(\sigma_{X^+},\pi_V,\pi_W)
    \] 
\end{lem}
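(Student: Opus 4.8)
The plan is to deduce the statement from one structural fact: a finitely generated object of $\Pi_{\CaWa}(G)$, for $G$ an almost linear Nash group, is a quotient of $\CS(G)^{\oplus k}=\Ind^{\CS,G}_{\{1\}}(\BC^k)$. Granting this, it suffices to show that $l(\sigma_{X^+},\pi_V,\pi_W)$ is a finitely generated Casselman--Wallach $\RP^+(F)$-module; one then takes $\RP_0^+=\{1\}$ and $\rho_0^+$ the trivial representation on $\BC^k$. Now $l(\sigma_{X^+},\pi_V,\pi_W)$ is inflated from $\RL^+(F)$ along the pseudo-unipotent radical $\RN^+$, which acts trivially; since $\RP^+/\RN^+\cong\RL^+$, a family of vectors generates over $\RP^+(F)$ exactly when it generates over $\RL^+(F)$, and (inflating a Casselman--Wallach representation of $\RL^+$ along the trivially acting $\RN^+$ again gives a Casselman--Wallach representation) it is enough to work over $\RL^+(F)$. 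By (\ref{equ: L^+}) one has $\RL^+\cong \Res_{E/F}\GL(X^+)\times \RG_V\times \RG_W$ or $\Res_{E/F}\GL(X^+)\times \RG_V\times \RG_W^J$ (a direct product, up to the $\{\pm1\}$-amalgamation in the metaplectic case), and, up to reordering, $l(\sigma_{X^+},\pi_V,\pi_W)$ is the external tensor product $\sigma_{X^+}\boxtimes\pi_V\boxtimes\pi_W$. If generating families are chosen in each factor, the vectors $v_i\otimes v_j'\otimes v_k''$ generate the external tensor product over the product group: the closed span of their group translates is the closed span of the tensors of the closed spans of the translates in each factor, and the algebraic tensor product of these is dense in the projective tensor product. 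Hence the problem reduces to finite generation of each of $\sigma_{X^+}$, $\pi_V$, $\pi_W$.

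For the reductive factors this is immediate. The representations $\sigma_{X^+}\in\Pi_{\CaWa}(\Res_{E/F}\GL(X^+))$ and $\pi_V\in\Pi_{\CaWa}^{\mathrm{irr}}(\RG_V)$ are Casselman--Wallach, hence of finite length as $\RD^\xi$-modules (Definition \ref{def: CasselmanWallach}), hence finitely generated; the same applies in the (B) cases to $\pi_W\in\Pi_{\CaWa}^{\mathrm{irr}}(\RG_W)$. In the (FJ 1), (FJ 2) cases $\pi_W\in\Pi_{\CaWa}^{\mathrm{irr},\mathrm{nd}}(\RG_W^J)$ is a topologically irreducible Casselman--Wallach representation of the Jacobi group; since every $\RD^\xi(\RG_W^J(F))$-submodule of a Casselman--Wallach representation is closed (Definition \ref{def: CasselmanWallach}(1)), any nonzero vector generates, so $\pi_W$ is cyclic. (Alternatively, Lemma \ref{lem: classification of representations 2} writes $\pi_W=\pi_{W,0}\otimes\omega_{W,\psi_F}$ or $\pi_{W,0}\otimes_{\pm1}\omega_{W,\psi_F}$ with $\pi_{W,0}$ Casselman--Wallach over the reductive group $\wt\RG_W$, and irreducibility of $\pi_W$ follows from that of $\pi_{W,0}$ and of $\omega_{W,\psi_F}$ via Stone--von Neumann.) The same discussion applies verbatim to $|\det|^s\sigma_{X^+}$ for every $s\in\BC$, and one may take a single generating family of $l(|\det|^s\sigma_{X^+},\pi_V,\pi_W)$ independent of $s$ (twisting by $|\det|^s$ changes neither the underlying Fr\'echet space nor the $K_{X^+}$-finite vectors nor the submodule lattice of the associated $(\Fg,K)$-module).

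Assembling: $l(\sigma_{X^+},\pi_V,\pi_W)$ is a finitely generated object of $\Pi_{\CaWa}(\RP^+)$, with generators $v_1,\dots,v_k$. The map $\CS(\RP^+(F))^{\oplus k}\to l(\sigma_{X^+},\pi_V,\pi_W)$, $(f_1,\dots,f_k)\mapsto\sum_i l(\sigma_{X^+},\pi_V,\pi_W)(f_i)v_i$, is continuous (moderate growth); its image is a $\RP^+(F)$-submodule whose closure contains each $v_i$ (apply the action along a Dirac sequence in $\CS(\RP^+(F))$), hence is everything, and, being a submodule of a Casselman--Wallach representation, the image is closed. This exhibits the desired surjection with $\RP_0^+=\{1\}$, $\rho_0^+=\BC^k$, and $\Ind^{\CS,\RP^+}_{\{1\}}(\BC^k)=\CS(\RP^+(F))^{\oplus k}$. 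Because the generators can be fixed and the action of $\CS(\RP^+(F))$ on $l(|\det|^s\sigma_{X^+},\pi_V,\pi_W)$ depends holomorphically on $s$ (it is $\int_{\RP^+}f(p)|\det p_{\GL}(p)|^s\,l(\sigma_{X^+},\pi_V,\pi_W)(p)\,dp$, entire in $s$ since Schwartz densities dominate the polynomial growth of $|\det p_{\GL}|^{\Re s}$), the surjection varies holomorphically in $s$, as required for Theorem \ref{thm: main 2}(2).

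Finally, if one wants $\RP_0^+$ to be an honest minimal (pseudo-)parabolic of $\RP^+$ (matching the name of the lemma), one instead presents each factor as a quotient of a genuine parabolic induction with finite-dimensional datum: for $\Res_{E/F}\GL(X^+)$, $\RG_V$, $\RG_W$ this follows from Casselman's subrepresentation theorem applied to the contragredient together with duality; for the Jacobi factor one combines the same for $\pi_{W,0}$ with a realization of $\omega_{W,\psi_F}$ in the mixed model (\ref{equ: mixed Weil}) as a Schwartz induction from the maximal parabolic of $\RG_W^J$ stabilizing a Lagrangian of $W$, so that by induction in stages $\omega_{W,\psi_F}$ is a quotient of an induction of finite-dimensional data. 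One assembles these over the product $\RL^+$ using that $\Ind^{\CS}$ commutes with external tensor products and direct sums, that the projective tensor product of surjections of Fr\'echet spaces is surjective, and induction in stages through $\RN^+$, and sets $\RP_0^+=\gamma^{\pm1}$-conjugate of the preimage in $\RP^+$ of a minimal parabolic of $\RL^+$. I expect the one genuinely delicate point to be this Jacobi-group step — ensuring the chosen model really presents $\omega_{W,\psi_F}$ (or a finite sum of copies) as Schwartz sections of a tempered line bundle over $\RQ_0\backslash\RG_W^J$ for an appropriate $\RQ_0$, rather than over a larger homogeneous space — which is exactly where Lemma \ref{lem: classification of representations 2} and the explicit formulas (\ref{equ: mixed Weil}) do the work; the $\RP_0^+=\{1\}$ argument above avoids it entirely, and is the route I would take.
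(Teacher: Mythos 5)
Your preferred route --- take $\RP_0^+=\{1\}$ and $\rho_0^+$ the trivial $k$-dimensional representation --- does prove the literal wording of the lemma (a finitely generated Casselman--Wallach $\RP^+(F)$-module is a quotient of $\CS(\RP^+(F))^{\oplus k}$, and your density-plus-closed-submodule argument for surjectivity is fine), but it produces a $\RP_0^+$ that cannot be used where the lemma is invoked. The whole point of the lemma, as deployed in the proof of Theorem \ref{thm: main 2}(2), is to embed $\CS^*(\RG^+(F),l(\sigma_{X^+},\pi_V,\pi_W))$ into $\CS^*(\RG^+\times\RP^+(F),\rho_0^+)$ and then establish holonomicity of $\Phi(\RJ^+_{s^+,\mu^+}\circ\RT_{\RP^+,s^+})$ by an orbit-by-orbit argument via \cite[Lemma 3.3]{gourevitch2019analytic}; that argument rests on Lemma \ref{lem: finite double coset}, the finiteness of $\RP_0^+(F)\bs\RG^+(F)/\RH^+(F)$. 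With $\RP_0^+=\{1\}$ this double-coset space is all of $\RG^+(F)/\RH^+(F)$, which is not finite, so the induction over orbits never starts. The apparent freedom in the statement is illusory: the lemma's name, and especially its sole application, require $\RP_0^+$ to be a (pseudo-)minimal parabolic so that Bruhat/spherical orbit finiteness holds. The route you say you would take must therefore be abandoned; proving the statement in a form that is true but useless for the downstream argument is a gap.

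Your alternative sketch (the ``Finally'' paragraph) is the right one and is essentially the paper's proof. For the reductive factors $\Res_{E/F}\GL(X^+)$, $\RG_V$, and (in (B) cases) $\RG_W$, Casselman's subrepresentation theorem applied to contragredients supplies surjections from minimal-parabolic inductions of finite-dimensional data. For the Jacobi factor the paper's device is a bit cleaner than what you propose: it uses the Schr\"odinger model, not the mixed model, realizing $\omega_{W,\psi_F}$ directly as $\Ind_{\CH(X_0)}^{\CS,\CH(W)}(\psi)$ for a polarization $W=X_0\oplus Y_0$, so that $\Ind_{\RP_{0,W}}^{\RG_W}(\rho'_{0,W})\wh{\otimes}\omega_{W,\psi_F}=\Ind_{\RP_{0,W}\rtimes\CH(X_0)}^{\RG_W^J}(\rho'_{0,W}\otimes\psi)$ with $\RP_{0,W}\rtimes\CH(X_0)=\RP_{0,W}^J$; in particular $\omega_{W,\psi_F}$ is itself a Schwartz induction, not merely a quotient of one. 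Beyond that, what you outline --- induction in stages through $\RN^+$, compatibility of $\Ind^{\CS}$ with external tensor products, conjugation by the element $\gamma$ --- matches the paper. Your sense that this is the ``genuinely delicate point'' is accurate, but it is this route, and not $\RP_0^+=\{1\}$, that has to be carried through.
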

\begin{proof}
We set $\RP_{0,X^+}=\RL_{0,X^+}\rtimes\RN_{0,X^+}$ to be a minimal parabolic subgroup of $\GL(X^+)$ over $E$ and $\RP_{0,V}=\RL_{0,V}\rtimes \RN_{0,V}$, $\RP_{0,W}=\RL_{0,W}\rtimes \RN_{0,W}$ to be a minimal parabolic subgroup of $\RG_V$, $\RG_W$ over $F$.
since $\RG_V$, $\RG_W$,  $\Res_{E/F}\GL(X^+)$ are reductive, there are surjections
\[
\Ind_{\RP_{0,X^+}(E)}^{\GL(X^+)(E)}(\rho_{0,X^+})\twoheadrightarrow
\sigma_{X^+},\quad \Ind_{\RP_{0,V}(F)}^{\RG_V(F)}(\rho_{0,V})\twoheadrightarrow
\sigma_{V},
\]
where $\sigma_{X^+}\in \Pi_{\CaWa}(\Res_{E/F}\RL_{0,X^+})$, $\rho_{0,V}\in \Pi_{\CaWa}(\RL_{0,V})$ are finite-dimensional representations.

In (B) cases, there is also a surjection
\[
\Ind_{\RP_{0,W}(F)}^{\RG_W(F)}(\rho_{0,W})\twoheadrightarrow
\sigma_{W}
\]
for finite-dimensional $\rho_{0,W}\in \Pi_{\CW}(\RL_{0,W})$. Then the lemma holds for
\[
\RP_0^+=(\Res_{E/F}\RP_{0,X^+}\times \RP_{0,W})\times\RP_{0,V}\subset \RP^+
\]
\[
\rho_0^+=(\rho_{0,X^+}\boxtimes\rho_{0,W})\boxtimes \rho_{0,V}
\]
In (FJ 1)(FJ 2) cases, from Lemma \ref{lem: classification of representations}, $\pi_W=\pi_W'\otimes \omega_{W,\psi}$ for irreducible Casselman-Wallach representation $\pi_W'$ of $\wt{\RG}_W$.
Since $\wt{\RG}_W$ is reductive, there is a surjection
\[
\Ind_{\RP_{0,W}}^{\RG_W}(\rho_{0,W}')\twoheadrightarrow \pi_W'
\]
for minimal parabolic subgroup $\RP_{0,W}$ and finite-dimensional representation $\rho_{0,W}'$.

There is a decomposition $W=X_0\oplus Y_0$ such that $X_0,Y_0$ are totally isotropic,
then Schrodinger's model of Weil representation gives
\[
\omega_{W,\psi}=\Ind_{\CH(X_0)}^{\CS,\CH(W)}(\psi)
\]
Then  
\[
\Ind_{\RP_{0,W}}^{\RG_W}(\rho_{0,W}')\wh{\otimes} \omega_{W,\psi}=\Ind_{\RP_{0,W}\rtimes \CH(X_0)}^{\RG_W^J}(\rho_{0,W}'\otimes \psi)
\]
Recall that $\RP_{0,W}^J=\RP_{0,W}\rtimes \CH(X_0^{\perp})=\RP_{0,W}\rtimes \CH(X_0)$, then 
\[
\Ind_{\RP_{0,W}^J}^{\RG_W}(\rho_{0,W})\twoheadrightarrow \pi_W
\]
is a surjection for $\rho_{0,W}=\rho_{0,W}'\otimes \psi$. Then the lemma holds for
\[
\RP_0^+=\gamma^{-1} \RP_{0,V}\times (\RP_{0,X^+}\times\RP_{0,W}^J)\gamma\subset \RP^+
\]
\[
\rho_{0}^+=\rho_{0,V}\boxtimes (\rho_{0,X^+}\boxtimes \rho_{0,W})
\]
in (FJ 1) cases, and 
\[
\RP_0^+=\gamma^{-1}(\RP_{0,X^+}\times\RP_{0,V})\times \RP_{0,W}^J\gamma\subset \RP^+
\]
\[
\rho_{0}^+=(\rho_{0,X^+}\boxtimes \rho_{0,V})\boxtimes \rho_{0,W}
\]
in (FJ 2) cases.
\end{proof}
To prove Theorem \ref{thm: main 2}, we need the following geometric structure on $\RP_0^+(F)\bs \RG^+(F)/\RH^+(F)$.
\begin{lem}\label{lem: finite double coset}
  $\RP_0^+(F)\bs \RG^+(F)/\RH^+(F)$ has finitely many double cosets.  
\end{lem}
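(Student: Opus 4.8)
The statement to prove is that $\RP_0^+(F)\bs \RG^+(F)/\RH^+(F)$ has finitely many double cosets, where $\RP_0^+$ is the product of minimal parabolic subgroups constructed in Lemma \ref{lem: surjection of minimal parabolic induction}. The strategy is to reduce this to the already-established finiteness of $\RP^+(F)\bs\RG^+(F)/\RH^+(F)$ together with finiteness of double cosets inside the groups $\RP^+$ and $\RH^+$ themselves. More precisely, since $\RP_0^+\subset \RP^+$, there is a natural surjection
\[
\RP_0^+(F)\bs\RG^+(F)/\RH^+(F)\longrightarrow \RP^+(F)\bs\RG^+(F)/\RH^+(F),
\]
and the target is finite by the analysis of the open double coset in Lemma \ref{lem: decomposition of the stabilizer group} and the complement in Lemma \ref{lem: complement} (which together exhaust $\CX=\RP^+(F)\bs\RG^+(F)$ by an $\RH^+(F)$-invariant stratification). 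Hence it suffices to show each fiber is finite; a fiber over the class of $g^+\in\RG^+(F)$ is identified with $\RP_0^+(F)\bs\RP^+(F)/(\RP^+(F)\cap g^+\RH^+(F)(g^+)^{-1})$.

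First I would record that $\RP^+(F)\cap g^+\RH^+(F)(g^+)^{-1}$ is the group of $F$-points of an algebraic subgroup of $\RP^+$ (it is the stabilizer computed in Lemma \ref{lem: decomposition of the stabilizer group} for the open orbit and in Lemma \ref{lem: stabilizer group} for the closed orbits), and in each case this subgroup surjects onto (a Levi-type quotient containing) a group whose reductive quotient is a product of general linear and classical groups, with the remaining part unipotent. The key classical fact is the Bruhat-type / Matsuki-type finiteness: if $\RM$ is a reductive group over $F=\BR$ or $\BC$, $\RP_{0}$ a minimal parabolic, and $\RQ$ any algebraic subgroup, then $\RP_{0}(F)\bs\RM(F)/\RQ(F)$ is finite — this follows for instance from the fact that $\RM(F)/\RQ(F)$ has finitely many $\RP_0(F)$-orbits, a consequence of the finiteness of $\RP_0(F)$-orbits on any $\RM(F)$-homogeneous space of algebraic origin (over real or complex fields, algebraic group actions have finitely many orbits on finite-dimensional algebraic varieties when one of the groups is a Borel/minimal parabolic, by Bruhat decomposition and its real forms).

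Concretely, I would carry out the argument in the following steps. Step 1: stratify $\CX$ into the finitely many $\RH^+(F)$-orbits furnished by Lemmas \ref{lem: decomposition of the stabilizer group}, \ref{lem: complement} and \ref{lem: stabilizer group}, so $\RP^+(F)\bs\RG^+(F)/\RH^+(F)$ is finite and it remains to bound the number of $\RP_0^+(F)$-orbits on each single $\RP^+(F)$-orbit $\CO\subset\CX$. Step 2: identify $\CO\cong\RP^+(F)/\RS(F)$ where $\RS=\RP^+\cap g^+\RH^+(g^+)^{-1}$ is one of the explicit stabilizers; the set of $\RP_0^+(F)$-orbits on $\CO$ is then $\RP_0^+(F)\bs\RP^+(F)/\RS(F)$. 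Step 3: use the pseudo-Levi decomposition $\RP^+=\RL^+\rtimes\RN^+$ (from \eqref{equ: L^+}) and the fact that $\RP_0^+$ contains the unipotent radical $\RN^+$ (by its construction as a product of minimal parabolics of the Levi factors together with $\RN^+$) to reduce the count to $\RP_{0,\RL^+}(F)\bs\RL^+(F)/\mathrm{pr}_{\RL^+}(\RS)(F)$ modulo a unipotent contribution — here I would invoke that the projection of $\RS$ to $\RL^+$ is algebraic and that $\RN^+\cap\RS$ has the right codimension so that the unipotent directions contribute only finitely (in fact transitively, after quotienting). Step 4: since $\RL^+$ is (up to metaplectic covers, which change nothing at the level of double cosets as the kernel is central of order two) a product of a general linear group $\Res_{E/F}\GL(X^+)$ with a classical group $\RG_V$ or $\RG_W^J$, and $\RP_{0,\RL^+}$ is a minimal parabolic, apply the reductive finiteness fact componentwise.

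\textbf{Main obstacle.} The delicate point is Step 3: the stabilizers $\RS$ appearing in Lemma \ref{lem: decomposition of the stabilizer group} and Lemma \ref{lem: stabilizer group} have the shape $(\Res_{E/F}\RR_{X',X^+}\times \RG_{\bullet})\rtimes \RN_{\bullet}$, and in particular their "$\GL$-part" is a \emph{mirabolic} subgroup $\RR_{X',X^+}$ rather than all of $\GL$, so the relevant double-coset space on the $\GL$-factor is $\RP_{0,X^+}(E)\bs\GL(X^+)(E)/\RR_{X',X^+}(E)$ rather than $\RP_0\bs\GL/\RP$; I would handle this by noting that $\RR_{X',X^+}$ contains a maximal unipotent and its reductive quotient is $\GL(X')\times\GL_1$, so this space is again finite (it is covered by finitely many Bruhat cells refined by the flag defining $X'$), and similarly the interaction of the unipotent radicals $\RN^+$, $\RN_{\bullet}$ with $\RP_0^+$ must be checked to not produce infinitely many orbits — this is where one genuinely uses that all groups in sight are of algebraic origin over $\BR$ or $\BC$, so that the total space $\RP_0^+(F)\bs\RG^+(F)/\RH^+(F)$ is the $F$-points of a scheme with finitely many orbits, hence finite. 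A clean alternative, which I would probably adopt to avoid case-by-case bookkeeping, is to prove finiteness directly from the general principle: $\RG^+$ is a (possibly metaplectic cover of a) reductive group, $\RP_0^+$ contains a minimal parabolic of the $\GL$- and classical factors as well as all unipotent radicals present, and $\RH^+$ is a spherical subgroup (it is, after all, a Gan--Gross--Prasad subgroup, for which sphericity/finiteness of Borel double cosets is known over any field of characteristic zero from the work on these models); finiteness of $\RP_0^+(F)\bs\RG^+(F)/\RH^+(F)$ then follows from finiteness of $\RB(F)\bs\RG^+(F)/\RH^+(F)$ for a Borel $\RB\subset\RP_0^+$, which is standard.
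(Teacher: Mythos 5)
The key "classical fact" you invoke is false as stated: it is \emph{not} true that for a reductive group $\RM$ over $\BR$ or $\BC$, a minimal parabolic $\RP_0$, and an \emph{arbitrary} algebraic subgroup $\RQ$, the space $\RP_0(F)\bs\RM(F)/\RQ(F)$ is finite. For a counterexample take $\RQ=\{1\}$ (then one recovers the flag variety, which is positive-dimensional), or $\RQ$ a maximal torus in $\SL_3$. Finiteness of Borel (or minimal parabolic) double cosets is precisely the \emph{spherical} condition on $\RQ$, so the statement you want to use is the conclusion, not a freely available lemma. This undermines both the fiberwise count in Steps 3--4 of your main argument (which you yourself flag as hand-wavy and defer to this "fact") and the blanket claim about "algebraic actions with finitely many orbits when one group is a Borel."

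Your alternative route — appeal to sphericity of the Gan--Gross--Prasad subgroup — is essentially what the paper does, but only in the Bessel case, where $\RG^+=\RG_{W^+}\times\RG_V$ is reductive and one can cite absolute sphericity of $\RG^+/\RH^+$. In the Fourier--Jacobi cases (FJ~1) and (FJ~2), however, the ambient group $\RG^+$ contains a Jacobi factor $\RG_{W^+}^J$ or $\RG_W^J=\RG_W\rtimes\CH(W)$, which is \emph{not} reductive, so there is no Borel subgroup and the sphericity framework does not apply verbatim; your "which is standard" therefore papers over the genuinely non-standard part. The paper instead handles (FJ~2) by observing that $\RP_{0,W}^J$ contains a maximal unipotent of $\RG_{V^+}$ and invoking Bruhat decomposition for $\RP_{0,V^+}\bs\RG_{V^+}/\RP_{0,W}^J$, and handles (FJ~1) by explicitly enumerating the double cosets in $\RG_{W^+}^J=\RG_{W^+}\rtimes\CH(W^+)$, decomposing them by Bruhat cell of the $\RG_{W^+}$-component together with vanishing/non-vanishing conditions on pairings $\langle v, z_j\rangle$ for the Heisenberg component. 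To repair your write-up you would need to replace the false general lemma by: in (B), an actual verification of sphericity; in (FJ~1)/(FJ~2), a direct Bruhat-type analysis of the Jacobi group as in the paper, or at least an argument that $\RP_0^+$ contains a maximal unipotent of the reductive part together with an explicit treatment of the Heisenberg factor.
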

\begin{proof}
  In (B) cases, one can verify as in \cite[\S 3]{chen2021local} that $\RG^+/\RH^+$ is absolutely spherical, which implies the finiteness of the double coset.  In (FJ 2) cases, as $\RP_{0,W}^J$ contains a maximal unipotent subgroup of $\RG_{V^+}$, from Bruhat decomposition, $\RP_{0,V^+}$ has finitely many orbits on $\RG_{V^+}/\RP_{0,W}^J$, so we have the finiteness.

 In (FJ 1) cases, we take a basis $\{z_i^+\}_{i=\pm 1,\cdots,\pm \frac{\dim_E W^+}{2}}$ of $W^+$ such that $\RP_{0,W^+}$ stabilizes the flag
 \[
 0\subset X_1^+\subset \cdots\subset X_{\frac{\dim_E W^+}{2}}^+=X_0
 \]
 the Bruhat decomposition gives a decomposition 
\[
\RG_{W^+}=\RP_{0,W^+}w_i\RP_{0,W^+},\quad i=1,\cdots,l_1
\]
    
For $p'\in \RP_{0,V}(F)=\RP_{0,W^+}(F)$, $g\rtimes v\in \RG_{W^+}\rtimes W^+ =\RG_{W^+}\rtimes \CH(W^+)/\BG_a$, and $p\rtimes x\in \RP_{0,W^+}\rtimes X_0=\RP_{0,W^+}\rtimes \CH(X_0)/\BG_a$, we have
\[
p'(g,v)(p,x)=(p'gp,v^px)
\]
where $v^p$ denotes the action of $p$ on $v$ by standard representation of $\RG_{0,W^+}$. Then the double cosets $\RP_{0,V}(F)\bs\RG_{W^+}^J(F)/\RP_{0,W^+}^J(F)$ are $\CO_{i,S}$ where $i=1,\cdots,l_1$ and $S\subset \{1,\cdots,\frac{\dim_E W^+}{2}\}$,
\[
\CO_{i,S}=\{(g,v): g\in \RP_{0,W^+}(F)w_i\RP_{0,W^+}(F), \langle v,z_j\rangle =0, \text{ for } i\in S,  \langle v,z_j\rangle \neq 0, \text{ for } i\notin S\}
\]
Therefore, in (FJ 1) cases, there are also finitely many double cosets.
\end{proof}
\begin{proof}[Proof for Theorem \ref{thm: main 2}(2)]
By  definition,
\[
\RJ_{s^+,\mu^+}^+\circ \RT_{\RP^+,s^+}\in \CS^*(\RG^+(F),l(\sigma_{X^+}, \pi_V, \pi_W))
\]

From Lemma \ref{lem: surjection of minimal parabolic induction}, there exists a subgroup $\RP_0^+$ of $\RG^+$ and a finite-dimensional representation $\rho_0^+$ of $\RP_0^+(F)$ such there is a surjection
    \[
\RF_{\sigma_{X^+},\pi_V,\pi_W}:\Ind_{\RP_0^+}^{\CS,\RP^+}(\rho_0^+)\twoheadrightarrow l(\sigma_{X^+},\pi_V,\pi_W)
    \] 
and by Definition \ref{def: Schwartz induction}, we have a surjection 
\[
\RT_{\RP_0^+,\rho_0^+}:\CS(\RP^+(F),\rho_0^+)\twoheadrightarrow \Ind_{\RP_0^+}^{\CS,\RP^+}(\rho_0^+)
\]

 We construct an injective map with the surjection 
 \[
\RF_{\sigma_{X^+,\pi_V,\pi_W}}\circ\RT_{P_0^+,\rho_0^+}:\CS(\RP^+(F),\rho_0^+)\to l(\sigma_{X^+},\pi_V,\pi_W)
 \] following \cite[Proposition 4.9]{gourevitch2019analytic},
\[
\Phi:\CS^*(\RG^+(F),l(\sigma_{X^+},\pi_V,\pi_W))\hookrightarrow \CS^*(\RG^+\times \RP^+(F),\rho_0^+)
\]

We denote by $\tau$ to be the kernel of $\RF_{\sigma_{X^+,\pi_V,\pi_W}}\circ\RT_{P_0^+,\rho_0^+}$ and by tensoring  with the nuclear Fr\'echet space $\CS(\RG^+(F))$, we obtain an exact sequence (\cite[Lemma A.3]{casselman2000bruhat})
\[
0\to \CS(\RG^+(F))\wh{\otimes}\tau \to 
\CS(\RG^+(F))\wh{\otimes}\CS(\RP^+(F),\rho_0^+) \to \CS(\RG^+(F))\wh{\otimes}l(\sigma_{X^+},\pi_V,\pi_W) \to 0
\]

Notice that \[
\CS(\RG^+(F))\wh{\otimes}\CS(\RP^+,\rho_0^+)=\CS(\RG^+\times \RP^+(F),\rho_0^+), 
\] 

By dualizing the exact sequence, we obtain an exact sequence
\[
0\to \CS^*(\RG^+(F),l(\sigma_{X^+},\pi_V,\pi_W))\overset{\Phi}{\to}\CS^*(\RG^+\times \RP^+(F),\rho_0^+)\to \CS^*(\RG^+(F),\tau)\to 0
\]

The map $\Phi$ is defined as in the exact sequence. Then we can prove the meromorphic continuation of $\RJ_{s^+,\mu^+}$ by showing that
\begin{enumerate}
    \item The family $\Phi(\RJ_{s^+,\mu^+})$ has a meromorphic continuation;
    \item The meromorphic family lies in $\Im(\Phi)$.
\end{enumerate}
We first \cite[Lemma 3.3]{gourevitch2019analytic} to prove $\Phi(\RJ_{s^+,\mu^+}^+\circ \RT_{\RP^+,s^+})$ is holonomic.

Hence, in each situation $\RP_0^+(F)\bs \RG^+(F)/\RH^+(F)$ has finitely many double cosets 
\[
\RP_0^+(F)x_i\RH(F),\quad i=0,1,\cdots,k
\] and we may assume $\RP_0^+(F)x_0\RH^+(F)$ is open. Then from \cite[Lemma 3.3]{gourevitch2019analytic}(1), $\Phi(\RJ_{s^+,\mu^+}^+\circ \RT_{\RP^+,s^+})|_{\RP_0^+(F)x_0\RH^+(F)}$ is holonomic and then one can prove by induction from \cite[Lemma 3.3]{gourevitch2019analytic}(2) that $\Phi(\RJ_{s^+,\mu^+}^+\circ \RT_{\RP^+,s^+})$ is holonomic. Therefore, Point (1) follows from \cite[Theorem 3.4]{gourevitch2019analytic}.

Point (2) follows the proof in \cite[Corollary 4.9]{gourevitch2019analytic} verbatim using  the fact that $\Im(\Phi)$ can be characterized by $(\CS(\RG^{+})\wh{\otimes}\tau)^{\perp}$ in $\CS^*(\RG^+\times \RP^+(F),\rho_0^+)$.
\end{proof}
\subsection{
Proof for Theorem \ref{thm: main intro}(3)}\label{section: pf for first multi formula}
In this section, we prove Theorem \ref{thm: main intro}(3) based on Theorem \ref{thm: main} and Theorem \ref{thm: main 2}.

\begin{proof}[Proof for Theorem \ref{thm: main intro}(3)]
From Theorem \ref{thm: multiplicity basic}, $m(\pi_V\boxtimes \pi_W)\leqslant 1$, so it suffices to show that there is a nonzero element in $\Hom_{\RH^+}(\RI(\sigma_{X^+},\pi_V,\pi_W),1_{\RH^+})$  when $\Hom_{\RH}(\pi_V\boxtimes \pi_W,\xi_{\RH})\neq 0$. 

For given nonzero $\mu\in \Hom_{\RH}(\pi_V\boxtimes \pi_W,\xi_{\RH})$, we construct meromorphic family $\RJ_{s,\mu,\lambda}$ for $\Re(s)$ large enough. From Theorem \ref{thm: main}(2), it has meromorphic continuation  \[
\RJ_{s,\mu,\lambda}\in \Hom_{\RP^+\cap \RH^+}(l(|\det|^s\sigma_{X^+},\pi_V,\pi_W),1_{\RP^+\cap \RH^+}).
\] Then we take $\mu^+$ to be the principal term
\[
\lim_{s\to 0}s^{-n}\RJ_{s,\mu,\lambda}
\]
where $n$ is the order of zero at $s=0$ for $\RJ_{s,\mu,\lambda}$. From \cite[\S 2]{gourevitch2019analytic},  
\[
 \mu^+\in \Hom_{\RP^+\cap \RH^+}(l(\sigma_{X^+},\pi_V,\pi_W),1_{\RP^+\cap \RH^+})
\]

We use $\mu^+$ to construct a nonzero meromorphic family $\RJ_{s^+,\mu^+}^+$ as in  Section \ref{section: second integral}. From  Theorem \ref{thm: main 2}(2), we has its meromorphic continuation 
\[
\RJ_{s^+,\mu^+}^+\in\Hom_{H^+}(\RI(|\det|^{s_0+s^+}\sigma_{X^+},\pi_V,\pi_W),1_{\RH^+})
\]
to the complex plane; again, by taking the principal term at $s^+=-s_0$, we get a nonzero element in
\[
\Hom_{\RH^+}(\RI(\sigma_{X^+},\pi_V,\pi_W),1_{\RH^+}) 
\]

Therefore, $\Hom_{\RH^+}(\RI(\sigma_{X^+},\pi_V,\pi_W),1_{\RH^+})\neq 0$. This completes the proof.
\end{proof}

\section{Proof for Theorem \ref{thm: conj 3}}\label{section: 5.3}
In this section, we prove Theorem \ref{thm: conj 3} that Conjecture \ref{conjecture: main} holds over Archimedean local fields following the ideas in \cite{moeglin2012conjecture}. 

The proof is based on Theorem \ref{thm: main intro}.  Theorem \ref{thm: main intro}(1)  reduces the theorem to the basic cases. The counterpart of the reduction over non-Archimedean fields are in \cite[\S 19]{gan2012symplectic}\cite[\S 1.5]{moeglin2012conjecture}.  Then, in the basic cases, we prove Theorem \ref{thm: main intro}(2)(3), which are the counterparts of the basic forms of "the first inequality" in \cite[\S 1.4]{moeglin2012conjecture} and "the second inequality" in \cite[\S 1.8]{moeglin2012conjecture}, respectively. 

More precisely, in Section \ref{section: main 2}, we complete the proof for Theorem \ref{thm: main intro}(2) using refined distribution analysis as in \cite[\S 1.4]{moeglin2012conjecture}. Then we use an analog of this proof in Schwartz analysis to prove Theorem \ref{thm: main intro}(1) in Section \ref{section: proof for (1)}. 

In Section \ref{section: first inequality}, we apply Theorem \ref{thm: main intro}(1) and a similar mathematical induction as in \cite[\S 1.6]{moeglin2012conjecture} to prove the first equality  from its basic forms (Theorem \ref{thm: main intro}(2)). 
Similarly, the second inequality also follows  from its basic forms (Theorem \ref{thm: main intro}(3)) and mathematical induction. Since Theorem \ref{thm: main intro}(3) works in the more general context, there are at most two steps in the mathematical induction, detailed in Section \ref{section: second inequality}.

\subsection{Proof for Theorem \ref{thm: main intro}(2)}\label{section: main 2}

In this section, we prove Theorem \ref{thm: main intro}(2) using a refined distributional analysis. We first define $\mathrm{LI}(\pi_V)$ and $\RI'(\chi,\pi_V,\pi_W)$ in Theorem \ref{thm: main intro}(2).
\begin{defin}
For a  representation
\[
\pi_V=|\cdot|^{s_1}\sigma_{V,1}\times \cdots \times |\cdot|^{s_r}\sigma_{V,r}\rtimes \pi_{V_0},
\]
as in Theorem \ref{thm: the first inequality}, we define $\mathrm{LI}(\pi_V)$ to be the supermum  of
\begin{enumerate}
    \item $\Re(s_i)$, when $\sigma_{V,i}$ is a unitary character of $\GL_1(E)$;
    \item $\Re(s_i)+\frac{m_i}{2}$, when $\sigma_{V,i}$ is the discrete series $D_{m_i}$ of $\GL_2(\BR)$.
\end{enumerate}
\end{defin}

\begin{defin}\label{def: Iprime}
When $\dim X^+=2$, let $\chi$ be a character of $\BR^{\times}$, then we define 
\[\RI'(\chi,\pi_V,\pi_W)=\begin{cases}
    (\chi\rtimes \pi_W)\boxtimes \pi_V &\text{ in (B) cases},\\
    \pi_W\boxtimes(\chi\rtimes \pi_V) &\text{ in (FJ 1) cases},\\
    (\chi\rtimes \pi_V)\boxtimes \pi_W&\text{ in (FJ 2) cases}.
\end{cases}
\]
\end{defin}

Since $\RG^+(F)$ is unimodular, by definition,
\[\RI(|\det|^s\sigma_{X^+},\pi_V, \pi_W)=\Ind_{\RP^+}^{\CS,\RG^+}(\delta_{\RP^+}^{1/2}\otimes l(|\det|^s\sigma_{X^+},\pi_V,\pi_W)).\]

From Proposition \ref{pro: alt Schwartz induction}, 
\[\Ind_{\RP^+}^{\CS,\RG^+}(\delta_{\RP^+}^{1/2}\otimes l(|\det|^s\sigma_{X^+},\pi_V,\pi_W))=\Gamma^{\CS}(\RP^+(F)\bs \RG^+(F),\CE_s),
\]
where $\CE_s$ the bundle
\[
\RP^+(F)\bs (\RG^+(F)\times \delta_{\RP^+}^{1/2}\otimes l(|\det|^s\sigma_{X^+},\pi_V,\pi_W))
\]
with the $\RP^+(F)$-action  given in Proposition \ref{pro: geometric definition}.

Let $\CX=\RP^+(F)\bs \RG^+(F)$ and $\CU$ be the open $\RH^+$-orbit $\RP^+(F)\bs\RP^+(F)\RH^+(F)$  (Lemma \ref{lem: decomposition of the stabilizer group}) and $\CZ$ be the complement of $\CU$ in $\CX$. 

The extension by zero gives an embedding of $\Gamma^{S}(\mathcal{U},\mathcal{E}_s)
$ into $\Gamma^{S}(\mathcal{X},\mathcal{E}_s)$, and we define
\[
\Gamma^{\CS}_{\CZ}(\CX,\CE_s)=\Gamma^{S}(\mathcal{X},\mathcal{E}_s)/\Gamma^{S}(\mathcal{U},\mathcal{E}_s).
\]

To prove Theorem \ref{thm: main intro}(2), it suffices to show that 
\begin{lem}\label{lem: inequalities}
Under the  conditions on $\Re(s)$ in Theorem \ref{thm: main intro}(2), we have
\begin{enumerate}
    \item 
$\dim\Hom_{\RH^+}(\Gamma^{S}(\mathcal{U},\mathcal{E}_s),1_{\RH^+})\geqslant \dim\Hom_{\RH^+}(\Gamma^{S}(\mathcal{X},\mathcal{E}_s),1_{\RH^+})$;
\item
\begin{enumerate}
    \item $m(\pi_V\boxtimes \pi_W)\geqslant\dim\Hom_{\RH^+}(\Gamma^{S}(\mathcal{U},\mathcal{E}_s),1_{\RH^+})$ when $\sigma_{X^+}=|\cdot|^s\chi$;
    \item $m(\RI'(|\cdot|^{s+\frac{m}{2}}\sgn^{m+1},\pi_V,\pi_W))\geqslant\dim\Hom_{\RH^+}(\Gamma^{S}(\mathcal{U},\mathcal{E}_s),1_{\RH^+})$ when $\sigma_{X^+}=|\det|^sD_m$.
\end{enumerate}
\end{enumerate}    
\end{lem}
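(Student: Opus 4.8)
\textbf{Proof plan for Lemma \ref{lem: inequalities}.}

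The plan is to follow the geometric architecture of M\oe glin--Waldspurger, carried out here on the Nash manifold $\CX = \RP^+(F)\bs\RG^+(F)$ rather than via Casselman's canonical pairing. Part (1) is the ``first inequality at the level of sections''. The exact sequence of tempered sheaves
\[
0\to \Gamma^{\CS}(\CU,\CE_s)\to \Gamma^{\CS}(\CX,\CE_s)\to \Gamma^{\CS}_{\CZ}(\CX,\CE_s)\to 0
\]
induces, after applying the left-exact functor $\Hom_{\RH^+}(-,1_{\RH^+})$, an exact sequence
\[
0\to \Hom_{\RH^+}(\Gamma^{\CS}_{\CZ}(\CX,\CE_s),1_{\RH^+})\to \Hom_{\RH^+}(\Gamma^{\CS}(\CX,\CE_s),1_{\RH^+})\to \Hom_{\RH^+}(\Gamma^{\CS}(\CU,\CE_s),1_{\RH^+}).
\]
So part (1) reduces to the \emph{vanishing} statement $\Hom_{\RH^+}(\Gamma^{\CS}_{\CZ}(\CX,\CE_s),1_{\RH^+})=0$ under the stated hypothesis on $\Re(s)$. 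First I would stratify $\CZ$ by $\RH^+(F)$-orbits using Lemma \ref{lem: complement}: there are only one or two non-open orbits, with explicit stabilizers $\RS_{\gamma'}$ (Lemma \ref{lem: stabilizer group}) and explicit conormal bundle fibers (Lemma \ref{lem: conormal bundle}). For each orbit one filters $\Gamma^{\CS}_{\CZ}(\CX,\CE_s)$ by normal-derivative order; the associated graded pieces are Schwartz sections of $\CE_s$ twisted by symmetric powers $\Sym^k(\CN^\vee_{\CZ|\CX})$ restricted to each orbit, which by Frobenius reciprocity (Schwartz-version, as in Proposition \ref{pro: geometric definition}) contribute
\[
\Hom_{\RS_{\gamma'}}\bigl(\delta\text{-twist}\otimes \Sym^k(\mathrm{Fib}_{\gamma'}(\CN^\vee_{\CZ|\CX}))\otimes l(|\det|^s\sigma_{X^+},\pi_V,\pi_W),\BC\bigr).
\]
The point is that $\RS_{\gamma'}$ contains the $\GL(X_c)(E)$-factor acting on $\mathrm{Fib}_{\gamma'}(\CN^\vee)=\std_{X_c}$ (or $\std_{X_c}\oplus\ovl{\std_{X_c}}$), while $p_{\GL}$ sends this factor into $\GL(X^+)(E)$ where $\sigma_{X^+}$ is twisted by $|\det|^s$; tracking the central characters of $\GL(X_c)$ on the two sides shows that the only way a nonzero functional can exist is if $s$ satisfies one of countably many linear equations determined by the central exponents of $\pi_V$ (this is precisely where $\mathrm{LI}(\pi_V)$ enters), together with the vanishing of $\Hom$ against the tempered datum $\pi_{V_0}\boxtimes\pi_{W_0}$ supplied by the appendix. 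For $\Re(s)\ge\mathrm{LI}(\pi_V)$ (resp. $\Re(s)+\tfrac m2\ge\mathrm{LI}(\pi_V)$) these equations have no solution, so every graded piece has zero $\Hom$, hence so does $\Gamma^{\CS}_{\CZ}(\CX,\CE_s)$; here I would invoke the fact that over Nash manifolds a convergent decreasing filtration with vanishing graded pieces forces the whole $\Hom$ to vanish, which needs the completeness/exactness properties of Schwartz sections from \cite{aizenbud2008schwartz} plus Borel's lemma as used in \cite{sun2012multiplicity}.

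For part (2), the open piece $\CU\cong \RP^+(F)\bs\RP^+(F)\RH^+(F)$ is an $\RH^+(F)$-homogeneous space with stabilizer $\RP^+\cap\RH^+ = \Delta\RS_{\mathrm{open}}$ (Lemma \ref{lem: decomposition of the stabilizer group}), so by Frobenius reciprocity
\[
\Hom_{\RH^+}(\Gamma^{\CS}(\CU,\CE_s),1_{\RH^+}) = \Hom_{\RP^+\cap\RH^+}\bigl(\delta_{\RP^+\cap\RH^+}^{-1}\delta_{\RP^+}^{1/2}\otimes l(|\det|^s\sigma_{X^+},\pi_V,\pi_W),\BC\bigr).
\]
Using Corollary \ref{cor: mirabolic structure} to write $\RH(F)\bs(\RP^+\cap\RH^+)(F) = \RN_{0,X^+}(E)\bs\RR_{X',X^+}(E)$, one expresses this last Hom-space as $\Hom_{\RH}(\pi_V\boxtimes\pi_W,\xi)$ twisted by a Jacquet-module-type functor along the mirabolic $\RR_{X',X^+}$; when $\sigma_{X^+}=|\cdot|^s\chi$ (so $\dim_E X^+=1$ and $\RR_{X',X^+}$ is trivial) this \emph{is} $\Hom_{\RH}(\pi_V\boxtimes\pi_W,\xi)=m(\pi_V\boxtimes\pi_W)$, giving (2)(a) with equality; in the $\dim_E X^+=2$, $E=\BR$ case the mirabolic contributes one extra unipotent integration, which matches exactly the passage from $|\det|^s D_m$ to $|\cdot|^{s+m/2}\sgn^{m+1}$ built into the definition of $\RI'$, giving (2)(b). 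The inequality (rather than equality) in (2)(b) comes from the fact that this mirabolic Jacquet functor need not be exact, so one only gets an injection of Hom-spaces.

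The main obstacle is the vanishing statement inside part (1): ensuring that \emph{all} the contributions from the non-open strata and \emph{all} their normal derivatives die under a single explicit bound $\mathrm{LI}(\pi_V)$ on $\Re(s)$, uniformly in the derivative order $k$. This is where the argument genuinely departs from the non-Archimedean case --- there is no finite-length Jacquet module to truncate the filtration, so one must control the growth in $k$ of the central exponents appearing in $\Sym^k(\CN^\vee)$ against the fixed exponents of $\pi_V$, and combine this with the appendix's vanishing result (which plays the role of ``nonvanishing of multiplicity implies nonvanishing of tempered intertwining'' but is proved here directly). I expect the bookkeeping of these exponents, and the verification that the Schwartz-section filtration argument is legitimate (convergence of the filtration, exactness of extension-by-zero, the Borel-lemma input), to be the technically heaviest part; everything else is Frobenius reciprocity and the explicit geometry already recorded in Lemmas \ref{lem: decomposition of the stabilizer group}--\ref{lem: conormal bundle}.
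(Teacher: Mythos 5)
Part (1) and part (2)(a) of your proposal are essentially the paper's argument: left-exactness of $\Hom$ plus the Borel-lemma normal-derivative filtration of $\Gamma^{\CS}_{\CZ}(\CX,\CE_s)$, recognition of the graded pieces via Lemmas~\ref{lem: stabilizer group} and~\ref{lem: conormal bundle}, the appendix vanishing result (Theorem~\ref{thm: vanishing in appendix}) to kill each graded piece, and the inverse-limit compatibility of $\Hom$ (Lemma~\ref{lem: three properties}(2)) to pass from graded pieces to the whole filtered module. One small caveat: the vanishing is not a ``generic-position, avoid countably many equations'' argument --- it is a clean vanishing for all $\Re(s')>\Re(s_1)$ supplied by the appendix; that phrasing belongs to the Schwartz-homology version in Theorem~\ref{thm: main intro}(1), which you have partly conflated with the present $\Hom$ argument. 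Also note that the filtration is indexed by $k$ alone; there is no growth in central exponents to control, since the shift $|\det|^{s+1/2}$ in the graded piece is independent of $k$ while $\Sym^k\rho$ lives on the separate $\GL(X_c)$-factor, so the vanishing at each $k$ comes from the single bound $\Re(s)\geqslant\mathrm{LI}(\pi_V)$.

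There is a genuine gap in part (2)(b). Your explanation --- that the inequality results from a ``mirabolic Jacquet-type functor failing to be exact'' --- is not the mechanism, and it skips the essential structural input. The actual argument restricts $|\det|^sD_m$ to the mirabolic $\RR_{1,1}$ and uses the Kirillov-model filtration (Lemma~\ref{lem: filtered structure of discrete series representation}): $|\det|^sD_m|_{\RR_{1,1}}$ contains the Schwartz-induced Whittaker piece $\Ind_{\BR^\times\times 1}^{\CS,\RR_{1,1}}(|\cdot|^{s+\frac{m}{2}}\sgn^{m+1})$ as a subrepresentation, and the quotient carries a complete descending filtration with graded pieces $|\det|^{k+s+\frac{m}{2}}\sgn(\det)^k|_{\RR_{1,1}}$ for $k\geqslant 1$. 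Inducing this embedding up to $\RH^+$ (using exactness of Schwartz induction and of $\wh\otimes$ with a nuclear space) realizes $\RI'(|\cdot|^{s+\frac{m}{2}}\sgn^{m+1},\pi_V,\pi_W)$ as a \emph{subrepresentation} of $\Gamma^{\CS}(\CU,\CE_s)$, not a quotient. The complementary quotient's graded pieces are the $\RI''(|\det|^{s+k+\frac{m}{2}}\sgn^k,\pi_V,\pi_W)$ of Lemma~\ref{lem: Iprimeprime}; killing them requires invoking the vanishing lemma (Lemma~\ref{lem: three properties}(3)(4)) once more, under the same $\Re(s)+\frac{m}{2}\geqslant\mathrm{LI}(\pi_V)$ hypothesis, a step your proposal omits entirely. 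Only then does left-exactness of $\Hom$ give the injection $\Hom_{\RH^+}(\Gamma^{\CS}(\CU,\CE_s),1)\hookrightarrow\Hom_{\RH^+}(\RI'(\cdots),1)$, which is where the inequality actually comes from. Without the Kirillov lemma and without re-applying the vanishing criterion to the quotient pieces, the claimed inequality does not follow.
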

The proof for Lemma \ref{lem: inequalities} is based on the following properties of the $\Hom$-functor in the category  $\Pi_{\FMG}$.

\begin{lem}\label{lem: three properties} 
\begin{enumerate}
    \item (Left exactness)
    For an exact sequence $0\to \pi_1 \to \pi_2 \to \pi_3 \to 0$ of $\RH^+(F)$-representations, there is an exact sequence
    \begin{equation*}\label{equ: left exact sequence}
    0\to \Hom_{\RH^+}(\pi_3,1_{\RH^+})\to \Hom_{\RH^+}(\pi_2,1_{\RH^+})\to \Hom_{\RH^+}(\pi_1,1_{\RH^+});
    \end{equation*}
    \item (Link between direct limit and inverse limit)
    Given a projective system $\{\pi_{\alpha}\}_{I}$, we have
    \begin{equation*}\label{equ: direct limit and inverse limit}
    \Hom_{\RH^+}(\varprojlim_{\alpha} \pi_{\alpha},1_{\RH^+})=\varinjlim_{\alpha} \Hom_{\RH^+}(\pi_{\alpha},1_{\RH^+});
    \end{equation*}
    \item (Vanishing results for reductive groups)
    For tempered representations $\sigma_i$ ($1\leqslant i\leqslant l$), tempered representation $\pi_{V_0'}$ of $\RG_{V_0'}$, and $\Re(s_1)\geqslant \cdots \geqslant \Re(s_l)>0$, we let
    \[
\pi_V=|\det|^{s_1}\sigma_1\times\cdots\times|\det|^{s_r}\sigma_r\rtimes\pi_{V_0},
\]
then we have
\[
    \Hom_{\RG_V}((|\det|^s\sigma_V\rtimes\pi_{V_0'})\wh{\otimes} \pi_V,1_{\RG_V}) = 0
\]
for $\pi_{V_0'}\in \Pi_{\FMG}(\RG_{V_0})$ when $\Re(s)>\Re(s_1)$;
    \item (Vanishing results for Jacobi groups)
    When $\RG_V=\Sp(V),\wt{\Sp}(V),\RU(V)$, let $\pi_V^J=\wt{\pi}_V\otimes \omega_{V,\psi_F}\in \Pi_{\FMG}^{\psi_F}(\RG_{V}^J)$, where 
    \[
\wt{\pi}_V=|\det|^{s_1}\sigma_1\times\cdots\times|\det|^{s_r}\sigma_r\rtimes\wt{\pi}_{V_0},
\]
where $s_i,\sigma_i$ are as in (3) and $\wt{\pi}_{V_0}$ is an irreducible tempered representaiton of $\wt{\RG}_{V_0}$, then we have
\[
    \Hom_{\RG_V^J}((|\det|^s\sigma_V\rtimes\pi_{V_0'}^J)\wh{\otimes} \pi_V^J,1_{\RG_V^J}) = 0
\]
for $\pi_{V_0'}^J\in \Pi_{\FMG}^{\psi_F^{-1}}(\RG_{V_0}^J)$ when $\Re(s)>\Re(s_1)$.
\end{enumerate}
\end{lem}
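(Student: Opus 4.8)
The four assertions are of quite different natures, so I would treat them in turn. Points (1) and (2) are formal: (1) is simply the left-exactness of the contravariant functor $\Hom_{\RH^+}(-,1_{\RH^+})$ on $\Pi_{\FMG}(\RH^+(F))$, which holds because applying a continuous functional and restricting to a closed subrepresentation is exact on the left; (2) is the statement that $\Hom$ out of an inverse limit of Fr\'echet $\RH^+(F)$-representations (with surjective transition maps, as arises from the filtration of Schwartz sections by order of vanishing on $\CZ$, cf.\ \cite[Theorem 5.4.3]{aizenbud2008schwartz}) is the direct limit of the $\Hom$'s — this is a standard fact about nuclear Fr\'echet spaces and their strong duals, combined with $\RH^+(F)$-equivariance. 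Neither needs any input specific to the Bessel/Fourier-Jacobi setup.

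The real content is in (3) and (4), the vanishing statements. For (3), I would argue as follows. Write $\pi_V = |\det|^{s_1}\sigma_1\times\cdots\times|\det|^{s_r}\sigma_r\rtimes\pi_{V_0}$ and note that by Lemma \ref{lem: reciprocity} (reciprocity) and the definition of multiplicity as a branching number, $\Hom_{\RG_V}\bigl((|\det|^s\sigma_V\rtimes\pi_{V_0'})\,\wh\otimes\,\pi_V,\,1_{\RG_V}\bigr)$ computes $\Hom_{\RG_V}\bigl(|\det|^s\sigma_V\rtimes\pi_{V_0'},\,\pi_V^\vee\bigr)$ (after checking the Casselman--Wallach hypotheses, or passing to $\Pi_{\FMG}$ where the pairing still makes sense as a duality). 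Then I would use Frobenius reciprocity for Schwartz (equivalently smooth, since the relevant quotient is compact for a parabolic) induction together with a Jacquet-module / Langlands-quotient type argument: a nonzero map out of $|\det|^s\sigma_V\rtimes\pi_{V_0'}$ forces an inclusion of a Jacquet module of $\pi_V^\vee$ with a central exponent of real part $\ge \Re(s)$ along the relevant parabolic, but the exponents appearing in $\pi_V^\vee$ are governed by the $\Re(s_i)$ and by the tempered pieces $\sigma_i,\pi_{V_0}$, all of which have real part $\le \Re(s_1)$; this contradicts $\Re(s)>\Re(s_1)$. The cleanest route is probably to invoke the classification of $\Pi_{\FMG}$ via Langlands data and the fact that the ``leading exponent'' of $\pi_V$ is $\Re(s_1)$, so that $\Hom$ from something strictly more positive vanishes — essentially the Archimedean analogue of the argument in \cite[\S1.4]{moeglin2012conjecture} and \cite[Proposition 5.7]{waldspurger2012formule}, but done with Casselman's and Wallach's asymptotics rather than with Jacquet modules of smooth representations.

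For (4), the Jacobi-group case, I would reduce to (3) by stripping off the Weil representation. Writing $\pi_V^J = \wt\pi_V\otimes\omega_{V,\psi_F}$ and $\pi_{V_0'}^J = \wt\pi_{V_0'}\otimes\omega_{V_0,\psi_F^{-1}}$, and using Lemma \ref{lem: compatibility with tensor Weil} together with Lemma \ref{lem: coinvariant tensor weil} (which gives $(\omega_{V,\psi_F}\otimes\omega_{V,\psi_F^{-1}})_{\CH(V)}=\BC$ with $\RG_V$ acting trivially), the space $\Hom_{\RG_V^J}\bigl((|\det|^s\sigma_V\rtimes\pi_{V_0'}^J)\wh\otimes\pi_V^J,1\bigr)$ is identified with $\Hom_{\RG_V}\bigl((|\det|^s\sigma_V\rtimes\wt\pi_{V_0'})\wh\otimes\wt\pi_V,1_{\RG_V}\bigr)$ — here one uses that $|\det|^s\sigma_V\rtimes\pi_{V_0'}^J$ is the Weil-twist of $|\det|^s\sigma_V\rtimes\wt\pi_{V_0'}$ by the compatibility lemma, and that the Heisenberg group acts on the tensor product of the two Weil representations with one-dimensional coinvariants. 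Then (4) follows from (3) applied to $\wt\RG_V$. The main obstacle is point (3): making the exponent/asymptotics argument rigorous in the category $\Pi_{\FMG}$ of smooth Fr\'echet representations of moderate growth, where one cannot directly quote finite-length Jacquet-module theory and must instead control leading exponents via Casselman--Wallach asymptotic expansions (or reduce to the Casselman--Wallach subquotients and use \cite{bernstein2014smooth}); everything else is either formal or a reduction to (3).
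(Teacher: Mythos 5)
Your proposal takes essentially the same route as the paper: (1) and (2) are dispatched as formal properties; (3) is converted via the reciprocity (Lemma \ref{lem: reciprocity}) to a $\Hom$ out of $|\det|^s\sigma_V\rtimes\pi_{V_0'}$ into the contragredient of the standard module, and then killed by an exponent argument; (4) is reduced to (3) by stripping off the Weil representation using Lemma \ref{lem: compatibility with tensor Weil} and the coinvariant computation of Lemma \ref{lem: coinvariant tensor weil}. That is exactly the structure of the proof in the paper, which invokes Theorem \ref{thm: vanishing in appendix}(2) for the exponent vanishing and the two Weil lemmas for (4).

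Where your sketch stops short is precisely the point you flag: carrying out the exponent argument when $\pi_{V_0'}$ is merely in $\Pi_{\FMG}$, hence of potentially infinite length with no Langlands classification available. Your two suggested fallbacks (Casselman--Wallach asymptotic expansions, or passing to $\Pi_{\CaWa}$ subquotients via \cite{bernstein2014smooth}) are not what the paper does. The paper's Appendix handles this via Lemma \ref{lem: reduction to irreducible cases}, which uses the second adjointness for $(\Fg,K)$-modules from \cite{din2021second} to replace the arbitrary $\pi_{V_0'}\in\Pi_{\FMG}(\RG_{V_0'})$ by an irreducible Casselman--Wallach $\pi_{V_0'}'$; only after that reduction does one apply the Langlands-classification/dominance argument (Lemmas \ref{lem: dominance} and \ref{lem: app ind}). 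A second small point worth making explicit: since the target $\pi_V$ in Lemma \ref{lem: three properties}(3) is the whole (possibly reducible) induced representation and not just its Langlands quotient, one must also check that every irreducible subquotient of $\pi_V$ has leading exponent $\le\Re(s_1)$; this is Lemma \ref{lem: dominance}(2), which upgrades the vanishing from Theorem \ref{thm: vanishing in appendix}(1) to (2). Neither of these is a flaw in your strategy, but both are concrete ingredients you would need to supply to turn the sketch into a proof.
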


\begin{proof}
Points (1)(2) are well-known. From Lemma \ref{lem: reciprocity}, 
\[  
\begin{aligned}
&\Hom_{\RG_V}((|\det|^s\sigma_V\rtimes\pi_{V_0'})\wh{\otimes} \pi_V,1_{\RG_V})\\
=&\Hom_{\RG_V}((|\det|^s\sigma_V\rtimes\pi_{V_0'}),\pi_V^{\vee})\\
=&\Hom_{\RG_V}(|\det|^s\sigma_V\rtimes\pi_{V_0'},(|\det|^{s_1}\sigma_1)^{\tau}\times \cdots\times (|\det|^{s_r}\sigma_{l})^{\tau}\rtimes \pi_{V_0})
\end{aligned}
\]
where $\tau$ is the complex conjugation when $E\neq F$ and $\tau$ is the trivial action when $E=F$. From Theorem \ref{thm: vanishing in appendix}(2), this space is equal to zero, then we proved Point (3). From the classification in Lemma \ref{lem: classification of representations 2}, $\pi_{V_0'}^J=\wt{\pi}_{V_0'}\otimes\omega_{V_0,\psi_F}$ and from Lemma \ref{lem: compatibility with tensor Weil}, 
\[
|\det|^s\sigma_V\rtimes\pi_{V_0'}^J=(|\det|^s\sigma_V\rtimes \wt{\pi}_{V_0'})\otimes\omega_{V,\psi_F}.
\] Then from Lemma \ref{lem: coinvariant tensor weil}
\[
    \Hom_{\RG_V^J}((|\det|^s\sigma_V\rtimes\wt{\pi}_{V_0'})\wh{\otimes} \pi_V\wh{\otimes}(\omega_{V,\psi_F}\otimes \omega_{V,\psi_F^{-1}}),1_{\RG_V^J}) = 
    \Hom_{\RG_V}((|\det|^s\sigma_V\rtimes\pi_{V_0'})\wh{\otimes} \pi_V,1_{\RG_V}).
\]
This reduces Point (4) to Point (3).
\end{proof}

\begin{proof}[Proof for Lemma \ref{lem:  inequalities}(1)]
 We prove Point (1) by
analyzing the complement $\CZ=\CX-\CU$ of the open orbit. From the left exactness (Lemma \ref{lem: three properties}), there is an exact sequence
\[
0\to \Hom_{\RH^+}(\Gamma^{\CS}_{\CZ}(\CX,\CE_s),1_{\RH^+}) \to \Hom_{\RH^+}(\Gamma^{S}(\mathcal{X},\mathcal{E}_s),1_{\RH^+}) \to \Hom_{\RH^+}(\Gamma^{S}(\mathcal{U},\mathcal{E}_s),1_{\RH^+}). 
\]

Hence, to prove the inequality in (1), it suffices to prove that
\begin{equation}\label{equ: hom closed=0}
\Hom_{\RH^+}(\Gamma^{\CS}_{\CZ}(\CX,\CE_s),1_{\RH^+})=0
\end{equation}
under the given conditions. 

One can study the structure of $\Gamma^{\CS}_{\CZ}(\CX,\CE_s)$ with the Borel's lemma in \cite{chen2020schwartz}.
\begin{lem}\label{lem: borel}
    There is a complete descending filtration $\{\Gamma_{\CZ}^{\CS}(\CX,\CE_s)_k\}_{k\geqslant 0}$ of  $\Gamma^\mathcal{S}_{\mathcal{Z}}\left(\mathcal{X}, \mathcal{E}_s\right)$ 
such that the graded pieces
\[ \Gamma_{\CZ}^{\CS}(\CX,\CE_s)_k/\Gamma_{\CZ}^{\CS}(\CX,\CE_s)_{k+1}=\Gamma^\mathcal{S}\left(\mathcal{Z}, \Sym^k\mathcal{N}_{\mathcal{Z}|\mathcal{X}}^\vee\otimes\mathcal{E}_s\big|_\mathcal{Z}\right), \quad k\in\mathbb{N}.
\]
where $\CN_{\CZ|\CX}$ is the conormal bundle of $\CZ$ in $\CX$.
\end{lem}
Recall that in Lemma \ref{lem: stabilizer group}, we computed
\[
\RS_{\gamma'}=\begin{cases}
  \Delta(\Res_{E/F}\GL(X_c)\times \RG_{V_0})\rtimes \RN_V  & \text{ in (B)(FJ 1) cases},\\
  \Delta(\Res_{E/F}\GL(X_c)\times \RG_{V_0^+}^J)\rtimes \RN_{W}^J  & \text{in (FJ 2) cases},\\
\end{cases}
\]
where $V_0,W_0^+,V_0^+$ are given as in Lemma \ref{lem: complement}, and we may assume  
    (1) $V_0\subset W$ and $\dim_E W=\dim_E V_0+1$ in (B) cases;
    (2) $W=V_0$ in (FJ 1) cases;
    (3) $V_0^+\subset V$ and $\dim_E V=\dim_E V_0^++2$ in (FJ 2) cases.

From Lemma \ref{lem: conormal bundle}, we compute the fiber of the bundle at $\gamma'$ as a representation of $\RS_{\gamma'}$
\[
\mathrm{Fib}_{\gamma'}(\Sym^k\CN_{\CZ|\CX}^{\vee}\otimes \CE_s|_{\CZ})=\begin{cases}
\delta_{\RP^+}^{1/2}\otimes((|\det|^{s+1/2}\sigma_{X^+}\otimes \Sym^k\rho)\boxtimes \pi_W^{\gamma'}|_{\RG_{V_0}})\boxtimes \pi_V& \text{ in (B) cases},\\
     \pi_V\boxtimes \delta_{\RP^+}^{1/2}\otimes((|\det|^{s+1/2}\sigma_{X^+}\otimes \Sym^k\rho)\boxtimes \pi_{W}^{\gamma'}|_{\RG_{W^+_0}})&\text{ in (FJ 1) cases},\\
     \delta_{\RP^+}^{1/2}\otimes((|\det|^{s+1/2}\sigma_{X^+}\otimes \Sym^k\rho)\boxtimes \pi_{V}^{\gamma'}|_{\RG_{V_0^+}^J})\boxtimes\pi_W&\text{ in (FJ 2) cases},
\end{cases}
\]
where $\rho=
\mathrm{Fib}_{\gamma'}(\CN_{\CZ|\CX}^{\vee})$ is the fiber of the conormal bundle $\CN_{\CZ|\CX}^{\vee}$ at $\gamma'$ given in Lemma \ref{lem: conormal bundle}. The representations obtained from restriction are Fr\'echet of moderate growth but not necessarily Casselman-Wallach.

We first detailed the cases when $\CZ$ has a single orbit $[\gamma']$ in Lemma \ref{lem: complement}. 
By comparing the unipotent part, we obtain that
\[
\delta_{\RP_{W^+,X^+}}(a)\delta_{\RP_{V,X_c}}^{-1}(a^{\gamma'})=|\det(a)|, 
\]
 \[
 \delta_{\RP_{W^+,X^+}^J}(a)\delta_{\RP_{V,X_c}}^{-1}(a^{\gamma'})=|\det(a)|,
 \] 
 \[
 \delta_{\RP_{V^+,X^+}}(a)\delta^{-1}_{\RP_{W,X_c}^J}(a^{\gamma'})=|\det(a)|,
 \]
 for $a\in \GL(X^+)(E)$. Then we have
\[
\Gamma_{\CZ}^{\CS}(\CX,\CE_s)_k/\Gamma_{\CZ}^{\CS}(\CX,\CE_s)_{k+1}=
\begin{cases}
    (|\det|^{s+1/2}\sigma_{X^+}\otimes \Sym^k\rho)\rtimes \pi_W^{\gamma'}|_{\RG_{V_0}}\boxtimes \pi_V& \text{ in (B) cases},\\
     \pi_V\boxtimes (|\det|^{s+1/2}\sigma_{X^+}\otimes \Sym^k\rho)\rtimes \pi_{W}^{\gamma'}|_{\RG_{W_0^+}}&\text{ in (FJ 1) cases},\\
(|\det|^{s+1/2}\sigma_{X^+}\otimes \Sym^k\rho)\rtimes \pi_{V}^{\gamma'}|_{\RG_{V_0^+}^J}\boxtimes \pi_W     &\text{ in (FJ 2) cases}.\\
\end{cases}
\]

Therefore, from Lemma \ref{lem: three properties}(3)(4) and the assumptions on $\Re(s)$, we have
\[
\Hom_{\RH^+}(\Gamma^{\CS}_{\CZ}(\CX,\CE_s)_k/\Gamma_{\CZ}^{\CS}(\CX,\CE_s)_{k+1},1_{\RH^+})=0, \quad k=0,1,\cdots
\]
From Lemma \ref{lem: three properties}(1)(2), we have (\ref{equ: hom closed=0}), so we completed the proof for Point (1).

When $\CZ$ has two orbits, from Lemma \ref{lem: complement}, it is either the special orthogonal Bessel cases or almost equal-rank Fourier-Jacobi cases. In (B) cases when $\RG_V=\SO(V)$, it was computed in \cite{chen2021local} that 
\[
\Gamma_{\CZ}^{\CS}(\CX,\CE_s)_k= ((|\det|^{s+1/2}\sigma_V\otimes \Sym^k\rho)\rtimes \pi_W|_{\RG_{V_0}}\boxtimes \pi_V)^{\oplus 2}.
\]

In (FJ 2) cases when $\dim X^+=1$, notice $\CZ$ has an open orbit $[\gamma']$ and a closed orbit $[\gamma]$, which is a single point.
\[
0\to \Gamma^{\CS}([\gamma'],\Sym^k\mathcal{N}_{\mathcal{Z}|\mathcal{X}}^\vee\otimes\mathcal{E}_s\big|_\mathcal{Z})\to \Gamma_{\CZ}^{\CS}(\CX,\CE_s)_k\to \Gamma_{[\gamma^{-1}]}^{\CS}(\CZ,\CE_s)\to 0.
\]

Using the method for single orbit situation, one can show that 
\[
\Hom_{\RH^+}(\Gamma^{\CS}([\gamma'],\Sym^k\mathcal{N}_{\mathcal{Z}|\mathcal{X}}^\vee\otimes\mathcal{E}_s\big|_\mathcal{Z}),1_{\RH^+})=0,\quad \Hom_{\RH^+}(\Gamma_{[\gamma^{-1}]}^{\CS}(\CZ,\CE_s),1_{\RH^+})=0.
\]
This special case is detailed in \cite{chen2022FJ}.
\end{proof}

To prove Point (2), we introduce the following property of discrete series  representations.

\begin{lem}(\cite[Lemma 5.4.9]{chen2021local})\label{lem: filtered structure of discrete series representation}
$|\det|^sD_m|_{\RR_{1,1}}$ has a subrepresentation (the underlying space is not necessarily closed) isomorphic to \[
\Ind_{\BR^{\times}\times 1}^{\CS,\RR_{1,1}}(|\cdot|^{s+\frac{m+1}{2}}\sgn^{m+1}).
\]
Moreover, there is a complete descending filtration with graded pieces isomorphic to
\[
|\det|^{k+s+\frac{m}{2}}\sgn(\det)^k|_{\RR_{1,1}},\text{ for } k=1, 2,\cdots.
\]
\end{lem}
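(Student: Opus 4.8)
\textbf{Proof plan for Lemma \ref{lem: filtered structure of discrete series representation}.}

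The plan is to work directly with the explicit realization of the discrete series $D_m$ of $\GL_2(\BR)$ restricted to the mirabolic subgroup $\RR_{1,1}$, which consists of matrices $\left(\begin{smallmatrix} a & x \\ 0 & 1\end{smallmatrix}\right)$ with $a\in \BR^\times$, $x\in \BR$. The point is that $\RR_{1,1}\cong \BR^\times \ltimes \BR$, so its representation theory is governed by the orbit structure of the dual $\widehat{\BR} = \BR$ under the $\BR^\times$-action by scaling, which has exactly three orbits: $\{0\}$, $\BR_{>0}$, and $\BR_{<0}$. First I would recall (or cite \cite[Lemma 5.4.9]{chen2021local} directly, since the statement is quoted verbatim from there) the Kirillov model of $D_m$: in the Kirillov model the restriction to $\RR_{1,1}$ acts by the standard action on a space of functions on $\BR^\times$, namely $\left(\begin{smallmatrix} a & x\\0&1\end{smallmatrix}\right)\cdot f(t) = \psi(xt) f(at)$, and the Schwartz-Kirillov model realizes this on $\CS(\BR^\times)$. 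The decomposition of $\CS(\BR^\times) = \CS(\BR_{>0}) \oplus \CS(\BR_{<0})$ under the sign of $t$, together with the identification of $\CS(\BR_{>0})$ with $\Ind_{\BR^\times\times 1}^{\CS,\RR_{1,1}}$ of the appropriate character via Mellin transform, is what produces the claimed subrepresentation. The shift by $\frac{m+1}{2}$ and the sign $\sgn^{m+1}$ come from tracking the normalization of the induction (the $\delta^{1/2}$ twist) against the central character of $D_m$, which is $\sgn^{m+1}\cdot|\cdot|^0$ up to the unitary normalization, i.e. the central character of the $(2m-1)$-dimensional irreducible representation as stated in the excerpt's convention.

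For the second assertion — the complete descending filtration with graded pieces $|\det|^{k+s+\frac m2}\sgn(\det)^k|_{\RR_{1,1}}$ for $k=1,2,\dots$ — I would use the Borel-type lemma / Taylor expansion at the origin: the quotient $\CS(\BR^\times)/\CS(\BR_{>0}\sqcup\BR_{<0})$-type obstruction is trivial here, but the genuinely relevant filtration comes from expanding a Schwartz function on $\BR$ (after inverse Mellin / Fourier transform) in its Taylor series at $0\in\BR$, i.e. from the filtration of $\CS(\BR)$ by order of vanishing at the $\{0\}$-orbit in the dual. Each graded piece of that filtration is a one-dimensional $\BR^\times$-module twisted by the $k$-th symmetric power of the (one-dimensional) normal space, which after the normalization bookkeeping gives exactly $|\det|^{k+s+m/2}\sgn(\det)^k$. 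Concretely: $D_m|_{\RR_{1,1}}$ sits in an exact sequence whose open-orbit part is the induced piece from the first assertion and whose closed-orbit ($\{0\}$) part carries this jet filtration; one then invokes the completeness of the filtration (Borel's lemma, \cite{chen2020schwartz}) to conclude it is a complete descending filtration.

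The main obstacle I anticipate is purely bookkeeping rather than conceptual: getting the exponents and the signs exactly right, namely verifying that the normalized Schwartz induction's modular twist $\delta_{\RR_{1,1}}^{1/2}$ combines with the Kirillov-model action and the central character of $D_m$ to yield precisely $s+\frac{m+1}{2}$ for the open piece and $k+s+\frac m2$ for the $k$-th closed graded piece, with the parity of the sign character alternating correctly. Since the statement is lifted verbatim from \cite[Lemma 5.4.9]{chen2021local}, the cleanest route is simply to cite that lemma; if a self-contained argument is wanted, one carries out the three steps above (Kirillov model, open-orbit identification via Mellin transform, jet filtration at the origin via Borel's lemma) and checks the normalization constants against the convention fixed in the excerpt for $D_m$. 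I would present it as a citation with a one-paragraph sketch of the orbit-theoretic mechanism, since no new ideas beyond \cite{chen2021local} are needed.
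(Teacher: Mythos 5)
The paper itself offers no proof of this lemma — it simply imports it as \cite[Lemma 5.4.9]{chen2021local} — so your decision to cite is indeed the route the paper takes, and you correctly identify this. Your optional sketch, however, contains concrete errors that would derail a self-contained argument, so it is worth flagging them.

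First, the Kirillov model of $D_m$ is \emph{not} $\CS(\BR^{\times})$. Whittaker functions of a discrete series on $\diag(t,1)$ have a nontrivial asymptotic expansion as $t\to 0$ whose terms are of the form $|t|^{\alpha}t^{k}$; these are neither smooth at $0$ nor rapidly decreasing together with all derivatives, so they do not lie in $\CS(\BR^{\times})$. The Casselman--Wallach Kirillov model is strictly larger than the Schwartz space, and it is precisely this excess — the coefficients of the asymptotic expansion at $0$ — that produces the infinite descending filtration by one-dimensional characters $|\det|^{k+s+m/2}\sgn(\det)^{k}$ in the quotient. If the model really were $\CS(\BR^{\times})$, the quotient would be zero and the second half of the lemma would be vacuous. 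Second, the decomposition $\CS(\BR^{\times})=\CS(\BR_{>0})\oplus\CS(\BR_{<0})$ you invoke is not $\RR_{1,1}$-stable: $-1\in\BR^{\times}\subset\RR_{1,1}$ interchanges the two summands, so it cannot be used to produce an $\RR_{1,1}$-subrepresentation. Third, and most substantively, you implicitly identify $\Ind_{\BR^{\times}\times 1}^{\CS,\RR_{1,1}}(\chi)$ with a Whittaker-type induction $\Ind_{N}^{\CS,\RR_{1,1}}(\psi)$ via a "Mellin transform" step, but these are distinct $\RR_{1,1}$-modules: the former is Schwartz sections over $\BR^{\times}\backslash\RR_{1,1}\cong\BR$ (underlying space $\CS(\BR)$), the latter is Schwartz sections over $N\backslash\RR_{1,1}\cong\BR^{\times}$ (underlying space $\CS(\BR^{\times})$), and they differ exactly by the jet space at the origin. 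Conflating them is the same mistake as in the first point.

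A cleaner self-contained argument — likely closer to what \cite{chen2021local} does — is to realize $D_m$ as a Langlands subquotient of a principal series $\chi_1\times\chi_2$ and stratify the restriction of the principal series to $\RR_{1,1}$ by the two $\RR_{1,1}$-orbits on $B\backslash\GL_2\cong\BP^1$: the open orbit $\BR^{\times}\backslash\RR_{1,1}$ contributes the subrepresentation $\Ind_{\BR^{\times}\times 1}^{\CS,\RR_{1,1}}(\chi)$, and the closed fixed point contributes the jet filtration (via the Borel-type lemma already used elsewhere in this paper, Lemma~\ref{lem: borel}) with one-dimensional graded pieces $\delta^{1/2}\chi_i\otimes\Sym^{k}\rho$. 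One then tracks how $D_m$ sits inside this filtered object. This is the same "open orbit vs. closed orbit" intuition you describe, but applied to the correct geometric object ($\BP^{1}$ with the mirabolic action) rather than to a purported $\CS(\BR^{\times})$ Kirillov model; the remaining work is the normalization bookkeeping you rightly anticipate.
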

Then from the exactness of Schwartz induction (\cite[Proposition 7.1]{chen2020schwartz}) and exactness of tensor product with nuclear Fr\'echet space (\cite[Lemma A.3]{casselman2000bruhat}), the embedding
\[
\Ind_{\BR^{\times}\times 1}^{\CS,\RR_{1,1}}(|\cdot|^{s+\frac{m}{2}}\sgn^{m+1}) \hookrightarrow D_m|_{\RR_{1,1}}
\]
induces an embedding 
\[
\Ind^{\CS,\RH^+}_{\RP^+\cap \RH^+}(\delta_{\RP^+}^{1/2}\otimes \Ind_{\BR^{\times}\times 1}^{\RR_{1,1}}(\chi)\boxtimes \pi_V\boxtimes \pi_W)\hookrightarrow 
\Ind^{\CS,\RH^+}_{\RP^+\cap \RH^+}(\delta_{\RP^+}^{1/2}\otimes |\det|^sD_m\boxtimes \pi_V\boxtimes \pi_W)=\Gamma^{S}(\mathcal{U},\mathcal{E}_s).
\]
\begin{num}
 \item \label{equ: complete de open}   
 The quotient\[\Gamma^{S}(\mathcal{U},\mathcal{E}_s)/
\Ind^{\CS,\RH^+}_{\RP^+\cap \RH^+}(\delta_{\RP^+}^{1/2}\otimes \Ind_{\BR^{\times}\times 1}^{\RR_{1,1}}(\chi)\boxtimes \pi_V\boxtimes \pi_W)\] has a
complete descending filtration with graded pieces isomorphic to
\[
\Ind^{\CS,\RH^+}_{\RP^+\cap \RH^+}(\delta_{\RP^+}^{1/2}\otimes |\det|^{k+s+\frac{m}{2}}\sgn(\det)^k|_{\RR_{1,1}}
\boxtimes \pi_V\boxtimes \pi_W).
\]
\end{num}
\begin{lem}
For a character $\chi$ of $\BR^{\times}$, we have
\[
\RI'(\chi,\pi_V,\pi_W)=\Ind^{\CS,\RH^+}_{\RP^+\cap \RH^+}(\delta_{\RP^+}^{1/2}\otimes \Ind_{\BR^{\times}\times 1}^{\RR_{1,1}}(|\cdot|^{-1/2}\chi))\boxtimes \pi_V\boxtimes \pi_W).
\]
\end{lem}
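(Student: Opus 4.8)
The plan is to read both sides as Schwartz inductions and match them by transitivity of Schwartz induction, using only structural results already in place; the (B), (FJ~1) and (FJ~2) cases run in parallel because the relevant double-coset geometry is uniform.

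\emph{Unwinding the right-hand side.} By Corollary~\ref{cor: mirabolic structure}, when $\dim_E X^+=2$ we have $\RR_{X',X^+}=\RR_{1,1}$, and the Levi-type subgroup $\BR^\times\times 1\subset\RR_{1,1}$ pulls back, inside $\RP^+\cap\RH^+$, to a subgroup $\RS_0$ with $\RP^+\cap\RH^+=\RS_0\cdot\RR$, where $\RR\cong\RN_{0,X^+}$ is the unipotent radical of $\RR_{1,1}$. Transitivity of Schwartz induction in the almost linear Nash category (available from the formalism of~\cite{chen2020schwartz}, as already used in the stages of Definition~\ref{defin: parabolic induction}) collapses
\[
\Ind^{\CS,\RH^+}_{\RP^+\cap\RH^+}\Big(\delta_{\RP^+}^{1/2}\otimes\big(\Ind^{\CS,\RR_{1,1}}_{\BR^\times\times 1}(|\cdot|^{-1/2}\chi)\big)\boxtimes\pi_V\boxtimes\pi_W\Big)
\]
into a single Schwartz induction $\Ind^{\CS,\RH^+}_{\RS_0}(\,\cdot\,)$, whose inducing datum is $\chi\boxtimes\pi_V\boxtimes\pi_W$ twisted by the product of $\delta_{\RP^+}^{1/2}$, the factor $|\cdot|^{-1/2}$, and the modular character of $\RR_{1,1}$ along $\BR^\times\times 1$.

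\emph{Unwinding the left-hand side.} By Definition~\ref{def: Iprime}, $\RI'(\chi,\pi_V,\pi_W)$ is a normalized Schwartz induction from a parabolic (resp.\ pseudo-parabolic) subgroup $\RQ^+$ of $\RG^+$ whose (pseudo-)Levi has a $\GL_1$-factor attached to a line of $X^+$ and carries the remaining classical data. Restricting to $\RH^+$ and analysing the double cosets $\RQ^+(F)\backslash\RG^+(F)/\RH^+(F)$ as in the proof of Lemma~\ref{lem: inequalities}(1) — the complement of the open $\RH^+$-orbit does not affect the identification, since over it the graded pieces again pair a degenerate principal series against $\pi_V$ (or against $\pi_W^J$), which is killed by Lemma~\ref{lem: three properties}(3)(4) in the present range of parameters — together with the geometric realization of Schwartz induction (Proposition~\ref{pro: alt Schwartz induction}), identifies $\RI'(\chi,\pi_V,\pi_W)|_{\RH^+}$ with $\Ind^{\CS,\RH^+}_{\RQ^+\cap\RH^+}$ of the restricted datum; Lemma~\ref{lem: decomposition of the stabilizer group} and Corollary~\ref{cor: mirabolic structure} then identify $\RQ^+\cap\RH^+$ with $\RS_0$. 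Thus both sides are Schwartz inductions from $\RS_0$.

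\emph{Matching normalizations, and the main obstacle.} What is left is to compare the two inducing data on $\RS_0$, i.e.\ a computation of modular characters. The normalization $\delta_{\RQ^+}^{1/2}$ built into $\RI'$, restricted to the $\GL_1$-factor, must be reconciled with the combination of $\delta_{\RP^+}^{1/2}$, the modular character of $\RR_{1,1}$ appearing in the transitivity step, and the surplus $|\cdot|^{-1/2}$; one checks the net character is trivial, using that on the relevant $\GL_1$ the modular character of $\GL_2$ is trivial while that of $\RR_{1,1}$ equals $|\cdot|$, so that the $|\cdot|^{1/2}$ produced by $\RR_{1,1}$ on $\BR^\times\times 1$ is exactly cancelled by the $|\cdot|^{-1/2}$ twist — which is the reason that twist occurs. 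I expect this reconciliation of the two normalization conventions to be the only genuinely delicate point; everything else is transitivity of Schwartz induction and substitution of lemmas already established. Finally, the (FJ~1) and (FJ~2) cases reduce to the reductive computation above by tensoring the desired identity with the appropriate Weil representation and invoking Lemma~\ref{lem: compatibility with tensor Weil}, $(\sigma_{X^+}\rtimes\pi_W)\otimes\omega_{W^+,\psi_F}=\sigma_{X^+}\rtimes(\pi_W\otimes\omega_{W,\psi_F})$, exactly as in the proof of Lemma~\ref{lem: Jac of Jaco}.
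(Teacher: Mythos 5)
Your instinct at the very end — that the heart of the lemma is reconciling the normalization $\delta_{\RP^+}^{1/2}$ with the extra $|\cdot|^{-1/2}$ twist through $\delta_{\RR_{1,1}}$ — is exactly right, and it is in fact the entire content of the paper's proof. The paper does not do any double-coset analysis here. It observes that $\RG^+(F)$ and $\RH^+(F)$ are unimodular, so $\RH^+(F)\backslash\RG^+(F)$ carries an invariant measure; since $\RH^+\RP^+$ is open (Lemma \ref{lem: stabilizer group}), this measure restricts to a $\RP^+(F)$-invariant measure on $\RP^+\cap\RH^+(F)\backslash\RP^+(F)$, which gives $\delta_{\RP^+}|_{\RP^+\cap\RH^+}=\delta_{\RP^+\cap\RH^+}$. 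Together with $\delta_{\RR_{1,1}}(\diag(a,1))=|a|$, this is the whole proof, and it is uniform across (B), (FJ~1) and (FJ~2). You name the pieces of the reconciliation but do not supply the invariant-measure argument that actually closes it; you leave this as ``I expect this to work out.''

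More seriously, the route you take to set up that comparison has a genuine gap. You restrict the induced representation underlying $\RI'(\chi,\pi_V,\pi_W)$ to $\RH^+$ and argue that the complement of the open orbit ``does not affect the identification'' because the graded pieces there are killed by Lemma \ref{lem: three properties}(3)(4). But (i) that vanishing is a statement about $\Hom$-spaces into the trivial character, not a statement that the complementary-orbit contributions are zero as representations, so it cannot upgrade to the isomorphism of $\RH^+$-representations asserted in the lemma; and (ii) Lemma \ref{lem: three properties}(3)(4) requires a lower bound on the real part of the exponent, whereas the lemma being proved is stated for an arbitrary character $\chi$ of $\BR^{\times}$ and its proof must not depend on such a range. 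The paper's argument needs neither assumption: both sides are read off as Schwartz inductions from $\RP^+\cap\RH^+$ to $\RH^+$, and the identity is purely a modular-character bookkeeping statement valid for all $\chi$. The Weil-representation reduction you propose for (FJ~1)/(FJ~2) is likewise unnecessary once one argues as the paper does.
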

\begin{proof}
Since both $\RG^+(F)$, $\RH^+(F)$ are unimodular, $\RH^+(F)\bs\RG^+(F)$ has a $\RG^+(F)$-invariant measure. Since $\RH^+(F)\RP^+(F)$ is open in $\RG(F)$  (Lemma \ref{lem: stabilizer group}), this measure induces an $\RP^+(F)$-invariant measure on $\RP^+\cap \RH^+(F)\bs \RP^+(F)$. Hence, \[
\delta_{\RP^+}|_{\RP^+\cap \RH^+}=\delta_{\RP^+\cap \RH^+}.
\]
Then the equality follows from the fact that
\[
\delta_{\RR_{1,1}}(\diag(a,1))=|a|.
\]
\end{proof}

\begin{defin}
For a character $\chi$ of $\BR^{\times}$, we have
\[
\RI''(\chi,\pi_V,\pi_W)=
\Ind^{\CS,\RH^+}_{\RP^+\cap \RH^+}(\delta_{\RP^+}^{1/2}\chi(\det)\boxtimes \pi_V\boxtimes \pi_W)
\]
\end{defin}
\begin{lem}\label{lem: Iprimeprime}
  \[
\RI''(\chi,\pi_V,\pi_W)=\begin{cases}
(\chi\rtimes \Ind_{\RG_W}^{\CS,\RG_{W\oplus Ez_0}}(\pi_W))\boxtimes \pi_V
&
\text{ in (B) cases},    \\
   \pi_V\boxtimes(\chi\rtimes \Ind_{\RG_W^J}^{\CS,\RG_{W\oplus H_1}}(\pi_W))&
\text{ in (FJ 1) cases},
 \\
    \chi\rtimes \Ind_{\RG_V}^{\RG_{V}^J}(\pi_V))\boxtimes \pi_W & \text{ in (FJ 2) cases}.
\end{cases}
\]  
\end{lem}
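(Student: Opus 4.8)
The plan is to prove Lemma~\ref{lem: Iprimeprime} by unwinding the Schwartz induction that defines $\RI''$ and re-expressing it through transitivity of Schwartz induction. First I would pin down the inducing subgroup: by Lemma~\ref{lem: decomposition of the stabilizer group} and Corollary~\ref{cor: mirabolic structure}, $\RP^+\cap\RH^+=\Delta\RS_{\mathrm{open}}$, and in the present situation ($E=\BR$, $\dim_EX^+=2$, so that $\RR_{X',X^+}$ is the mirabolic $\RR_{1,1}$ of $\GL_2$) a short computation with the semidirect product $\RS_{\mathrm{open}}=(\Res_{E/F}\RR_{1,1}\times\RG_W)\rtimes\RN_V'$ shows that $\RS_{\mathrm{open}}$ coincides with the subgroup $\RS'$ of $\RH^+$ obtained as the preimage, under $\RP_{V,X}\twoheadrightarrow\GL(X)\times\RG_{W\oplus Ez_0}$, of $\GL(X)\times\RG_W$; here $\RP_{V,X}$ is precisely the (pseudo-)parabolic of $\RH^+$ relative to which $\chi\rtimes(\cdot)$ is defined, $\RG_W$ sits inside $\RG_{W\oplus Ez_0}$ as the stabilizer of $z_0$, and the identification of the abelian unipotent $\RN_{V,X}=\RU\cdot\RN_V'$ (mirabolic unipotent times $\RN_V'$) is the key step making $\RS_{\mathrm{open}}=\RS'$. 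The (FJ\,1) and (FJ\,2) cases are parallel, with $\RG_W$ (resp.\ $\RN_V'$) replaced by its Jacobi analogue $\RG_V^J$ (resp.\ $\RN_W^J$) and $\RG_{W\oplus Ez_0}$ replaced by $\RG_{W\oplus H_1}$ (resp.\ $\RG_V^J$).

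Next, on the right-hand side, $\chi\rtimes\Ind_{\RG_W}^{\CS,\RG_{W\oplus Ez_0}}(\pi_W)$ is by definition $\Ind^{\CS,\RH^+}_{\RP_{V,X}}(\delta_{\RP_{V,X}}^{1/2}\,\chi\boxtimes\Ind_{\RG_W}^{\CS,\RG_{W\oplus Ez_0}}(\pi_W))$, and inducing the inner Schwartz induction in stages (via $\RS'\subset\RP_{V,X}$ and $\RP_{V,X}/\RS'\cong\RG_W\backslash\RG_{W\oplus Ez_0}$) rewrites this as $\Ind^{\CS,\RH^+}_{\RS'}$ of the inflation of $\delta_{\RP_{V,X}}^{1/2}\chi\boxtimes\pi_W$. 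On the left-hand side $\RI''(\chi,\pi_V,\pi_W)=\Ind^{\CS,\RH^+}_{\RP^+\cap\RH^+}(\delta_{\RP^+}^{1/2}\chi(\det)\boxtimes\pi_V\boxtimes\pi_W)$, and the tensor (projection) identity for Schwartz induction pulls $\pi_V$ out of the induction. Since $\RS_{\mathrm{open}}=\RS'$, it then remains to compare the two inducing data, i.e.\ to check $\delta_{\RP^+}^{1/2}|_{\RS_{\mathrm{open}}}=\delta_{\RP_{V,X}}^{1/2}|_{\RS'}$ — both restrict to a power of $|\det|$ on the $\GL(X)$-factor and are trivial on $\RG_W$ and on the unipotent part — which comes down to matching the exponents produced by $\dim W^+$, $\dim X^+$ on one side against $\dim V$, $\dim X$ on the other. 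In the (FJ) cases I would in addition commute the Weil-representation tensor past every Schwartz induction by Lemma~\ref{lem: compatibility with tensor Weil}, use Lemma~\ref{lem: classification of representations 2} to move between $\pi_W$, $\wt{\pi}_W$ and their images under the relevant (twisted) Jacquet functors, and use the coinvariant computation of Lemma~\ref{lem: coinvariant} to absorb the extra Heisenberg directions, so that after these reductions the (FJ) cases run through exactly the same induction-in-stages argument and deliver $\chi\rtimes\Ind_{\RG_W^J}^{\CS,\RG_{W\oplus H_1}}(\pi_W)$ and $\chi\rtimes\Ind_{\RG_V}^{\CS,\RG_V^J}(\pi_V)$.

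The step I expect to be the real obstacle is the modular-character bookkeeping in this last comparison, since it is exactly this that forces the right-hand sides to involve $\chi$ itself rather than some shift $|\cdot|^{c}\chi$. The inputs are the identity $\delta_{\RP^+}|_{\RP^+\cap\RH^+}=\delta_{\RP^+\cap\RH^+}$ (valid because $\RG^+(F)$ and $\RH^+(F)$ are unimodular and $\RP^+(F)\RH^+(F)$ is open in $\RG^+(F)$ by Lemma~\ref{lem: decomposition of the stabilizer group}), the value $\delta_{\RR_{1,1}}(\diag(a,1))=|a|$ of the mirabolic modulus, and an explicit computation of $\delta_{\RP^+}$ and $\delta_{\RP_{V,X}}$ on the $\GL(X)$-torus; getting all the exponents to cancel, uniformly across the (B), (FJ\,1) and (FJ\,2) families, is the delicate point, and a consistency check is that the resulting formula must agree with the $\dim_EX^+=1$ computation already carried out in \cite{chen2021local}. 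Everything else — transitivity of Schwartz induction, the projection formula, the geometric description in Proposition~\ref{pro: geometric definition}, and the exactness properties recalled in this section — is formal.
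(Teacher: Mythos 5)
Your route is exactly what the paper's one-line proof (``this follows from $\RG_{W\oplus Ez_0}$, $\RG_W$, \dots\ being unimodular'') is gesturing at: identify $\RP^+\cap\RH^+=\Delta\RS_{\mathrm{open}}$ with the preimage of $\GL(X)\times\RG_W$ in the relevant parabolic, apply transitivity of Schwartz induction and the projection formula to pull out $\pi_V$, and match modular characters.

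The one thing you flag as ``the real obstacle'' is actually clean, and it is worth seeing why so that you don't waste time on an explicit exponent count. You already have $\delta_{\RP^+}|_{\RP^+\cap\RH^+}=\delta_{\RP^+\cap\RH^+}$ from the proof of the preceding lemma (openness of $\RP^+\RH^+$ plus unimodularity of $\RG^+$ and $\RH^+$); what remains is $\delta_{\RP^+\cap\RH^+}=\delta_{\RP_{V,X}}|_{\RS_{\mathrm{open}}}$, and this holds on abstract grounds: writing $\RS_{\mathrm{open}}=(\GL(X)\times\RG_W)\rtimes\RN_{V,X}$ inside $\RP_{V,X}=(\GL(X)\times\RG_{W\oplus Ez_0})\rtimes\RN_{V,X}$, both moduli are $|\det\mathrm{Ad}(\cdot)|_{\Fn_{V,X}}|$, trivial on $\RG_W$ and on the unipotent, because $\RG_W$ and $\RG_{W\oplus Ez_0}$ are unimodular (and, being isometry groups, have $|\det|=1$ on any algebraic representation). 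That is precisely the content of the cited unimodularities, and it is uniform across (B), (FJ\,1), (FJ\,2). You should therefore not need the consistency check against the $\dim X^+=1$ case, nor, for the Fourier--Jacobi cases, the detour through Lemma~\ref{lem: compatibility with tensor Weil}, Lemma~\ref{lem: classification of representations 2} and the coinvariant Lemma~\ref{lem: coinvariant}: the statement is about $\pi_W$ as a representation of the Jacobi group $\RG_W^J$ directly, and the same induction-in-stages together with the modular identity above goes through verbatim once one keeps track of the Heisenberg factor inside $\RS_{\mathrm{open}}$ and $\RP_{W^+,X^+}^J$.
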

\begin{proof}
This follows from the fact that $\RG_{W\oplus Ez_0}$, $\RG_{W}$ (in (B) cases), $\RG_{W\oplus H_1}$, $\RG_W^J$ ( in (FJ 1) cases), $\RG_{V}^J$, $\RG_V$ (in (FJ 2) cases) are unimodular. 
\end{proof}
\begin{proof}[Proof for Lemma \ref{lem: inequalities}(2)]
We prove Point (2) by analyzing the open orbit $\CU$.

When $\sigma_{X^+}=|\cdot|^s\chi$, from Corollary \ref{cor: mirabolic structure}, we have $\CU=\RH(F)\bs\RP^+\cap \RH^+(F)=\{1\}$, so
\[
\dim\Hom_{\RH^+}(\Gamma^{S}(\mathcal{U},\mathcal{E}_s),1_{\RH^+})=m(\pi_V\boxtimes \pi_W)
\]

When $\sigma_{X^+}=|\det|^sD_m$,   
 from Corollary \ref{cor: mirabolic structure},
$\CU=\RH(\BR)\bs\RP^+\cap \RH^+(\BR)=\{\RN_{X^+}(\BR)\bs \RR_{X',X^+}(\BR)\}=\BR^{\times}$. From Lemma \ref{lem: Iprimeprime} and  Lemma \ref{lem: inequalities}(3)(4),
\[
\Hom_{\RH^+}(\RI''(|\det|^{s+k+\frac{m}{2}}\sgn(\det)^k,\pi_V,\pi_W)),\BC)=0, \quad \text{ for }k=1,2,\cdots
\]
then from (\ref{equ: complete de open}) Lemma \ref{lem: three properties}(1)(2), we have
\[
\dim\Hom_{\RH^+}(\Gamma^{S}(\mathcal{U},\mathcal{E}),1_{H^+})\leqslant m(\pi_V\boxtimes \RI'(|\cdot|^{s+{\frac{m}{2}}}\sgn^{m+1},\pi_V,\pi_W)).
\]
\end{proof}

\subsection{Proof for Theorem \ref{thm: main intro}(1)} \label{section: proof for (1)}

Theorem \ref{thm: main intro}(1) was proved  in   \cite[Proposition 6.1]{xue2020bessel} for unitary Bessel cases, and  in \cite[Lemma 5.1.3]{chen2021local} for special orthogonal Bessel cases using a Schwartz homology analog of the proof for Theorem \ref{thm: main 2}(2).

\begin{defin}
For $V\in \Pi_{\FMG}(G)$, the Schwartz homology $\RH_i^{\mathcal{S}}(G,V)$ is defined to be the $i$-th left derived functors of the  $G(\BR)$-coinvariant functor $V\mapsto V_{G}$, where $V_{G}$ is defined in Definition \ref{def: Hausdorff coinvariant}(1). In particular, $\RH_0^{\mathcal{S}}(G,V)=V_{G}$.
\end{defin}
As in the previous section, we have
\[
\RI(\sigma_{\udl{s}},\pi_V, \pi_W)=\Gamma^{\CS}(\RP^+(F)\bs \RG^+(F),\CE_{\udl{s}}),
\]
where $\CE_{\udl{s}}$ the bundle
\[
\RP^+(F)\bs (\RG^+(F)\times \delta_{\RP^+}^{1/2}\otimes l(\sigma_{\udl{s}},\pi_V,\pi_W))
\]
with the $\RP^+(F)$-action  given in Proposition \ref{pro: geometric definition}.

The following lemma is the counterpart of Lemma \ref{lem: inequalities}
\begin{lem} \label{lem: equalities}
We have
\begin{enumerate}
    \item 
$H_i(\RH^+(F),\Gamma^{S}(\mathcal{U},\mathcal{E}_{\udl{s}}))=H_i(\RH^+(F),\Gamma^{S}(\mathcal{X},\mathcal{E}_{\udl{s}}))$ for $\udl{s}$ in general positions;
\item
$H_i(\RH^+(F),\Gamma^{\CS}(\CU,\CE_s))=H_i(\RH(F),\pi_V\boxtimes \pi_W\otimes \xi^{-1})$ for $\udl{s}$ in general positions.
\end{enumerate}       
\end{lem}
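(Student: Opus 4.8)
\textbf{Proof proposal for Lemma \ref{lem: equalities}.}

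The plan is to transport the Schwartz-homology machinery used for the $\Hom$-functor in the proof of Lemma \ref{lem: inequalities} into the derived setting, exploiting that $H_i^{\mathcal{S}}(\RH^+(F),-)$ is the derived functor of the (Hausdorff) coinvariants functor, hence enjoys a long exact sequence and commutes with direct limits of Schwartz-space filtrations. First I would recall the two structural inputs from Section \ref{section: double coset}: the stratification $\CX=\CU\sqcup\CZ$ of $\CX=\RP^+(F)\bs\RG^+(F)$ by the right $\RH^+(F)$-action (Lemma \ref{lem: decomposition of the stabilizer group}, Lemma \ref{lem: complement}), together with the Borel-lemma filtration of $\Gamma^{\CS}_{\CZ}(\CX,\CE_{\udl{s}})$ whose graded pieces are $\Gamma^{\CS}(\CZ,\Sym^k\CN^{\vee}_{\CZ|\CX}\otimes\CE_{\udl{s}}|_{\CZ})$ (the analogue of Lemma \ref{lem: borel}), and the fiber computations of Lemma \ref{lem: stabilizer group} and Lemma \ref{lem: conormal bundle}.

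For Point (1), I would use the short exact sequence
\[
0\to \Gamma^{\CS}(\CU,\CE_{\udl{s}})\to \Gamma^{\CS}(\CX,\CE_{\udl{s}})\to \Gamma^{\CS}_{\CZ}(\CX,\CE_{\udl{s}})\to 0
\]
and its long exact sequence in Schwartz homology; it suffices to show $H_i^{\CS}(\RH^+(F),\Gamma^{\CS}_{\CZ}(\CX,\CE_{\udl{s}}))=0$ for all $i$ and all $\udl{s}$ in general positions. As in the proof of Lemma \ref{lem: inequalities}(1), each graded piece of the Borel filtration restricts, on the stabilizer $\RS_{\gamma'}$ of Lemma \ref{lem: stabilizer group}, to a representation of the form $(|\det|^{s+1/2}\sigma_{X^+}\otimes\Sym^k\rho)\rtimes\pi^{\gamma'}_{\bullet}|_{\RG_{\bullet}}$ boxed with the remaining factor; the vanishing of its Schwartz homology in all degrees for $\udl{s}$ outside a countable union of hyperplanes is the Schwartz-homology refinement of Lemma \ref{lem: three properties}(3)(4) — this is precisely the type of vanishing established in \cite{xue2020bessel} and \cite{chen2021local}, and in the two-orbit cases (special orthogonal Bessel, almost equal-rank Fourier-Jacobi) one stratifies $\CZ$ further exactly as recalled in the proof of Lemma \ref{lem: inequalities}(1). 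Then Borel's lemma plus the behaviour of $H_i^{\CS}$ under complete descending filtrations (direct-limit / inverse-limit compatibility, as in Lemma \ref{lem: three properties}(2) upgraded to the derived functor) forces $H_i^{\CS}(\RH^+(F),\Gamma^{\CS}_{\CZ}(\CX,\CE_{\udl{s}}))=0$.

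For Point (2), the open orbit $\CU=\RP^+(F)\bs\RP^+(F)\RH^+(F)$ is, via Corollary \ref{cor: mirabolic structure}, a fibration over $\RH(F)\bs\RH^+(F)$ with fiber $\RN_{0,X^+}(E)\bs\RR_{X',X^+}(E)$, and $\Gamma^{\CS}(\CU,\CE_{\udl{s}})$ is the Schwartz induction from $\RP^+\cap\RH^+(F)$ of $\delta^{1/2}_{\RP^+}\otimes l(\sigma_{\udl{s}},\pi_V,\pi_W)$ restricted to that subgroup. I would then apply Shapiro's lemma for Schwartz homology (induction in stages along $\RH\subset\RP^+\cap\RH^+\subset\RH^+$) to reduce $H_i^{\CS}(\RH^+(F),\Gamma^{\CS}(\CU,\CE_{\udl{s}}))$ to $H_i^{\CS}(\RH(F),(\delta^{1/2}_{\RP^+}\otimes l(\sigma_{\udl{s}},\pi_V,\pi_W))|_{\RH}\otimes\delta)$ for the appropriate modular twist, and then compute the Schwartz homology of the intermediate Jacquet-type quotient along $\RN_{0,X^+}(E)\bs\RR_{X',X^+}(E)$: for $\underline{s}$ in general positions the spherical principal series $\sigma_{X^+}=|\cdot|^{s_1}\times\cdots\times|\cdot|^{s_{\dim_E X^+}}$ is irreducible and its twisted Jacquet homology along this mirabolic-type group is one-dimensional in degree $0$ and vanishes in higher degrees (the Schwartz-homology analogue of the one-dimensionality statement proved via \cite{chen2020schwartz} and used in Lemma \ref{lem: coinvariant}), which collapses everything to $H_i^{\CS}(\RH(F),\pi_V\boxtimes\pi_W\otimes\xi^{-1})$. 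The main obstacle will be the higher-degree vanishing in this last step — establishing that the derived twisted Jacquet functor along $\RN_{0,X^+}(E)\bs\RR_{X',X^+}(E)$ is concentrated in degree $0$ on a spherical principal series in general position, which requires the Archimedean homological input of \cite{chen2020schwartz} together with a genericity argument controlling the countably many exceptional hyperplanes, paralleling \cite[Proposition 6.1]{xue2020bessel} and \cite[Lemma 5.1.3]{chen2021local} but now uniformly across the (B), (FJ 1), (FJ 2) cases.
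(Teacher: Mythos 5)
Your Point (1) follows the paper's proof: short exact sequence and the long exact sequence of Lemma \ref{lem: two property Schwartz}(1), the Borel-type filtration of $\Gamma^{\CS}_{\CZ}(\CX,\CE_{\udl{s}})$, vanishing of the graded pieces from Lemma \ref{lem: two property Schwartz}(3)(4), and passage to the inverse limit via Lemma \ref{lem: two property Schwartz}(2). The two-orbit cases are handled exactly as you say.

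Your Point (2) takes a genuinely different route, and the step you yourself flag as the main obstacle --- higher-degree vanishing of the derived twisted Jacquet functor of $\sigma_{\udl{s}}$ along $\RN_{0,X^+}(E)\bs\RR_{X',X^+}(E)$ --- is not what the paper proves and is not available from the cited sources. Two things in your sketch need repair. First, Shapiro's lemma only reduces $H_i^{\CS}(\RH^+,\Gamma^{\CS}(\CU,\CE_{\udl{s}}))$ to $H_i^{\CS}(\RP^+\cap\RH^+,\,\delta_{\RP^+}^{1/2}\otimes l(\sigma_{\udl{s}},\pi_V,\pi_W))$, because $\Gamma^{\CS}(\CU,\CE_{\udl{s}})$ is the Schwartz induction from $\RP^+\cap\RH^+$, not from $\RH$; the representation on $\RP^+\cap\RH^+$ is not itself an induction from $\RH$, so there is no ``induction in stages'' chain $\RH\subset\RP^+\cap\RH^+\subset\RH^+$ to exploit. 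Second, you would then need a Hochschild--Serre type spectral sequence for the pair $\RH\subset\RP^+\cap\RH^+$ involving derived Jacquet homology of $\sigma_{\udl{s}}|_{\RR_{X',X^+}}$, whose collapse requires precisely the higher vanishing you cannot produce.

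What the paper actually does is different and avoids this. By Lemma \ref{lem: XC} (from \cite[Proposition 5.1]{xue2020bessel} and \cite[Proposition 5.3.4]{chen2021local}) there is an $\RR_{X',X^+}$-equivariant \emph{embedding} $\Ind_{\RN_{0,X^+}}^{\CS,\RR_{X',X^+}}(\psi_{0,X^+}^{-1})\hookrightarrow \sigma_{\udl{s}}$, and the quotient carries a complete $\RR_{X',X^+}$-stable filtration with explicit graded pieces $\Ind_{\RP_{a,b,c}}^{\CS,\RR_{\dim X',1}}(\tau_a\boxtimes\tau_b\boxtimes\tau_c)$, all with $a+b\neq 0$. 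Pushing this filtration through the exact Schwartz induction up to $\RH^+$ produces a complete filtration of $\Gamma_o=\Gamma^{\CS}(\CU,\CE_{\udl{s}})/\Ind_{\RP^+\cap\RH^+}^{\CS,\RH^+}(\Ind_{\RN_{0,X^+}}^{\CS,\RR_{X',X^+}}(\psi_{0,X^+}^{-1})\boxtimes\pi_V\boxtimes\pi_W)$ whose graded pieces are parabolic inductions of the shape $(\sgn^{m_1}|\cdot|^{s_{i_1}+k_1'}\rtimes\pi)\boxtimes\pi_V$ (and the (FJ) analogues). Their Schwartz homology over $\RH^+(F)$ then vanishes by the \emph{same} Lemma \ref{lem: two property Schwartz}(3)(4) used for Point (1), i.e. vanishing for the $\Delta\RG_V$- or $\Delta\RG_V^J$-coinvariant functor --- not for a Jacquet functor along the mirabolic. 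This shift of the vanishing burden from the mirabolic to the symmetric-pair side is the essential structural move you are missing. The remaining bottom piece is $\Ind_{\RH}^{\CS,\RH^+}(\pi_V\boxtimes\pi_W\otimes\xi^{-1})$ by transitivity of Schwartz induction and Corollary \ref{cor: mirabolic structure}, and only there does Shapiro's lemma apply to give $H_i^{\CS}(\RH(F),\pi_V\boxtimes\pi_W\otimes\xi^{-1})$.
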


The idea for Lemma \ref{lem: equalities} proof is to replace Lemma \ref{lem: three properties} in the proof for Theorem \ref{thm: main intro}(2) with  properties of the Schwartz homology in Lemma \ref{lem: two property Schwartz}.

\begin{lem}\label{lem: two property Schwartz}
\begin{enumerate}
\item (Long exact sequence)
For an exact sequence 
\[
0\to \pi_1\to \pi_2\to \pi_3\to 0
\]
of $\RH^+(F)$-representations, there is a long exact sequence
\[
\to H_i^{\CS}(\RH^+(F),\pi_1)\to H_i^{\CS}(\RH^+(F),\pi_2)\to H_i^{\CS}(\RH^+(F),\pi_3)\to H_{i-1}^{\CS}(\RH^+(F),\pi_1) \to \cdots
\]
\item (Commutes with inverse limit)
Given a projective system $\{\pi_{\alpha}\}_{I}$ with surjective $\pi_{\alpha}\to \pi_{\beta}$ for $\alpha>\beta$, we have
\[
H_i^{\CS}(\RH^+(F),\varprojlim_{\alpha} \pi_{\alpha})=\varprojlim_{\alpha} H_i^{\CS}(\RH^+(F),\pi_{\alpha}).
\]
\item (Vanishing result for reductive groups) In the setting of Lemma \ref{lem: three properties}(3), we have
\[
    H_i^{\CS}(\Delta\RG_V(F),(|\det|^s\sigma_V\rtimes\pi_{V_0})\boxtimes \pi_V) = 0
\]
for $s\in\BC$ in general positions.
\item (Vanishing results for Jacobi groups)
 In the settings of Lemma \ref{lem: three properties}(4), we have
\[
    H_i^{\CS}(\Delta\RG_V^J(F),((|\det|^s\sigma_V\rtimes\pi_{V_0})\wh{\otimes} \pi_V,1_{\RG_V^J}) = 0
\]
for $s\in \BC$ in general positions.
\end{enumerate}
\end{lem}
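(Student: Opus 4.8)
\textbf{Proof proposal for Lemma \ref{lem: two property Schwartz}.}

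The plan is to establish the four assertions in the same spirit as the corresponding statements of Lemma \ref{lem: three properties}, but now one homological degree at a time. Points (1) and (2) are formal: the Schwartz homology $\RH_i^{\CS}(\RH^+(F),-)$ is by definition the left derived functor of the coinvariant functor $\pi\mapsto \pi_{\RH^+(F)}$ on $\Pi_{\FMG}(\RH^+)$, so the long exact sequence in (1) is the standard long exact sequence of derived functors once one checks that this derived functor theory behaves well on the category of smooth Fr\'echet representations of moderate growth; this is exactly the content developed in \cite{chen2020schwartz} (the Schwartz homology formalism), so I would simply cite it and spell out the connecting maps. For (2), I would invoke the fact that $\RH_i^{\CS}$ can be computed by a fixed projective resolution (e.g.\ the Schwartz analogue of the bar resolution) together with the exactness of $\wh\otimes$ against the nuclear Fr\'echet spaces appearing there (\cite[Lemma A.3]{casselman2000bruhat}); since the transition maps in the projective system are surjective, the relevant $\varprojlim^1$ terms vanish and the homology commutes with the inverse limit. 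This mirrors verbatim how Lemma \ref{lem: three properties}(2) is used, only dualized.

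For Points (3) and (4), the strategy is to reduce them, exactly as in the proof of Lemma \ref{lem: three properties}, to a single vanishing statement for the reductive group $\RG_V$. By Lemma \ref{lem: reciprocity} (Frobenius reciprocity / the branching reformulation), $\RH_i^{\CS}(\Delta\RG_V(F),(|\det|^s\sigma_V\rtimes\pi_{V_0})\boxtimes\pi_V)$ is computed by a two-sided bar-type complex whose terms are built from $|\det|^s\sigma_V\rtimes\pi_{V_0}$ and the contragredient $\pi_V^\vee\cong(|\det|^{s_1}\sigma_1)^\tau\times\cdots\times(|\det|^{s_l}\sigma_l)^\tau\rtimes\pi_{V_0}$. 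The point is that the relevant $\Ext$-type groups between two parabolically induced representations from segments with \emph{disjoint} real exponents vanish in all degrees — this is the homological upgrade of Theorem \ref{thm: vanishing in appendix}(2), and for $s$ avoiding a countable set of ``bad'' hyperplanes (where composition factors could collide) the two inducing data have no common subquotient in any degree. Concretely, I would use second-adjunction for Schwartz induction, compute the iterated Jacquet modules via the geometric lemma, and observe that for $\Re(s)$ outside countably many affine conditions none of the pieces match, forcing all the $\RH_i^{\CS}$ to vanish. Then (4) follows from (3) exactly as in Lemma \ref{lem: three properties}: using the classification Lemma \ref{lem: classification of representations 2}, the compatibility Lemma \ref{lem: compatibility with tensor Weil}, and Lemma \ref{lem: coinvariant tensor weil} to cancel the pair $\omega_{V,\psi_F}\otimes\omega_{V,\psi_F^{-1}}$ against the trivial representation of $\CH(V)$, one identifies $\RH_i^{\CS}(\Delta\RG_V^J(F),-)$ with $\RH_i^{\CS}(\Delta\RG_V(F),-)$ and invokes (3).

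The main obstacle I anticipate is the homological strengthening in Point (3): where Lemma \ref{lem: three properties}(3) needs only the vanishing of a single $\Hom$-space, here I need vanishing of $\Ext^i$ (equivalently all derived coinvariants) between two induced representations, and I must control the \emph{countable} set of $s$ for which a composition factor of $|\det|^s\sigma_V\rtimes\pi_{V_0}$ could coincide with one of $\pi_V^\vee$ — it is this genericity that forces ``$s$ in general positions'' rather than a half-plane condition. I expect this to require a careful bookkeeping of the possible exponents produced by the geometric lemma for Schwartz (parabolic) induction over Archimedean fields, together with the exactness properties of Schwartz induction (\cite[Proposition 7.1]{chen2020schwartz}) to propagate the degree-zero statement to all degrees. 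The rest of the argument is bookkeeping parallel to Section \ref{section: main 2}.
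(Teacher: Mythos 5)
Your Points (1) and (2) match the paper in both substance and level of detail: the paper likewise treats (1) as a formal property of derived functors and cites \cite[Proposition 2.13]{xue2020bessel} for (2), which is proved via the vanishing of $\varprojlim^1$ along surjective transition maps, exactly as you outline.

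For Points (3) and (4), however, your proposed route differs from the paper's and, as written, has a genuine gap. You correctly identify the crux — one needs a \emph{homological} strengthening of Theorem \ref{thm: vanishing in appendix}(2), i.e.\ vanishing of all higher Schwartz homology groups, not just the $\Hom$-space — and you correctly observe that this forces ``general position'' rather than a half-plane condition. But your plan to obtain this via second adjunction, the geometric lemma, and bookkeeping of composition factors does not by itself close the argument: knowing that two Casselman--Wallach representations have disjoint composition series does \emph{not} immediately give $\Ext^{i}=0$ for all $i$ in the category $\Pi_{\FMG}$; one needs a block-decomposition argument to separate them, and over an Archimedean local field the standard device for this is the action of the center $\CZ(\Fg_{\BC})$ of the universal enveloping algebra. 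The paper bypasses the entire Jacquet-module bookkeeping and instead invokes the \emph{infinitesimal character vanishing criterion} of \cite{xue2020bessel} (see also \cite[Lemma 5.2.7]{chen2021local}): if the generalized infinitesimal characters of the two factors in the tensor product $(|\det|^s\sigma_V\rtimes\pi_{V_0})\wh{\otimes}\pi_V$ do not match up under the diagonal $\CZ(\Fg_{\BC})$-action, then every $H_i^{\CS}(\Delta\RG_V(F),-)$ vanishes. Since the infinitesimal character of $|\det|^s\sigma_V\rtimes\pi_{V_0}$ is affine-linear in $s$, this mismatch holds away from a countable family of conditions on $s$, which is exactly the ``general position'' hypothesis. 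So the missing ingredient in your proof is this infinitesimal-character criterion; once it is in place, Point (4) reduces to Point (3) via Lemmas \ref{lem: classification of representations 2}, \ref{lem: compatibility with tensor Weil} and \ref{lem: coinvariant tensor weil}, exactly as you describe and as the paper does.
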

\begin{proof}
Point (1) is a property of derived functors. Point (2) was proved in \cite[Proposition 2.13]{xue2020bessel}. Point (3) follows from the vanishing criterion on infinitesimal characters (\cite{xue2020bessel}) using the arguments in \cite[\S 3]{xue2020bessel} or \cite[Lemma 5.2.7]{chen2021local}. Using the arguments in the proof for \ref{lem: three properties}, Point (4) follows from Point (3). 
\end{proof}

\begin{proof}[Proof for Lemma \ref{lem: equalities}(1)]
Following the computation  complete descending filtration $\Gamma_{\CZ}^{\CS}(\CX,\CE_{\udl{s}})_k$ of $\Gamma_{\CZ}^{\CS}(\CX,\CE_{\udl{s}})=\Gamma^{\CS}(\CX,\CE_{\udl{s}})/\Gamma^{\CS}(\CU,\CE_{\udl{s}})$   
with 
\[
\Gamma_{\CZ}^{\CS}(\CX,\CE_{\udl{s}})_k/
\Gamma_{\CZ}^{\CS}(\CX,\CE_{\udl{s}})_{k+1}=
\begin{cases}    (|\det|^{s+1/2}\sigma_{\udl{s}}\otimes \Sym^k\rho)\rtimes \pi_W^{\gamma'}|_{\RG_{V_0}}\boxtimes \pi_V& \text{ in (B) cases},\\
     \pi_V\boxtimes (|\det|^{1/2}\sigma_{\udl{s}}\otimes \Sym^k\rho)\rtimes \pi_{W}^{\gamma'}|_{\RG_{W_0^+}}&\text{ in (FJ 1) cases},\\
(|\det|^{1/2}\sigma_{\udl{s}}\otimes \Sym^k\rho)\rtimes \pi_{V}^{\gamma'}|_{\RG_{V_0^+}^J}\boxtimes \pi_W     &\text{ in (FJ 2) cases}.
\end{cases}
\]
Then from Lemma \ref{lem: two property Schwartz}(3)(4),
\[
H_i(\RH^+(F),\Gamma_{\CZ}^{\CS}(\CX,\CE_{\udl{s}})_k/
\Gamma_{\CZ}^{\CS}(\CX,\CE_{\udl{s}})_{k+1})=0,\quad i=0,1,\cdots,\quad k=0,1,\cdots
\]
Hence, from Lemma \ref{lem: two property Schwartz}(1), we have
\[
H_i(\RH^+(F),\Gamma_{\CZ}^{\CS}(\CX,\CE_{\udl{s}})/\Gamma_{\CZ}^{\CS}(\CX,\CE_{\udl{s}})_k)=0,\quad i=0,1,\cdots,\quad k=0,1,\cdots
\]
Therefore, from Lemma \ref{lem: two property Schwartz}(2), we have
\[
H_i(\RH^+(F),\Gamma_{\CZ}^{\CS}(\CX,\CE_{\udl{s}}))=0,\quad i=0,1,\cdots
\]
Then from Lemma \ref{lem: two property Schwartz}(1), we obtain Lemma \ref{lem: equalities}(1).
\end{proof}

The idea for the proof for Lemma \ref{lem: equalities}(2) is also parallel to that for Lemma \ref{lem: inequalities}(2). We need to study the structure of $\sigma_{\udl{s}}/\Ind_{\RN_{0, X^+}}^{\CS,\RR_{X',X^+}}(\psi_{0,X^+}^{-1})$ with Lemma \ref{lem: XC}.

We first introduce some notations. 
\begin{itemize}
    \item We denote by $\RP_{0,n}$ a Borel subgroup of $\GL_{0,n}$ and denote by $\RP_{0,n}$ the unipotent radical of $\RP_{0,n}$;
\item  We denote by $\RP_{a,b,c}$ the parabolic subgroup of $\GL_{a+b+c}$ stabilizing a filtration
\[
X_a\subset X_{a+b}\subset X_{a+b+c},
\]
where $\dim X_i=i$, and we denote by $\RL_{a,b,c}$ its Levi subgroup.
\end{itemize}

\begin{lem}\label{lem: XC}(\cite[Proposition 5.1]{xue2020bessel}\cite[Proposition 5.3.4]{chen2021local})
There is a  $\RR_{X',X^+}$-equivariant embedding from $\Ind_{\RN_{0, X^+}}^{\CS,\RR_{X',X^+}}(\psi_{0,X^+}^{-1})$
to $\sigma_{\udl{s}}$. The quotient $\sigma_{\udl{s}}/\Ind_{\RN_{0, X^+}}^{\CS,\RR_{X',X^+}}(\psi_{0,X^+}^{-1})$  admits an $R_{X',X^+}$-stable complete filtration whose graded pieces have the shape 
\[
\Ind_{\RP_{a,b,c}}^{\CS,\RR_{\dim X',1}}(\tau_a\boxtimes \tau_b\boxtimes \tau_c),
\]
where $a+b+c=\dim X^+$, $a+b\neq 0$ and 
 the $\RP_{a,b,c}$-representation $\tau_a\boxtimes \tau_b\boxtimes \tau_c$ is regarded as the inflation  from $\RL_{a,b,c}$-representation $\tau_a\boxtimes \tau_b\boxtimes \tau_c$.
\begin{enumerate}
    \item $\tau_a=\Ind_{\RP_{0,a}}^{\CS,\GL_a}(\sgn^{m_1}|\cdot|^{s_{i_1}+k_1}\boxtimes \cdots \boxtimes \sgn^{m_{a}}|\cdot|^{s_{i_a}+k_a})$
    where $1\leqslant i_1, \cdots,i_a\leqslant t+1$ are integers, $l_1,\cdots,l_a\in\BZ$ and $k_1,\cdots,k_a\in\frac{1}{2}\BZ$;
    \item $\tau_b=\tau_b'\otimes \rho$ where $\tau_b'$ is a representation of the same form as $\tau_a$ and $\rho$ is a finite-dimensional representation of $\GL_b(\BR)$;
    \item $\tau_c=\Ind_{\RN_{0,c}}^{\RR_{c-1,1}}(\psi_{c}^{-1})$.
\end{enumerate}
\end{lem}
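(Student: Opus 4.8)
\textbf{Proof plan for Lemma \ref{lem: XC}.}
The statement is the analogue, for the mirabolic subgroup of $\GL(X^+)(E)$, of the well-known filtration of a generic representation obtained by restricting to a mirabolic and applying the Bernstein--Zelevinsky derivative machinery; the Archimedean versions I would follow are \cite[Proposition 5.1]{xue2020bessel} and \cite[Proposition 5.3.4]{chen2021local}. The plan is first to reduce to the case where $\sigma_{X^+}$ is a standard module, i.e.\ a Schwartz (normalized) induction from a Borel of characters $\sgn^{m_j}|\cdot|^{s_{i_j}}$ (for $E=\BR$) or $\chi_j|\cdot|^{s_{i_j}}$ (for $E=\BC$); this is legitimate because a generic $\sigma_{X^+}$ embeds in such a standard module and restriction to $\RR_{X',X^+}$ together with the exactness of Schwartz induction (\cite[Proposition 7.1]{chen2020schwartz}) and of $\wh\otimes$ with nuclear Fr\'echet spaces (\cite[Lemma A.3]{casselman2000bruhat}) propagates the filtration. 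Then, working on the standard module, I would use the geometric description of the double cosets $\RN_{0,X^+}\bs \GL(X^+)/\RR_{X',X^+}$ (equivalently, the orbits of $\RR_{X',X^+}$ on the flag variety of $\GL(X^+)$ cut out by the Whittaker character), which is governed by the combinatorics of partial flags, to produce a $\RR_{X',X^+}$-stable filtration on $\sigma_{\udl s}$ whose open piece is exactly the Kirillov-type model $\Ind_{\RN_{0,X^+}}^{\CS,\RR_{X',X^+}}(\psi_{0,X^+}^{-1})$ and whose remaining graded pieces are Schwartz inductions from the parabolics $\RP_{a,b,c}$.

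The second and main step is to identify the shape of these graded pieces. Using the mixed model / Mackey-type description of $\sigma_{\udl s}|_{\RR_{X',X^+}}$, each non-open orbit contributes a piece induced from $\RP_{a,b,c}$, where the three blocks record: the coordinates ``before'' the moving vector ($\tau_a$, a product of the inducing characters shifted by half-integers $k_j$ coming from the Jacobian/modulus factors), the coordinates ``at'' the moving vector ($\tau_b$, of the same type twisted by a finite-dimensional representation $\rho$ coming from the derivative operation), and the coordinates ``after'' it ($\tau_c$, again a Kirillov model $\Ind_{\RN_{0,c}}^{\RR_{c-1,1}}(\psi_c^{-1})$ for the smaller mirabolic). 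The bookkeeping of the shifts $k_j\in\frac12\BZ$ and the signs $m_j$ is exactly as in \cite[\S 5]{xue2020bessel}; over $\BC$ one replaces $\sgn^{m_j}$ by characters $z\mapsto (z/|z|)^{m_j}$ and the argument of \cite[\S 5.3]{chen2021local} transcribes verbatim. The condition $a+b\neq 0$ records that the open piece has been split off.

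The third step is to assemble the pieces into a single complete descending filtration, checking completeness (Hausdorff-ness of successive quotients and convergence) via Borel's lemma for Schwartz sections as in \cite[\S 7]{chen2020schwartz}; this is the same formal argument used in Lemma \ref{lem: borel}. I expect the main obstacle to be purely organizational rather than conceptual: tracking the precise exponents and the finite-dimensional twist $\rho$ through the normalization of the Schwartz inductions (the $\delta^{1/2}$ factors) and through the identification of the conormal directions, so that the graded pieces come out in exactly the normalized form stated. As in the cited references, once the orbit structure of $\RR_{X',X^+}$ on the relevant flag variety is written down explicitly, the analytic input needed (exactness of $\Ind^{\CS}$, exactness of $\wh\otimes$ with nuclear spaces, and Borel's lemma) is already available in \cite{chen2020schwartz}, so no new estimates are required and the proof reduces to the combinatorial verification carried out case-by-case in \cite{xue2020bessel} and \cite{chen2021local}.
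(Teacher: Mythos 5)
Note first that the paper does not prove Lemma \ref{lem: XC}: it is cited from \cite[Proposition 5.1]{xue2020bessel} and \cite[Proposition 5.3.4]{chen2021local}, and the \texttt{proof} environment immediately following the statement in the source is the proof of Lemma \ref{lem: equalities}(2) --- it begins ``Using this lemma'' and ends by deducing Lemma \ref{lem: equalities} --- so there is no in-paper argument to compare against. Your sketch is consistent with the method of the cited sources: stratify the $\RR_{X',X^+}$-action on $\RN_{0,X^+}\bs\GL(X^+)$, realize the Kirillov model $\Ind_{\RN_{0,X^+}}^{\CS,\RR_{X',X^+}}(\psi_{0,X^+}^{-1})$ as the contribution of the open stratum, and filter the closed complement via the exactness of $\Ind^{\CS}$, the exactness of $\wh{\otimes}$ with nuclear Fr\'echet spaces, and the Borel-lemma machinery of Lemma \ref{lem: borel}. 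Two points deserve flagging, though. (a) In this paper $\sigma_{\udl{s}}$ is already the spherical principal series $|\cdot|^{s_1}\times\cdots\times|\cdot|^{s_{\dim_E X^+}}$ (cf.\ Theorem \ref{thm: main intro}(1)), so your preliminary reduction to a standard module is unnecessary here; and even where it would be needed, embedding a generic $\sigma$ into an ambient standard module and intersecting the filtration does not by itself identify the graded pieces of $\sigma|_{\RR_{X',X^+}}$, so that reduction would require its own argument rather than ``propagating'' automatically. (b) All the substantive content of the lemma --- the $(a,b,c)$-indexed orbit stratification with stabilizers $\RP_{a,b,c}$, the half-integer shifts $k_j\in\frac{1}{2}\BZ$ and the $\sgn^{m_j}$ coming from the conormal fibers over $\BR$, the finite-dimensional twist $\rho$ in $\tau_b$, and the completeness of the descending filtration --- is deferred to the very references you are trying to reconstitute. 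That is acceptable given the paper itself only cites, but what you have written is a roadmap pointing at the proof rather than a proof; closing it would require writing out the $\RR_{X',X^+}$-orbit decomposition and the conormal-fiber computations explicitly as in \cite[\S 5]{xue2020bessel}, and verifying that the $\delta^{1/2}$-normalizations of the Schwartz inductions produce precisely the displayed exponents.
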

\begin{proof}
Using this lemma and the exactness of Schwartz induction and the exactness of tensor product with nuclear Fr\'echet space, we obtain a 
complete descending filtration  of 
\[
\Gamma_o=\Gamma^{\CS}(\CU,\CE_{\udl{s}})/\Ind_{\RP^+\cap \RH^+}^{\CS,\RH^+}(\Ind_{\RN_{0,X^+}}^{\CS,\RR_{X',X^+}}(\psi_{0,X^+}^{-1})\boxtimes \pi_V\boxtimes \pi_W)
\]
with graded pieces isomorphic to
\[
\begin{cases}
(\sgn^{m_1}|\cdot|^{s_{i_1}+k_1'}\rtimes \pi)\boxtimes \pi_V
&
\text{ in (B) cases},    \\
   \pi_V\boxtimes(\sgn^{m_1}|\cdot|^{s_{i_1}+k_1'}\rtimes \pi)&
\text{ in (FJ 1) cases},
 \\(\sgn^{m_1}|\cdot|^{s_{i_1}+k_1'}\rtimes \pi)\boxtimes \pi_W & \text{ in (FJ 2) cases}
\end{cases}
\]
for certain Fr\'echet representation $\pi$ of moderate growth and $k_1'\in \frac{1}{2}\BZ$, then from Lemma \ref{lem: two property Schwartz}(3)(4), we have
the Schwartz homology of the graded piece all equal to zero. Then Lemma \ref{lem: two property Schwartz}(1)(2) implies Lemma \ref{lem: equalities}.
\end{proof}

\begin{proof}[Proof for Theorem \ref{thm: main intro}(1)]
Then we prove Theorem \ref{thm: main intro}(1) from Lemma \ref{lem: equalities}. 

By definition
\[
m(\RI(\sigma_{\udl{s}},\pi_V,\pi_W))=\dim \Hom_{\RH^+}(\RI(\sigma_{\udl{s}},\pi_V,\pi_W),1_{\RH^+})=\dim \Hom_{\BC}(\RI(\sigma_{\udl{s}},\pi_V,\pi_W)^{\mathrm{Haus}}_{\RH^+},1),
\]
and
\[
m(\pi_V\boxtimes
\pi_W)=\dim \Hom_{\RH}(\pi_V\boxtimes\pi_W,\xi^{-1})=\dim \Hom_{\BC}((\pi_V\boxtimes \pi_W\otimes \xi)^{\mathrm{Haus}}_{\RH},1).
\]

Hence, it suffices to show that 
\begin{equation}\label{equ: maximal Hausdorff quotient}
\RI(\sigma_{\udl{s}},\pi_V,\pi_W)_{\RH^+}^{\mathrm{Haus}}=(\pi_V\boxtimes \pi_W\otimes \xi^{-1})_{\RH}^{\mathrm{Haus}}.
\end{equation}

By definition, 
\[
\RH_0^{\CS}(\RH^+(F),\RI(\sigma_{\udl{s}},\pi_V,\pi_W))=\RI(\sigma_{\udl{s}},\pi_V,\pi_W)_{\RH^+},\quad \RH_0^{\CS}(\RH(F),\pi_V\boxtimes \pi_W\otimes \xi^{-1})=(\pi_V\boxtimes \pi_W\otimes \xi^{-1})_{\RH}.
\]
Then from Lemma \ref{lem: equalities},
\[
\RI(\sigma_{\udl{s}},\pi_V,\pi_W)_{\RH^+}=(\pi_V\boxtimes \pi_W\otimes \xi^{-1})_{\RH}
\]
as topological spaces, then their maximal Hausdorff quotients are the same, so we have (\ref{equ: maximal Hausdorff quotient}). 
\end{proof}

\subsection{The first inequality}\label{section: first inequality}
In this section, we prove the first inequality $m(\pi_V\boxtimes \pi_W)\geqslant m(\RI(\sigma_{X^+},\pi_V,\pi_W))$ in Conjecture \ref{conjecture: main} from Theorem \ref{thm: main intro}(1)(2) using a modification of the mathematical induction in \cite{moeglin2012conjecture}. 

From the classification of tempered representations of general linear groups (\cite{knapp1982classification}), every tempered representation $\sigma$ of a general linear group is in the form of 
\[
|\det|^{s_1}\sigma_1\times\cdots\times |\det|^{s_l}\sigma_l
\]
where $s_i\in \BC$,  $\sigma_{i}$ is either equal to an  one-dimensional unitary $\GL_1(E)$-representation,  or a discrete series representation of $\GL_2(\BR)$ when $E=\BR$. Then we can prove the first inequality in the basic cases with the following theorem.

The proof follows from a modification of the proof in \cite[\S 1.4-\S 1.6]{moeglin2012conjecture}, which proves the basic form of the first inequality, and then applies 
mathematical induction to reduce the general inequalities to those in the basic forms.

\begin{thm}\label{thm: the first inequality}
Let 
\[
\sigma_V=|\cdot|^{s_{V,1}}\sigma_{V,1}\times \cdots \times |\cdot|^{s_{V,l_V}}\sigma_{V,l_V},\quad \sigma_W=|\cdot|^{s_{W,1}}\sigma_{W,1}\times \cdots \times |\cdot|^{s_{W,l_W}}\sigma_{W,l_W},
\]
where $\sigma_{V,i},\sigma_{W,i}$ are either equal to  unitary characters of  $\GL_1(E)$,  or discrete series representations of $\GL_2(\BR)$  when $E=\BR$, and $\Re(s_{V,i}),\Re(s_{W,i})\geqslant 0$, then for irreducible tempered representations $\pi_{V_0},\pi_{W_0}$ and $\pi_V=\sigma_V\rtimes \pi_{V_0},\pi_W=\sigma_W\rtimes \pi_{W_0}$
\[
m(\pi_V\boxtimes \pi_W)\leqslant m(\pi_{V_0}\boxtimes \pi_{W_0}).
\]    
\end{thm}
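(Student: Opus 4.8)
The plan is to prove Theorem~\ref{thm: the first inequality} by induction on the number of GL-segments appearing in $\sigma_V$ and $\sigma_W$, following the scheme of M\oe glin and Waldspurger in \cite[\S 1.4--\S 1.6]{moeglin2012conjecture} but feeding it the two statements established above in place of that paper's structural inputs. Concretely, Theorem~\ref{thm: main intro}(1) (reduction to basic cases) takes over the role of the combinatorial reductions of \cite[\S 1.5]{moeglin2012conjecture}, and Theorem~\ref{thm: main intro}(2) (the basic form of the first inequality) takes over the role of \cite[\S 1.4]{moeglin2012conjecture}; since both were proved above uniformly for the (B), (FJ~1) and (FJ~2) cases, the induction will run uniformly as well, and in particular the argument stays independent of the local Gan--Gross--Prasad conjecture for tempered $L$-parameters, of \cite[Proposition 5.7]{waldspurger2012formule}, and of Casselman's canonical pairing. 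The base case $l_V=l_W=0$ is immediate: then $\pi_V=\pi_{V_0}$ and $\pi_W=\pi_{W_0}$ are tempered and the inequality is an equality.

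For the inductive step one peels off a single GL-segment. Order the exponents so that $\Re(s_{V,1})$ and $\Re(s_{W,1})$ are the largest real parts occurring on the two sides; after possibly interchanging the roles of $V$ and $W$ --- which, since the pair $(V,W)$ and its basic admissible mate carry the same multiplicity by Theorem~\ref{thm: main intro}(1), one may do at the expense of adjoining auxiliary spherical principal series data --- assume $l_V\geqslant 1$ and write $\pi_V=|\cdot|^{s_{V,1}}\sigma_{V,1}\rtimes\pi_V'$ by transitivity of Schwartz parabolic induction, where $(V',W)$ is again an admissible pair of the same type, $\pi_V'=\sigma_V'\rtimes\pi_{V_0}$ carries $l_V-1$ segments on $\RG_{V'}$, and $\pi_{V_0},\pi_{W_0}$ are unchanged. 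The goal of the step is the inequality
\[
m(\pi_V\boxtimes\pi_W)\leqslant m(\pi_V'\boxtimes\pi_W),
\]
whose right-hand side, by the inductive hypothesis on $(V',W)$, is bounded by $m(\pi_{V_0}\boxtimes\pi_{W_0})$. To prove the displayed inequality one passes, via Theorem~\ref{thm: main intro}(1), to the basic admissible pair attached to $(V,W)$ --- adjoining an auxiliary spherical principal series $|\cdot|^{t_1}\times\cdots\times|\cdot|^{t_k}$ in general position and leaving the multiplicity unchanged --- and then removes the segment $|\cdot|^{s_{V,1}}\sigma_{V,1}$ there by Theorem~\ref{thm: main intro}(2)(a) when $\sigma_{V,1}$ is a unitary character of $\GL_1(E)$, or by Theorem~\ref{thm: main intro}(2)(b) together with the identifications of $\RI'$ and $\RI''$ from Lemma~\ref{lem: Iprimeprime} when $E=\BR$ and $\sigma_{V,1}=D_m$ is a discrete series of $\GL_2(\BR)$; in the discrete-series case the additional classical-group induction coming from $\RI''$ is reabsorbed by one further application of Theorem~\ref{thm: main intro}(1).

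The main obstacle is that Theorem~\ref{thm: main intro}(2) applies only when the real part of the exponent of the segment being discarded dominates the invariant $\mathrm{LI}$ of the remaining representation, whereas we are only given $\Re(s_{V,1})\geqslant 0$. The way around this is to exploit the latitude in Theorem~\ref{thm: main intro}(1): the auxiliary exponents $t_1,\dots,t_k$ may be chosen with arbitrarily large real parts while staying in general position, so that after the passage to the basic pair the segments of largest real part are precisely the $|\cdot|^{t_i}$ and hence automatically satisfy the $\mathrm{LI}$-bound; one removes those first, and only then the segments coming from $\sigma_{V,1}$ --- and, when the two spaces have been interchanged, from $\sigma_W$ --- always with the discarded exponent strictly dominant, before undoing the reduction of Theorem~\ref{thm: main intro}(1). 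Carrying this bookkeeping out correctly, especially accommodating the discrete-series segments with their shift by $m/2$ and the sign twist $\sgn^{m+1}$, and verifying that the ambient pair genuinely shrinks at each stage so that the inductive hypothesis is legitimately available, is the delicate part; once it is done, Theorem~\ref{thm: the first inequality} follows, and together with the second-inequality half it yields Theorem~\ref{thm: conj 3}.
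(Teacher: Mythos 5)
Your proposal captures the high-level skeleton (reduce to basic pairs via Theorem~\ref{thm: main intro}(1), then peel off segments via Theorem~\ref{thm: main intro}(2), running an induction on the number of segments), but there is a genuine gap in the peeling step that the paper's proof is specifically designed to avoid. The hypothesis of Theorem~\ref{thm: main intro}(2) is $\Re(s)\geqslant\mathrm{LI}(\pi_V)$, where $\pi_V$ denotes the representation on the \emph{opposite, unmodified} side of the basic pair --- not, as your phrase ``the invariant $\mathrm{LI}$ of the remaining representation'' suggests, the remaining segments on the same side after removal. Consequently adjoining an auxiliary spherical principal series $|\cdot|^{t_1}\times\cdots\times|\cdot|^{t_k}$ with large real parts on the $W^+$-side does nothing to relax that bound (which is measured on $\pi_V$), and your proposal to ``remove those first'' is simply circular: peeling off all the $t_i$'s via Theorem~\ref{thm: main intro}(2) just undoes the passage of Theorem~\ref{thm: main intro}(1), returning you to $m(\pi_V\boxtimes\pi_W)$ with no gain. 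Worse, if you then try to move $\sigma_{V,1}$, you are stuck: once you are sitting at the basic pair $(W^+,V)$ with $\sigma_{\underline{t}}\rtimes\pi_W$ on the large side and $\pi_V$ on the small side, the inequality of Theorem~\ref{thm: main intro}(2) only strips the \emph{outermost} segment of the \emph{induced} (large) side; $\sigma_{V,1}$ sits inside $\pi_V$ on the small side, and the codimension of $(W^+,V')$ is no longer $1$, so the theorem does not apply there.

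The paper's proof resolves exactly this obstruction, and in the opposite way. It first reduces to $(V,W)$ \emph{basic} via Theorem~\ref{thm: main intro}(1), and then runs a two-parameter lexicographic induction on $(N,M)$, where $N$ counts all segments and $M$ tracks how many segments on the side with the larger leading exponent $\mathrm{LC}$ still lie above the $\mathrm{LI}$-threshold of the other side. When $\mathrm{LC}(\pi_V)>\mathrm{LC}(\pi_W)$ one can peel the top segment of $\pi_V$ directly (the condition $\Re(s_{V,1})+\tfrac{m_{V,1}}{2}\geqslant\mathrm{LI}(\pi_W)$ holds), decreasing $N$. When $\mathrm{LC}(\pi_V)<\mathrm{LC}(\pi_W)$ --- the case your proposal does not handle correctly --- the paper adjoins a \emph{single} $|\cdot|^{s'}$ with $\Re(s')=0$ (not large!) via Theorem~\ref{thm: main intro}(1). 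Because $\Re(s')=0$, this costs nothing in $N$ or $M$ and does not disturb the $\mathrm{LI}$ threshold, but it flips the basic pair so that the segment $\sigma_{W,1}$ becomes accessible on the induced side after commuting it past $|\cdot|^{s'}$; removing it decreases $M$ (or $N$), and the induction proceeds. Replacing $\Re(s')=0$ by huge real parts, as you propose, breaks the bookkeeping precisely because the $\mathrm{LI}$-bound lives on the other side.
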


\begin{proof}[Proof for "the first inequality" (Theorem \ref{thm: the first inequality})]
Then we prove "the first inequality" follows from the basic forms. From Theorem \ref{thm: main intro}, we may assume $(V,W)$ is a basic admissible pair. The mathematical induction can be completed following a modification of the proof in \cite[\S 5.4]{chen2021local}. To put more emphasis on the induction, we abuse to notions and take $m(\pi_V\boxtimes \pi_W)=m(\pi_W\boxtimes \pi_V)$. The precise orders are given in a similar mathematical induction in next section.

We may assume
 \[
\Re(s_{V,1})+\frac{m_{V,1}}{2}\geqslant\cdots \geqslant \Re(s_{V,l_V})+\frac{m_{V,l_V}}{2}\geqslant 0, \text{and}
\]
\[
\Re(s_{W,1})+\frac{m_{V,l_V}}{2}\geqslant\cdots \geqslant \Re(s_{W,l_W})+\frac{m_{V,l_V}}{2}\geqslant 0.
\]

 We  prove the inequality in the theorem by mathematical induction on the lexicographical order of $(N(\sigma_V,\sigma_W),M(\sigma_V,\sigma_W))\in \BN^2$, where
\[
N(\sigma_V,\sigma_W)=\sum_{i=1}^{l_V}n_{V,i}+\sum_{i=1}^{l_W}n_{W,i},
\]
and $M(\sigma_V,\sigma_W)$ is equal to the largest $i$ such that
\[
\Re(s_{W,i})+\frac{m_{W,i}}{2}>\Re(s_{V,1})+\frac{m_{V,1}}{2}
\]
when $\mathrm{LC}(\pi_W)>\mathrm{LC}(\pi_V)$,  the largest $i$ such that 
\[
\Re(s_{V,i})+\frac{m_{V,i}}{2}>\Re(s_{W,1})+\frac{m_{W,1}}{2}
\]
when $\mathrm{LC}(\pi_V)>\mathrm{LC}(\pi_W)$, and $0$ when $\mathrm{LC}(\pi_V)=\mathrm{LC}(\pi_W)$.

We set 
\[
\sigma_V'=|\cdot|^{s_{V,2}}\sigma_{V,1}\times \cdots \times |\cdot|^{s_{V,l_V}}\sigma_{V,l_V},\quad \sigma_W'=|\cdot|^{s_2}\sigma_{W,1}\times \cdots \times |\cdot|^{s_{W,l_W}}\sigma_{W,l_W}
\]
\[
\pi_V'=\sigma_V'\rtimes \pi_{V_0},\quad \pi_{W}'=\sigma_{W}'\rtimes \pi_{W_0}
\]

If $N(\sigma_V,\sigma_W)=0$, $\pi_V=\pi_{V_0}$ and $\pi_{W}=\pi_{W_0}$, then
\[
m(\pi_{V}\boxtimes \pi_{W})=m(\pi_{V_0}\boxtimes \pi_{W_0}).
\]

When $N(\sigma_V,\sigma_W)>0$,
\begin{enumerate}
    \item If $\mathrm{LC}(\pi_V)>\mathrm{LC}(\pi_W)$, 
    \begin{enumerate}
        \item When $n_{V,1}=1$, from  Theorem \ref{thm: main intro}(2), we have 
        \[        m(\pi_V',\pi_W)\geqslant m(\pi_V\boxtimes \pi_W)
        \]
        and 
    \[
N(\sigma_V',\sigma_W)=N(\sigma_V,\sigma_W)-1.
    \]
    \item When $n_{V,1}=2$, from Theorem \ref{thm: main intro}(2), we have \[m((|\cdot|^{s_{V,1}+\frac{m_{V,1}}{2}}\sgn^{m_{V,1}+1} \rtimes\pi_V')\boxtimes\pi_W)\leqslant m(\pi_V\boxtimes \pi_W)\]
    and 
    \[
N(|\cdot|^{s_{V,1}+\frac{m_{V,1}}{2}}\sgn^{m_{V,1}+1} \rtimes\sigma_V',\sigma_W)=N(\sigma_V,\sigma_W)-1.
    \]
    \end{enumerate}
    \item If $\mathrm{LC}(\pi_V)<\mathrm{LC}(\pi_W)$, from Theorem \ref{thm: main intro}(1), we can fix $s'\in \BC$ with $\Re(s')=0$ such that 
    \[
m(\pi_V\boxtimes(|\cdot|^{s'}\rtimes\pi_W))=m(\pi_V,\pi_W).
    \]
    \begin{enumerate}
        \item If $M(\sigma_V,\sigma_W)>1$ and $n_{W,1}=1$, by applying Theorem \ref{thm: main intro}(2), we obtain 
        \[m(\pi_V,|\cdot|^{s'}\rtimes \pi_W')\leqslant m(\pi_V\boxtimes |\cdot|^{s'}\rtimes \pi_W')
        \]
        and we have
        \[
N(\sigma_V,|\cdot|^{s'}\rtimes \sigma_W')=N(\sigma_V,\sigma_W),\quad 
M(\sigma_V,|\cdot|^{s'}\rtimes \sigma_W')=M(\sigma_V,\sigma_W)-1.
        \]
        \item If $M(\sigma_V,\sigma_W)=1$ and $n_{W,1}=1$, we can apply Theorem \ref{thm: main intro}(2) twice and obtain that
        \[
    m(\pi_V\boxtimes (|\cdot|^{s_1}\rtimes\pi_W))\leqslant m(\pi_V'\boxtimes(|\cdot|^{s'}\rtimes\pi_W'))
        \]
        and we have
        \[
N(\sigma_V',|\cdot|^{s'}\rtimes \sigma_W')=N(\sigma_V,\sigma_W)-1.
        \]
    \item  When $n_{W,1}=2$, from Theorem \ref{thm: main intro}(2), we have
    \[
m(\pi_V\boxtimes(|\cdot|^{s'}\rtimes\pi_W))\leqslant
m(\pi_V\boxtimes(|\cdot|^{s'}\rtimes |\cdot|^{s_{W,1}+\frac{m_{W,1}}{2}}\sgn^{m_{W,1}+1} \rtimes \pi_W'))
    \] 
    and $\pi_V\boxtimes(|\cdot|^{s'}\rtimes |\cdot|^{s_{W,1}+\frac{m_{W,1}}{2}}\sgn^{m_{W,1}+1} \rtimes \pi_W')$ is in case (a)(b) with
    \[N(\sigma_V\boxtimes(|\cdot|^{s'}\times |\cdot|^{s_{W,1}+\frac{m_{W,1}}{2}}\sgn^{m_{W,1}+1} \rtimes \sigma_W'))=N(\sigma_V,\sigma_W), \text{ and}\]
    \[    M(\sigma_V\boxtimes(|\cdot|^{s'}\rtimes |\cdot|^{s_{W,1}+\frac{m_{W,1}}{2}}\sgn^{m_{W,1}+1} \rtimes \sigma_W')\leqslant M(\sigma_V,\sigma_W)
    \]
    
    \end{enumerate}
 \end{enumerate}
\end{proof}

\subsection{The second inequality}\label{section: second inequality}
In this section, we prove the counterpart of the "second inequality" in \cite[\S 1.6]{moeglin2012conjecture}, that is, $m(\pi_V\boxtimes \pi_W)\leqslant m(\RI(\sigma_{X^+},\pi_V,\pi_W))$ in the setting of Conjecture \ref{conjecture: main}. It is not hard to imagine that the proof for the second inequality is essentially  applying Theorem \ref{thm: main intro} twice. Still, there are two details we need to pay attention to:
\begin{enumerate}
\item We need  to ensure the pair of representations always associates to a basic admissible pair every time we apply Theorem \ref{thm: main intro}(1).
    \item Since we have not proven the inequality for finite-length $\pi_V$ and $\pi_W$, we need to ensure our representation on the  $\RG_V$-block of the inducing datum is always irreducible when applying Theorem \ref{thm: main intro}(3).
\end{enumerate}

For point (1), we follow the idea in \cite{jiang2010uniqueness}\cite{liu2013uniqueness} and consider the parabolic induction with a spherical principal series representation (Theorem \ref{thm: main intro}). For point (2), we choose an appropriate parameter of the spherical principal series representation to ensure the irreducibility of the parabolic induction.
\begin{thm}\label{thm: second inequality}
Given irreducible Casselman-Wallach representations $\pi_{V_0},\pi_{W_0}$ and generic Casselman-Wallach  representations $\sigma_V,\sigma_W$, we set 
\[
\pi_V=\sigma_{V}\rtimes \pi_{V_0},\quad
\pi_W=\sigma_{W}\rtimes \pi_{W_0},
\]
then
\[
m(\pi_V\boxtimes \pi_W)\geqslant m(\pi_{V_0}\boxtimes \pi_{W_0})
\]    
\end{thm}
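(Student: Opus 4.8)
\textbf{Proof proposal for Theorem \ref{thm: second inequality} (the second inequality).}

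The plan is to reduce the general statement to Theorem \ref{thm: main intro}(3), the basic form of the second inequality, by a two-step descent that mirrors M\oe glin--Waldspurger's argument in \cite[\S 1.6--\S 1.8]{moeglin2012conjecture}, using Theorem \ref{thm: main intro}(1) to move between admissible pairs. First I would record the reduction: by Theorem \ref{thm: main intro}(1), for a spherical principal series $\sigma_{X^+}=|\cdot|^{s_1}\times\cdots\times|\cdot|^{s_{\dim_E X}}$ with $(s_1,\dots,s_{\dim_E X})$ in general positions we have $m(\pi_V\boxtimes\pi_W)=m(\RI(\sigma_{X^+},\pi_W,\pi_V))$, so the pair $(\pi_{V_0},\pi_{W_0})$ may be assumed attached to a basic admissible pair $(V',W')$ (codimension-one (B), equal-rank (FJ 1), or almost equal-rank (FJ 2)) at the cost of enlarging $\pi_{V_0}$ by a generic spherical induction. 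Since $\pi_{V_0},\pi_{W_0}$ are irreducible and $\sigma_V,\sigma_W$ are generic, $\pi_V=\sigma_V\rtimes\pi_{V_0}$ and $\pi_W=\sigma_W\rtimes\pi_{W_0}$ are of finite length; the whole point is that Theorem \ref{thm: main intro}(3) only requires $\sigma_{X^+}$ generic and $\pi_V,\pi_W$ irreducible on the \emph{relevant} block, not irreducibility of the induced representations themselves.

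The core induction goes as follows. Write $\pi_V=\sigma_V\rtimes\pi_{V_0}$ with $\sigma_V=|\cdot|^{s_{V,1}}\sigma_{V,1}\times\cdots$ in decreasing order of real parts, and similarly for $\pi_W$. Applying Theorem \ref{thm: main intro}(3) to the basic pair with inducing datum $\sigma_{X^+}=\sigma_{V,1}$ (a generic representation of $\GL(X^+)(E)$, $\dim_E X^+=n_{V,1}$) and the irreducible $\pi_{V_0}$ on the $\RG_V$-block, one gets
\[
m(\pi_{V_0}\boxtimes\pi_{W_0})\leqslant m(\RI(|\cdot|^{s_{V,1}}\sigma_{V,1},\pi_{V_0},\pi_{W_0})),
\]
and then peeling off $\sigma_{V,1}$ strips one factor from $\sigma_V$; iterating over the factors of $\sigma_V$ and then over those of $\sigma_W$ (using the (FJ 1)$\leftrightarrow$(FJ 2) symmetry of Lemma \ref{lem: isomorphism between FJ1 and FJ2} and Lemma \ref{lem: compatibility with tensor Weil} to handle the Fourier-Jacobi bookkeeping) produces the chain $m(\pi_{V_0}\boxtimes\pi_{W_0})\leqslant\cdots\leqslant m(\pi_V\boxtimes\pi_W)$ after undoing the basic reduction with Theorem \ref{thm: main intro}(1). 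Exactly as in \cite{moeglin2012conjecture}, the induction is on a lexicographic quantity $(N(\sigma_V,\sigma_W),M(\sigma_V,\sigma_W))\in\BN^2$ counting total $\GL$-blocks and a measure of how ``out of order'' the leading exponents of $\sigma_V$ and $\sigma_W$ are; a tempered representation of a general linear group factors as $|\det|^{s_1}\sigma_1\times\cdots$ with each $\sigma_i$ one-dimensional unitary or (over $\BR$) a discrete series $D_m$ of $\GL_2$, so in each inductive step one removes a single block and decreases $N$, or rearranges and decreases $M$.

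The main obstacle — and the reason this is the ``second inequality'' rather than the easier ``first'' — is point (2) above: keeping the representation on the $\RG_V$-block irreducible throughout the descent, because Theorem \ref{thm: main intro}(3) as stated needs $\sigma_{X^+}$ generic \emph{and} the inducing $\pi_V$ irreducible. The intermediate representations $\RI(\sigma_{X^+},\pi_{V_0},\pi_{W_0})$ are genuinely reducible in general. Following \cite{jiang2010uniqueness}\cite{liu2013uniqueness}, the fix is to interpose, before each application, a generic spherical principal series $|\cdot|^{s'_1}\times\cdots$ whose exponents $s'_j$ are chosen in general position so that the parabolic induction attaching it to the basic pair stays irreducible (one uses that reducibility points form a countable union of hyperplanes, while Theorem \ref{thm: main intro}(1) lets us choose the $s'_j$ freely without changing the multiplicity); then Theorem \ref{thm: main intro}(3) applies to the irreducible datum, and a final invocation of Theorem \ref{thm: main intro}(1) removes the auxiliary induction. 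Verifying that the irreducibility locus really is ``in general position'' relative to the finitely many exponents appearing in $\sigma_V,\sigma_W$ — so that all the general-position conditions can be met simultaneously — is the one place requiring care; everything else is the same lexicographic bookkeeping as in \cite[\S 5.4]{chen2021local} and the proof of Theorem \ref{thm: the first inequality} above, with the inequality direction reversed and Theorem \ref{thm: main intro}(2) replaced by Theorem \ref{thm: main intro}(3).
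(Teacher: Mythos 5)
Your overall strategy — reduce to basic pairs with Theorem~\ref{thm: main intro}(1) and then apply Theorem~\ref{thm: main intro}(3) — is the right one, but you have organized it as a many-step lexicographic induction borrowed from the proof of Theorem~\ref{thm: the first inequality}, whereas the paper's actual proof of Theorem~\ref{thm: second inequality} consists of exactly two applications of Theorem~\ref{thm: main intro}(3). The reason two suffice is the observation that Theorem~\ref{thm: main intro}(3) already accepts an \emph{arbitrary} generic Casselman--Wallach $\sigma_{X^+}$, not just a single one- or two-dimensional tempered block: one application strips all of $\sigma_W$ at once (with inducing datum $\sigma_{\underline{s}}\times\sigma_W$), a second strips all of $\sigma_V$ at once (with inducing datum $|\cdot|^s\times\sigma_V$, after a $|\cdot|^s$-insertion via \ref{thm: main intro}(1) to flip which side of the basic pair receives the induction). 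Because of this, the $(N(\sigma_V,\sigma_W),M(\sigma_V,\sigma_W))$ bookkeeping you import does no work here: the $M$-index in the first-inequality argument tracks the $\mathrm{LI}$-positivity constraint in Theorem~\ref{thm: main intro}(2), but Theorem~\ref{thm: main intro}(3) carries no such constraint, so the ordering of exponents is irrelevant. The irreducibility issue you correctly flag is also much easier in the paper's two-step layout: the only potentially reducible block occurs in the second application of \ref{thm: main intro}(3), and it is rendered irreducible by one choice of spherical parameter $\underline{s}'$ so that $\sigma_{\underline{s}'}\rtimes\pi_{W_0}$ is irreducible (via \cite[Theorem~1.1]{speh1980reducibility} and the Langlands classification), whereas your factor-at-a-time descent accumulates reducibility on both blocks simultaneously, which is why your simultaneous general-position concern is genuinely harder in your setup than in the paper's.

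There is also a concrete misstep in your sketch of the induction step: in the (B) and (FJ~1) cases the definition of $\RI$ attaches $\sigma_{X^+}$ to the $W$-block, so ``apply Theorem~\ref{thm: main intro}(3) with $\sigma_{X^+}=\sigma_{V,1}$ to $(\pi_{V_0},\pi_{W_0})$'' does \emph{not} strip a factor from $\sigma_V$ — as written, $\RI(|\cdot|^{s_{V,1}}\sigma_{V,1},\pi_{V_0},\pi_{W_0})$ puts $\sigma_{V,1}$ onto the $W_0$-side. To place a $\GL$-factor onto the $V$-side one must first pass, via Theorem~\ref{thm: main intro}(1), to the basic pair $(W_0^+,V_0)$, at which point $V_0$ is the \emph{smaller} side and receives the induction when \ref{thm: main intro}(3) is invoked for that pair. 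The paper's $|\cdot|^s$-insertion between the two applications of \ref{thm: main intro}(3) is precisely this side-swap spelled out; your proposal leaves it implicit, and moreover mislabels $\dim_E X^+$ (you take it to be $n_{V,1}$, but it is determined by the codimension of the pair, not by the block you wish to peel). These points would need to be worked out before your per-factor induction could be made rigorous; none of them arise in the paper's two-shot argument.
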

\begin{proof}
We consider $W^+=(X^+\oplus Y^+)\oplus^{\perp} W$ as in Section \ref{section: double coset}. 
Since $\pi_W$ is  Casselman-Wallach, from Theorem \ref{thm: main intro}(1), we can find a spherical principal series representation $\sigma_{\udl{s}}$ such that 
\begin{equation}
m(\pi_V\boxtimes\pi_W)=m(\RI(\sigma_{\udl{s}},\pi_V,\pi_W))
\end{equation}
 
From Theorem \ref{thm: main intro}(3), we have
\begin{equation}
\begin{aligned}
m(\RI(\sigma_{\udl{s}},\pi_V,\pi_W))=m((\sigma_{\udl{s}}\rtimes\pi_W)\boxtimes\pi_V)\geqslant m(\pi_{V}\boxtimes \pi_{W_0}),& \text{ in (B) cases};\\
m(\RI(\sigma_{\udl{s}},\pi_V,\pi_W))=m(\pi_V\boxtimes(\sigma_{\udl{s}}\rtimes\pi_W))\geqslant m(\pi_{V}\boxtimes \pi_{W_0}),& \text{ in (FJ 1) cases};\\
m(\RI(\sigma_{\udl{s}},\pi_V,\pi_W))=m((\sigma_{\udl{s}}\rtimes\pi_V)\boxtimes\pi_W)\geqslant m(\pi_{V_0}\boxtimes \pi_{W}),& \text{ in (FJ 2) cases}.\\
\end{aligned}
\end{equation}

From Theorem \ref{thm: main intro}(1), we can find a spherical principal series representation $\sigma_{\udl{s}'}$ such that 
\[
\begin{aligned}
    m(\pi_{V}\boxtimes \pi_{W_0})=m((\sigma_{\udl{s}}\rtimes \pi_{W_0})\boxtimes \pi_V),& \text{ in (B) cases};\\
   m(\pi_{V}\boxtimes \pi_{W_0})=m(\pi_V\boxtimes (\sigma_{\udl{s}}\rtimes \pi_{W_0})),& \text{ in (FJ 1) cases}; \\
    m(\pi_{V_0}\boxtimes \pi_{W})=m((\sigma_{\udl{s}}\rtimes\pi_{V_0})\boxtimes  \pi_{W}),& \text{ in (FJ 2) cases}.
\end{aligned}
\]

From Theorem \ref{thm: main intro}(1), we can find $s\in \BC$ such that 
\[
\begin{aligned}
m((\sigma_{\udl{s}}\rtimes \pi_{W_0})\boxtimes \pi_V)=m((|\cdot|^s\rtimes\pi_V)\boxtimes (\sigma_{\udl{s}}\rtimes \pi_{W_0})),& \text{ in (B) cases};\\
  m(\pi_V\boxtimes (\sigma_{\udl{s}}\rtimes \pi_{W_0}))=m(\pi_V\boxtimes (|\cdot|^s\times\sigma_{\udl{s}}\rtimes \pi_{W_0})),& \text{ in (FJ 1) cases (RHS is a (FJ 2) case)}; \\
    m((\sigma_{\udl{s}}\rtimes\pi_{V_0})\boxtimes  \pi_{W})=m((|\cdot|^s\times\sigma_{\udl{s}}\rtimes\pi_{V_0})\boxtimes  \pi_{W}),& \text{ in (FJ 2) cases (RHS is a (FJ 1) case)}.
\end{aligned}
\]

Similarly, 
\begin{equation}
m(\pi_{V_0}\boxtimes\pi_{W_0})=m(\RI(\sigma_{\udl{s}'},\pi_{V_0},\pi_{W_0})).
\end{equation}


From \cite[Theorem 1.1]{speh1980reducibility} and Langlands classification, we may also assume $\sigma_{\udl{s}'}\rtimes\pi_{W_0}$ is irreducible.

From Theorem \ref{thm: main intro}(3), we have
\begin{equation}
\begin{aligned}
m((|\cdot|^s\rtimes\pi_V)\boxtimes (\sigma_{\udl{s}'}\rtimes \pi_{W_0}))\geqslant m((\sigma_{\udl{s}'}\rtimes\pi_{W_0})\boxtimes \pi_{V_0}),& \text{ in (B) cases};\\
m(\pi_V\boxtimes (|\cdot|^s\times\sigma_{\udl{s}}\rtimes \pi_{W_0}))\geqslant m(\pi_{V_0}\boxtimes (|\cdot|^s\times\sigma_{\udl{s}}\rtimes\pi_{W_0}),& \text{ in (FJ 1) cases};\\
m((|\cdot|^s\times\sigma_{\udl{s}}\rtimes\pi_{V_0})\boxtimes  \pi_{W})\geqslant m((|\cdot|^s\times\sigma_{\udl{s}}\rtimes\pi_{V_0})\boxtimes  \pi_{W_0}),& \text{ in (FJ 2) cases}.\\
\end{aligned}
\end{equation}

Recall that our choice of $s$ and $\udl{s}'$ are in general positions, then we may assume
\begin{equation}
\begin{aligned}
m((\sigma_{\udl{s}'}\rtimes\pi_{W_0})\boxtimes \pi_{V_0})=m(\pi_{V_0},\pi_{W_0}),&   \text{ in (B) cases};\\
m(\pi_{V_0}\boxtimes (|\cdot|^s\times\sigma_{\udl{s}}\rtimes\pi_{W_0})=m(\pi_{V_0},\pi_{W_0}),&     \text{ in (FJ 1) cases};\\
m((|\cdot|^s\times\sigma_{\udl{s}}\rtimes\pi_{V_0})\boxtimes  \pi_{W_0})=m(\pi_{V_0},\pi_{W_0}),&     \text{ in (FJ 2) cases}.
\end{aligned}
\end{equation}

In conclusion,
\[
m(\pi_V\boxtimes\pi_W)\geqslant m(\pi_{V_0}\boxtimes\pi_{W_0}).
\]

This completes the proof for "the second inequality".

    
\end{proof}

\section{The local Gan-Gross-Prasad conjecture}\label{section: GGP}
In this section,  we introduce the setting of the local Gan-Gross-Prasad conjecture and give a uniform proof from the tempered cases assuming Theorem \ref{thm: conj 3}.   The further reduction to tempered basic cases is given in Section \ref{section: 5.3}.

\subsection{Statement of conjecture}

When $F$ is Archimedean, $\RG(F)$ is one of the following in the Bessel cases
\begin{equation}\label{equ: GGP Bessel cases}
\begin{aligned}
&\RU(p+r+1,q+r)\times \RU(p,q),  \SO(p+r+1,q+r)\times\SO(p,q),\SO(n+2r+1,\BC)\times \SO(n,\BC),\\
\end{aligned}
\end{equation}
or one of the following in the Fourier-Jacobi cases 
\begin{equation}\label{equ: GGP FJ}
\begin{aligned}
&\RU(p+r,q+r)\times(\RU(p,q)\rtimes \CH_{2p+2q+1}(\BR)),\Sp(2n+2r,\BR)\times(\Sp(2n,\BR)\rtimes \CH_{2n+1}(\BR)), \\
&\Mp(2n+2r,\BR)\times(\Mp(2n,\BR)\rtimes \CH_{2n+1}(\BR)),\Sp(2n+2r,\BC)\times(\Sp(2n,\BC)\rtimes \CH_{2n+1}(\BC)),
\end{aligned}
\end{equation}
where $r\geqslant 0$. We define $\RH,\xi$ as in Section \ref{section: Bessel Fourier-Jacobi}.  We  give a uniform proof for the conjecture and its variation when $r<0$ (FJ 2), and in the section, we give a uniform statement of the conjecture.

\subsubsection{Vogan $L$-packet} 
When $F$ is Archimedean, the Weil-Deligne group $\mathrm{WD}_F$ is equal to the Weil group \[\CW_F=\begin{cases}
\BC^{\times}&\text{ when }F=\BC,\\
\BC^{\times}\cup \BC^{\times}j&\text{ when }F=\BR,
\end{cases}\]
where $j^2=-1$. For a connected reductive group $\RG$, R. Langlands associated an $L$-packet $\Pi_{\varphi}(\RG)$ to every $L$-parameter $\varphi:\CW_F\to {}^L\RG$.  For every $\alpha\in H^1(F,\RG)$, we define $\RG_{\alpha}$ be the inner twist of $\RG$ with $\alpha$, and we call $\RG_{\alpha}$ a \textbf{pure inner form} of $\RG$. From the definition of the Langlands groups (which is only dependent on the dual group $\wh{\RG}$ and its splitting), we have ${}^L\RG={}^L\RG_{\alpha}$. Following the work of Vogan (\cite{vogan1993local}), we can define the \textbf{Vogan $L$-packet} associate to $\varphi$ as
\begin{equation}\label{equ: define Vogan}    
\Pi_{\varphi}^{\Vogan}=\bigsqcup_{\alpha\in H^1(F,\RG)}\Pi_{\varphi}(\RG_{\alpha}).
\end{equation}

A \textbf{Whittaker datum} $\Fw$ for a reductive group $\RG$ is a triple $(\RG^*,\RB^*,\psi^*)$ where $\RG^*$ is a quasi-split pure inner form of $\RG$, $\RB^*$ is a Borel subgroup of $\RG^*$, and $\psi^*$ is a generic character of the unipotent radical of $\RB^*$. An $L$-parameter is called \textbf{generic} if $\Pi_{\varphi}^{\Vogan}$ contains a generic representation (with respect to a Whittaker datum),  and this is independent of the choice of Whittaker datum (\cite[\S 18]{gan2012symplectic}).

It was conjectured by Vogan and known over Archimedean local fields (\cite[Theorem  6.3]{vogan1993local}), that, fixing a Whittaker datum of $(\RG_{\alpha})_{\alpha\in H^1(F, \RG)}$, there is a bijection 
\begin{equation}
\pi\in\Pi^{\Vogan}_{\varphi}\longleftrightarrow \chi_{\pi}\in\widehat{\CS}_{\varphi}.
\end{equation}
Here $\widehat{\CS}_{\varphi}$ is the set of character of  component group $\CS_{\varphi}$ of $\RS_{\varphi}$, where $\RS_{\varphi}$ is the centralizer of the image $\Im(\varphi)$ in the dual group $\wh{\RG}$. 

 In the Bessel cases,  $\RG_V=\RU(V),\SO(V)$, so $\RG_V,\RG_W$ are connected and reductive. 

We choose the Whittaker datum as in \cite[\S 12]{gan2012symplectic} and obtain a bijection (\ref{equ: isomorphism component group}). Gan, Gross, and Prasad suggested that one may consider the relevant Vogan packet
\[
\Pi^{\Vogan}_{\varphi,\rel}=\bigsqcup_{\alpha\in H^1(F,\RH)}\Pi_{\varphi}(\RG_{\alpha}).
\]


In Fourier-Jacobi cases, we define $\wt{\RG}_W$ as in (\ref{equ: def of tilde GW}).  Given $L$-parameter of $\wt{\RG}_W:
\wt{\varphi}_W:\CW_F\to {}^L\wt{\RG}_W$.

\begin{enumerate}
    \item When $\RG_W(F)\neq \Sp(2n,\BR)$,  $\wt{\RG}_W$ is reductive, then we define $\Pi^{\Vogan}_{\wt{\varphi}_W}$ as in (\ref{equ: define Vogan}).
\item 
When $\RG_W(F)=\Sp(2n,\BR)$, the parameter $\wt{\varphi}_W:\CW_{\BR}\to \wt{\RG}_W=\Sp(\dim_EW,\BC)$ also  gives an $L$-parameter 
\begin{equation}\label{equ: correspondence for metaplectic}
\varphi_{W'}:\CW_{\BR}\to {}^L\RG_{W'}=\Sp(\dim_EW,\BC)
\end{equation}
of $\SO(W')$ where $W'$ is a quadratic space over $\BR$ with $\dim_{\BR} W'=\dim_{\BR} W+1$ and $\disc(W')=1$. 
The Shimura-Waldspurger correspondence (\cite{adams1998genuine}) gives an isomorphism 
\[
\theta_W:\Pi^{\mathrm{irr}}_{\CaWa}(\SO(W'))\to \Pi^{\mathrm{irr,genu}}_{\CaWa}(\Mp(W))
\]

The \textbf{Vogan $L$-packet for metaplectic groups} is defined as  \[
\Pi^{\Vogan}_{\wt{\varphi}_W}=\theta_W(\Pi^{\Vogan}_{{\varphi}_{W'}}).
\]


 The Vogan $L$-packet for metaplectic groups also corresponds to characters of the component group via the following bijections:
\begin{equation}
\wt{\pi}_W\in\Pi^{\Vogan}_{\wt{\varphi}_W}\longleftrightarrow \theta_{W}^{-1}(\wt{\pi}_W)\in \theta_W^{-1}(\Pi^{\Vogan}_{\wt{\varphi}_W})\longleftrightarrow\chi_{\wt{\pi}_W}=\chi_{\theta_W^{-1}(\wt{\pi}_W)}\in\widehat{\CS}_{\wt{\varphi}_W}
\end{equation}
\end{enumerate}

With the metaplectic cases discussed in Section \ref{section: subgroup}, for a  representation $\wt{\pi}_W\in \Pi_{\wt{\varphi}_W}^{\Vogan}$, the representation  $\pi_{W}\otimes\omega_{W,\psi_F}$ factors through a representation in  $\Pi_{\CaWa}(\RG_W\rtimes \CH(W))$, which we also denote by $\pi_{W}\otimes\omega_{W,\psi_F}$.

\subsubsection{Distinguished character} Following \cite{gan2012symplectic},  we consider the standard representation $\std_{V}$ of ${}^L\RG_V$ and we call $\varphi_V^{\ss}=\std_V\circ \varphi_V$ the \textbf{semisimplification} of $\varphi_V$. We set 
\[
\RM_V=\begin{cases}
 \std_V \oplus \BC & \text{ when } \RG_V=\Sp(V),\\
  \std_V & \text{ otherwise.}
\end{cases}
\]

For an $L$-parameter $\varphi_W$ of $\RG_W$ (here $\RG_W$ does not necessary be of the same type as $\RG_V$), we define $\varphi^{\ss}_{W}$ and $\RM_W$ accordingly.

We define the following distinguished characters in Conjecture \ref{conj: Bessel intro} and Conjecture \ref{conj: intro FJ} in the Archimedean cases as in \cite{gan2012symplectic}.
\begin{equation}\label{equ: definition of character}
\begin{aligned}
&\chi_{\varphi_V,W}\left(s_V\right)=
\sgn_{\varphi_V,W}(s_V)\cdot
\varepsilon\left(\frac{1}{2}, \mathrm{M}_V^{s_V=-1} \otimes \mathrm{M}_W, \psi_F\right),\quad s_V\in \CS_V, \text{ and}\\
&\chi_{\varphi _W,V}\left(s_W\right)=\sgn_{V,\varphi_W}(s_W)\cdot\varepsilon\left(\frac{1}{2}, \mathrm{M}_W^{s_W=-1} \otimes \mathrm{M}_V, \psi_F\right),\quad s_W\in \CS_W.
\end{aligned}
\end{equation} 
where the space $\RM_V^{s_V=-1}$, $\RM_W^{s_W=-1}$ are respectively the $s_V=(-1)$-eigenspace of $\RM_V$, the $s_W=(-1)$-eigenspace of $\RM_W$, and $\varepsilon(\dots)$ is the local root number defined by Rankin-Selberg integral (\cite{jacquet2009archimedean}). The sign characters are defined as the following
\[
\sgn_{\varphi_V,W}(s_V)=\begin{cases}
  1 & \text{ when }\RG_V=\RU(V),\\
  \operatorname{det}\left(-\mathrm{Id}_{\mathrm{M}_V^{s_V=-1}}\right)^{\frac{\operatorname{dim} \mathrm{M}_W}{2}} \cdot \operatorname{det}\left(-\mathrm{Id}_{\mathrm{M}_W}\right)^{\frac{\operatorname{dim} \mathrm{M}_V^{s_V=-1}}{2}} & \text{ otherwise}.
\end{cases}
\]
\[
\sgn_{V,\varphi_W}(s_W)=\begin{cases}
 1& \text{when }\RG_V=\RU(V)\\
 \operatorname{det}\left(-\operatorname{Id}_{\mathrm{M}_W^{s_W=-1}}\right)^{\frac{\operatorname{dim} \mathrm{M}_V}{2}} \cdot \operatorname{det}\left(-\operatorname{Id}_{\mathrm{M}_V}\right)^{\frac{\operatorname{dim}{\mathrm{M}_W^{s_W=-1}}}{2}}  & \text{otherwise.}
\end{cases}
\]

\begin{lem}\label{lem: no self-dual sub}
When $\RG_V=\RU(V),\SO(V),\Sp(V),\Mp(V)$, suppose
\[
\varphi_V^{\ss}=\varphi_V^{\GL}\oplus \varphi_{V_0}^{\ss}\oplus (\varphi_V^{\GL})^{\vee},
\]
where $\varphi^{\GL}_V$ has no self-dual subrepresentation, 
then there is an isomorphism
\[
\CS_{\varphi_V}\to \CS_{\varphi_{V_0}}.
\]
In particular, when $\varphi_{V_0},\varphi_V$ are generic $L$-parameters,
\[
|\Pi_{\varphi_{V_0}}^{\Vogan}|=|\Pi_{\varphi_V}^{\Vogan}|.
\]
\end{lem}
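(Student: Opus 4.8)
\textbf{Proof proposal for Lemma \ref{lem: no self-dual sub}.}

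The plan is to produce the isomorphism $\CS_{\varphi_V}\to \CS_{\varphi_{V_0}}$ by an explicit book-keeping of the centralizers in the dual groups, using the decomposition $\varphi_V^{\ss}=\varphi_V^{\GL}\oplus \varphi_{V_0}^{\ss}\oplus (\varphi_V^{\GL})^{\vee}$. Recall that $\RS_{\varphi_V}$ is the centralizer of $\Im(\varphi_V)$ in $\wh{\RG}_V$, and $\wh{\RG}_V$ is the appropriate classical group ($\GL$, $\SO$, $\Sp$, or $\O$) acting on the space $\RM_V$ (resp.\ $\RM_{V_0}$) via the standard representation, so the centralizer can be computed inside this matrix group. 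First I would decompose $\varphi_V^{\ss}$, and likewise $\varphi_{V_0}^{\ss}$, into isotypic pieces $\bigoplus_i m_i \rho_i$ with $\rho_i$ irreducible; the standard description of the component group of a classical-group $L$-parameter (as in \cite[\S 4]{gan2012symplectic}) says that $\CS_{\varphi_V}$ is an elementary abelian $2$-group with one $\BZ/2$ factor for each \emph{self-dual} $\rho_i$ of the ``same type'' as the form preserved by $\wh{\RG}_V$ (orthogonal for symplectic $\wh{\RG}$, symplectic for orthogonal $\wh{\RG}$), with the product of all these signs possibly constrained to be trivial in the $\SO$/orthogonal case. The key structural input is the hypothesis that $\varphi_V^{\GL}$ contains \emph{no} self-dual subrepresentation: this forces $\varphi_V^{\GL}$ and $(\varphi_V^{\GL})^{\vee}$ to contribute only pairs $\rho\oplus\rho^{\vee}$ with $\rho\not\cong\rho^{\vee}$, none of which produces a $\BZ/2$ factor, and moreover these pairs do not interact with the self-dual constituents of $\varphi_{V_0}^{\ss}$ when forming the centralizer.

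The key steps, in order, would be: (i) set up the matrix model, writing $\RM_V = N \oplus \RM_{V_0} \oplus N^{\vee}$ where $N$ is the space of $\varphi_V^{\GL}$ equipped with the pairing induced from that on $\RM_V$ restricted to $N\oplus N^{\vee}$ (a hyperbolic/split pairing since $N$ is totally isotropic by the no-self-dual hypothesis); (ii) show that any element of $\wh{\RG}_V$ centralizing $\Im(\varphi_V)$ must preserve each isotypic component, hence preserves $N$, $N^{\vee}$, and $\RM_{V_0}$ separately — here one uses that $N$ and $N^{\vee}$ share no irreducible constituents with $\RM_{V_0}$, which follows because a common constituent would be self-dual (it would appear in both $N$ and $N^{\vee}$) contradicting the hypothesis, OR it would force a constituent of $\varphi_{V_0}$ to appear in $N$; one has to be slightly careful and argue that the centralizer condition plus the pairing pins down the block structure; (iii) conclude that $\RS_{\varphi_V} \cong \GL(\Hom_{\CW_F}(N\text{-part})) \times \RS_{\varphi_{V_0}}$ where the first factor is connected (a product of general linear groups over the multiplicity spaces of the non-self-dual $\rho_i$'s), so that passing to $\pi_0$ kills it and yields $\CS_{\varphi_V}\cong \CS_{\varphi_{V_0}}$; (iv) in the $\SO$/orthogonal cases, check that the ``determinant $=1$'' or ``$\O$ versus $\SO$'' constraint defining the component group is unaffected, because the extra $\GL$-blocks always contribute an element of determinant $1$ (they come in a block and its transpose-inverse). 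Finally, the statement about $|\Pi_{\varphi_{V_0}}^{\Vogan}| = |\Pi_{\varphi_V}^{\Vogan}|$ for generic parameters is immediate from the bijection $\pi\leftrightarrow\chi_\pi$ between $\Pi_\varphi^{\Vogan}$ and $\widehat{\CS}_\varphi$ recalled in \eqref{equ: isomorphism component group}, since $|\widehat{\CS}_\varphi| = |\CS_\varphi|$ for a finite abelian group.

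I expect the main obstacle to be step (ii)–(iii): making rigorous the claim that the centralizer splits as a \emph{direct product} of a connected group and $\RS_{\varphi_{V_0}}$, rather than merely surjecting onto $\RS_{\varphi_{V_0}}$ with connected kernel. One must verify that the self-dual constituents of $\varphi_{V_0}^{\ss}$ really are disjoint from those of $N\oplus N^{\vee}$ — which is exactly where the ``no self-dual subrepresentation of $\varphi_V^{\GL}$'' hypothesis does its work — and then that there is a canonical splitting of the sequence $1\to (\text{connected})\to \RS_{\varphi_V}\to \RS_{\varphi_{V_0}}\to 1$ at the level of component groups. A clean way to finesse this is to argue directly on $\pi_0$: an element of $\RS_{\varphi_V}$ is determined up to the connected centralizer by a sign on each self-dual constituent of the correct type, all such constituents lie in $\RM_{V_0}$, and the same sign data (with the same parity constraint) describes $\CS_{\varphi_{V_0}}$, giving the isomorphism functorially. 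The metaplectic case $\RG_V = \Mp(V)$ is handled by replacing $\wh{\RG}_V$ and $\wh{\RG}_{V_0}$ by the relevant $\Sp(\BC)$'s via the quadratic-space $W'$ device recalled in the introduction, after which the orthogonal-group analysis applies verbatim.
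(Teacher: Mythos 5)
Your proposal is correct and follows essentially the same route as the paper: the paper's proof simply cites \cite[\S 4]{gan2012symplectic} for the description of $\CS_{\varphi}$ via self-dual constituents of the appropriate type (your steps (i)--(iv) unpack exactly that computation, with your closing remark on arguing directly at the level of $\pi_0$ being the clean way to handle the block-preservation subtlety you flag in step (ii)), and handles $\Mp(V)$ by the same passage to $\SO(V')$ with $\dim V' = \dim V + 1$ and $\mathrm{disc}(V')=1$.
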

\begin{proof}
  When $\RG_V=\RU(V),\SO(V),\Sp(V)$, the result follows from \cite[\S 4]{gan2012symplectic}. When $\RG_V=\Mp(V)$, an $L$-parameter $\varphi_V:\CW_F\to {}^L\RG_V=\Sp(\dim V,\BC)$ can be treated as $L$-parameter $\varphi_{V'}$ of $\SO(V')$ for a quadratic space  $V'$ with dimension equal to $\dim V+1$ and discriminant equal to $1$.
\end{proof}
Then we check by definition that we have the following lemma.
\begin{lem}
For $L$-parameters $\varphi_V,\varphi_{V_0},\varphi_W,\varphi_{W_0}$ of $\RG_V,\RG_{V_0},\RG_W,\RG_{W_0}$ satisfying
\[
\varphi_V^{\ss}=\varphi_V^{\GL}\oplus \varphi_{V_0}^{\ss}\oplus (\varphi_V^{\GL})^{\vee},\quad \varphi_W^{\ss}=\varphi_W^{\GL}\oplus \varphi_{W_0}^{\ss}\oplus (\varphi^{\GL}_W)^{\vee}
\]    
such that $\varphi_V^{\GL},\varphi_W^{\GL}$ have no self-dual subrepresentation, then for $s_V\in \CS_{\varphi_{V_0}}=\CS_{\varphi_V}$ and $s_W\in \CS_{\varphi_{W_0}}=\CS_{\varphi_W}$
\[
\chi_{\varphi_V,W}(s_V)=\chi_{\varphi_{V_0},W_0}(s_V),\quad 
\chi_{\varphi_W,V}(s_W)=\chi_{\varphi_{W_0},V_0}(s_W).
\]
\end{lem}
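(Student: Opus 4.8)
The statement asserts compatibility of the Gan--Gross--Prasad distinguished characters $\chi_{\varphi_V,W}$ with the isomorphisms $\CS_{\varphi_V}\cong\CS_{\varphi_{V_0}}$ and $\CS_{\varphi_W}\cong\CS_{\varphi_{W_0}}$ of Lemma~\ref{lem: no self-dual sub}, given the decompositions $\varphi_V^{\ss}=\varphi_V^{\GL}\oplus\varphi_{V_0}^{\ss}\oplus(\varphi_V^{\GL})^{\vee}$ and $\varphi_W^{\ss}=\varphi_W^{\GL}\oplus\varphi_{W_0}^{\ss}\oplus(\varphi_W^{\GL})^{\vee}$ with $\varphi_V^{\GL},\varphi_W^{\GL}$ carrying no self-dual subrepresentation. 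The plan is to verify the identity $\chi_{\varphi_V,W}(s_V)=\chi_{\varphi_{V_0},W_0}(s_V)$ termwise against the definition \eqref{equ: definition of character}; the identity for $\chi_{\varphi_W,V}$ follows by symmetry. Recall that $\chi_{\varphi_V,W}(s_V)$ is the product of the sign character $\sgn_{\varphi_V,W}(s_V)$ and the local root number $\varepsilon\bigl(\tfrac12,\mathrm M_V^{s_V=-1}\otimes\mathrm M_W,\psi_F\bigr)$, so I would treat the $\varepsilon$-factor and the sign character separately.

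First I would analyze the $\varepsilon$-factor. The key observation is that under the isomorphism $\CS_{\varphi_V}\to\CS_{\varphi_{V_0}}$ constructed in Lemma~\ref{lem: no self-dual sub}, an element $s_V$ acts on $\mathrm M_V=\varphi_V^{\ss}$ (possibly plus a trivial summand when $\RG_V=\Sp(V)$) with $(-1)$-eigenspace that decomposes as $\mathrm M_V^{s_V=-1}=\mathrm M_{V_0}^{s_V=-1}$, because $s_V$ acts on the new summands $\varphi_V^{\GL}\oplus(\varphi_V^{\GL})^{\vee}$ as an \emph{even} number of $(-1)$'s — indeed the centralizer isomorphism forces $s_V$ to preserve the pairing between $\varphi_V^{\GL}$ and its dual, so on $\varphi_V^{\GL}\oplus(\varphi_V^{\GL})^{\vee}$ the contribution to the $(-1)$-eigenspace is of the form $U\oplus U^{\vee}$ for some subspace $U\subseteq\varphi_V^{\GL}$. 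Thus $\mathrm M_V^{s_V=-1}\otimes\mathrm M_W=(\mathrm M_{V_0}^{s_V=-1}\otimes\mathrm M_W)\oplus\bigl((U\oplus U^{\vee})\otimes\mathrm M_W\bigr)$. By additivity of $\varepsilon$-factors in the Galois argument, it then suffices to show $\varepsilon\bigl(\tfrac12,(U\oplus U^{\vee})\otimes\mathrm M_W,\psi_F\bigr)=1$ and that the same analysis applied to $W$ accounts for the difference between $\mathrm M_W$ and $\mathrm M_{W_0}$; for the former I would use that $(U\oplus U^{\vee})\otimes\mathrm M_W$ is self-dual (as $\mathrm M_W$ is self-dual), hence its root number is a sign, together with the standard fact that $\varepsilon(\tfrac12,\rho\oplus\rho^{\vee},\psi_F)=1$ for the Rankin--Selberg/Artin $\varepsilon$-factor — this is precisely the cancellation exploited in \cite[\S4--5]{gan2012symplectic}. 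The cross terms $U\otimes\varphi_W^{\GL}$ etc.\ from simultaneously expanding $\mathrm M_W$ likewise pair up into $\rho\oplus\rho^{\vee}$ shapes and cancel.

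Second, for the sign character $\sgn_{\varphi_V,W}$: in the unitary case it is identically $1$ and there is nothing to check, so assume $\RG_V$ is orthogonal or symplectic. Here $\sgn_{\varphi_V,W}(s_V)=\det(-\mathrm{Id}_{\mathrm M_V^{s_V=-1}})^{\dim\mathrm M_W/2}\cdot\det(-\mathrm{Id}_{\mathrm M_W})^{\dim\mathrm M_V^{s_V=-1}/2}$. Using $\mathrm M_V^{s_V=-1}=\mathrm M_{V_0}^{s_V=-1}\oplus(U\oplus U^{\vee})$ one has $\dim\mathrm M_V^{s_V=-1}=\dim\mathrm M_{V_0}^{s_V=-1}+2\dim U$, so the exponent $\dim\mathrm M_V^{s_V=-1}/2$ differs from $\dim\mathrm M_{V_0}^{s_V=-1}/2$ by the integer $\dim U$, and $\det(-\mathrm{Id}_{\mathrm M_W})^{\dim U}=((-1)^{\dim\mathrm M_W})^{\dim U}$; similarly $\det(-\mathrm{Id}_{\mathrm M_V^{s_V=-1}})=\det(-\mathrm{Id}_{\mathrm M_{V_0}^{s_V=-1}})\cdot\det(-\mathrm{Id}_{U\oplus U^{\vee}})$ and $\det(-\mathrm{Id}_{U\oplus U^{\vee}})=(-1)^{2\dim U}=1$. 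Expanding $\mathrm M_W=\mathrm M_{W_0}\oplus(\text{self-dual shape})$ in the same way and collecting the parity contributions, one finds all extra factors are of the form $(\pm1)^{\text{even}}=1$; the bookkeeping is a direct parity computation that I would carry out carefully but expect to be routine.

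\textbf{Main obstacle.} The delicate point is the sign-character bookkeeping: one must track several half-integer exponents (the $\dim/2$'s) and ensure that when both $\mathrm M_V$ and $\mathrm M_W$ are simultaneously refined, all discrepancies cancel in pairs, including the subtle $\mathrm M_V=\std_V\oplus\BC$ correction in the symplectic case and the Shimura--Waldspurger bridge in the metaplectic case. I anticipate that the cleanest route is to reduce everything, via Lemma~\ref{lem: no self-dual sub} and its proof, to the quasi-split quasi-split situation over $F$ where the relevant $\varepsilon$- and sign-factor identities are already recorded in \cite[\S4--\S6]{gan2012symplectic}, invoking the multiplicativity and duality properties of the Rankin--Selberg $\varepsilon$-factor from \cite{jacquet2009archimedean} to handle the Archimedean specifics; the remaining work is then purely combinatorial.
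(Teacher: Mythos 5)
The paper itself gives no proof of this lemma; it simply says ``we check by definition,'' so there is nothing in the text to compare your argument against directly. Your overall plan — split the character $\chi_{\varphi_V,W}$ into $\varepsilon$-factor and $\sgn$-factor contributions, decompose $\mathrm M_V^{s_V=-1}$ and $\mathrm M_W$ along the given direct-sum decompositions, and show the extra pieces contribute trivially — is the right one, and your attention to the symplectic $\RM_V=\std_V\oplus\BC$ correction and the metaplectic Shimura--Waldspurger translation is appropriate.

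However, one claim you rely on is not correct as stated: it is not a ``standard fact'' that $\varepsilon(\tfrac12,\rho\oplus\rho^{\vee},\psi_F)=1$. The functional equation gives $\varepsilon(\tfrac12,\rho,\psi_F)\,\varepsilon(\tfrac12,\rho^{\vee},\psi_F^{-1})=1$, and since $\varepsilon(\tfrac12,\rho^{\vee},\psi_F^{-1})=\det\rho^{\vee}(-1)\,\varepsilon(\tfrac12,\rho^{\vee},\psi_F)$, one actually gets
\[
\varepsilon\left(\tfrac12,\rho\oplus\rho^{\vee},\psi_F\right)=\det\rho(-1),
\]
which is $\pm1$ but not always $+1$. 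This residual $\det(-1)$ is exactly what the $\sgn_{\varphi_V,W}$ factor is designed to absorb: when you expand $\mathrm M_W=\mathrm M_{W_0}\oplus\varphi_W^{\GL}\oplus(\varphi_W^{\GL})^{\vee}$, both the exponents $\dim\mathrm M_W/2$ in $\sgn_{\varphi_V,W}$ and the $\varepsilon$-factor shift by quantities that cancel only when multiplied together (via the identity $\det(M\otimes N)(-1)=\det M(-1)^{\dim N}\det N(-1)^{\dim M}$ and the parity of the dimension changes). If you carry out the parity bookkeeping under the false assumption that the $\varepsilon$-factor already cancels, you risk either double-counting or missing a sign. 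In the unitary case, where $\sgn$ is trivial, the cancellation instead comes from the specific normalization $\psi_E|_F=1$ forcing $\psi_E^{-1}=\bar\psi_E$, and the relevant conjugate-duality relation takes a slightly different form; it is worth treating that case separately rather than trying to force it into the same framework.

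A simplification you may find useful: the value $\chi_{\varphi_V,W}(s_V)$ depends on $s_V$ only through its image in the component group $\CS_{\varphi_V}$, so you are free to choose the representative lift. Taking the lift that acts as $+1$ on all of $\varphi_V^{\GL}\oplus(\varphi_V^{\GL})^{\vee}$ gives $\mathrm M_V^{s_V=-1}=\mathrm M_{V_0}^{s_V=-1}$ on the nose, with $U=0$, eliminating one of your two moving parts and reducing the check to the effect of $\mathrm M_W\rightsquigarrow\mathrm M_{W_0}$ alone.
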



\subsubsection{Results in this article} This article gives uniform proof for these conjectures from the tempered basic cases by modifying the proof in \cite{moeglin2012conjecture}.

\begin{thm}\label{thm: GGP hold}
When $F$ is Archimedean, if  Conjecture \ref{conj: Bessel intro} and Conjecture \ref{conj: intro FJ} holds in basic cases for tempered $L$-parameters, then it holds in general.
\end{thm}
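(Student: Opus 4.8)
\textbf{Proof proposal for Theorem \ref{thm: GGP hold}.}

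The plan is to deduce the general case from the tempered basic case in two independent reduction steps, mirroring the strategy of \cite{moeglin2012conjecture} but powered by Theorem \ref{thm: conj 3} (the multiplicity formula) in place of the ad hoc arguments used in the non-Archimedean literature. First I would reduce from general generic $L$-parameters to tempered $L$-parameters. Let $\varphi_V,\varphi_W$ be generic. By the Langlands classification for the relevant classical (or metaplectic) groups, every representation $\pi_V\in\Pi^{\Vogan}_{\varphi_V}$ is of the form $\sigma_V\rtimes\pi_{V_0}$ where $\sigma_V=|\cdot|^{s_{V,1}}\sigma_{V,1}\times\cdots\times|\cdot|^{s_{V,l_V}}\sigma_{V,l_V}$ with $\sigma_{V,i}$ tempered, $\Re(s_{V,i})>0$, and $\pi_{V_0}$ tempered with tempered $L$-parameter $\varphi_{V_0}$; similarly for $\pi_W$. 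The semisimplification decomposes as $\varphi_V^{\ss}=\varphi_V^{\GL}\oplus\varphi_{V_0}^{\ss}\oplus(\varphi_V^{\GL})^{\vee}$ with $\varphi_V^{\GL}$ having no self-dual subrepresentation. Lemma \ref{lem: no self-dual sub} then gives a canonical bijection $\CS_{\varphi_V}\cong\CS_{\varphi_{V_0}}$ (hence $|\Pi^{\Vogan}_{\varphi_V}|=|\Pi^{\Vogan}_{\varphi_{V_0}}|$, and likewise for $W$), and the displayed lemma on distinguished characters identifies $\chi_{\varphi_V,W}$ with $\chi_{\varphi_{V_0},W_0}$ and $\chi_{\varphi_W,V}$ with $\chi_{\varphi_{W_0},V_0}$ under these bijections. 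Theorem \ref{thm: conj 3} gives $m(\pi_V\boxtimes\pi_W)=m(\pi_{V_0}\boxtimes\pi_{W_0})$ in Bessel cases and the analogous identity in Fourier--Jacobi cases. Combining: the unique representation in $\Pi^{\Vogan}_{\varphi,\rel}$ with nonzero multiplicity corresponds (via the two bijections, which are compatible with the inclusion $\Pi^{\Vogan}_{\rel}\subset\Pi^{\Vogan}$ of relevant forms) to the unique representation in $\Pi^{\Vogan}_{\varphi_0,\rel}$ with nonzero multiplicity, and the character identities transport the precise conditions $\chi_{\pi_V}=\chi_{\varphi_V,W}$, $\chi_{\pi_W}=\chi_{\varphi_W,V}$ from the tempered case to the general case. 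This establishes both parts of Conjecture \ref{conj: Bessel intro} and Conjecture \ref{conj: intro FJ} for all generic $\varphi$ assuming the tempered (not necessarily basic) case.

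Second I would reduce from tempered $L$-parameters to tempered \emph{basic} cases, i.e.\ from an arbitrary admissible pair $(V,W)$ to its basic admissible pair. Here the key input is Theorem \ref{thm: main intro}(1): for spherical principal series $\sigma_{X^+}=|\cdot|^{s_1}\times\cdots\times|\cdot|^{s_{\dim_E X^+}}$ with $(s_1,\dots,s_{\dim_E X^+})$ in general positions, one has $m(\pi_V\boxtimes\pi_W)=m(\RI(\sigma_{X^+},\pi_W,\pi_V))$, which moves a given pair to one attached to a codimension-one Bessel triple or an equal-rank/almost-equal-rank Fourier--Jacobi triple. Iterating across the $r$ layers of the totally isotropic flag (and, in the Fourier--Jacobi setting, using Lemma \ref{lem: isomorphism between FJ1 and FJ2} to pass between (FJ 1) and (FJ 2) realizations), one arrives at a basic triple. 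The combinatorial bookkeeping that this does not disturb the enumeration of the Vogan packet or the distinguished-character labels is parallel to \cite[\S 19]{gan2012symplectic}, \cite[\S 1.5]{moeglin2012conjecture}: induction by a spherical principal series does not change $\varphi^{\GL}$'s self-duality pattern, so Lemma \ref{lem: no self-dual sub} and the character lemma again apply verbatim. Thus given the conjecture in the tempered basic cases, Theorem \ref{thm: main intro}(1) upgrades it to all tempered cases, which combined with Step 1 yields the full statement.

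I expect the main obstacle to be the second step: carefully tracking the bijection $\Pi^{\Vogan}_{\varphi}\leftrightarrow\widehat{\CS}_{\varphi}$ and the distinguished characters through the geometric reduction, especially in the metaplectic and Fourier--Jacobi cases where the passage between (FJ 1) and (FJ 2) via Lemma \ref{lem: isomorphism between FJ1 and FJ2} changes which group plays the role of $\RG_V$ versus $\RG_W$, and where the Shimura--Waldspurger correspondence $\theta_W$ must be shown to intertwine the multiplicity identities with the identities on component-group characters. The analytic content is entirely contained in Theorem \ref{thm: conj 3} and Theorem \ref{thm: main intro}, which are available; what remains is the representation-theoretic and root-number bookkeeping, and the one genuinely delicate point is verifying that ``in general positions'' can be chosen so that every relevant intermediate induced representation is irreducible (via \cite[Theorem 1.1]{speh1980reducibility} and Langlands classification) \emph{while simultaneously} keeping all the multiplicity identities of Theorem \ref{thm: main intro}(1) valid — a finite intersection of generic conditions, but one must check it is nonempty. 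Once these are in place, the proof is a formal assembly of the two reduction steps.
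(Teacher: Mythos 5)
Your proposal is correct and follows essentially the same two-step route the paper uses: Theorem~\ref{thm: structure} (the Langlands-type structure of generic Vogan packets), Lemma~\ref{lem: no self-dual sub}, and Theorem~\ref{thm: conj 3} reduce generic $L$-parameters to tempered ones, while Theorem~\ref{thm: main intro}(1) with a spherical principal series $\sigma_{\udl{s}}$, $\udl{s}\in(i\BR)^{r^+}$ in general position with $s_i+s_j\neq 0$, reduces a tempered admissible pair $(V,W)$ directly to its tempered basic admissible pair. One small correction to your second step: the reduction is a \emph{single} application of Theorem~\ref{thm: main intro}(1) with $\dim_E X^+=r^+$, landing immediately on the basic pair of Definition~\ref{defin: basic admissible pair associated to}, so no layer-by-layer iteration is needed, and Lemma~\ref{lem: isomorphism between FJ1 and FJ2} is not invoked at this stage (the (FJ~2) case lands directly on an almost-equal-rank (FJ~1) basic triple via $\RI(\sigma_{\udl{s}},\pi_V,\pi_W)=(\sigma_{\udl{s}}\rtimes\pi_V)\boxtimes\pi_W$, and conversely (FJ~1) lands on an equal-rank (FJ~2) basic triple). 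Your nonemptiness worry is routine: the exceptional locus is a countable (not finite) union of zero sets of nonzero polynomials on $\BC^{r^+}$, and each restricts to a proper real-algebraic subset of $(i\BR)^{r^+}$, so the set of admissible purely imaginary $\udl{s}$ is dense by Baire category.
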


In most situations, the local Gan-Gross-Prasad conjecture in tempered basic cases has been proved case-by-case using various approaches.

For Bessel models, Beuzart-Plessis proved the multiplicity-one part of the conjecture for unitary groups in \cite{beuzart2019local} using  the local trace formula.  Xue proved the whole conjecture for unitary Bessel cases using theta correspondence in \cite{xue1bessel}. Luo proved the multiplicity-one part of the conjecture for special orthogonal groups in  \cite{luothesis}. The author and Luo proved the epsilon-dichotomy part of the conjecture for special orthogonal groups in \cite{chen2022local} by simplifying Waldspurger's proof in non-Archimedean cases. 

For Fourier-Jacobi models, Xue proved the skew-unitary cases in \cite{xuefourier}. There is an ongoing project of the author with  Chen and  Zou for the symplectic-metaplectic tempered basic cases. 

\subsection{Reduction to tempered basic cases}
In this section, we reduce Conjecture \ref{conj: Bessel intro} and \ref{conj: intro FJ} to the tempered cases based on Theorem \ref{thm: conj 3} using a \textit{structure theorem}. With this reduction, the proof for Theorem \ref{thm: GGP hold} can be completed with the reduction to basic cases using Theorem \ref{thm: main intro}(1).

The  \textit{structure theorem} studies the representations in generic packets and expresses them as Schwartz induction from inducing data of specific types. We need the structure theorem for connected reductive groups $\RG_V,\RG_W$ in all cases and for $\wt{\RG}_W$ in the Fourier-Jacobi cases.

\begin{thm}\label{thm: structure}
When $\RG_V=\RU(V),\SO(V),\Sp(V),\wt{\Sp}(V)$, for every generic $L$-parameter $\varphi_V$ of $\RG_V$, there is a decomposition 
\[
\varphi_V^{\ss}=\varphi^{\GL}_{V}\oplus \varphi_{V_0}^{\ss}\oplus (\varphi^{\GL}_{V})^{\vee}
\]
such that $\varphi^{\GL}_V$ has no self-dual subrepresentation and $\varphi_{V_0}$ is tempered. Moreover, there is a bijection
\[
\Pi_{\varphi_{V_0}}^{\Vogan}\to \Pi_{\varphi_V}^{\Vogan}
\]\[
{\pi}_{V_0}\mapsto \sigma_V\rtimes \pi_{V_0}
\]
for  the general linear group representation $\sigma_V$ with $L$-parameter  $\varphi_V^{\GL}$. Moreover, the isomorphism is compatible with the isomorphism in Lemma \ref{lem: no self-dual sub}.
\end{thm}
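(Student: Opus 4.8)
\textbf{Proof proposal for Theorem \ref{thm: structure}.}

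The plan is to reduce the statement to the classification of (limits of) discrete series and tempered $L$-parameters, together with Langlands' classification for $\RG_V$ and the known structure of generic packets over Archimedean fields. First I would recall that a generic $L$-parameter $\varphi_V\colon \CW_F\to {}^L\RG_V$ is, after semisimplification, a direct sum of irreducible representations of $\CW_F$ with multiplicities, each twisted by $|\cdot|^{s}$ for some $s\in\BC$; genericity forces the parameter to be \emph{of Arthur type with trivial $\SL_2$} and bounded-modulo-center in the tempered direction, so the non-unitary part must come in dual pairs. Concretely, collect the Jordan--H\"older constituents of $\varphi_V^{\ss}$ whose central exponent has nonzero real part (or, more precisely, those not self-dual of the correct sign): group them as $\rho_i|\cdot|^{s_i}$ with $\Re(s_i)>0$ and their duals $\rho_i^{\vee}|\cdot|^{-s_i}$. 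Setting $\varphi_V^{\GL}=\bigoplus_i \rho_i|\cdot|^{s_i}$ (a $\GL$-type parameter with no self-dual subrepresentation by construction) and letting $\varphi_{V_0}^{\ss}$ be the remaining self-dual part, one obtains the decomposition $\varphi_V^{\ss}=\varphi_V^{\GL}\oplus\varphi_{V_0}^{\ss}\oplus(\varphi_V^{\GL})^{\vee}$, and $\varphi_{V_0}$ is visibly bounded, hence tempered. This uses only the structure of $\CW_{\BR},\CW_{\BC}$ and the definition of genericity recalled in the excerpt (the parameter being generic means its packet contains a generic member, equivalently the adjoint $L$-function is regular at $s=1$, which rules out the presence of an $\SL_2$-factor).

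Next I would establish the bijection $\Pi^{\Vogan}_{\varphi_{V_0}}\to\Pi^{\Vogan}_{\varphi_V}$, $\pi_{V_0}\mapsto \sigma_V\rtimes\pi_{V_0}$. The key inputs are: (i) the representation-theoretic realization of the Langlands quotient — for a parameter of the form ``$\GL$-type placed in the open Weyl chamber'' induced from a tempered parameter on the smaller classical group, the standard module $\sigma_V\rtimes\pi_{V_0}$ is irreducible, because the inducing data $\sigma_V$ has strictly positive real exponents arranged in decreasing order (Langlands' classification / the Casselman--Wallach theory), and irreducibility of such ``fully induced from tempered with parameters in general position'' modules is classical (Speh--Vogan, and for the cases at hand one may invoke \cite{speh1980reducibility} as is done in the proof of Theorem \ref{thm: second inequality}); (ii) the compatibility of $L$-parameters under parabolic induction, so that $\sigma_V\rtimes\pi_{V_0}$ indeed has parameter $\varphi_V$; (iii) the bijection $\CS_{\varphi_{V_0}}\cong\CS_{\varphi_V}$ of Lemma \ref{lem: no self-dual sub}, which matches the indexing sets of the two Vogan packets and shows the map is a bijection onto $\Pi^{\Vogan}_{\varphi_V}$ rather than merely an injection. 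For the metaplectic case $\RG_V=\wt{\Sp}(V)$ I would transport everything through the Shimura--Waldspurger correspondence $\theta_V$ as in the definition of $\Pi^{\Vogan}_{\wt{\varphi}_V}$, reducing to the $\SO(V')$ case already handled; here one also uses Lemma \ref{lem: classification of representations} to keep track of genuineness and the fact that parabolic induction intertwines $\theta$ on both sides.

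Finally I would check the compatibility with the character isomorphism of Lemma \ref{lem: no self-dual sub}: this is essentially a bookkeeping statement that the bijection $\pi_{V_0}\mapsto\sigma_V\rtimes\pi_{V_0}$ of packet members corresponds, under the fixed Whittaker normalization \eqref{equ: isomorphism component group}, to the identity on $\widehat{\CS}$ once $\CS_{\varphi_{V_0}}$ and $\CS_{\varphi_V}$ are identified; over Archimedean fields this follows from Vogan's parametrization \cite{vogan1993local} being compatible with parabolic induction, i.e.\ the Langlands quotient of an induced standard module carries the same component-group character as its tempered support, after the canonical identification of component groups. The main obstacle I expect is precisely making the irreducibility claim in (i) airtight in the \emph{non-quasi-split} pure inner forms and uniformly across $\RU,\SO,\Sp,\Mp$: while $\Re(s_i)>0$ in decreasing order does force irreducibility of the full induction in the generic situation, one must ensure no accidental reducibility arises from the self-dual block $\varphi_{V_0}$ interacting with $\varphi_V^{\GL}$ — this is controlled by the genericity hypothesis (the relevant $L$- and $\varepsilon$-factors governing reducibility are regular), but spelling this out requires invoking the Archimedean analogue of the standard-module irreducibility criterion carefully, which is where most of the real work lies; the remaining steps are formal once this is in place.
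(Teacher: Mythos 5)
Your proposal is correct and follows essentially the same route as the paper: decompose $\varphi_V^{\ss}$ according to the sign of $\Re(s)$, realize a packet member as the Langlands quotient of the standard module $\sigma_V\rtimes\pi_{V_0}$, reduce everything to the irreducibility of that standard module, and handle $\wt{\Sp}(V)$ by transport through the Shimura--Waldspurger correspondence $\theta_V$. One precision worth flagging: the irreducibility you correctly identify as the crux is \emph{not} the Speh--Vogan general-position statement you initially invoke (that result is what is used in the proof of Theorem \ref{thm: second inequality}, where the exponents $\underline{s}$ can be moved into general position), since here $\varphi_V$ is fixed and may be degenerate; the paper instead deduces irreducibility from the genericity hypothesis via Vogan's criterion \cite{vogan1978gelfand} (equivalently, regularity of $L(s,\varphi_V,\rho)$ at $s=1$ for $\rho$ the Asai or adjoint representation), citing \cite{xue2020bessel} for the Asai case and \cite[Lemma 4.0.4]{chen2021local} for the adjoint case, which is exactly the ``$L$-factor is regular'' route you sketch in your final paragraph.
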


\begin{proof}[Proof for Theorem \ref{thm: structure}]
We check the case when $F=\BR$ and complex cases can be treated similarly. Based on the representation theory of $\CW_{\BR}$, $\varphi_V^{\ss}$ can be decomposed into a direct sum
\[
\bigoplus_{\substack{k=1,2\\
i\in \RI_k}}|\cdot|^{s_{V,i}}\varphi_{m_{V,i}}^{(k_{V,i})},
\]
 where $\RI_1\coprod\RI_2=\{1,\cdots,l\}$, the unitary character
$\varphi_{m_{V,i}}^{(1)}$ of
$\CW_{\BR}=\BC^{\times}\cup \BC^{\times} j\ (j^{2}=-1)$ is defined by
\[
\varphi_{m_{V,i}}^{(1)}(z)=1,\quad \varphi_{m_{V,i}}^{(1)}(z\cdot j)=(-1)^{m_{V,i}}, \quad z\in \BC,
\]
and $\varphi_{m_{V,i}}^{(2)}\ (m_{V,i}\in \BN)$ is the two-dimensional representation of $\CW_{\BR}$ with basis $u,v$ satisfying
\[
\begin{aligned}
\varphi_{m_{V,i}}^{(2)}(z)u=u, &\quad  \varphi_{m_{V,i}}^{(2)}(z\cdot j)u=(-1)^{m_{V,i}}v,\\
\varphi_{m_{V,i}}^{(2)}(z)v=v, &\quad  \varphi_{m_{V,i}}^{(2)}(z\cdot j)v=u,\ z\in \BC.
\end{aligned}
\] 

We take 
\[
\varphi_V^{\GL}=\bigoplus_{\substack{\Re(s_{V,i})>0\\
k=1,2,
i\in \RI_k}}|\cdot|^{s_{V,i}}\varphi_{m_{V,i}}^{(k_{V,i})}, \quad
\varphi_{V_0}^{\ss}=\bigoplus_{\substack{\Re(s_{V,i})=0\\
k=1,2, i\in\RI_k}}|\cdot|^{s_{V,i}}\varphi_{m_{V,i}}^{(k_{V,i})}
\]
It is obvious that $\varphi^{\GL}$ has no self-dual subrepresentation and based on the invariant bilinear form of the standard representation of ${}^L\RG$, we have
\[
\varphi_V^{\ss}=\varphi^{\GL}_{V}\oplus \varphi_{V_0}^{\ss}\oplus (\varphi^{\GL}_{V})^{\vee}.
\]

When $\varphi_V$ is generic, there exists a generic representation $\pi_V\in \Pi^{\Vogan}_{\varphi_V}$.
From \cite[Theorem 6.2, equivalence of (a) and (e)]{vogan1978gelfand}, we obtain a relation between the $s_{V,i},m_{V,i}$, which is equivalent to 
\[
L(s,\varphi_V,\rho)\text{ is holomorphic at }s=1.
\] 
Here
 \[
 \rho=\begin{cases}
\mathrm{As}^{(-1)^{\dim_E V}}    &\text{ when } \RG_V  \text{ is the unitary group,}\\
  \Ad  & \text{ otherwise.}
 \end{cases}
 \]

Now we take a representation $\pi_V$ in the Vogan $L$-packet $\Pi_{\varphi_V}^{\Vogan}$. 

When $\RG_V=\RU(V),\SO(V),\Sp(V)$, from the Langlands classification,  $\pi_V$ is the quotient of 
\begin{equation}\label{equ: langlands classification for piV}
|\det|^{s_{V,1}}\sigma_{V,1}\times\cdots \times |\det|^{s_{V,l}}\sigma_{V,l} \rtimes \pi_{V_0}
\end{equation}
for $\sigma_{V,i}=\begin{cases}
   \sgn^{m_{V,i}} &\text{ when } k_i=1 \\
    D_{m_{V,i}} &\text{ when }k_i=2
\end{cases}$, and  $\pi_{V_0}$ is tempered.

To show $\pi_V=\sigma_V \rtimes\pi_{V_0}$, it  suffices to verify that the representation is irreducible.  This was proved in \cite{xue2020bessel} when $\rho$ is the Asai $L$-function and proved in \cite[Lemma 4.0.4]{chen2021local} when $\RG_V$ is the adjoint $L$-function.

When $\RG_V=\wt{\Sp}(V)$,  recall the Vogan $L$-packet is defined by
\[
\Pi_{\varphi_{V}}^{\Vogan}=\theta_V(\Pi_{\varphi_{V'}}^{\Vogan}),
\] 
where $V', \varphi_{V'}$ are defined as in (\ref{equ: correspondence for metaplectic}).

Based on the result for $\varphi_{V'}$, there is a decomposition 
\[
\varphi_{V'}^{\ss}=\varphi_V^{\GL}\oplus \varphi_{V_0'}^{\ss}\oplus (\varphi_V^{\GL})^{\vee}.
\]

Then for every representation $\pi_V\in\Pi_{\varphi_{V}}^{\Vogan}$,  
\[
\theta_V^{-1}(\pi_V)=\sigma_V\rtimes \pi_{V_0'}.
\]

From \cite[Theorem 5.1]{adams1998genuine}, we have
\[
\pi_V=\wt{\sigma}_V\rtimes \theta_{V_0}^{-1}(\pi_{V_0'})=\sigma_V\rtimes \theta_{V_0}^{-1}(\pi_{V_0'}),
\]
where $\wt{\sigma}_V$ is the $\wt{\GL}$-representation correspond to $\sigma_V$  in Lemma \ref{lem: classification of representations}(1). This completes the proof for the lemma.
\end{proof}

\begin{pro}
When $F$ is Archimedean, if  Conjecture \ref{conj: Bessel intro} and Conjecture \ref{conj: intro FJ} holds for tempered $L$-parameters, then it holds in general.
\end{pro}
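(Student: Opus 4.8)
The statement to be proved is the reduction of Conjecture \ref{conj: Bessel intro} and Conjecture \ref{conj: intro FJ} from the tempered $L$-parameter cases to the general (generic) case. The plan is to combine the \emph{structure theorem} (Theorem \ref{thm: structure}) with the multiplicity formula of Theorem \ref{thm: conj 3} (equivalently Conjecture \ref{conjecture: main}) and the compatibility of the distinguished characters with the isomorphism $\CS_{\varphi_V}\cong \CS_{\varphi_{V_0}}$ (Lemma \ref{lem: no self-dual sub} and the lemma immediately following it). First I would fix generic $L$-parameters $\varphi_V$ of $\RG_V$ and $\varphi_W$ (resp. $\wt{\varphi}_W$) of $\RG_W$ (resp. $\wt{\RG}_W$), and apply Theorem \ref{thm: structure} to each to obtain decompositions
\[
\varphi_V^{\ss}=\varphi_V^{\GL}\oplus \varphi_{V_0}^{\ss}\oplus (\varphi_V^{\GL})^{\vee},\qquad
\varphi_W^{\ss}=\varphi_W^{\GL}\oplus \varphi_{W_0}^{\ss}\oplus (\varphi_W^{\GL})^{\vee},
\]
with $\varphi_{V_0},\varphi_{W_0}$ tempered, together with bijections $\Pi_{\varphi_{V_0}}^{\Vogan}\to \Pi_{\varphi_V}^{\Vogan}$, $\pi_{V_0}\mapsto \sigma_V\rtimes \pi_{V_0}$, and similarly for $W$, where $\sigma_V,\sigma_W$ are the general linear group representations attached to $\varphi_V^{\GL},\varphi_W^{\GL}$; note $\sigma_V,\sigma_W$ are automatically of the shape required in Conjecture \ref{conjecture: main}, i.e.\ $\sigma_V=|\cdot|^{s_{V,1}}\sigma_{V,1}\times\cdots$ with $\Re(s_{V,i})>0$ and $\sigma_{V,i}$ tempered. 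In the metaplectic and Fourier-Jacobi situations I would run the same argument after passing through the Shimura-Waldspurger correspondence and through Lemma \ref{lem: classification of representations 2}, exactly as in the proof of Theorem \ref{thm: structure}.

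Next, for a representation $\pi=\pi_V\boxtimes\pi_W\in \Pi^{\Vogan}_{\varphi,\rel}$ (resp.\ $\pi_V\boxtimes(\wt\pi_W\otimes\omega_{W,\psi_F})$), written via the structure theorem as $\pi_V=\sigma_V\rtimes\pi_{V_0}$, $\pi_W=\sigma_W\rtimes\pi_{W_0}$ with $\pi_{V_0}\boxtimes\pi_{W_0}$ in the tempered relevant Vogan packet, Theorem \ref{thm: conj 3} gives
\[
m(\pi_V\boxtimes\pi_W)=m(\pi_{V_0}\boxtimes\pi_{W_0})
\]
(and the analogous identity with the Weil representation in the Fourier-Jacobi cases). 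Now I invoke the hypothesis: Conjecture \ref{conj: Bessel intro}(1)/\ref{conj: intro FJ}(1) for the tempered parameters $\varphi_{V_0}\times\varphi_{W_0}$ says there is exactly one $\pi_{V_0}\boxtimes\pi_{W_0}$ with $m(\pi_{V_0}\boxtimes\pi_{W_0})=1$. Transporting along the bijections of Theorem \ref{thm: structure}, there is then exactly one $\pi$ in the relevant Vogan packet of $\varphi$ with $m(\pi)=1$, giving part (1) of the conjecture in general. For part (2), the tempered case tells us that this distinguished $\pi_{V_0}\boxtimes\pi_{W_0}$ has $\chi_{\pi_{V_0}}=\chi_{\varphi_{V_0},W_0}$ and $\chi_{\pi_{W_0}}=\chi_{\varphi_{W_0},V_0}$; under the isomorphism $\CS_{\varphi_{V_0}}\cong\CS_{\varphi_V}$ (Lemma \ref{lem: no self-dual sub}) the character $\chi_{\pi_V}$ corresponds to $\chi_{\pi_{V_0}}$ (this is the stated compatibility of the bijection in Theorem \ref{thm: structure} with Lemma \ref{lem: no self-dual sub}), and the lemma following Lemma \ref{lem: no self-dual sub} identifies $\chi_{\varphi_{V_0},W_0}$ with $\chi_{\varphi_V,W}$ under the same isomorphism; likewise for $W$. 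Chaining these identifications yields $\chi_{\pi_V}=\chi_{\varphi_V,W}$ and $\chi_{\pi_W}=\chi_{\varphi_W,V}$, which is part (2). One should double-check the bookkeeping for the metaplectic case, where the character of the component group is read off via $\theta_W$ and one uses $\theta_V^{-1}(\pi_V)=\sigma_V\rtimes\pi_{V_0'}$ from the proof of Theorem \ref{thm: structure}.

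The step I expect to be the genuine content, rather than formal, is the already-cited combination: Theorem \ref{thm: conj 3} does the heavy lifting of moving multiplicities between the generic induced situation and the tempered situation, and Theorem \ref{thm: structure} does the heavy lifting of matching up the Vogan packets and component groups. Given those two inputs, the argument here is essentially a diagram chase: each of part (1) and part (2) is obtained by transporting the tempered statement along the bijections, using that all relevant quantities ($m(\cdot)$, $\chi_{\pi_V}$, $\chi_{\varphi_V,W}$) are preserved. So the only real care needed is to verify that the various identifications of component group characters and of distinguished characters are mutually compatible (no sign or parity discrepancy creeps in), and to handle the three families (B), (FJ 1), (FJ 2) and the metaplectic subcase uniformly; the passage between (FJ 1) and (FJ 2) is already furnished by Lemma \ref{lem: isomorphism between FJ1 and FJ2}. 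I would conclude by remarking that combining this proposition with Theorem \ref{thm: main intro}(1) further reduces everything to the tempered \emph{basic} cases, which is the statement of Theorem \ref{thm: GGP hold}.
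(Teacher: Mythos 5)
Your proposal matches the paper's proof essentially step for step: decompose the generic $L$-parameters via Theorem \ref{thm: structure}, transport multiplicities using Theorem \ref{thm: conj 3}, and identify component groups and distinguished characters via Lemma \ref{lem: no self-dual sub} and the lemma that follows it. The paper's write-up is briefer (and in the Fourier-Jacobi bullet additionally cites Lemma \ref{lem: compatibility with tensor Weil} to ensure that Theorem \ref{thm: conj 3} applies to $\pi_W\otimes\omega_{W,\psi_F}$, a point you touch on by invoking Lemma \ref{lem: classification of representations 2}), but the argument and the key ingredients are the same.
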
 \begin{proof}[Proof for Theorem \ref{thm: GGP hold}]

We first reduce Conjecture \ref{conj: Bessel intro} and \ref{conj: intro FJ} to the tempered cases.
 \begin{itemize}
     \item In the Bessel cases, given generic $L$-parameters $\varphi_{V}$ of $\RG_V$ and $\varphi_W$ of $\RG_W$. If we identify  the component group $\CS_{\varphi_{V_0}\times \varphi_{W_0}}$ with $\CS_{\varphi_V\times \varphi_W}$  via the isomorphisms \[
    \Pi_{\varphi_{V_0}}^{\Vogan}\to \Pi_{\varphi_V}^{\Vogan},\quad\Pi_{\varphi_{W_0}}^{\Vogan}\to \Pi_{\varphi_W}^{\Vogan}
    \]
    given by Theorem \ref{thm: structure}, from Lemma \ref{lem: no self-dual sub}, $\chi_{\varphi_{V_0}\times \varphi_{W_0}}=\chi_{\varphi_{V}\times \varphi_{W}}$. Then, with Theorem \ref{thm: GGP hold}, we reduce Conjecture \ref{conj: Bessel intro} to the tempered cases for  $\varphi_{V_0}, \varphi_{W_0}$.
\item In Fourier-Jacobi cases, given generic $L$-parameters $\varphi_{V}$  of $\RG_V(F)$, and $\wt{\varphi}_W$ of $\wt{\RG}_W(F)$.  If we identify  the component group $\CS_{\varphi_{V_0}}$ with $\CS_{\varphi_V}$  and identify $\CS_{\wt{\varphi}_{W_0}}$ with $\CS_{\varphi_V\times \varphi_W}$  via the isomorphisms \[
    \Pi_{\varphi_{V_0}}^{\Vogan}\to \Pi_{\varphi_V}^{\Vogan},\quad\Pi_{\wt{\varphi}_{W_0}}^{\Vogan}\to \Pi_{\wt{\varphi}_W}^{\Vogan}
    \]
    given by Theorem \ref{thm: structure}, from Lemma \ref{lem: no self-dual sub}
    \[\chi_{\varphi_{V_0}\times \varphi_{W_0}}=\chi_{\varphi_{V}\times \varphi_{W}}.
    \]
    
    Then, with Theorem \ref{thm: conj 3} and Lemma \ref{lem: compatibility with tensor Weil}, we reduce Conjecture \ref{conj: intro FJ} to the tempered cases for  $\varphi_{V_0}, \wt{\varphi}_{W_0}$.
 \end{itemize}
\end{proof}

Therefore, to complete the proof for Theorem \ref{thm: GGP hold}, it remains to prove that the tempered basic cases imply the tempered cases.
 \begin{pro}
When $F$ is Archimedean, if  Conjecture \ref{conj: Bessel intro} and Conjecture \ref{conj: intro FJ} holds  in basic cases for tempered $L$-parameters, then
Conjecture \ref{conj: Bessel intro} and Conjecture \ref{conj: intro FJ} holds for tempered $L$-parameters. 
 \end{pro}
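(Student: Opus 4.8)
The plan is to use Theorem~\ref{thm: main intro}(1) to reduce each tempered case to its basic companion, and then transport the conclusion of Conjecture~\ref{conj: Bessel intro} (resp.\ Conjecture~\ref{conj: intro FJ}) from the basic case back along the reduction. Concretely, fix tempered --- hence generic --- $L$-parameters $\varphi_V,\varphi_W$ of $\RG_V,\RG_W$ in a Bessel case, or $\varphi_V$ of $\RG_V$ and $\wt{\varphi}_W$ of $\wt{\RG}_W$ in a Fourier--Jacobi case, attached to an admissible pair $(V,W)$, and let $(V',W')$ be its basic admissible pair (Definition~\ref{defin: basic admissible pair associated to}), with basic triple $(\RG^+,\RH^+,\xi^+)$. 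Here $\dim_E X^+$ is exactly what is needed for $(V',W')$ to be basic, so a single application of Theorem~\ref{thm: main intro}(1) suffices: for the spherical principal series $\sigma_{X^+}=|\cdot|^{s_1}\times\cdots\times|\cdot|^{s_{\dim_E X^+}}$ of $\GL(X^+)(E)$ with $\underline{s}$ in general positions,
\[
m(\pi_V\boxtimes\pi_W)=m(\RI(\sigma_{X^+},\pi_W,\pi_V)),
\]
and this holds for every member of the Vogan packets and simultaneously over the finitely many relevant pure inner forms. When $(V',W')$ is of (FJ~2) type I would first pass, via Lemma~\ref{lem: isomorphism between FJ1 and FJ2} and Lemma~\ref{lem: compatibility with tensor Weil}, to the equivalent basic (FJ~1) triple, so that only basic (B) and (FJ~1) cases are ultimately invoked.

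Next I would pin down the $L$-parameter on the enlarged datum. The key point is that $\underline{s}$ may be chosen with each $s_i$ purely imaginary \emph{and} in general positions at the same time: the exceptional locus in Theorem~\ref{thm: main intro}(1) --- which, via Lemma~\ref{lem: equalities} and Lemma~\ref{lem: two property Schwartz}(3)(4), records coincidences of finitely many infinitesimal characters as $\underline{s}$ varies --- meets the real subspace $(i\BR)^{\dim_E X^+}$ in a countable union of proper real-analytic subsets, which one can avoid together with the analogous bad sets for the other pure inner forms. For such $\underline{s}$ the inducing representation $\sigma_{X^+}$ is tempered, $\RI(\sigma_{X^+},\pi_W,\pi_V)$ is irreducible, and its $L$-parameter is tempered; writing $\varphi'$ for the component along the group of $W'$ (resp.\ $V'$), its semisimplification is $\varphi_{X^+}\oplus\varphi_W^{\ss}\oplus\varphi_{X^+}^{\vee}$ with $\varphi_{X^+}=\bigoplus_i|\cdot|^{s_i}$, which has no self-dual subrepresentation for $\underline{s}$ generic. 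By Lemma~\ref{lem: no self-dual sub} this gives an isomorphism $\CS_{\varphi'}\cong\CS_{\varphi_W}$, the assignment $\pi_W\mapsto\sigma_{X^+}\rtimes\pi_W$ is a bijection $\Pi_{\varphi_W}^{\Vogan}\to\Pi_{\varphi'}^{\Vogan}$ compatible with the labeling by characters, and I would check that it carries the relevant sub-packet of $(V,W)$ onto that of $(V',W')$ (the relevant pure inner forms correspond under the Galois-cohomology inclusions coming from $W\subset V\subset W^+$, and similarly in the Fourier--Jacobi cases). A direct root-number computation, of the type in the lemma stated right after Lemma~\ref{lem: no self-dual sub}, shows the added blocks $\varphi_{X^+}\oplus\varphi_{X^+}^{\vee}$ contribute trivially, so the distinguished characters are unchanged: $\chi_{\varphi',V}=\chi_{\varphi_W,V}$ and $\chi_{\varphi_V,W'}=\chi_{\varphi_V,W}$.

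With these identifications in hand, I would apply the assumed basic tempered case to $(\RG^+,\RH^+,\xi^+)$ with parameters $\varphi_V,\varphi'$: there is a unique $\pi'=\pi_V'\boxtimes\pi_{W'}'$ in the relevant Vogan packet with $m(\pi')=1$, and it satisfies $\chi_{\pi_V'}=\chi_{\varphi_V,W'}$ and $\chi_{\pi_{W'}'}=\chi_{\varphi',V}$. Writing $\pi_{W'}'=\sigma_{X^+}\rtimes\pi_W^{\star}$ and feeding the displayed multiplicity identity back, $\pi_V'\boxtimes\pi_W^{\star}$ becomes the unique member of the relevant Vogan packet of $(V,W)$ with multiplicity one, with characters $\chi_{\varphi_V,W}$ and $\chi_{\varphi_W,V}$ --- exactly Conjecture~\ref{conj: Bessel intro} for $(V,W)$. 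The Fourier--Jacobi cases run the same way once the $\omega_{\bullet,\psi_F}$-twist is carried through by Lemma~\ref{lem: classification of representations 2} and Lemma~\ref{lem: compatibility with tensor Weil}. I expect the representation-theoretic content to be supplied entirely by Theorem~\ref{thm: main intro}(1) and Lemma~\ref{lem: no self-dual sub}; the main obstacle is the bookkeeping of the reduction --- securing imaginary exponents in general position (which forces a second look at the proof of Theorem~\ref{thm: main intro}(1)), and verifying that the relevant pure inner forms and the $\varepsilon$-factor characters are genuinely matched under the enlargement of the datum.
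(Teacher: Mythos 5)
Your proposal is correct and follows essentially the same route as the paper: apply Theorem~\ref{thm: main intro}(1) with a spherical inducing datum whose exponents are chosen purely imaginary and in general positions, identify the resulting Vogan packet with that of the original tempered parameter via Lemma~\ref{lem: no self-dual sub} and Theorem~\ref{thm: structure}, and transport the conclusion of the assumed basic tempered case back through the multiplicity equality. The extra care you take --- verifying that the ``general positions'' locus of Theorem~\ref{thm: main intro}(1) meets $(i\BR)^{r^+}$ only in a countable union of proper subsets (so imaginary exponents in general position do exist), and checking that the relevant pure inner forms and the $\varepsilon$-factor characters match under enlargement of the datum --- is left implicit in the paper's one-paragraph proof and is worth spelling out.
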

 
\begin{proof}
 From Lemma \ref{lem: no self-dual sub}, we choose $(s_1,\cdots,s_{r^+})\in \BC^{r^+}$ such that $s_i+s_j\neq 0$, $1\leqslant i<j\leqslant r^+$, there are isomorphisms
\[
\begin{aligned}
\Pi^{\Vogan}_{\varphi_W}\to \Pi^{\Vogan}_{\varphi_{W,\udl{s}}},&\quad \pi_W\mapsto \sigma_{\udl{s}}\rtimes \pi_W \text{ in (B) cases},\\
\Pi^{\Vogan}_{\wt{\varphi}_W}\to \Pi^{\Vogan}_{\wt{\varphi}_{W,\udl{s}}},&\quad \wt{\pi}_W\mapsto \sigma_{\udl{s}}\rtimes \wt{\pi}_W \text{ in (FJ 1) cases},\\
\Pi^{\Vogan}_{\varphi_V}\to \Pi^{\Vogan}_{\varphi_{V,\udl{s}}},&\quad \pi_V\mapsto \sigma_{\udl{s}}\rtimes \pi_V \text{ in (FJ 2) cases},
\end{aligned}
\]
where $\varphi_{W,\udl{s}}^{\ss}=\varphi_{\udl{s}}\oplus \varphi_W^{\ss}\oplus\varphi_{\udl{s}}^{\vee}$ for $\varphi_{\udl{s}}=\oplus_{i=1}^{r^+}|\cdot|^{s_i}$.

From Theorem \ref{thm: main intro}(1), we can find  $\udl{s}=(s_1,\cdots,s_{r^+})\in (i\BR)^{r^+}$ such that $s_i+s_j\neq 0$, for $1\leqslant i<j\leqslant r^+$, and
\begin{equation}\label{equ: reduction to codimension-one}
m(\pi_V\boxtimes\pi_W)=m(\RI(\sigma_{\underline{s}},\pi_V,\pi_W))
\end{equation}
for all $\pi_V\in \Pi_{\varphi_V}^{\Vogan}$ and (i) $\pi_W\in \Pi_{\varphi_W}^{\Vogan}$ in (B) cases, (ii) $\pi_W=\wt{\pi}_W\otimes \omega_{W,\psi}$, $\wt{\pi}_W\in \Pi_{\wt{\varphi}_W}^{\Vogan}$ in (FJ 1)(FJ 2) cases. 

Notice that
\[m(\RI(\sigma_{\underline{s}},\pi_V,\pi_W))=\begin{cases}
    m((\sigma_{\udl{s}}\rtimes \pi_W)\boxtimes\pi_V)& \text{ in (B) cases},\\
    m(\pi_V\boxtimes (\sigma_{\udl{s}}\rtimes \pi_W))& \text{ in (FJ 1) cases},\\
    m((\sigma_{\udl{s}}\rtimes \pi_V)\boxtimes \pi_W)& \text{ in (FJ 2) cases}. 
\end{cases}\]
Since $(\varphi_{W,\udl{s}},\varphi_V)$ in (B) cases, $(\varphi_V,\varphi_{W,\udl{s}})$ in (FJ 1) cases and $(\varphi_{V,\udl{s}},\varphi_{W})$ in (FJ 2) cases are tempered $L$-parameters for basic triples. Then Conjecture \ref{conj: Bessel intro} and \ref{conj: intro FJ} for tempered $L$-parameters $(\varphi_V,\varphi_W)$ follows from the tempered basic cases.
 \end{proof}

\appendix
\section{A Vanishing Lemma}\label{section: vanishing app}
\smallskip
\begin{center}by\textsc{ Cheng Chen, Rui Chen, Jialiang Zou}\end{center}

In this section, we prove Theorem \ref{thm: vanishing in appendix}(2), a vanishing lemma for $\Hom$-functor, which implies Lemma \ref{lem: three properties}(3) with the reciprocity law (Lemma \ref{lem: reciprocity}). We follow the methods of M\oe glin and Waldspurger in \cite[p10]{moeglin2012conjecture} for special orthogonal groups over non-Archimedean local fields. These arguments hold for other classical groups over non-Archimedean local fields as well. 

Let $F=\BR,\BC$, and $E$ be an algebraic field extension of $F$. For a given $\epsilon$-hermitian space $V$ over $E$. We define $\RG_V$ as in Definition \ref{def: GV}, then 
\[
\RG_V=\RU(V), \SO(V), \RO(V), \Sp(V), \text{ or }\Mp(V).
\]

From Langlands classification, every irreducible Casselman-Wallach representation of $\RG_V(F)$ is the unique quotient  of
  a normalized parabolic induction
\begin{equation}\label{equ: Langlands  quotient}
|\det|^{s_1}\sigma_{1}\times\cdots\times|\det|^{s_l}\sigma_{l}\rtimes\pi_{V_0},
\end{equation}
 where $\sigma_{1},\cdots,\sigma_{l}$ are irreducible tempered representations of $\GL_{n_i}(E)$,  $\pi_{V_0}$ is an irreducible tempered representation of $\RG_{V_0}(F)$, $s_1,\cdots,s_l\in \BR$ satisfying $s_1\geqslant \cdots \geqslant s_l>0$. We denote by $\mathrm{LQ}(|\det|^{s_1}\sigma_{1}\times\cdots\times|\det|^{s_l}\sigma_{l}\rtimes\pi_{V_0})$ the unique quotient of $|\det|^{s_1}\sigma_{1}\times\cdots\times|\det|^{s_l}\sigma_{l}\rtimes\pi_{V_0}$, which is known as the Langlands quotient.
 
Given another representation $\pi_{V'}=|\det|^{s'}\sigma_{0}'\rtimes \pi_{0}'$ of $\RG_V(F)$, where $\sigma_{0}'$ is irreducible and tempered representations of $\GL_{n'_0}(E)$, while $\pi_{V_0'}\in \Pi_{\FMG}(\RG_{V_0'})$ is  not necessarily irreducible  or Casselman-Wallach.

\begin{thm}\label{thm: vanishing in appendix}
In the above setting, for 
\[
\pi_V=|\det|^{s_1}\sigma_{1}\times\cdots\times|\det|^{s_l}\sigma_{l}\rtimes\pi_{V_0},
\]
we have
\begin{enumerate}
    \item \[
\Hom_{\RG_V}(\pi_{V'},\mathrm{LQ}(\pi_V))=0
\]    
when $s'>s_1$, and
\item 
\[
\Hom_{\RG_V}(\pi_{V'},\pi_V)=0
\]
when $s'>s_1$.
\end{enumerate}
\end{thm}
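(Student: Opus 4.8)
The plan is to deduce both parts from the standard theory of the Langlands quotient and the Jacquet-module / Frobenius-reciprocity machinery, following the non-Archimedean argument of M\oe glin--Waldspurger adapted to the Archimedean setting via the Casselman-Wallach category. First I would reduce (2) to (1): since $\pi_V$ surjects onto $\mathrm{LQ}(\pi_V)$ and the functor $\Hom_{\RG_V}(\pi_{V'},-)$ is left exact, any nonzero map $\pi_{V'}\to \pi_V$ composed with the projection $\pi_V\twoheadrightarrow \mathrm{LQ}(\pi_V)$ either stays nonzero — giving a contradiction with (1) — or lands in the kernel $K$ of that projection. So the real content is to control $\Hom_{\RG_V}(\pi_{V'},K)$, i.e. to show the image of $\pi_{V'}$ cannot be a subrepresentation of the ``non-Langlands-quotient part'' of $\pi_V$. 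The clean way to handle both at once is to work with the \emph{exponents}: the key point is that the Jacquet module of $|\det|^{s'}\sigma_0'\rtimes\pi_{V_0'}'$ along a maximal parabolic has a central exponent (the leading one, for the $\GL_{n_0'}$-block) of real part $s'>0$, essentially by construction of the standard module, while the leading central exponents occurring in $\pi_V$ (and in particular in $\mathrm{LQ}(\pi_V)$ and in $K$) have real part bounded by $s_1<s'$.

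The key steps, in order, would be: (i) recall/set up the geometric lemma computing the Jacquet module of a parabolic induction $\sigma\rtimes\tau$ along a standard parabolic $\RP_{n_0'}$ (Bruhat filtration), identifying all the $\GL_{n_0'}(E)$-central characters that can appear; (ii) observe that for $\pi_{V'}=|\det|^{s'}\sigma_0'\rtimes\pi_{V_0'}'$ with $\sigma_0'$ tempered, the Jacquet module along $\RP_{n_0'}$ (or along the appropriate standard parabolic) has a subquotient, coming from the ``identity'' term of the Bruhat filtration, on which $\GL_{n_0'}(E)$ acts with central exponent $|\det|^{s'}\otimes(\text{unitary})$ — and this is the \emph{only} term not killed by a suitable positivity argument, so $s'$ is literally an exponent of $\pi_{V'}$; (iii) by Frobenius reciprocity (in the Casselman-Wallach setting, using that $\sigma_0'$ tempered implies the relevant $\Hom$ is detected by a nonzero map from the $\RP_{n_0'}$-Jacquet module), any nonzero element of $\Hom_{\RG_V}(\pi_{V'},\pi_V)$ forces $|\det|^{s'}$ to occur as an exponent of $\pi_V$ along $\RP_{n_0'}$; (iv) compute the exponents of $\pi_V=|\det|^{s_1}\sigma_1\times\cdots\times|\det|^{s_l}\sigma_l\rtimes\pi_{V_0}$ along $\RP_{n_0'}$ by the same geometric lemma, and check — using $s_1\geqslant\cdots\geqslant s_l>0$, the temperedness of the $\sigma_i$ and $\pi_{V_0}$, and the fact that the Weyl-group elements in the Bruhat filtration can only move weights toward the ``center'' or flip signs — that every such exponent has real part $\leqslant s_1$; (v) conclude $s'>s_1$ is incompatible, hence the $\Hom$ vanishes; (vi) finally, for the Langlands-quotient version (1), note $\mathrm{LQ}(\pi_V)$ is a quotient of $\pi_V$, so its Jacquet-module exponents form a subset of those of $\pi_V$, and the same bound applies a fortiori.

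The main obstacle I anticipate is \textbf{step (iii)}: in the Archimedean/Casselman-Wallach world one does not have the clean adjunction $\Hom_G(\pi, \mathrm{Ind}_P^G \tau)=\Hom_L(r_P\pi,\tau)$ at the level of full Casselman-Wallach representations that one enjoys $p$-adically, because smooth induction and Jacquet functors are exact in different senses and the na\"ive Jacquet module need not be Hausdorff or admissible. The fix is to pass to the underlying $(\Fg_\BC,K)$-modules — where the results of Casselman and the Casselman-Wallach globalization theorem (as invoked throughout Section~\ref{section: subgroup}, e.g.\ Lemma~\ref{lem: reciprocity}) give a genuine adjunction between $(\Fg_\BC,K)$-module induction and the $\Fn$-homology Jacquet functor — and to use that a continuous $\RG_V(F)$-map between Casselman-Wallach representations is the same data as a $(\Fg_\BC,K)$-map between the underlying Harish-Chandra modules. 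Once the problem is phrased there, the exponent bookkeeping in steps (i), (ii), (iv) is the standard computation used to prove the Langlands classification, and the positivity inequality $s'>s_1$ does the rest. A secondary subtlety is that $\pi_{V_0'}'$ is allowed to be an arbitrary (possibly reducible, non-Casselman-Wallach) Fr\'echet representation of moderate growth; but this only enters through the $\RG_{V_0'}$-block of the Jacquet module and does not affect the $\GL_{n_0'}(E)$-central exponent $s'$, which is what drives the argument, so no difficulty arises there.
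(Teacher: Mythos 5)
Your outline identifies some of the right scaffolding — (2) follows from (1) by testing on irreducible subquotients (this is exactly how the paper finishes, via Lemma~\ref{lem: dominance}(2)), and second adjointness \`a la Din--Kr\"otz--Schlichtkrull \cite{din2021second} does play a role — but the engine you propose for the core vanishing is not the paper's, and it has a real gap at step~(iv).

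The claim that ``every exponent of $\pi_V$ along $\RP_{n_0'}$ has real part $\leqslant s_1$'' is not correct as stated. When some $\sigma_i$ is a discrete series $D_{m_i}$ of $\GL_2(\BR)$, its own Jacquet module along the Borel carries exponents $\pm c_i$ with $c_i>0$, so the Bruhat-cell pieces of the Jacquet/Casselman module of $\pi_V$ along a $\GL_1\times\RG_{V_0'}$-parabolic include $\GL_1$-exponents of the form $s_i + c_i$, which can exceed $s_1$ even though $s_i\leqslant s_1$ and $\sigma_i$ is tempered. So the clean inequality you want to extract from the geometric lemma does not hold in the form you state; you would need a substantially finer argument — tracking only the central character on the full $\GL_{n_0'}$-block, or only the cells that produce genuine quotients of $F(\pi_V)$ rather than arbitrary subquotients — none of which is developed, and all of which is technically delicate over $\BR$ (the ``geometric lemma'' in the Casselman--Wallach setting is not the clean $p$-adic statement you are leaning on). This is the genuine missing idea.

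The paper's actual proof of Theorem~\ref{thm: vanishing in appendix}(1) never computes a Jacquet module of $\pi_V$. Lemma~\ref{lem: app ind} is a combinatorial descent on the ``disorder'' measure $D(\exp(\pi_V'))$: in the base case the exponent tuple of $\pi_{V'}$ is nonincreasing, so the standard module has a \emph{unique} irreducible quotient, and a nonzero map to $\mathrm{LQ}(\pi_V)$ would force equality of Langlands data, contradicting $s_0'>s_1$; in the inductive step one swaps an adjacent inverted pair $s_i'<s_{i+1}'$, using the Speh reducibility criterion and Lemma~\ref{lem: dominance}(1) to show each irreducible subquotient of $|\det|^{s_i'}\sigma_i'\times|\det|^{s_{i+1}'}\sigma_{i+1}'$ has exponents pinched between $s_i'$ and $s_{i+1}'$, so $D$ drops (the convexity estimate at the end of the proof of Lemma~\ref{lem: app ind}). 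Second adjointness appears only in Lemma~\ref{lem: reduction to irreducible cases}, to reduce the arbitrary Fr\'echet $\pi_{V_0'}$ to an irreducible Casselman--Wallach one via a Dixmier--Malliavin density argument; you flag this as a ``secondary subtlety'' but it is actually a nontrivial separate lemma, whereas the Frobenius-reciprocity-plus-exponent computation you treat as the main mechanism is the step that fails. In short: right shape, wrong core argument.
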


First of all, we prove Lemma \ref{lem: app ind}, which implies Theorem \ref{thm: vanishing in appendix}(1) for cases when $\pi_{V_0'}$ is irreducible and Casselman-Wallach using the Langlands classification for $\pi_{V_0'}$. Then we prove Lemma \ref{lem: reduction to irreducible cases} to reduce  Theorem \ref{thm: vanishing in appendix}(1) to these special cases by refining \cite[p10]{moeglin2012conjecture} using results on the second adjointness in \cite{din2021second}. Finally, we verify conditions in Theorem \ref{thm: vanishing in appendix}(1) for all irreducible subquotients of $\pi_V$ using Lemma \ref{lem: dominance}(2), we prove Theorem \ref{thm: vanishing in appendix}(2).

\begin{defin}
Let $s_1\geqslant \cdots\geqslant s_l\geqslant 0$, $\sigma_i$ are tempered representations of $\GL_{n_i}(F)$ and $n=\sum_{i=1}^l{n_i}$.
\begin{enumerate}
    \item We define the exponent $\exp(\pi)$ of the representation
\[
\pi=|\det|^{s_1}\sigma_{1}\times \cdots \times |\det|^{s_l}\sigma_l \]
of $\GL_{n}(F)$ to be the $n$-tuple
\[
(\underbrace{s_1,\cdots,s_1}_{n_1},\cdots,\underbrace{s_l,\cdots,s_l}_{n_l}).
\]
\item We define the exponent $\exp(\pi_V)$ of the representation 
\[
\pi_V=|\det|^{s_1}\sigma_{V,1}\times\cdots\times|\det|^{s_l}\sigma_{V,l}\rtimes\pi_{V_0}\]
by
\[
\exp(\pi_V)=\exp(|\det|^{s_1}\sigma_{V,1}\times\cdots\times|\det|^{s_l}\sigma_{V,l})
\]
\end{enumerate}
\end{defin}
\begin{lem}\label{lem: dominance}
\begin{enumerate}
    \item Let $\pi'$ be an irreducible component of $\pi=|\det|^{s_1}\sigma_{1}\times \cdots \times |\det|^{s_l}\sigma_l$  and $\pi'=\mathrm{LQ(|\det|^{s_1'}\sigma_{1}'\times \cdots \times |\det|^{s_{l'}'}\sigma_{l'})}$, then   
\[
s_1\geqslant s_1'\geqslant s_l' \geqslant s_l
\]
\item $\pi'_V$ be an irreducible component of $\pi_V=|\det|^{s_1}\sigma_{1}\times \cdots \times |\det|^{s_l}\sigma_l\rtimes \pi_{V_0}$  and $\pi_V'=\mathrm{LQ(|\det|^{s_1'}\sigma_{1}'\times \cdots \times |\det|^{s_{l'}'}\sigma_{l'})\rtimes \pi_{V_0'}}$, then   
\[
s_1\geqslant s_1'
\]
\end{enumerate}
\end{lem}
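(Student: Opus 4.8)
\textbf{Proof plan for Lemma \ref{lem: dominance}.}

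The plan is to reduce Point (2) to Point (1) and to prove Point (1) by a standard Langlands-classification argument combined with Jacquet-module (Casselman) bookkeeping. First I would recall that every irreducible subquotient $\pi'$ of an induced representation $\pi$ appears as a subquotient of the Jacquet module with respect to an appropriate parabolic, and that the Langlands data of an irreducible representation can be read off from the "leading exponents" of its Jacquet modules (Casselman's criterion, as in \cite{wallach1988real}\cite{casselman1989canonical}). The key inequality $s_1 \geqslant s_1' \geqslant s_{l'}' \geqslant s_l$ should then follow from the fact that the multiset of exponents appearing in any Jacquet module of $\pi$ along a maximal parabolic (up to Weyl-group action and the usual contributions of the tempered blocks $\sigma_i$) is a sub-multiset of the exponents $\{s_1,\dots,s_1,\dots,s_l,\dots,s_l\}$, so the largest exponent occurring cannot exceed $s_1$ and the smallest cannot be less than $s_l$. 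In more detail: I would take the cuspidal support of $\pi'$, observe it equals (a Weyl conjugate of) the cuspidal support of $\pi$, write $\pi' = \mathrm{LQ}(|\det|^{s_1'}\sigma_1' \times \cdots \times |\det|^{s_{l'}'}\sigma_{l'}')$ with $s_1' \geqslant \cdots \geqslant s_{l'}' > 0$, and compare the real parts of the central exponents of the cuspidal supports on both sides; the Langlands quotient's datum is characterized by being dominant, and the multiset of real parts is preserved, whence the extremal values are controlled by those of $\pi$.

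For Point (2), I would use the compatibility of parabolic induction in stages together with the geometric lemma / Bernstein--Zelevinsky-type filtration of Jacquet modules adapted to classical groups (as developed in \cite{gomez2017generalized} and used throughout Section \ref{section: subgroup}). An irreducible component $\pi_V'$ of $|\det|^{s_1}\sigma_1 \times \cdots \times |\det|^{s_l}\sigma_l \rtimes \pi_{V_0}$ has a nonzero Jacquet module along the parabolic with Levi $\Res_{E/F}\GL_{n_1} \times \cdots$; the exponents occurring there are built from the $s_i$ together with exponents coming from $\pi_{V_0}$, which are purely imaginary since $\pi_{V_0}$ is tempered. Hence the largest real part of an exponent in any Jacquet module of $\pi_V'$ is at most $s_1$. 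On the other hand, writing $\pi_V' = \mathrm{LQ}(|\det|^{s_1'}\sigma_1' \times \cdots \rtimes \pi_{V_0'})$ with $s_1'$ maximal, the Langlands data forces $|\det|^{s_1'}\sigma_1'$ to contribute an exponent of real part $s_1'$ to a suitable Jacquet module (this is the defining "leading term" property of the Langlands quotient), so $s_1' \leqslant s_1$, which is exactly the asserted inequality.

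The main obstacle I anticipate is the bookkeeping of exponents for the classical-group Jacquet modules: unlike the $\GL$ case, the geometric lemma involves Weyl-group elements that can send an exponent $s$ to $-s$ and can mix the $\GL$-blocks with the block acting on $V_0$, so one must argue carefully that no cancellation or sign flip can produce an exponent of real part exceeding $s_1$ in $\pi_V'$ while still being consistent with $\pi_V'$ being a genuine subquotient (and with $\pi_{V_0}$ tempered, hence all its exponents imaginary). I would handle this by passing to the algebraic ($(\Fg_{\BC},K)$-module) Jacquet functors $\Jac_{\RP}^{\alg}$ introduced before Lemma \ref{lem: expansion pseudo-Whittaker}, using their exactness and the finite-length property from \cite[Lemma 4.3.1]{wallach1988real}, and invoking the standard fact that the $\GL$-exponents of a maximal parabolic's Jacquet module of a standard module are governed by the original inducing data. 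The metaplectic cases $\RG_V = \Mp(V)$ reduce to the linear case $\SO(V')$ via the Shimura--Waldspurger correspondence and Lemma \ref{lem: classification of representations 2}, so no new phenomena appear there. This is essentially the Archimedean analogue of \cite[\S 1.7--1.8]{moeglin2012conjecture}, and I expect the argument to go through verbatim once the exponent combinatorics is set up.
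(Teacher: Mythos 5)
The paper's own proof is short: it cites Borel--Wallach, \emph{Continuous cohomology} (Prop.\ 4.4.13) for the fact that the Langlands quotient is the maximal element in the dominance order among irreducible subquotients of a standard module, interprets this as $\exp(\pi)-\exp(\pi')=\sum a_i(e_i-e_{i+1})$ with $a_i\geqslant 0$, and then reads off the first and last coordinates. Your proposal moves in the same direction (you do invoke "the Langlands quotient's datum is characterized by being dominant"), but the route you actually execute --- bounding exponents of Jacquet modules and comparing cuspidal supports --- contains a concrete error.

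The specific gap is the assertion that ``the exponents occurring [in the Jacquet module of $\pi_V$] are built from the $s_i$ together with exponents coming from $\pi_{V_0}$, which are purely imaginary since $\pi_{V_0}$ is tempered. Hence the largest real part of an exponent in any Jacquet module of $\pi_V'$ is at most $s_1$.'' This is false over $\BR$: a tempered block $\sigma_i=D_{m_i}$ on $\GL_2(\BR)$ has cuspidal exponents $\pm m_i/2$, which are real and nonzero, so a Jacquet module of $\pi_V$ along a $\GL_1$-parabolic already contains the exponent $s_i+m_i/2>s_i$. Consequently the cuspidal-support multiset does \emph{not} consist of the $s_i$'s, and the ``extremal values are controlled'' inference breaks down. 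A second issue: you extract $s_1'$ from the Jacquet module along the parabolic whose $\GL$-block has size $n_1'$, while the $s_i$ bound you quote comes from a Jacquet module along a different parabolic (of size $n_1$); without the dominance statement there is no a priori way to compare exponents taken along parabolics of different shapes. The clean fix is exactly what the paper does: quote the dominance of $\exp(\pi)-\exp(\pi')$ (Borel--Wallach Prop.\ 4.4.13, valid both for the $\GL$ case and for the $\rtimes\pi_{V_0}$ case), note that the first coordinate of $\exp(\pi)-\exp(\pi')$ is $a_1\geqslant 0$ and the last is $-a_{n-1}\leqslant 0$, and conclude $s_1\geqslant s_1'$ and $s_{l'}'\geqslant s_l$; the middle inequality $s_1'\geqslant s_{l'}'$ is just the Langlands ordering. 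The Jacquet-module and cuspidal-support bookkeeping you set up is not wrong in spirit but adds genuine complications (exactly the Weyl sign flips and $D_m$-exponents you flagged) without supplying the needed comparison.
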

\begin{proof}
  For Point (1), we use the fact that the Langlands quotient is the "most non-tempered" irreducible component (\cite[Proposition 4.4.13]{borel2000continuous}), which we interpret as the dominance of $\exp(\pi)-\exp(\pi')$, that is, 
\[
\exp(\pi)-\exp(\pi')=\sum_{i=1}^{n-1}a_i(e_i-e_{i+1}),\quad a_i\geqslant 0
\]
where $e_i$ is the row vector with $1$ at the $i$-th entry and $0$ at other entries.

So we have
\[
s_1\geqslant s_1'\geqslant s_l' \geqslant s_l
\]
Similarly, the dominance of $\exp(\pi_V)-\exp(\pi_V')$ implies the Point (2).
\end{proof}
\begin{lem} \label{lem: app ind}
In the setting of Theorem \ref{thm: vanishing in appendix}, given irreducible tempered representations $\sigma_{i}'$ of $\GL_{n_i'}(E) \ (1\leqslant i\leqslant l')$, $\pi_{V_{00}'}$ of $\RG_{V_{00}'}(F)$ and $s_1',\cdots,s_{l'}'\in \BR$ satisfying $s_1'\geqslant \cdots \geqslant s_{l'}'>0$, we have
\[
\Hom_{\RG_V}(|\det|^{s_0'}\sigma_{0}'\times|\det|^{s_1'}\sigma_{1}'\times\cdots\times|\det|^{s_{l'}'}\sigma_{l'}'\rtimes\pi_{V_{00}'},\mathrm{LQ}(\pi_V))=0
\]
when $s_0'>s_1$.
\end{lem}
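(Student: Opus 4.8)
The plan is to prove Lemma \ref{lem: app ind} by reducing it to the irreducible case via the Langlands classification of $\pi_{V_{00}'}$ and then invoking the ``most non-tempered component'' principle together with the self-duality constraints imposed by the Langlands quotient $\mathrm{LQ}(\pi_V)$. First I would apply the Langlands classification to $\pi_{V_{00}'}$ itself: since it is an irreducible Casselman--Wallach representation of $\RG_{V_{00}'}(F)$, it is the unique Langlands quotient of some standard module $|\det|^{t_1}\tau_1\times\cdots\times|\det|^{t_k}\tau_k\rtimes\pi_{00}^{\mathrm{temp}}$ with $t_1\geqslant\cdots\geqslant t_k>0$ and $\pi_{00}^{\mathrm{temp}}$ tempered. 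Substituting this into the source representation and collecting all the $\GL$-blocks, the representation
\[
|\det|^{s_0'}\sigma_0'\times|\det|^{s_1'}\sigma_1'\times\cdots\times|\det|^{s_{l'}'}\sigma_{l'}'\rtimes\pi_{V_{00}'}
\]
is, up to a surjection from a single standard module, the Langlands quotient of a standard module whose leading exponent is still $s_0'$ (because $s_0'>s_1'\geqslant t_1$, the block $|\det|^{s_0'}\sigma_0'$ remains the most positive). Hence it suffices to show $\Hom_{\RG_V}(\pi_{V'},\mathrm{LQ}(\pi_V))=0$ whenever $\pi_{V'}$ is \emph{irreducible} with leading exponent $s_0'>s_1$, which is Theorem \ref{thm: vanishing in appendix}(1) in the irreducible case.

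For that irreducible case I would argue as follows. Any nonzero $\RG_V(F)$-map $\pi_{V'}\to\mathrm{LQ}(\pi_V)$ forces $\pi_{V'}\cong\mathrm{LQ}(\pi_V)$ by irreducibility of the target (the Langlands quotient is irreducible) and the fact that $\mathrm{LQ}(\pi_V)$ has a unique irreducible quotient; more precisely, a nonzero map between irreducibles is an isomorphism. Then the Langlands data must agree: the leading exponent of $\mathrm{LQ}(\pi_V)$ is $s_1$, while the leading exponent of $\pi_{V'}$ is $s_0'$, and these are uniquely determined by the isomorphism class. This contradicts $s_0'>s_1$, so no nonzero map exists. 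The subtlety I need to handle carefully is that ``leading exponent'' must be read off intrinsically from the isomorphism class of a representation admitting a Langlands quotient presentation, which is exactly the uniqueness part of the Langlands classification (\cite[\S4.4]{borel2000continuous}, or the real-group version used throughout the paper); I would cite this rather than reprove it.

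The main obstacle — and the reason the reduction in the first paragraph is not purely formal — is controlling the exponents after composing two Langlands presentations: I must verify that when I expand $\pi_{V_{00}'}$ as a Langlands quotient and re-assemble, the block $|\det|^{s_0'}\sigma_0'$ genuinely remains strictly dominant over every other $\GL$-block, so that the combined object is again (a quotient of) a standard module with leading exponent $s_0'$. This uses $s_0'>s_1'\geqslant\cdots\geqslant s_{l'}'>0$ together with $t_1\geqslant\cdots\geqslant t_k>0$ and $s_0'>s_1\geqslant\cdots$; the inequality $s_0'>s_1$ is what ultimately feeds into the contradiction. A secondary technical point is that the source representation here is an honest parabolic induction (not yet passed to its Langlands quotient), so I should note that a nonzero map out of it factors through its Langlands quotient when the target is the irreducible $\mathrm{LQ}(\pi_V)$ — equivalently, use that $\mathrm{LQ}(\pi_V)$ appears with multiplicity one as the unique irreducible quotient and apply Frobenius-type reciprocity plus Lemma \ref{lem: dominance}(2) to rule out all other irreducible subquotients of the source as possible images. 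Finally, Theorem \ref{thm: vanishing in appendix}(2) follows from part (1) by decomposing $\pi_V$ into irreducible subquotients, each of which is $\mathrm{LQ}$ of a standard module with leading exponent $\leqslant s_1$ by Lemma \ref{lem: dominance}(2), so part (1) applies to each; but that step lies outside the statement I have been asked to prove here.
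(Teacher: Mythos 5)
Your proposal has a genuine gap: you tacitly assume that $|\det|^{s_0'}\sigma_0'$ is the ``most positive'' $\GL$-factor of the source, writing $s_0'>s_1'$ explicitly, but the hypotheses of the lemma give no such inequality. The only assumptions are $s_1'\geqslant\cdots\geqslant s_{l'}'>0$ and $s_0'>s_1$, where $s_1$ is the leading exponent of the \emph{target}'s Langlands data, not of the source. In the key application inside the proof of Theorem \ref{thm: vanishing in appendix}, one starts from $\pi_{V'}=|\det|^{s'}\sigma_0'\rtimes\pi_{V_0'}$, expands the irreducible $\pi_{V_0'}$ as the Langlands quotient of $|\det|^{s_1'}\sigma_1'\times\cdots\rtimes\pi_{V_{00}'}$, and passes to a surjection from the full parabolic induction; at that point $s_0'=s'$ need bear no relation to $s_1'$ and may well be smaller. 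When $s_0'<s_1'$ the source $|\det|^{s_0'}\sigma_0'\times|\det|^{s_1'}\sigma_1'\times\cdots\times|\det|^{s_{l'}'}\sigma_{l'}'\rtimes\pi_{V_{00}'}$ is \emph{not} a standard module, so one cannot speak of ``its Langlands quotient,'' nor is it automatically a quotient of the reordered standard module (the relevant intertwining operator may have a kernel precisely at reducibility points), nor can one bound leading exponents of its irreducible quotients from below. Lemma \ref{lem: dominance}(2), which you invoke, only supplies an \emph{upper} bound for subquotients of a standard module — the opposite direction of what you need. A secondary issue: your opening move of writing $\pi_{V_{00}'}$ as a Langlands quotient is vacuous, since $\pi_{V_{00}'}$ is assumed tempered.

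The out-of-order case $s_0'\not\geqslant s_1'$ is precisely the substance of the paper's proof, which runs an induction on the disorder statistic $D(\exp(\pi_V'))$. The base case $D=0$ is the ordered case, where your reasoning applies verbatim: the source is a standard module with a unique irreducible quotient whose leading exponent is $s_0'>s_1$, contradicting Langlands uniqueness for $\mathrm{LQ}(\pi_V)$. In the inductive step one finds an adjacent out-of-order pair $s_i'<s_{i+1}'$, uses irreducibility of the target to replace $|\det|^{s_i'}\sigma_i'\times|\det|^{s_{i+1}'}\sigma_{i+1}'$ by one of its finitely many irreducible subquotients $T$, rewrites $T$ in its own Langlands presentation, and verifies — via equality of central characters and a convexity estimate powered by Lemma \ref{lem: dominance}(1) — that $D$ does not increase, and strictly decreases once a genuine reducibility is encountered. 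That combinatorial bookkeeping is the missing ingredient, and without it your reduction to the ordered/irreducible case does not go through.
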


\begin{proof}
We denote $\pi_V'=|\det|^{s_0'}\sigma_{0}'\times|\det|^{s_1'}\sigma_{1}'\times\cdots\times|\det|^{s_{l'}'}\sigma_{l'}'\rtimes\pi_{V_{00}'}$ and we prove the lemma by mathematical induction on $\lceil 2D(\exp(\pi_V'))\rceil$, where
\[
D(\lambda_1,\cdots,\lambda_n)=\sum_{\substack{0\leqslant i<j \leqslant n\\\lambda_{j}>\lambda_i
}}(\lambda_{j}-\lambda_{i})=\sum_{0\leqslant i<j \leqslant n}\max(\lambda_{j}-\lambda_{i},0).
\]

\begin{enumerate}
    \item  When $\lceil 2D(\exp(\pi_V'))\rceil=0$, then $D(\exp(\pi_V'))=0$, then we have
\[
s_0'\geqslant s_1'\geqslant\cdots\geqslant s_{l'}'>0.
\]

In this situation, $|\det|^{s_0'}\sigma_{0}'\times|\det|^{s_1'}\sigma_{1}'\times\cdots\times|\det|^{s_{l'}'}\sigma_{l'}'\rtimes\pi_{V_{00}'}$ has a unique quotient. Suppose 
\[
\Hom_{\RG_V}(|\det|^{s_0'}\sigma_{0}'\times|\det|^{s_1'}\sigma_{1}'\times\cdots\times|\det|^{s_{l'}'}\sigma_{l'}'\rtimes\pi_{V_{00}'},\mathrm{LQ}(\pi_V))\neq 0,
\]
then $\mathrm{LQ}(\pi_V)$ is equal to this quotient, which leads to a contradiction as $s_0'>s_1$.

\item When $\lceil 2D(\exp(\pi_V'))\rceil\neq 0$, there is an  integer $i$ such that $s_i'<s_{i+1}'$, suppose 
\[
\Hom_{\RG_V}(|\det|^{s_0'}\sigma_{0}'\times\cdots\times|\det|^{s_{l'}'}\sigma_{l'}'\rtimes\pi_{V_{00}'},\pi_V)\neq 0,
\]
then there exists an irreducible subquotient $T$ of $\pi_2'=|\det|^{s_{i}'}\sigma_{i}'\times |\det|^{s_{i+1}'}\sigma_{i+1}'$ such that
\[
\Hom_{\RG_V}(|\det|^{s_0'}\sigma_{0}'\times\cdots \times T\times\cdots\times|\det|^{s_{l'}'}\sigma_{l'}'\rtimes\pi_{V_{00}'},\pi_V)\neq 0.
\]

We choose $s_{1}''\geqslant\cdots\geqslant s_{k}''$ and tempered $\sigma_{1}'',\cdots,\sigma_{k}''$ such that $T$ is the Langlands quotient of $\pi_k''=|\det|^{s_{1}''}\sigma_{1}''\times\cdots\times |\det|^{s_{k}''}\sigma_{k}''$. Then 
\[
\Hom_{\RG_V}(|\det|^{s_0'}\sigma_{0}'\times\cdots \times \pi_k''\times\cdots\times|\det|^{s_{l'}'}\sigma_{l'}'\rtimes\pi_{V_{00}'},\pi_V)\neq 0.
\]

We claim that
\begin{itemize}
\item When $|\det|^{s_i'}\sigma_i'\times |\det|^{s_{i+1}'}\sigma_{i+1}'$  is irreducible, then $k=2$ and $(s_{1}'',s_{2}'')=(s_{i+1}',s_i')$, in this situation,
\[
D(\exp(|\det|^{s_0'}\sigma_{0}'\times\cdots \times \pi_k''\times\cdots\times|\det|^{s_{l'}'}\sigma_{l'}'\rtimes\pi_{V_{00}'}))<D(\exp(\pi_V')),
\]
and this operation increases the number of reverse pairs in the tuple.
    \item  When $|\det|^{s_i'}\sigma_i'\times |\det|^{s_{i+1}'}\sigma_{i+1}'$ is reducible,  we have
\[
D(\exp(|\det|^{s_0'}\sigma_{0}'\times\cdots \times \pi_k''\times\cdots\times|\det|^{s_{l'}'}\sigma_{l'}'\rtimes\pi_{V_{00}'}))\leqslant D(\exp(\pi_V'))-\frac{1}{2}.
\]
\end{itemize}   
\end{enumerate}

With this result, the reducible case  decreases $\lceil 2D\rceil$ at least by one and the irreducible cases can only appear consequently for finitely many times. This completes the mathematical induction.

It remains to prove the claim. The irreducible situations are obvious. In reducible cases, from Lemma \ref{lem: dominance}, we have
\[
s_{i+1}'\geqslant s_1''\geqslant \cdots \geqslant s_k''\geqslant s_i'
\]
We set $i_0=\sum_{j=1}^{i-1}{n_j}+1$ and $i_1=\sum_{j=1}^{i+1}n_j$. Then, by definition
\[
\begin{aligned}
&D(\exp(\pi_V'))-
D(\exp(|\det|^{s_0'}\sigma_{0}'\times\cdots \times \pi_k''\times\cdots\times|\det|^{s_{l'}'}\sigma_{l'}'\rtimes\pi_{V_{00}'}))\\=&\sum_{j<i_0}(n_{i}\max(s_{i}'-s_j',0)+n_{i+1}\max(s_{i+1}'-s_j',0)-\sum_{t=1}^{r}n_i''\max(s_t''-s_j',0))\\
&+\sum_{j>i_1}(n_{i}\max(s_{j}'-s_i',0)+n_{i+1}\max(s_{j}'-s_{i+1}',0)-\sum_{t=1}^{r}n_i''\max(s_j'-s_{t}'',0))\\
&+n_in_j(s_{i+1}'-s_i')
\end{aligned}
\]
From \cite{speh1977reducibility} (see also \cite[Theorem 1]{prasad2017reducible}), as $|\det|^{s_i'}\sigma_i'\times |\det|^{s_{i+1}'}\sigma_{i+1}'$ is reducible, we have $s_{i+1}\geqslant s_i+\frac{1}{2}$. Therefore, it suffices to show that 
\[n_{i}\max(s_{i}'-s_j',0)+n_{i+1}\max(s_{i+1}'-s_j',0)-\sum_{t=1}^{r}n_i''\max(s_t''-s_j',0)\geqslant 0,\]
and
\[
n_{i}\max(s_{j}'-s_i',0)+n_{i+1}\max(s_{j}'-s_{i+1}',0)-\sum_{t=1}^{r}n_i''\max(s_j-s_{t}'',0)\geqslant 0.
\]
We set
\[
f_1(x)=n_{i}\max(s_{i}'-x,0)+n_{i+1}\max(s_{i+1}'-x,0),\quad f_2(x)=\sum_{t=1}^{r}n_i''\max(s_t''-x,0),
\]
\[
f_3(x)=n_{i}\max(x-s_i',0)+n_{i+1}\max(x-s_{i+1}',0),\quad f_4(x)=\sum_{t=1}^{r}n_i''\max(x-s_{t}'',0).
\]

Since $|\det|^{s_i'}\sigma_i'\times |\det|^{s_{i+1}'}\sigma_{i+1}'$ and $|\det|^{s_{1}''}\sigma_{1}''\times\cdots\times |\det|^{s_{k}''}\sigma_{k}''$ have the same central character, we have
\[
n_is_i'+n_{i+1}s_{i+1}'=\sum_{t=1}^{k}n_t''s_t''.
\]
Then $f_1(x)=f_2(x)$ when $x\leqslant s_{i}'$ and $f_3(x)=f_4(x)$ when $x\geqslant s_{i+1}'$. Since $f_1(x)=f_2(x)=0$ when $x\geqslant s_{i+1}'$ and $f_3(x)=f_4(x)=0$ when $x\leqslant s_i'$, it suffices to compare the functions when $x\in [s_i',s_{i+1}']$. Notice that the graph of  $f_1(x)$ and $f_3(x)$ are segments in the inteval and $f_2(x),f_4(x)$ are lower convex, we have
\[f_1(x)\geqslant f_2(x),\quad f_3(x)\geqslant f_4(x),\quad x\in [s_i',s_{i+1}'].\]
This completes the proof for the claim.
\end{proof}

When $\pi_{V_0'}$ is an irreducible Casselman-Wallach representation,  $\pi_{V_0'}$ is a the Langlands quotient of 
\[
|\det|^{s_1'}\sigma_{1}'\times\cdots\times|\det|^{s_{l'}'}\sigma_{l'}'\rtimes\pi_{V_{00}'}
\]
for some $s_1'\geqslant\cdots\geqslant s_{l'}'>0$. Suppose $\Hom_{\RG_V}(\pi_{V'},\pi_{V})\neq 0$.
Then 
\[
\Hom_{\RG_V}(|\det|^{s}\sigma_{0}'\times|\det|^{s_1'}\sigma_{1}'\times\cdots\times|\det|^{s_{l'}'}\sigma_{l'}'\rtimes\pi_{V_{00}'},\pi_V)\neq 0.
\]
This contradicts Lemma \ref{lem: app ind}. So we proved Theorem \ref{thm: vanishing in appendix}(1) when $\pi_{V_0'}$ is an irreducible Casselman-Wallach representation.

Then we complete the proof for Theorem \ref{thm: vanishing in appendix}(1) with the following lemma.
\begin{lem}\label{lem: reduction to irreducible cases}
For $V=V_0\oplus^{\perp}(X\oplus Y)$ and $\sigma\in \Irr(\Res_{E/F}\GL(X))$. Suppose there exists  $\pi_{V_0}'\in\Pi_{\FMG}(\RG_{V_0'})$ such that
\[
\Hom_{\RG_V}(\sigma'\rtimes \pi_{V_0'},\pi_V)\neq 0.
\] 
    Then there exists $\pi_{V_0'}'\in \Irr(\RG_{V_0'})$ such that
\[
\Hom_{\RG_V}(\sigma'\rtimes \pi_{V_0'},\pi_V)\neq 0.
\]
\end{lem}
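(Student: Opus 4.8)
\textbf{Proof proposal for Lemma \ref{lem: reduction to irreducible cases}.}
The plan is to reduce from an arbitrary $\pi_{V_0'}\in\Pi_{\FMG}(\RG_{V_0'})$ to an irreducible one by a Jacquet-module/second-adjointness argument, following the idea sketched in \cite[p10]{moeglin2012conjecture} but in the Archimedean setting. First I would rewrite the nonvanishing hypothesis using Frobenius reciprocity (Lemma \ref{lem: reciprocity}) as a statement about the Jacquet module of $\pi_V$ along the parabolic $\RP_{V,X}$ whose Levi is $\Res_{E/F}\GL(X)\times\RG_{V_0}$: a nonzero map $\sigma'\rtimes\pi_{V_0'}\to\pi_V$ produces, via the second adjointness for real reductive groups proved in \cite{din2021second}, a nonzero $\RL_{V,X}(F)$-equivariant map from $\sigma'\boxtimes\pi_{V_0'}$ (up to a modulus twist) into the Jacquet module $\Jac^{\alg}_{\ol{\RP}_{V,X}}(\pi_V)$ (or its smooth/Casselman--Wallach completion). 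Since $\pi_V$ is Casselman--Wallach, this Jacquet module is a finite-length $\CU((\Fl_V)_\BC)$-module, hence has a finite composition series whose pieces are (completions of) irreducible representations of the Levi, of the form $\tau\boxtimes\pi_{V_0'}'$ with $\tau\in\Irr(\Res_{E/F}\GL(X))$ and $\pi_{V_0'}'\in\Irr(\RG_{V_0'})$.

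Next I would argue that the nonzero map must hit one of these graded pieces: because $\Hom$ is left exact and the filtration is finite, there is a subquotient $\tau\boxtimes\pi_{V_0'}'$ of $\Jac^{\alg}_{\ol{\RP}_{V,X}}(\pi_V)$ together with a nonzero $\RL_{V,X}(F)$-map from some subquotient of $\sigma'\boxtimes\pi_{V_0'}$ to it. Restricting to the $\GL(X)(E)$-factor and the $\RG_{V_0'}(F)$-factor separately, one gets that $\tau$ is a subquotient of $\sigma'$ and $\pi_{V_0'}'$ is a subquotient of $\pi_{V_0'}$; in particular, since $\sigma'$ is irreducible, $\tau=\sigma'$. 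Then running the second-adjointness/Frobenius-reciprocity equivalence backwards with the now-irreducible $\pi_{V_0'}'$ in place of $\pi_{V_0'}$ yields $\Hom_{\RG_V}(\sigma'\rtimes\pi_{V_0'}',\pi_V)\neq 0$, which is exactly the desired conclusion. One must be careful that the ``backwards'' direction is legitimate: the map we produced is only on a subquotient, so I would instead phrase the whole argument at the level of Jacquet modules, i.e. show directly that $\sigma'\boxtimes\pi_{V_0'}'$ appears in $\Jac^{\alg}_{\ol{\RP}_{V,X}}(\pi_V)$ and then invoke the adjointness once, in the direction that produces a genuine $\RG_V(F)$-homomorphism.

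The main obstacle I anticipate is the bookkeeping of modulus characters and of the distinction between the algebraic Jacquet module $\Jac^{\alg}$, its Casselman--Wallach completion, and the Hausdorff coinvariants $\Jac^0$ used elsewhere in the paper: the second adjointness of \cite{din2021second} is stated for the appropriate completed functors, and I need the finite-length statement at the level of $(\Fl_\BC,K_L)$-modules to extract irreducible constituents, so the transition between the two pictures must be done cleanly (this is the step where the Archimedean proof genuinely differs from the $p$-adic one, where exact Jacquet functors on smooth representations make the argument immediate). A secondary, more routine point is to confirm that restricting an irreducible representation of the Levi $\Res_{E/F}\GL(X)\times\RG_{V_0'}$ to each factor gives irreducibles, so that the factorization $\tau\boxtimes\pi_{V_0'}'$ with both factors irreducible is automatic; this is standard for Casselman--Wallach representations of a direct product. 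Once these analytic points are settled, the combinatorial extraction of the irreducible constituent and the final application of Frobenius reciprocity are formal.
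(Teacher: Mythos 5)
The paper's proof is the same high-level argument you outline (second adjointness from \cite{din2021second} together with the finite-length of the right adjoint $F(\pi_V^{\alg})$), so the approach matches; but you worry about two points that the paper resolves more cleanly than your sketch suggests, and it is worth noting both. First, the "subquotient problem" at the end: the paper does not need to phrase everything at the level of Jacquet modules, because it takes an irreducible \emph{submodule} $M$ of the image of the adjoint map inside $F(\pi_V^{\alg})$ (such a submodule exists because $F(\pi_V^{\alg})$ is admissible of finite length). This $M$ is simultaneously a subquotient of $\sigma'\boxtimes\pi_{V_0'}$ --- forcing its $\GL(X)(E)$-factor to be $\sigma'$ by irreducibility of $\sigma'$, exactly as you say --- and an honest submodule of $F(\pi_V^{\alg})$, so one can run the adjunction backwards directly: $\Hom_{\RL_V}(\sigma^{\prime,\alg}\boxtimes\pi_{V_0'}^{\prime,\alg},F(\pi_V^{\alg}))\neq 0$ gives $\Hom_{\RG_V}(\sigma^{\prime,\alg}\rtimes\pi_{V_0'}^{\prime,\alg},\pi_V^{\alg})\neq 0$ with no subquotient ambiguity. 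Second, the algebraic-versus-completed bookkeeping: rather than tracking completions and modulus twists throughout, the paper isolates this into a separate density step: using Iwasawa decomposition to identify $\sigma'\rtimes\pi_{V_0'}|_{K_V}$ with $\Ind_{K_{L_V}}^{\CS,K_V}(\sigma'\boxtimes\pi_{V_0'}|_{K_{L_V}})$ and Dixmier--Malliavin, one shows $\sigma^{\prime,\alg}\rtimes\pi_{V_0'}^{\alg}$ is dense in $\sigma'\rtimes\pi_{V_0'}$, so the original nonvanishing of $\Hom_{\RG_V}(\sigma'\rtimes\pi_{V_0'},\pi_V)$ passes to the $(\Fg,K)$-module level; one then works entirely with $(\Fg,K)$-modules and transfers back via the Casselman--Wallach/Harish-Chandra category equivalence. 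This modular structure is the detail you flagged as "the step where the Archimedean proof genuinely differs from the $p$-adic one," and you should make it explicit rather than leaving it as an anticipated obstacle. Also, in your last sentence, the final application is the second adjointness, not Frobenius reciprocity.
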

We prove the lemma using the second adjointness as in \cite[p10]{moeglin2012conjecture}. Over Archimedean local fields, the second adjointness for $(\Fg_{V,\BC},K_V)$-modules are proved in \cite{din2021second}.

Let $\Fg_{V,\BC}$ be the complexification of $\RG_V(F)$ and $K_V$ be the maximal compact subgroup of the Lie algebra of $\RG_V(F)$, for a smooth representation $\pi$ of $\RG_V(F)$, we denote by $\pi^{\alg}$ the $(\Fg_{V,\BC},K_V)$-module associated to it, and denote by $\mathcal{M}(\mathfrak{g}_{V,\BC},K_V)$ the category of $(\Fg_{V,\BC},K_V)$-modules (not necessarily finitely generated). From the category equivalence of Harish-Chandra modules (admissible $(\Fg_{V,\BC},K_V)$-modules  of finite length) and Casselman-Wallach representations of $\RG_V(F)$, it suffices to show that there exists irreducible admissible $(\Fg_{V,\BC},K_V)$-module $\pi_V^{\alg}$ such that
\[
\Hom_{\RG_V}(\sigma_{0}^{\p,\alg}\rtimes \pi_{V_0'}^{\prime,\alg},\pi_V^{\alg})\neq 0.
\]
Here we set the parabolic induction of $(\Fg_{V,\BC},K_V)$-modules following \cite[Definition 5.1]{din2021second}. For parabolic subgroup $P_V$ with Levi decomposition $P_V=L_V\rtimes N_V$, we set
\begin{equation}\label{equ: induction on g,K}
\sigma_{0}^{\p,\alg}\rtimes \pi_{V_0'}^{\alg}=(\CO(K_V)\otimes I_{\CU(\Fp)}^{\CU(\Fg)}(\sigma_0^{\prime}\boxtimes \pi_{V_0'})^{\alg})^{K_{L_V}}_{\Fk_{V,\BC}}.
\end{equation}
where 
\begin{itemize}
    \item 
$\CO(K_V)$ is the space of regular functions of $K_V$, or say, the space of $K_V$-finite vectors in $C_c^{\infty}(K_V)$;
\item $I_{\CU(\Fp)}^{\CU(\Fg)}((\sigma_0^{\prime}\boxtimes \pi_{V_0'})^{\alg})$ is the $(\Fg_{V,\BC},K_{L_V}N_V)$-module
\[
\CU(\Fg_{V,\BC})\otimes _{\CU(\Fp_{V,\BC})}(\BC_{-\rho_P}\otimes (\sigma_0^{\prime}\boxtimes \pi_{V_0'})^{\alg}).
\]
\item 
By taking $\CO(K_V)\otimes I_{\CU(\Fp)}^{\CU(\Fg)}(\sigma_0^{\prime}\boxtimes \pi_{V_0'})$ as the space of algebraic functions from $K_V$ to $I_{\CU(\Fp)}^{\CU(\Fg)}(\sigma_0^{\prime}\boxtimes \pi_{V_0'})$, the  $K_V$-action is given by left action,   the $\Fg_{V,\BC}$-action is given by 
\[(X.f)(k)={}^{k^{-1}}X.f(k).\]
\item The action for $\Fk_{V,\BC}$-covariance is for the compatibility of  $\Fg_{V,\BC}$-action restricted to $\Fk_{V,\BC}$ and the $\Fk_{V,\BC}$-action obtained by differentiating the $K_V$-action.
\item The action of $K_{L_V}$ is given by tensor $\CO(K_V)$ and $(\sigma_0'\boxtimes \pi_{V_0'})^{\alg}$ as $K_{L_V}$-module.
\end{itemize}
In particular, the $K_V$-module structure of $\sigma_{0}^{\p,\alg}\rtimes \pi_{V_0'}^{\alg}$ is 
\begin{equation}\label{equ: Lie algebra side}
(\CO(K)\otimes (\sigma_0'\boxtimes \pi_{V_0'})^{\alg})^{K_{L_V}}.
\end{equation}

\begin{thm}(\cite[Thm. 5.10]{din2021second})
\begin{enumerate}
    \item There is a functor
\[
    F: \mathcal{M}(\mathfrak{g}_V,K_V) \lra \mathcal{M}(\mathfrak{l}_V,K_{L_V}),
\]
right adjoint to the functor of normalized parabolic induction from $\RP_{V,X}(F)$ to $\RG_V(F)$. 
\item   $F$ preserves the admissibility and finite-length property.
\end{enumerate}
\end{thm}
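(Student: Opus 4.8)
The statement is \cite[Thm.~5.10]{din2021second}, so the task is to reproduce that argument; the plan has two stages: first produce $F$ by an abstract adjoint-functor argument, then identify it concretely enough to extract the finiteness assertions in part~(2). Write $I_P$ for the normalized parabolic induction from $\RP_{V,X}(F)$ to $\RG_V(F)$ on the level of $(\Fg_{V,\BC},K_V)$-modules, as realized by the formula \eqref{equ: induction on g,K}; let $\RP_{V,X}=\RL_{V,X}\rtimes\RN$ be the Levi decomposition, $\bar\RP_{V,X}=\RL_{V,X}\rtimes\bar\RN$ the opposite parabolic, and $\Fn,\bar\Fn$ the corresponding complexified nilradicals. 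The first observation is that $I_P$ is a composite of elementary functors, each exact and cocontinuous (preserving arbitrary direct sums, hence all colimits): inflation $\CM(\Fl_{V,\BC},K_{L_V})\to\CM(\Fp_{V,\BC},K_{L_V}\RN)$ along $\RP_{V,X}\twoheadrightarrow\RL_{V,X}$, the algebraic induction $\CU(\Fg_{V,\BC})\otimes_{\CU(\Fp_{V,\BC})}(-)$, and the $K$-production functor $M\mapsto(\CO(K_V)\otimes M)^{K_{L_V}}$ followed by the $\Fk_{V,\BC}$-covariance passage. Hence $I_P$ itself is exact and cocontinuous. Since $\CM(\Fg_{V,\BC},K_V)$ is a Grothendieck abelian category — it has a generator and exact filtered colimits, being the category of nondegenerate modules over the Hecke algebra $\CU(\Fg_{V,\BC})\otimes_{\CU(\Fk_{V,\BC})}R(K_V)$ — Freyd's special adjoint functor theorem produces a right adjoint $F$. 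Equivalently and more usefully, one composes the manifest right adjoints of the three elementary pieces — restriction $\CM(\Fg_{V,\BC},K_V)\to\CM(\Fp_{V,\BC},K_V)$, the production (Zuckerman-type) functor $\mathrm{Pro}_{\Fp_{V,\BC}}^{\Fl_{V,\BC}}$, and $K$-restriction — keeping careful track of the normalizing twists $\delta_{\RP_{V,X}}^{\pm1/2}$; this yields an explicit model for $F$.

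The delicate point is that $F$, as a right adjoint, is left exact, whereas the naive guess — the normalized Jacquet functor $M\mapsto(M)_{\bar\Fn}$ for the opposite parabolic — is only right exact, so $F$ is not literally that functor. The correct description, which is Bernstein's second adjointness in its $(\Fg,K)$-module form, is that $F$ coincides with the top-degree $\bar\Fn$-homology functor twisted by the appropriate modulus and determinant characters; the content is to show that the abstractly produced adjunction is realized by this functor, i.e.\ that $\Hom_{(\Fg_{V,\BC},K_V)}(I_P\sigma,M)\cong\Hom_{(\Fl_{V,\BC},K_{L_V})}(\sigma,F(M))$ naturally. This is the substance of \cite{din2021second}: the proof filters the relevant module by $\RP_{V,X}$-orbits on the partial flag variety and invokes the analytic-localization comparison of Hecht and Taylor between $\Fn$- and $\bar\Fn$-(co)homology. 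I expect this to be the main obstacle: producing \emph{some} right adjoint is soft, but pinning $F$ down — ruling out an uncontrolled left-exact functor and identifying it with an explicit Lie-algebra (co)homology functor — requires exactly this geometric input and is the heart of the theorem.

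Granting the concrete description of $F$, part~(2) follows from classical finiteness theorems. If $M$ is a Harish-Chandra module, i.e.\ admissible of finite length over $(\Fg_{V,\BC},K_V)$, then every Lie-algebra (co)homology group $H_i(\Fn,M)$, equivalently $H_i(\bar\Fn,M)$, is admissible and finitely generated over $(\Fl_{V,\BC},K_{L_V})$ by the Casselman--Osborne theorem together with the Hecht--Schmid finiteness results, and an admissible finitely generated $(\Fl_{V,\BC},K_{L_V})$-module has finite length by Harish-Chandra's theorem. Applying this to the top-degree piece shows that $F(M)$ is admissible of finite length, which is part~(2); and since Harish-Chandra modules are precisely the admissible finite-length objects, $F$ restricts to a functor between the subcategories of Harish-Chandra modules of $\RG_V(F)$ and of $\RL_{V,X}(F)$, as claimed.
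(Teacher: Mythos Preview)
The paper does not prove this theorem at all: it is stated with the attribution \cite[Thm.~5.10]{din2021second} and then immediately used as a black box in the proof of Lemma~\ref{lem: reduction to irreducible cases}. There is no proof environment following the statement, and nothing in the surrounding text supplies an argument. So there is no ``paper's own proof'' to compare your proposal against.

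Your sketch is a plausible high-level outline of how such a second-adjointness theorem is established in the $(\Fg,K)$-module setting --- abstract existence of a right adjoint via cocontinuity, followed by identification with a top-degree $\bar{\Fn}$-homology functor and then Casselman--Osborne/Hecht--Schmid finiteness --- but whether this matches the actual argument in \cite{din2021second} cannot be determined from the present paper. For the purposes of this paper, the theorem is simply quoted and no reproduction of its proof is expected.
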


\begin{proof}[Proof for Lemma \ref{lem: reduction to irreducible cases}]
(1) We first prove 
$\Hom_{\RG_V}(\sigma_0^{\prime,\alg}\rtimes\pi_{V_0'}^{\alg},\pi_V^{\alg})=0$ by contradiction.
Suppose we have
$\Hom_{\RG_V}(\sigma_0^{\prime,\alg}\rtimes\pi_{V_0'}^{\alg},\pi_V^{\alg})\neq 0$.
The second adjointness implies
\[
\Hom_{\RL_V}(\sigma_{0}^{\p,\alg}\boxtimes \pi_{V_0'}^{\alg},F(\pi_V^{\alg}))\neq 0.
\]

Since $F(\pi_V^{\alg}))$ is admissible of finite-length, there exists an irreducible $\pi_{V_0'}^{\prime,\alg}$ such that
\[
\Hom_{\RL_V}(\sigma_{0}^{\p,\alg}\boxtimes \pi_{V_0'}^{\prime,\alg},F(\pi_V^{\alg}))\neq 0.
\]

(2) Finally, we show that 
$\Hom_{\RG_V}(\sigma_0'\rtimes \pi_{V_0'},\pi_V)\neq 0$ implies $\Hom_{\RG_V}(\sigma_0^{\prime,\alg}\rtimes\pi_{V_0'}^{\alg},\pi_V^{\alg})\neq 0$. It suffices to show that $\sigma_0^{\prime,\alg}\rtimes\pi_{V_0'}^{\alg}$ is dense in $\sigma_0'\rtimes \pi_{V_0'}$. 

From the Iwasawa decomposition $\RG_V(F)=\RP_V(F)$, we have 
\[
\RP_V(F)\bs \RG_V(F)=(\RP_V(F)\cap K_V)\bs K_V,
\]
then
\[
\sigma_0'\rtimes \pi_{V_0}'|_{K_V}=\Ind_{\RP_V(F)}^{\CS,\RG_V(F)}(\sigma_0'\boxtimes \pi_{V_0'})|_{K_V}=\Ind_{K_{L_V}}^{\CS,K_V}(\sigma_0'\boxtimes \pi_{V_0'}|_{K_{L_V}}).
\]

By definition, 
\begin{equation}\label{equ: reduce to compact}
\Ind_{K_{L_V}}^{\CS,K_V}(\sigma_0'\boxtimes \pi_{V_0'}|_{K_{L_V}})=(C_c^{\infty}(K_L)\wh{\otimes} \sigma_0^{\prime})^{K_{L_V}}
\end{equation}
as smooth $K_{L_V}$-representation and the equality preserves topology.  From Dixmier-Malliavin theorem, $\CO(K_L)$ is dense in $C_c^{\infty}(K_L)$ and $\sigma_0^{\prime,\alg}$ is dense in $\sigma_0^{\prime}$, then by comparing (\ref{equ: Lie algebra side})(\ref{equ: reduce to compact}), we have 
\[
\sigma_{0}^{\p,\alg}\rtimes \pi_{V_0'}^{\alg}\text{ is dense in }
\sigma_{0}^{\p}\rtimes \pi_{V_0'}
\]
with respect the topology of $C_c^{\infty}(K_L)\wh{\otimes} \sigma_0^{\prime}$, this completes the proof.    
\end{proof}

Now we complete the proof for Theorem \ref{thm: vanishing in appendix}(1). From Lemma \ref{lem: dominance}(2), every irreducible subquotient $T_V$ of $\pi_V$ is the Langlands quotient of certain $\pi_V'$ satisfying the condition in Theorem \ref{thm: vanishing in appendix}(1) then
\[\Hom_{\RG_V}(\pi_V',T_V)=0\]
for every irreducible subquotient of $\pi_V$. Hence,
\[\Hom_{\RG_V}(\pi_V',\pi_V)=0.\]
So we complete the proof for Theorem \ref{thm: vanishing in appendix}.
\bibliographystyle{alpha} 
\bibliography{cheng}
\end{document}